\newcommand{\stkout}[1]{\ifmmode\text{\sout{\ensuremath{#1}}}\else\sout{#1}\fi}
\definecolor{darkspringgreen}{rgb}{0.09, 0.45, 0.27}
\newcommand{\mmod}{\! \sslash \!}
\newcommand{\mc}[1]{\mathcal{#1}}
\newcommand{\mb}[1]{\mathbb{#1}}
\newcommand{\mr}[1]{\mathrm{#1}}
\newcommand{\abs}[1]{\lvert #1 \rvert}
\newcommand{\bra}[1]{\langle #1 \rangle}
\newcommand{\br}[1]{\overline{#1}}
\newcommand{\td}[1]{\widetilde{#1}}
\newcommand{\ZZ}{\mathbb{Z}}
\newcommand{\CC}{\mathbb{C}}
\newcommand{\QQ}{\mathbb{Q}}
\newcommand{\WW}{\mathbb{W}}
\newcommand{\FF}{\mathbb{F}}
\newcommand{\GG}{\mathbb{G}}
\newcommand{\MS}{\mathbb{S}}
\newcommand{\TMF}{\mathrm{TMF}}
\newcommand{\Tmf}{\mathrm{Tmf}}
\newcommand{\tmf}{\mathrm{tmf}}
\newcommand{\bo}{\mathrm{bo}}
\def \HF2{\mr{H}\FF_2}
\newcommand{\BP}{\mr{BP}}
\newcommand{\tb}{\bar{t}}
\newcommand{\bh}{{h}}
\newcommand{\si}{\sigma}
\newcommand{\sitd}{\widetilde{\sigma}}
\newcommand{\sibr}{\overline{\sigma}}
\newcommand{\Sirav}{S}
\def \AA0{\br{A \mmod A(0)}_*}
\def \AA2{A\mmod A(2)_*}
\def\@url#1{{\tt\def~{\lower3.5pt\hbox{\char'176}}\def\_{\char'137}#1}}
\def\makeautorefname#1#2{\expandafter\def\csname#1autorefname\endcsname{#2}}
                   \let\c@lemma\c@theorem
 \newtheorem{thm}[equation]{Theorem}
 \newtheorem{cor}[equation]{Corollary}
 \newtheorem{lem}[equation]{Lemma}
 \newtheorem{prop}[equation]{Proposition}
              \newtheorem{war}[equation]{Warning}
               \theoremstyle{definition}
 \newtheorem{defn}[equation]{Definition}
 \newtheorem{ex}[equation]{Example}
 \newtheorem{rmk}[equation]{Remark}
 \newtheorem{conjecture}[equation]{Conjecture}
 \newtheorem*{thm*}{Theorem}
 \newtheorem*{cor*}{Corollary}
 \newtheorem*{lem*}{Lemma}
 \newtheorem*{prop*}{Proposition}
  \newtheorem*{not*}{Notation}
    \newtheorem*{guess*}{Guess}
\newtheorem*{defn*}{Definition}
\newtheorem*{ex*}{Example}
\newtheorem*{exs*}{Examples}
\newtheorem*{rmk*}{Remark}
\newtheorem*{claim*}{Claim}
\newtheoremstyle{named}{}{}{\itshape}{}{\bfseries}{.}{.5em}{\thmnote{#3}}
\theoremstyle{named}
\newtheorem*{namedtheorem}{Theorem}
\numberwithin{equation}{subsection}
\numberwithin{figure}{section}
\let\c@lem=\c@thm
\let\c@cor=\c@thm
\let\c@prop=\c@thm
\let\c@lem=\c@thm
\let\c@ex=\c@thm
\let\c@exs=\c@thm
\let\c@obs=\c@thm
\let\c@rmk=\c@thm
\let\c@perthm=\c@thm
\let\c@conjtel=\c@thm
\let\c@exmps=\c@thm
\let\c@rem=\c@thm
\let\c@question=\c@thm
\let\c@warn=\c@thm
\let\c@claim=\c@thm
\let\c@quest=\c@thm
\let\c@notation=\c@thm
\let\c@note=\c@thm
\let\c@conjtel=\c@thm
\let\c@gue=\c@thm
\let\c@goal=\c@thm
\DeclareMathOperator{\Ext}{Ext}
\DeclareMathOperator{\Hom}{Hom}
\DeclareMathOperator{\aut}{Aut}
\DeclareMathOperator{\im}{im}
\DeclareMathOperator{\Map}{Map}
\DeclareMathOperator{\Aut}{Aut}
\DeclareMathOperator{\Gal}{Gal}
\DeclareMathOperator{\sq}{Sq}
\newcommand{\s}{\wedge}
\newcommand{\E}[2]{\prescript{#1}{#2}{E}}
\newcommand{\smsh}{\wedge}
\definecolor{limegreen}{rgb}{0.2, 0.8, 0.2}
\definecolor{darkmagenta}{rgb}{0.55, 0.0, 0.55}
\definecolor{lavenderrose}{rgb}{0.91, 0.33, 0.5}
\definecolor{goldenpoppy}{rgb}{0.99, 0.76, 0.0}
\definecolor{seagreen}{rgb}{0.11, 0.35, 0.02}
\definecolor{maroon}{RGB}{128,0,0}
\definecolor{darkviolet}{RGB}{148,0,211}
\definecolor{twelve}{RGB}{100,100,170}
\definecolor{thirteen}{RGB}{100,150,50}
\definecolor{fourteen}{RGB}{200,0,0}
\definecolor{fifteen}{RGB}{0,200,0}
\definecolor{sixteen}{RGB}{0,0,200}
\definecolor{seventeen}{RGB}{200,0,200}
\definecolor{eighteen}{RGB}{0,200,200}
\def\ev#1#2#3{
{\color{#1}(#2,#3:#3)^{ev}}
}
\title{The telescope conjecture at height 2 and the tmf resolution}
\author{A.~Beaudry}\address{University of Colorado at Boulder}\email{agnes.beaudry@colorado.edu}
\author{M.~Behrens}\address{University of Notre Dame}\email{mbehren1@nd.edu}
\author{P.~Bhattacharya}\address{University of Notre Dame}\email{pbhattac@nd.edu}
\author{D.~Culver}\address{Max Plank Institute for Mathematics }\email{dculver@mpim-bonn.mpg.de}
\author{Z.~Xu}\address{University of California, San Diego}\email{xuzhouli@ucsd.edu}
\thanks{This material is based upon work supported by the National Science Foundation under Grants No. DMS-1050466/1611786/2005476, DMS-1725563/1906227, and DMS-1810638.}
\begin{document}

\begin{abstract}
Mahowald proved the height 1 telescope conjecture at the prime 2 as an application of his seminal work on bo-resolutions.  In this paper we study the height 2 telescope conjecture at the prime 2 through the lens of $\tmf$-resolutions. To this end we compute the structure of the $\tmf$-resolution for a specific type $2$ complex $Z$.  We find that, analogous to the height 1 case, the $E_1$-page of the $\tmf$-resolution possesses a decomposition into a $v_2$-periodic summand, and an Eilenberg-MacLane summand which consists of bounded $v_2$-torsion.  However, unlike the height 1 case, the $E_2$-page of the $\tmf$-resolution exhibits unbounded $v_2$-torsion.  We compare this to the work of Mahowald-Ravenel-Shick, and discuss how the validity of the telescope conjecture is connected to the fate of this unbounded $v_2$-torsion: either the unbounded $v_2$-torsion kills itself off in the spectral sequence, and the telescope conjecture is true, or it persists to form \emph{$v_2$-parabolas} and the telescope conjecture is false.  We also study how to use the $\tmf$-resolution to effectively give low dimensional computations of the homotopy groups of $Z$.  These computations allow us to prove a conjecture of the second author and Egger: the $E(2)$-local Adams-Novikov spectral sequence for $Z$ collapses.
\end{abstract}

\maketitle	

\tableofcontents


\section{Introduction}\label{sec:intro}

\subsection*{The telescope conjecture}

Fix a prime $p$ and let $X$ be a finite spectrum.  The perspective of chromatic homotopy theory is to understand $X_{(p)}$ through the study of its \emph{chromatic tower} \cite[Sec.~7.5]{Ravenelorange} \cite{BarthelBeaudry}
$$  \cdots \rightarrow X_{E(n)} \rightarrow X_{E(n-1)} \rightarrow \cdots \rightarrow X_{E(0)} = X_\QQ. $$
Here, $X_{E(n)}$ denotes the Bousfield localization of $X$ with respect to the Johnson-Wilson spectrum $E(n)$ with
$$ \pi_*E(n) = \ZZ_{(p)}[v_1, \ldots, v_n, v_n^{-1}] $$
where $\abs{v_n} = 2(p^n-1)$.  
The chromatic convergence theorem of Hopkins and Ravenel \cite{HopkinsRavenel} states that $X_{(p)}$ is recovered as the inverse limit of the tower.  Thus the $E(n)$-localizations interpolate between the rationalization and the $p$-localization of $X$.  The \emph{monochromatic layers} of the chromatic tower are defined to be the fibers
$$ M_nX \rightarrow X_{E(n)} \rightarrow X_{E(n-1)}. $$
Applying $\pi_*$ to the chromatic tower yields the \emph{chromatic spectral sequence}
$$ \E{css}{}_1^{n,*}(X) = \pi_* M_nX \Rightarrow \pi_*X. $$ 
The efficacy of the chromatic approach is established by Morava's change of rings theorem \cite{Morava}, which states that the Adams-Novikov spectral sequence for $M_nX$ takes the form 
$$ \E{anss}{}_2^{s,t}(M_nX) = H^s_c(\GG_n, (E_n)_t M_nX) \Rightarrow \pi_{t-s}M_nX, $$
where $E_n$ is the height $n$ Morava E-theory spectrum and $\GG_n$ is the height $n$ Morava stabilizer group.  For a given height $n$, $\E{anss}{}^{*,*}_2(M_nX)$ (and in fact the entire Adams-Novikov spectral sequence) is in principle completely computable. 

In reality, the complexity of these computations increases significantly as a function of $n$, and therefore these computations have only been carried out successfully for small values of $n$.  It is thus desirable to have a means of directly relating the homotopy groups of each of the monochromatic layers $M_nX$ to the homotopy groups of $X$ itself, without having to resort to needing to compute the entire chromatic spectral sequence.

There is a variant of the chromatic tower which does have this property.  Let $X^f_{E(n)}$ denote the finite $E(n)$-localization, obtained by killing only finite $E(n)$-acyclic spectra (instead of all $E(n)$ acyclic spectra).  The finite localizations also form a tower, with finite monochromatic layers defined to be the fibers
$$ M_n^fX \rightarrow X^f_{E(n)} \rightarrow X^f_{E(n-1)}. $$
The advantage of this variant of the chromatic tower is that the elements of the homotopy groups of these finite monochromatic layers have a concrete relationship to the homotopy groups of $X$ itself: elements of $\pi_*M_n X$ correspond to $v_n$-periodic families in $\pi_*X$ \cite[Sec.~2.5]{Ravenelorange}.   

In \cite{1984}, Ravenel proposed the following Panglossian conjecture.

\begin{namedtheorem}[Telescope Conjecture]
For any spectrum $X$, prime $p$, and height $n$, the natural map
\[X^f_{E(n)} \to X_{E(n)}\] 
is an equivalence.
\end{namedtheorem}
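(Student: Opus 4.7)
The plan is to adapt Mahowald's height 1 strategy --- where $\bo$-resolutions were used to establish the telescope conjecture at $n=1$, $p=2$ --- to the height 2, prime 2 setting by replacing $\bo$ with $\tmf$. A standard thick-subcategory reduction lets us test the equivalence $X^f_{E(n)} \to X_{E(n)}$ on a single finite type $2$ complex; for compatibility with $\tmf$ I would fix a well-chosen type 2 spectrum $Z$ equipped with a $v_2$-self map, so that $v_2^{-1} Z$ models $Z^f_{E(2)}$ and the comparison to $Z_{E(2)}$ becomes a question about whether a natural localization map of $\tmf$-based spectral sequences is an isomorphism.

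With this in hand I would construct the $\tmf$-based Adams resolution of $Z$ and try to split its $E_1$-page in the manner of Mahowald's $\bo$-splitting: algebraically, decompose the relevant $\Ex$-groups of $H_*(\tmf^{\smsh s} \smsh Z)$ as a $v_2$-periodic summand plus an Eilenberg--MacLane-type summand of \emph{bounded} $v_2$-torsion. The formal consequence is that if all $v_2$-torsion on the $E_\infty$-page of the $\tmf$-resolution spectral sequence is bounded, then a Mittag-Leffler / $\varprojlim$ argument identifies the spectral sequence for $v_2^{-1} Z$ with that for $Z_{E(2)}$, yielding the telescope conjecture for $Z$ and hence for all $X$.

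The main obstacle is exactly what the abstract flags: while the $E_1$-page enjoys a height-1-style splitting, new unbounded $v_2$-torsion appears on the $E_2$-page (the $v_2$-parabolas). To push the argument through, one must show that every such unbounded torsion class is either the source or target of a differential in the $\tmf$-resolution spectral sequence, so that only bounded torsion survives to $E_\infty$. Producing enough differentials to cancel these parabolas --- or finding an algebraic obstruction forcing them to cancel in pairs --- is the crux; absent such cancellations the same machinery becomes a vehicle for disproving the conjecture, and this is the step I expect to be genuinely hard and where the whole approach might fail.
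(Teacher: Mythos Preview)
The statement you are attempting to prove is not a theorem of the paper; it is the Telescope Conjecture, stated as an open problem and explicitly \emph{not} resolved by the paper. There is no proof in the paper to compare against. Your proposal accurately reconstructs the paper's strategy (reduce to a type $2$ complex $Z$ with a $v_2^1$-self map, run the $\tmf$-resolution, split the $E_1$-page into a $v_2$-periodic good part and an Eilenberg--MacLane evil part, and hope for bounded torsion plus a vanishing line), and you correctly identify the obstruction: unbounded $v_2$-torsion appears on the $E_2$-page and must be killed by differentials for Mahowald's argument to go through.

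The genuine gap is that this obstruction is not overcome, and the paper does not claim to overcome it. The paper's Unbounded Torsion Theorem (Theorem~\ref{thm:MRE1}) establishes that the May--Ravenel $E_1$-term has $v_2^i$-torsion for arbitrarily large $i$, and the authors state plainly that ``the results of this paper are as such inconclusive, and the telescope conjecture remains one of the great unlocked mysteries of the subject.'' Worse, the paper's Parabola Conjecture (Conjectures~\ref{conj:parabola} and \ref{conj:diffsv2p2}) predicts that the unbounded torsion does \emph{not} cancel but instead assembles into $v_2$-parabolas detecting nontrivial classes in $\pi_*\widehat{Z}$ not in the image from $\pi_*Z_{E(2)}$ --- that is, the paper's working hypothesis is that the telescope conjecture is \emph{false} at height $2$. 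Your final paragraph acknowledges this is ``the step I expect to be genuinely hard and where the whole approach might fail''; that is exactly right, and it is why what you have written is a strategy outline rather than a proof.
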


The Hopkins-Smith thick subcategory theorem \cite{HopkinsSmith} implies that the telescope conjecture is true if and only if it is true for a single type $n$ spectrum (a $p$-local finite spectrum which is $E(n-1)$-acyclic, but not $E(n)$-acyclic).  In this case where $X$ is type $n \ge 1$, the Hopkins-Smith periodicity theorem \cite{HopkinsSmith} implies there is an asymptotically unique \emph{$v_n$-self map}, that is, an $E(n)$-self equivalence
$$ v: \Sigma^N X \rightarrow X $$
where $N > 0$.
The telescope of $X$ is defined
as the homotopy colimit 
\[ \widehat{X} := X \xrightarrow{v} \Sigma^{-N} X  \xrightarrow{v} \Sigma^{-2N} X  \rightarrow \ldots \]
and we have
$$ X^f_{E(n)} \simeq \widehat{X}. $$
Thus, for any $p$-local spectrum $X$ of type $n$, the natural map
\begin{equation}\label{eq:telescopemap}
\widehat{X} \rightarrow X_{E(n)}
\end{equation}
is an equivalence if and only if the $p$-primary height $n$ telescope conjecture is true.

\subsection*{The height 1 case}

The telescope conjecture was in large part motivated by the case of height $n = 1$, where the conjecture was already proven by Mahowald for $p = 2$ \cite{Mahowaldbo}, \cite[Thm.~1.2]{Mahowald}, and Miller for $p > 2$ \cite{Miller}.  In both of these cases, the proof is computational, in the sense that the authors compute the homotopy groups of the source and target of (\ref{eq:telescopemap}) and show that the map is an isomorphism on these homotopy groups.
The methods used in each of these cases, though, are somewhat different.

In the $p > 2$ case of \cite{Miller}, Miller considered the mod $p$ Moore spectrum $M(p)$, which is type $1$, with $v_1$-self map 
\begin{equation}\label{eq:v1^1Mp}
 v: \Sigma^{2(p-1)}M(p) \rightarrow M(p)
\end{equation}
having the property that it is given by multiplication by $v_1$ in $E(1)$-homology.  We will call such a self-map a $v_1^1$-self map.  Miller computes the localized Adams spectral sequence
$$ v_1^{-1}\E{ass}{}_2^{s,t} = v_1^{-1}\Ext^{s,t}_{A_*}(\FF_p, H_* M(p)) \Rightarrow \pi_{t-s}\widehat{M(p)} $$
where $A_*$ denotes the $p$-primary dual Steenrod algebra, and $H_*$ denotes mod $p$ homology.  To do this, he completely computes the $E_2$-page, and then gives a delicate lifting argument which computes the $d_2$ Adams differentials from the $d_1$ Adams-Novikov differentials.  He then shows the localized Adams spectral sequence collapses at $E_3$ to the known values of $\pi_*M(p)_{E(1)}$.

In the case of $p = 2$, the situation is more complicated as the mod $2$ Moore spectrum only has a \emph{$v_1^4$-self map}
$$ v_1^4: \Sigma^{8} M(2) \rightarrow M(2), $$
having the property that it is given by multiplication by $v_1^4$ on $E(1)$-homology.  
For this reason, in \cite{Mahowald}, Mahowald considers the 2-local type 1-spectrum
$$ Y := M(2) \wedge C(\eta) $$ 
where $C(\eta)$ denotes the cofiber of the Hopf map $\eta : S^1 \rightarrow S^0$.  In contrast with the case of the mod 2 Moore spectrum, the spectrum $Y$ possesses a $v_1^1$-self map:
$$ v_1^1: \Sigma^2 Y \rightarrow Y. $$
Mahowald analyzed the $\bo$-based Adams spectral sequence (aka the ``$\bo$-resolution'') for $Y$:
$$ \E{\bo}{}_1^{s,t}(Y) = \pi_t \bo^{\wedge s+1} \wedge Y \Rightarrow \pi_{t-s} Y. $$
Here $\bo$ denotes the connective real K-theory spectrum.
This spectral sequence is significantly simplified by the fact that we have an equivalence
$$ \bo \wedge Y \simeq k(1), $$
where $k(1)$ denotes the height $1$ connective Morava K-theory spectrum.  Unfortunately, the $v_1$-localized $\bo$-resolution converges to the $E(1)$-local homotopy groups of $Y$ (rather than those of $\widehat{Y}$):
$$ v_1^{-1}\E{bo}{}^{s,t}_1(Y) \Rightarrow \pi_{t-s}Y_{E(1)}. $$
Nevertheless, Mahowald was able to deduce the height $1$ telescope conjecture at the prime $2$ by establishing the following key results: 
\begin{description}
\item[Collapse theorem] The $v_1$-localized $\bo$-resolution for $Y$ collapses at its $E_2$-page.
\item[Bounded torsion theorem] If $x \in \E{\bo}{}^{*,*}_2$ is $v_1$-torsion, then $v_1^2 x = 0$.
\item[Vanishing line theorem] There is a $c$ so that $\E{\bo}{}^{s,t}_2(Y) = 0$ for $s > \frac{t-s}{5} + c$. 
\end{description}  
The idea is to use these key results to prove the map
\begin{equation}\label{eq:Ytelmap}
 \pi_*\widehat{Y} \rightarrow \pi_*Y_{E(1)}
\end{equation}
is surjective and injective.  
The map (\ref{eq:Ytelmap}) is surjective because if $y \in \pi_*Y_{E(1)}$ is detected by an element $y' \in v_1^{-1}\E{\bo}{}_2^{*,*}(Y)$ in the $v_1$-localized $\bo$-resolution, then the bounded torsion theorem implies that the targets of the differentials supported by the family $v_1^{2i}y'$ in the unlocalized $\bo$-resolution lie above a line of slope $1/4$, and thus will eventually surpass the $1/5$ vanishing line.  Hence for $i \gg 0$, the element $v_1^{2i}y'$ detects a $v_1$-periodic family mapping to that of $y$ under (\ref{eq:Ytelmap}).  The map (\ref{eq:Ytelmap}) is injective because the collapse theorem implies that any element $x \in \pi_*Y$ which maps to zero in $\pi_*Y_{E(1)}$ must be detected by a $v_1$-torsion element of $\E{\bo}{}_2^{*,*}(Y)$, and the bounded torsion theorem then implies that the family of elements  $v_1^{2i}x$ are detected in the $\bo$ resolution above a line of slope $1/4$, and thus will eventually surpass the $1/5$ vanishing line.  Hence $x$ must be $v_1$-torsion.

\subsection*{Attempts to disprove the telescope conjecture}

Less than a decade after his 1984 paper, Ravenel's optimistic beliefs concerning the telescope conjecture took a decidedly Orwellian turn.  In \cite{Raveneltelescope}, Ravenel studied the height $2$ telescope conjecture at primes $p \ge 5$ by considering the analog of Miller's argument for the Smith-Toda complex $V(1)$ (the cofiber of (\ref{eq:v1^1Mp})).  The Adams-Novikov spectral sequence
$$ \E{anss}{}^{*,*}_2(V(1)_{E(2)}) = H^*_c(\GG_2, (E_2)_*V(1)) \Rightarrow \pi_* V(1)_{E(2)} $$
collapses for dimensional reasons.
Ravenel computed the $E_2$-term of the localized Adams spectral sequence
$$ v_2^{-1}\E{ass}{}^{*,*}_2(V(1)) = v_2^{-1}\Ext_{A_*}(\FF_p, H_*V(1)) \Rightarrow \pi_*\widehat{V(1)}, $$
and found that the Adams-Novikov differentials lifted to differentials of unbounded length in the localized Adams spectral sequence.  He then observed that a power operation argument gave rise to Toda-type differentials which 
preceeded the lifted Adams-Novikov differentials, potentially causing $\pi_*\widehat{V(1)}$ to differ from $\pi_*V(1)_{E(2)}$.  Although he initially thought he had a counterexample to the telescope conjecture, it eventually became clear that it was impossible to rule out the possibility that a bizarre pattern of other differentials might subsequently ``fix'' the havoc caused by these Toda-type differentials, allowing the telescope conjecture to hold.  Mahowald, Ravenel, and Shick summarized the uncertain state of affairs in \cite{MahowaldRavenelShick}.


\subsection*{Main results}

\emph{The purpose of this paper is to carry out the height $2$ analog of Mahowald's analysis of the height $1$ telescope conjecture at the prime $2$.}  

To this end we replace the $\bo$-resolution of Mahowald with the $\tmf$-based Adams spectral sequence (aka the $\tmf$-resolution), 
$$ \E{\tmf}{}_1^{s,t}(X) = \pi_t (\tmf^{\wedge {s+1}} \wedge X) \Rightarrow \pi_{t-s}X $$
where $\tmf$ denotes the spectrum of \emph{connective topological modular forms} \cite{tmf}. The role that was played by Mahowald's spectrum $Y$ will now be reprised by $Z$, a $2$-local finite spectrum of type $2$ constructed by the third author and Egger \cite{bhateggerZ}, with the distinguished property that it posesses a $v_2^1$-self map
$$ v_2^1: \Sigma^6 Z \rightarrow Z, $$
and that there is an equivalence 
\begin{equation}\label{eq:tmfsmashZ}
\tmf \wedge Z \simeq k(2). 
\end{equation}
Here $k(2)$ is the height 2 connective Morava $K$-theory spectrum. 

We find that, similar to the height $1$ case, the $E_1$ term of the $\tmf$-resolution for $Z$ fits into a short exact sequence
\begin{equation}\label{eq:sesintro}
 0 \to V^{*,*}(Z) \to \E{\tmf}{}^{*,*}_1(Z) \to \mc{C}^{*,*}(Z) \to 0 
 \end{equation} 
 where the groups $\mc{C}^{*,*}(Z)$ are $v_2$-torsion free and completely computable and the groups $V^{*,*}(Z)$ are $v_2^1$-torsion and essentially incomputable.  We will refer to $\mc{C}^{*,*}(Z)$ as the \emph{good complex} and $V^{*,*}(Z)$ as the \emph{evil complex}.

We will show that the good complex $\mc{C}^{*,*}(Z)$ is an explicit connective subcomplex of the cobar complex for computing the $E_2$-term of the Adams-Novikov spectral sequence 
\begin{equation}\label{eq:anssZE(2)}
 \E{anss}{}_2^{*,*}(Z_{E(2)}) = H^*_c(\GG_2; (E_2)_*Z) \Rightarrow \pi_* Z_{E(2)}
 \end{equation}
and the localized $\tmf$ resolution
$$ v_2^{-1}\E{\tmf}{}_2^{*,*} = v_2^{-1}H^{*,*}(\mc{C}(Z)) \Rightarrow \pi_* Z_{E(2)}. $$
is isomorphic to the spectral sequence (\ref{eq:anssZE(2)}).\footnote{In general, for a bigraded cochain complex $C^{*,*}$, we shall denote its cohomology by $H^{*,*}(C)$.}  The groups 
$$ \E{anss}{}_2^{*,*}(Z_{E(2)}) $$
were computed by the third author and Egger \cite{bhateggerK2Z}.  It turned out that, unlike the situation for large primes, the spectral sequence (\ref{eq:anssZE(2)}) cannot be shown to collapse at its $E_2$-page simply for dimensional reasons, but the third author and Egger conjectured that it does collapse.  
One major result of this paper is a proof of this conjecture.

\begin{namedtheorem}[Collapse Theorem (Theorem~\ref{thm:k2locss})]
The $v_2$-localized $\tmf$-resolution for $Z$ collapses at its $E_2$-page.
\end{namedtheorem}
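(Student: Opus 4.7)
The plan is to reduce the collapse statement to a stem-by-stem additive comparison between the $E_2$-page of the $v_2$-localized $\tmf$-resolution and the abutment $\pi_*Z_{E(2)}$. First, observe that the short exact sequence (\ref{eq:sesintro}) together with the fact that $V^{*,*}(Z)$ is $v_2^1$-torsion gives
\[
    v_2^{-1}\E{\tmf}{}_1^{*,*}(Z) \;\cong\; v_2^{-1}\mc{C}^{*,*}(Z),
\]
so that
\[
    v_2^{-1}\E{\tmf}{}_2^{*,*}(Z) \;\cong\; v_2^{-1}H^{*,*}\bigl(\mc{C}(Z)\bigr).
\]
By the identification already established in the paper, this $E_2$-page is isomorphic to the $E_2$-page of the Adams--Novikov spectral sequence (\ref{eq:anssZE(2)}), which was fully computed by Bhattacharya and Egger in \cite{bhateggerK2Z}. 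Moreover, the isomorphism extends to an isomorphism of spectral sequences, so collapse of one is equivalent to collapse of the other; it therefore suffices to prove that $\E{anss}{}_2^{*,*}(Z_{E(2)})$ has no nontrivial differentials.

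The main step is to write down the graded Poincar\'e series of $\E{anss}{}_2^{*,*}(Z_{E(2)})$ in each topological degree $t-s$ and match it, stem by stem, against the graded Poincar\'e series of $\pi_*Z_{E(2)}$. The homotopy groups $\pi_*Z_{E(2)}$ can be accessed independently via the $K(2)$-local computation (using that $Z$ is type $2$ and admits a $v_2^1$-self map, so that $E(2)$-localization and $K(2)$-localization agree on $Z$ up to standard chromatic comparisons), and via the identification $\tmf\smsh Z\simeq k(2)$ together with the decomposition of the $E_n$-module structure on $(E_2)_*Z$ into a small number of $\GG_2$-equivariant summands. Because every differential $d_r$ strictly decreases the $\FF_2$-rank of $E_r$ in each stem, an equality of total graded dimensions in every stem forces all classes to be permanent cycles.

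The hardest step is controlling the bookkeeping in the stems where the $E_2$-page contains ``glued'' contributions from several $v_2$-periodic families and from classes in positive cohomological degree. If a direct dimension count closes every stem, we are done; otherwise, in the finitely many stems where a mismatch arises, one must rule out the putative differential by multiplicative arguments. Here the key leverage is the $v_2^1$-self map $v_2 \colon \Sigma^6 Z\to Z$, which acts on the whole spectral sequence and commutes with differentials: any hypothetical $d_r(x)=y$ would give rise to an infinite $v_2$-periodic family of differentials $d_r(v_2^j x) = v_2^j y$, and one can kill such families by exhibiting either the source or the target as a permanent cycle using the Hurewicz map $\tmf\to\tmf\smsh Z=k(2)$, together with the detection of the known $K(2)$-local permanent cycles from $\pi_*Z_{K(2)}$.

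The anticipated main obstacle is purely computational: organizing the dimension count so that the contributions of $\mc{C}(Z)$ in cohomological degrees $0$, $1$, and $2$ are matched correctly against the Bhattacharya--Egger computation of $\pi_*Z_{E(2)}$, and verifying that the $v_2$-periodicity argument indeed covers every remaining stem. The conceptual input — the isomorphism of spectral sequences with the ANSS, the $v_2^1$-self map, and the identification $\tmf\smsh Z\simeq k(2)$ — does the heavy lifting; the technical work is in carrying out the match in practice.
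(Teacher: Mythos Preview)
Your proposal has a genuine gap: the dimension-count strategy is circular. You propose to compare the Poincar\'e series of $\E{anss}{}_2^{*,*}(Z_{E(2)})$ against that of $\pi_*Z_{E(2)}$, but there is no independent computation of $\pi_*Z_{E(2)}$ to compare against --- the ANSS (equivalently, the $v_2$-localized $\tmf$-resolution) \emph{is} the only available tool for computing these homotopy groups, and the collapse statement is precisely what determines the answer. Your suggested independent routes (``the $K(2)$-local computation'', ``the decomposition of $(E_2)_*Z$ into $\GG_2$-equivariant summands'') are just reformulations of the same spectral sequence. The paper explicitly notes that, unlike at large primes, this spectral sequence cannot be shown to collapse for dimensional reasons alone; this was the open conjecture of \cite{bhateggerK2Z}. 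Your fallback plan invokes ``known $K(2)$-local permanent cycles from $\pi_*Z_{K(2)}$'', which is again circular.

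The paper's argument works differently. After establishing (as you do) that $v_2^{-1}\E{\tmf}{}_2(Z) \cong \FF_2[v_2^{\pm 1}]\otimes E[\td{h}_{2,1}, h_{3,0}, h_{3,1}, \td{h}_{4,1}]$, one observes a horizontal vanishing line at $n=5$ and uses $v_2$-linearity together with degree reasons to narrow the possible nontrivial differentials down to a single family: $d_3$ on $v_2^k h_{3,1}$. That family is then ruled out not by a count but by lifting to the \emph{unlocalized} $\tmf$-ASS: the low-dimensional computation of Section~\ref{sec:stemwise} (specifically Proposition~\ref{prop:higherdiffs}, whose proof uses the cofiber $A_2$ and the structure of $\pi_{39}(Z)$ and $\pi_{39}(A_2)$) shows that $v_2^2 h_{3,1}$ is a $d_3$-cycle in the unlocalized spectral sequence, hence its image in the localized one is as well. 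Your fallback suggestion of ``exhibiting sources or targets as permanent cycles'' is gesturing in the right direction, but the essential mechanism --- mapping the unlocalized spectral sequence to the localized one and doing a careful stemwise computation in the former via the agathokakological method and Bruner's Ext data --- is absent from your plan.
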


The height $2$ story begins to diverge in the context of the bounded torsion theorem.  We construct an analog of the May filtration on the good complex $\mc{C}^{*,*}(Z)$, and we will refer to the associated spectral sequence 
$$ \E{MR}{}_1^{*,*,*} = H^{*,*}(E^0_*\mc{C}(Z)) \Rightarrow H^{*,*}(\mc{C}(Z)) $$
as the \emph{May-Ravenel spectral sequence}.  We will completely compute the $E_1$-term of the May-Ravenel spectral sequence, and will observe the following:

\begin{namedtheorem}[Unbounded torsion theorem (Theorem \ref{thm:MRE1})]
The May-Ravenel $E_1$-page has 
unbounded $v_2$-torsion: there are elements which are $v_2^i$-torsion for $i$ arbitrarily large.
\end{namedtheorem}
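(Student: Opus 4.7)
The plan is to read the unbounded torsion directly off the complete computation of the May-Ravenel $E_1$-term that will be the main technical content of Theorem~\ref{thm:MRE1}. First, I would make precise the May filtration on the good complex $\mc{C}^{*,*}(Z)$, inherited from the standard May filtration on the cobar complex for the Hopf algebroid underlying $\tmf$, and then describe the associated graded $E^0_* \mc{C}(Z)$ as an explicit polynomial--exterior cobar-type complex together with its induced differential and $v_2$-action.

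Within the resulting $E_1$-term $H^{*,*}(E^0_*\mc{C}(Z))$, the strategy is to exhibit an explicit family of cocycles $\{x_i\}_{i\geq 1}$ and integers $a(i)\to\infty$ such that $v_2^{\,a(i)-1} x_i \neq 0$ but $v_2^{\,a(i)} x_i = 0$. The mechanism producing such torsion is the interplay between the connective truncation defining $\mc{C}(Z)$ and the May differential on the associated graded: after sufficiently many multiplications by $v_2$, a cycle $x_i$ lying close to the boundary of the truncation either picks up a correction term from a lower May filtration, or is pushed outside $\mc{C}(Z)$ entirely, making $v_2^{\,a(i)} x_i$ a coboundary in $E^0_*\mc{C}(Z)$. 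The family $\{x_i\}$ should be arranged so that the ``distance'' of $x_i$ to the boundary of $\mc{C}(Z)$ grows linearly in $i$, forcing the torsion exponents $a(i)$ to be unbounded.

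The main obstacle is the verification of non-vanishing: while $v_2^{\,a(i)} x_i = 0$ typically follows from a degree and filtration count once the differential is written out, establishing $v_2^{\,a(i)-1} x_i \neq 0$ on the $E_1$-page requires precise knowledge of all relations in $H^{*,*}(E^0_*\mc{C}(Z))$ as a module over $\FF_2[v_2]$. Consequently, most of the work sits in the detailed structural description of the $E_1$-page; once that description is in hand, the theorem follows by inspection of a single infinite family. This unbounded-torsion phenomenon is precisely what distinguishes height $2$ from the height $1$ bounded-torsion theorem and is the structural origin of the $v_2$-parabolas discussed in the introduction.
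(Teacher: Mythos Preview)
Your high-level plan---compute the May-Ravenel $E_1$-page and read the unbounded torsion off an explicit infinite family---matches the paper's approach. However, the mechanism you propose for the torsion is not what actually happens, and this misidentification would prevent you from carrying out the computation.

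You attribute the torsion to ``the interplay between the connective truncation defining $\mc{C}(Z)$ and the May differential,'' with cycles being ``pushed outside $\mc{C}(Z)$ entirely'' after enough $v_2$-multiplication. That is not the source. The paper first identifies the good complex with the cobar complex of an explicit sub-Hopf algebra $\sitd(2)$ of a quotient of the Morava stabilizer algebra (Theorem~\ref{thm:goodcomplex}); the May-Ravenel filtration is the one Ravenel defined on $\Sigma(2)$, not a May filtration coming from $\tmf$ directly. The associated graded is a primitively generated exterior Hopf algebra, and its cohomology is computed via a Chevalley-Eilenberg complex whose differentials (Theorem~\ref{thm:diffsquotient}) include
\[
d(h_{i,1}) = v_2^{2^{i-1}} h_{i-2,0}^2, \qquad i \ge 5.
\]
These large $v_2$-powers come from the relations $t_k^4 = v_2^{2^k-1} t_k$ in $\Sigma(2)$, not from any connective truncation. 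They make $h_{i,0}^2$ into $v_2^{2^{i+1}}$-torsion on the $E_1$-page (for $i\ge 3$), and the torsion exponent grows like $2^{i+1}$, not linearly. Your non-vanishing concern is genuine: the paper handles it by refining Adams filtration into a lexicographical filtration and running an auxiliary spectral sequence to pin down exactly which monomials survive (the classes of type $\mathrm{(I'')}$ in Theorem~\ref{thm:MRE1}).

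So the plan is right in outline, but you need to replace the ``truncation'' story with the Morava-stabilizer-algebra identification and the explicit Chevalley-Eilenberg differentials; without those, you will not find the family or verify its torsion order.
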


Unfortunately, we are unable to deduce the same unbounded torsion statement for $H^{*,*}(\mc{C}(Z))$ (which is equivalent to unbounded torsion in  $\E{\tmf}{}^{*,*}_2(Z)$) because we do not know if the May-Ravenel spectral sequence collapses at $E_1$, and we do not know if there are hidden $v_2$ extensions in this spectral sequence.  Nevertheless, the computation of the unbounded torsion allows us to understand exactly how the telescope conjecture could fail at height $2$.

We now turn our attention to the final component of Mahowald's work on $\bo$-resolutions: the vanishing line.  Clearly, a vanishing line for the cohomology of the good complex $H^{*,*}(\mc{C}(Z))$ can be read off of our computation of the May-Ravenel $E_1$-term.  In order to lift this vanishing line to one for $\E{\tmf}{}^{*,*}_2(Z)$, we need a vanishing line for the cohomology of the evil complex $H^{*,*}(V(Z))$.  

In \cite{BBBCX} we developed a technique (the agathakakological\footnote{{\bf agathokakological} (ag-uh-thuh-kak-uh-LAHJ-uh-kuhl) 
\emph{adjective:} Made up of both good and evil.}
spectral sequence) for computing the cohomology of the evil complex associated to the $\bo$-resolution by relating it to $\Ext_{A_*}$ and the good complex.  We will construct an agathokakological spectral sequence in our present setting of the $\tmf$-resolution.  This will allow us to use a vanishing line in $\Ext_{A_*}$ to establish a vanishing line for the cohomology of the evil complex $H^{*,*}(V(Z))$, thus establishing a vanishing line for $\E{\tmf}{}^{*,*}_2(Z)$.

\begin{namedtheorem}[Vanishing line theorem (Theorem~\ref{thm:vanishingline})]
In the $\tmf$-resolution for $Z$, we have $\E{\tmf}{}_2^{s,t}(Z) = 0$ for
$$ s > \frac{t-s+12}{11}. $$
\end{namedtheorem}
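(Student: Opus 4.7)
The plan is to exploit the short exact sequence of cochain complexes
\[
0 \to V^{*,*}(Z) \to \E{\tmf}{}_1^{*,*}(Z) \to \mc{C}^{*,*}(Z) \to 0
\]
from (\ref{eq:sesintro}). Taking cohomology yields a long exact sequence
\[
\cdots \to H^{s,t}(V(Z)) \to \E{\tmf}{}_2^{s,t}(Z) \to H^{s,t}(\mc{C}(Z)) \to H^{s+1,t}(V(Z)) \to \cdots,
\]
so it suffices to prove the vanishing line $s > (t-s+12)/11$ separately for the cohomology of the good complex and for the cohomology of the evil complex.

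For the good complex, I would use the May-Ravenel spectral sequence $\E{MR}{}_1^{*,*,*} = H^{*,*}(E^0_*\mc{C}(Z)) \Rightarrow H^{*,*}(\mc{C}(Z))$ together with the explicit description of its $E_1$-page that is the content of the Unbounded Torsion Theorem. A vanishing line on the $E_1$-page of a first-quadrant spectral sequence passes to $E_\infty$, so the task reduces to reading off the steepest slope realized by the generators of $\E{MR}{}_1$ and checking that all such generators lie on or below the line $s = (t-s+12)/11$. This is purely bookkeeping on bidegrees; the slope $1/11$ and intercept $12/11$ should fall out as the worst case among the finitely many generator families in the explicit description, once one incorporates the equivalence $\tmf\wedge Z\simeq k(2)$ and the relevant Brown-Gitler splittings of $\tmf$-smash powers.

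For the evil complex, I would invoke the agathokakological spectral sequence of \cite{BBBCX}, adapted to the $\tmf$-resolution, which computes $H^{*,*}(V(Z))$ from inputs combining $\Ext_{A_*}(\FF_2, H_*Z)$ with the cohomology of the good complex. The classical Adams vanishing theorem at the prime $2$ supplies a slope-$1/2$ vanishing region for $\Ext_{A_*}$, and the vanishing line for $H^{*,*}(\mc{C}(Z))$ just established provides the other input. Feeding these into the agathokakological $E_1$-page and again using that a vanishing line persists through a first-quadrant spectral sequence, one concludes that $H^{*,*}(V(Z))$ vanishes in the same range $s > (t-s+12)/11$; since the Adams slope $1/2$ is far steeper than $1/11$, the constraint coming from the good complex is always the binding one.

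The main obstacle, I expect, is the explicit bookkeeping on the May-Ravenel $E_1$-page: one must enumerate the generators realizing the extremal slope and verify that the intercept $12$ is sharp for them. This is where the particular structure of the $\tmf$-resolution of $Z$ — as opposed to the much simpler $\bo$-resolution of Mahowald's $Y$ — enters in an essential way, and this is the step that determines the numerical form of the theorem. By contrast, setting up the agathokakological comparison is a formal exercise once \cite{BBBCX} is in hand, and the Adams vanishing line is off-the-shelf, so the evil side contributes no new difficulty.
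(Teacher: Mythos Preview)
Your overall architecture---split via the long exact sequence and handle the good and evil pieces separately---matches the paper, and the good-complex half is essentially right. The evil-complex half has a genuine gap. In the algebraic AKSS for $Z$, the groups $H^{*,*}(V)$ sit on the $E_{1+\epsilon}$-page and the spectral sequence converges \emph{to} $\Ext_A(Z)$; a vanishing line on the abutment does not pass back to one on $E_{1+\epsilon}$. Concretely, an evil class in $H^{n,t}(V)$ above the slope-$1/11$ line could be the target of a $d_{r+\epsilon}$ from a good class in $H^{*,*,*}(\mc{C}_{alg})$, and the \emph{algebraic} good complex (which is what appears in this spectral sequence, not the topological $H^{*,*}(\mc{C})$ you invoke) is the full polynomial algebra $\FF_2[v_2,\td{h}_{2,1},h_{i,j}]$. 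Since $v_2$ has Adams slope $1/6$, this algebra has no slope-$1/11$ vanishing; neither does $\Ext_A(Z)$ itself, which contains the entire $v_2$-tower. Your slope-$1/2$ Adams input is therefore far too coarse to recover the sharp $1/11$ line, and the argument as written cannot exclude evil classes in the region between slopes $1/11$ and $1/6$.

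The paper's remedy is to pass to $A_2$, the cofiber of the $v_2$-self-map, precisely in order to kill this obstructing $v_2$-periodicity. One builds a variant algebraic AKSS (Proposition~\ref{prop:evilvanishing}) converging to $\Ext_A(A_2)$ whose $E_{1+\epsilon}$-page contains only the $s=0$ slice $H^{n,0,t}(\mc{C}_{alg})$---hence no $v_2$-towers---together with two shifted copies of $H^{*,*}(V)$. A direct May-spectral-sequence argument (Lemma~\ref{lem:A2vanishing}) shows $\Ext_A(A_2)$ has the slope-$1/11$ vanishing line with intercept exactly $12$, that intercept arising from an $h_{1,3}$-Bockstein. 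Since the $s=0$ good slice vanishes above slope $1/11$ as well, an induction on $t-n$ then forces the same vanishing for $H^{*,*}(V)$. Passing to the $v_2$-cofiber is the missing idea, and it is also what pins down the constant $12$.
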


The slope of this line cannot be improved at $E_2$; $1/11$ is the slope of the non-nilpotent element
$$ g_2 \in \Ext^{4, 48}_{A_*}(\FF_2, \FF_2), $$
and it turns out $g_2$ lifts to $\E{\tmf}{}_2^{4,48}(Z)$.  We conjecture that for some $r > 2$, the $E_r$-page has a slope $1/13$ vanishing line (Conj.~\ref{conj:vanishingline}).

The agathokakological spectral sequence allows us to combine our computations of the cohomology of the good complex $H^{*,*}(\mc{C}(Z))$ with low dimensional computer computations of $\Ext_{A_*}$ to obtain low dimensional computations of the $\tmf$-resolution $E_2$-page $\E{\tmf}{}^{*,*}_2(Z)$.   Using this technique, we compute the $\tmf$-resolution of $Z$ through the $40$-stem.  This is not just an academic exercise --- rather it is the means by which we prove the collapse theorem.  In this range we are able to locate unlocalized elements which map to the generators of the $E_2$-term of the Adams-Novikov spectral sequence for $Z_{E(2)}$. By observing that the corresponding unlocalized elements are permanent cycles in the $\tmf$-resolution, we deduce that their images in the Adams-Novikov spectral sequence for $Z_{E(2)}$ are permanent cycles.

The unbounded torsion theorem allows us to identify the possible ways the map
$$ \pi_* \widehat{Z} \rightarrow \pi_* Z_{E(2)} $$
can fail to be an isomorphism.  The last section of this paper is a detailed discussion giving a precise conjecture for what $\pi_*\widehat{Z}$ is (the \emph{parabola conjecture}), and how this conjectural answer differs from $\pi_*Z_{E(2)}$.  The parabola conjecture is essentially an adaptation of the conjectures of Mahowald, Ravenel, and Shick \cite{Ravenelparabola}, \cite{MahowaldRavenelShick} to our context.

\subsection*{Future directions}

It is probably clear to the reader that the authors hoped that adapting Mahowald's approach to the $2$-primary height $1$ telescope conjecture to the height $2$ context would yield new information that would lead to a computational proof or disproof of the telescope conjecture at chromatic height $2$.  The results of this paper are as such inconclusive, and the telescope conjecture remains one of the great unlocked mysteries of the subject.

The seasoned expert will recognize, however, that if this was the authors' only goal, then we would have been better off studying the $BP\bra{2}$-resolution of the Smith-Toda complex $V(1)$ for primes $p \ge 5$.  Indeed, that would have simplified many parts of this paper.

However, the authors had other motivations for undertaking this particular endeavor at the prime $2$.  We wanted to complete the computation of $\pi_*Z_{E(2)}$ initiated by the third author and Egger in \cite{bhateggerK2Z}.  Not only does our analysis show that the structure of the homotopy groups of $Z_{E(2)}$ mirrors the structure of the homotopy groups of $V(1)_{E(2)}$ at primes $p \ge 5$, despite the fact that the $E(2)$-local Adams-Novikov spectral sequence is no longer sparse,  it also represents the first non-trivial complete computation of the homotopy groups of any $E(2)$-local finite complex at the prime $2$.

The prime $2$ represents the last computational frontier for chromatic height $2$, where computations are elaborate but straightforward for primes $p \ge 5$ (see, for example, \cite{SY}), and downright difficult, but possible, the prime $p = 3$ (see, for example, \cite{GHMR}).  In fact, the duality resolution of \cite{GHMR} is a minimal $\tmf$-resolution of the sphere in the $K(2)$-local stable homotopy category.


Besides the fact that the $2$-torsion in $\tmf$ is an order of magnitude more complicated than the $3$-torsion, there is a fundamental unsolved difficulty at $p = 2$: Bobkova and Goerss have successfully constructed a duality resolution for the maximal cyclotomic extension of the $K(2)$-local sphere \cite{BobkovaGoerss}, but constructing resolutions of the $K(2)$-local sphere itself is much more subtle. Our analysis links the $\tmf$-resolution explicitly to the Morava stabilizer group through the good complex.  We are hopeful that this will allow us to one day use the $\tmf$-resolution to help us understand finite resolutions for the $K(2)$-local sphere itself.

We also plan to develop the $\tmf$-resolution as a valuable tool for low dimensional $2$-primary computations of stable homotopy groups.  Our use of the $\tmf$-resolution to compute the first $40$ stems of $Z$ required very little effort --- the computation could probably be pushed to much higher degrees if we had a good reason to do so.  In the case of the sphere, there is such a motivation: the Kervaire invariant one problem in dimension $126$ \cite{HHR}.  The work of Isaksen, Wang, and the fifth author \cite{IsaksenWangXu} shows that complex motivic homotopy theory can be used to effectively compute the $2$-primary Adams spectral sequence for the sphere, and they have used their machinery to carry out this computation up to the 90 stem.  It is unclear whether their techniques alone will suffice to get up to dimension 126.  The $\tmf$-resolution could provide a valuable tool for analyzing Adams differentials between $v_2$-periodic elements in $\Ext_{A_*}$.  The computations of this paper provide the starting point for the analysis of the $\tmf$-resolution of the sphere.

\subsection*{Conventions}

We will use the following notation throughout this paper.
\begin{itemize}
\item ASS = classical Adams spectral sequence.
\item $\tmf$-ASS = the $\tmf$-based ASS (aka the $\tmf$-resolution).
\item ANSS = Adams-Novikov spectral sequence (aka the $BP$-based ASS).
\item AKSS = agathokakological spectral sequence.
\item $H_*(-)/H^*(-)$ denotes homology/cohomology with $\FF_2$-coefficients.
\item $H$ denotes the mod 2 Eilenberg-MacLane spectrum.
\item $A$ denotes the mod $2$ Steenrod algebra, and $A_*$ is its dual.
\end{itemize}

For $X$ any $2$-complete spectrum, we shall let
$$ \E{ass}{}_2^{s,t}(X) = \Ext^{s,t}_{A_*}(\FF_2, H_*X) \Rightarrow \pi_{t-s}X $$
denote its ASS.  Assuming this spectral sequence converges, we shall say an element of $\pi_*X$ has \emph{Adams filtration} $s$ if it is detected in the ASS by a class in $\E{ass}{}_2^{s,*}(X)$.

Finally, in \cite{bhateggerZ}, the third author and Egger show that there is a class of spectra $\td{\mc{Z}}$, each of whose cohomology is isomorphic as $A(2)$-modules, and each of which admits a $v_2^1$-self map.  For concreteness, the spectrum we call $Z$ in this paper is always taken to be a particular fixed member of this class for which the cofiber of its $v_2$-self map has cohomology as described in Appendix~\ref{apx:data}. 

\subsection*{Organization of the paper}

In Section~\ref{sec:background}, we recall some basic facts about the spectrum $\tmf$, its cohomology, and its relationship to Morava $E$-theory.  We will also review some facts about the spectrum $Z$.

%

In Section~\ref{sec:goodevildecomp} we begin our analysis of the $\tmf$-ASS $\{\E{\tmf}{}^{n,t}_r(Z)\}$.  The $E_1$-term is given by
$$ \E{\tmf}{}^{n,t}_1(Z) = \pi_t (\tmf^{\wedge n+1} \wedge Z). $$
We will compute this $E_1$-term using the Adams spectral sequences
$$ \E{ass}{}^{s,t}_2(\tmf^{\wedge n+1} \wedge Z) \Rightarrow \pi_{t-s}(\tmf^{\wedge n+1}\wedge Z). $$
We will explain how to use Margolis homology to compute the $E_2$-terms of these Adams spectral sequences, and we show these Adams spectral sequences collapse to give a short exact sequence of chain complexes (\ref{eq:sesintro}) (the \emph{good/evil} decomposition).
The goal is to use the short exact sequence (\ref{eq:sesintro}) to compute $\E{\tmf}{}_2^{*,*}$ from $H^{*,*}(\mc{C}(Z))$ and $H^{*,*}(V(Z))$. It will turn out that $H^{*,*}(V(Z))$ is computable despite the incomputability of $V^{*,*}(Z)$ itself. 

In Section~\ref{sec:MSG}, we both recall the structure of the Morava stabilizer group and Morava stabilizer algebra associated to the Honda height $2$ formal group, and relate these to the corresponding groups and algebras for the formal group coming from the unique supersingular elliptic curve $C$ in characteristic $2$.  We compute the action of the group of automorphisms $\Aut(C)$ on the Morava $E$-homology of the complex $Z$.

In Section~\ref{sec:gooddiffs}, we compute the differentials in the good complex $\mc{C}^{*,*}(Z)$.  This is accomplished by showing that the good complex is actually isomorphic to the cobar complex of an explicit sub-Hopf algebra $\sitd(2)$ of a quotient of the Morava stabilizer algebra.

At this point, the number of different Hopf algebras important for our purposes has become significant, so we give a list in Table~\ref{fig:listofhopfs} to help the reader keep track.

In Section~\ref{sec:good} we embark on the computation of 
$$ H^{*,*}(\mc{C}(Z)) \cong \Ext^{*,*}_{\sitd(2)}(k(2)_*, k(2)_*). $$ 
The cohomology of the Morava stabilizer algebra was computed by Ravenel \cite{Ravenel_coh} using a modification of the May spectral sequence which we will call the \emph{May-Ravenel spectral sequence}.  We adapt the May-Ravenel spectral sequence to compute the cohomology of $\sitd(2)$.
We completely compute the $E_1$-term of this spectral sequence (Theorem~\ref{thm:MRE1}), thus proving the unbounded torsion theorem.

Having dealt with the good complex, in Section~\ref{sec:agatho} we turn to the problem of computing the cohomology of the evil complex.  Following the techniques introduced in \cite{BBBCX}, 
we introduce a refinement of the $\tmf$-ASS called the \emph{topological agathokakological spectral sequence} (topological AKSS)
$$ H^{*,*}(\mc{C}(Z)) \oplus H^{*,*}(V(Z)) \Rightarrow \pi_*Z. $$
We also introduce an algebraic version, the \emph{algebraic agathokakological spectral sequence} (algebraic AKSS) 
$$ H^{*,*,*}(\mc{C}_{alg}(Z)) \oplus H^{*,*}(V(Z)) \Rightarrow \E{ass}{}^{*,*}_2(Z). $$
We then prove the \emph{dichotomy principle} (Theorem~\ref{thm:dichotomy}), which relates evil terms in the algebraic AKSS to $v_2$-torsion in $\E{ass}{}_2^{*,*}(Z)$.
We therefore are able to recover $H^{*,*}(V(Z))$ from $H^{*,*,*}(\mc{C}_{alg}(Z))$ (which we completely compute) and $\E{ass}{}^{*,*}_2(Z)$ (which we compute using Bruner's Ext software \cite{Bruner}).  

In Section~\ref{sec:stemwise}, we perform low dimensional computations of the $\tmf$-ASS (or equivalently, the topological AKSS) for $Z$ in the range $t-n < 40$.  This proceeds by first analyzing $v_2$-periodicity in $\E{ass}{}^{*,*}_2(Z)$ by analyzing $\E{ass}{}^{*,*}_2(A_2)$, where $A_2$ is the cofiber
$$ \Sigma^6 Z \xrightarrow{v_2} Z \to A_2 $$
whose cohomology is isomorphic to the subalgebra $A(2) \subset A$ as an $A(2)$-module.  Appendix~\ref{apx:data} contains the Bruner module definition data used to compute the relevant Ext charts.
We then compute the algebraic AKSS in our range.  From this we extract $H^{*,*}(V(Z))$, which we input into the topological AKSS, and compute through this same range.  We end this section with a comparison to the computations of Bhattacharya-Egger of the Adams-Novikov spectral sequence (ANSS) for $Z_{E(2)}$, and prove the collapse theorem by mapping the $\tmf$-ASS to the $K(2)$-local ANSS (Theorem~\ref{thm:k2locss}).

In Section~\ref{sec:telescope} we discuss how the analog of Mahowald's approach to the $2$-primary height $1$ telescope conjecture using the $\bo$-resolution for $Y$ fails in the context of the $\tmf$-resolution for $Z$.  Namely, assuming there are no additional differentials or extensions in the May-Ravenel spectral sequence, and assuming a certain pattern of $d_3$-differentials, we show that $\E{\tmf}{}_4$ decomposes into a direct sum of three pieces: 
\begin{enumerate}
\item a summand which is $v_2$-torsion free, and is isomorphic to  $\pi_*Z_{E(2)}$ after $v_2$ inversion, 
\item a summand which consists entirely of bounded $v_2^2$-torsion, and
\item a summand which consists of unbounded $v_2$-torsion, and assembles via a conjectural sequence of hidden extensions, into an uncountable collection of \emph{$v_2$-parabolas}.\footnote{We call them $v_2$-parabolas because they lie on (sideways) parabolas in the $(t-n,n)$-plane.}
\end{enumerate}
We explain how our work in previous sections proves the vanishing line theorem (the slope $1/11$ vanishing line for $\E{\tmf}{}_2^{*,*}(Z)$). We explain why one might expect to be able to improve this to a slope $1/13$ vanishing line, which would preclude infinite families of hidden extensions among the terms in summand (2) from assembling to give $v_2$-families in $\pi_*\widehat{Z}$. 
We then describe the analogs of conjectures of Mahowald-Ravenel-Shick \cite{MahowaldRavenelShick} which describe a hypothetical picture (\emph{the parabola conjecture}) of $\pi_* \widehat{Z}$ which is assembled from a portion of the classes in summands (1) and (3) above, and in particular is unequal to $\pi_*Z_{E(2)}$.  However, just as in \cite{MahowaldRavenelShick}, it is totally possible for a bizarre pattern of differentials between $v_2$-parabolas to occur to make the telescope conjecture true.  

\subsection*{Acknowledgments}

The authors benefited greatly from conversations with Phil Egger, Paul Goerss, Mike Hopkins, Doug Ravenel, and Tomer Schlank. The authors also owe much gratitude to the referee, for their incredibly thoughtful insights into how to improve what is a very technical paper, and for providing the statement and proof of Proposition~\ref{prop:k(n)}.


\section{Background}\label{sec:background}

\subsection{Morava $K$-theory and $E$-theory}

Recall \cite[Part II]{Adams} that a homotopy commutative ring spectrum is said to be \emph{complex orientable} if the map on reduced $E$-cohomology
$$ \td{E}^*(\CC P^\infty) \rightarrow \td{E}^{*}(\CC P^1) $$
is surjective.  The cohomology $\td{E}^*(\CC P^1)$ is free of rank $1$ as an $E_*$-module, and a lift 
$$ x \in \td{E}^*(\CC P^\infty) $$ 
of a generator of $\td{E}^*(\CC P^1)$ is called a complex orientation.  We then have
$$ E^*(\CC P^\infty) = E^*[[x]]. $$
The $H$-space structure on $\CC P^\infty$ gives rise to a formal group law over $E^*$.
In the case where the spectrum $E$ is even periodic, ($\pi_{odd} E = 0$ and $\pi_2 E$ contains a unit) we can take our complex orientation to lie in $\td{E}^0(\CC P^\infty)$, and the resulting formal group law $\mc{F}_E$ can actually be defined over the ring $E_0$.

A complex orientation of a ring spectrum $E$ is equivalent to the structure of a map of ring spectra
$$ MU \rightarrow E, $$
where $MU$ is the complex cobordism spectrum.
For a prime $p$, the $p$-localization of $MU$ splits as a wedge of suspensions of the Brown-Peterson specrum $BP$, with
$$ BP_* \cong \ZZ_{(p)}[v_1, v_2, v_3, \cdots] $$
with $\abs{v_i} = 2(p^i - 1)$.  The Wilson spectrum $BP\bra{n}$ can be constructed as the regular quotient of $BP$ given by \cite{Strickland}
$$ BP\bra{n} = BP/(v_{n+1}, v_{n+2}, \ldots ). $$
However, these ring spectra depend on the choices of the generators $v_i$, and as such there are many different \emph{forms} of $BP\bra{n}$. 
Associated to any such choice is the associated Johnson-Wilson spectrum
$$ E(n) := BP\bra{n}[v_n^{-1}] $$
and the associated Morava $K$-theory spectrum is the regular quotient
$$ K(n) = E(n)/(p, v_1, \ldots v_{n-1}) $$
with
$$ \pi_*K(n) = \FF_p[v_n^{\pm 1}]. $$
The connective Morava $K$-theory $k(n)$ is the connective cover of $K(n)$.

The localization functors $(-)_{E(n)}$ and $(-)_{K(n)}$ are independent of the form of $E(n)$ and $K(n)$, and we have \cite{1984}
$$ (-)_{E(n)} = (-)_{K(0) \vee \cdots \vee K(n)}. $$
In particular, if $X$ is a type $n$ spectrum, then we have
$$ X_{E(n)} \simeq X_{K(n)}. $$

The height $n$ Morava $E$-theory spectrum $E_n$ \cite{BarthelBeaudry} is a $K(n)$-local even periodic variant of the Johnson-Wilson spectrum $E(n)$.  Like $E(n)$, there are many forms of $E_n$, one for each height $n$ formal group law $\mc{F}$ over a perfect field $\FF$ of characteristic $p$.  The formal group law associated to $E_n$ is the Lubin-Tate universal deformation of $\mc{F}$, and we have
$$ \pi_*E_n = \WW(\FF)[[u_1, \ldots, u_{n-1}]][u^{\pm 1}] $$
where $\WW(\FF)$ denotes the Witt ring of $\FF$, $\abs{u_i} = 0$, and $\abs{u} = -2$.
Goerss, Hopkins, and Miller showed that $E_n$ admits a homotopically unique $E_\infty$-structure, and admits a natural action of the \emph{Morava stabilizer group} $\Aut(\mc{F})$.
If $\mc{F}$ is obtained from a formal group law over $\FF_p$ via base change, then there is a natural action of $\Gal(\FF/\FF_p)$ on $\Aut(\mc{F})$, and the natural action of $\Aut(\mc{F})$ on $E_n$ extends to an action of the associated 
\emph{extended Morava stabilizer group}
$$ \GG_n := \Aut(\mc{F}) \rtimes \Gal(\FF/\FF_p). $$
Note that $\GG_n$ implicitly depends both on the formal group $\mc{F}$, and the field $\FF$.

Morava $E$-theory gives rise to an associated variant of Morava K-theory, which is defined to be the spectrum given by the regular quotient.
$$ K_n := E_n/(p, u_1, \ldots, u_{n-1}) $$
so we have
$$ \pi_*K_n = \FF[u^{\pm 1}]. $$
Again, different formal group laws $\mc{F}$ and different fields of definition $\FF$ can give rise to different forms of $K_n$.

\subsection{Topological modular forms}

We give a brief overview of some facts about the spectrum of connective topological modular forms $\tmf$.  A more complete introduction may be found in \cite{Behrenstmf}, \cite{tmf}.  

An \emph{elliptic cohomology theory} consists of a triple
$$ (E, C, \alpha) $$
where $E$ is a complex orientable even periodic ring spectrum, $C$ is an elliptic curve over $E_0$, and $\alpha$ is an isomorphism
$$ \alpha: \widehat{C} \xrightarrow{\cong} F_E $$
between the formal group law $\widehat{C}$ associated of $C$ and the formal group law of $E$.

Goerss, Hopkins, and Miller constructed a sheaf of $E_\infty$-ring spectra $\mc{O}^{top}$ on the \'etale site of the moduli stack of elliptic curves $\mc{M}_{ell}$, with the property that the spectrum of sections 
$$ E_C := \mc{O}^{top}(\mr{spec}(R) \xrightarrow{C} \mc{M}_{ell}) $$
associated to an affine etale open classifying an elliptic curve $C/R$ is an elliptic cohomology theory for the elliptic curve $C$.

The Goerss-Hopkins-Miller sheaf is actually defined over the Deligne-Mumford compactification $\br{\mc{M}}_{ell}$ of the moduli stack $\mc{M}_{ell}$ of elliptic curves.
The spectrum of \emph{non-connective topological modular forms} is defined to be the spectrum of global sections of this sheaf
$$ \Tmf := \mc{O}^{top}(\br{\mc{M}}_{ell}). $$
There is a natural map from the homotopy groups of $\Tmf$ to the ring of integral modular forms for $SL_2(\ZZ)$
\begin{equation}\label{eq:MFmap}
\pi_{2*}\Tmf \to \mr{MF}_*(SL_2(\ZZ)) = \ZZ[c_4, c_6, \Delta]/(c_4^3-c_6^2 = 1728\Delta).
\end{equation}
Here $c_4$ and $c_6$ denote normalizations of the Eisenstein series of weight $4$ and $6$, respectively, and $\Delta$ denotes the discriminant of weight $12$.  The map (\ref{eq:MFmap}) is a rational isomorphism, but is not an isomorphism integrally.  Nevertheless the modular forms $c_4$ and $\Delta^{24}$ are in the image, we shall let $c_4 \in \pi_{8}\Tmf$ and $\Delta^{24} \in \pi_{576}\Tmf$ denote lifts of these modular forms to $\pi_*\Tmf$.

The spectrum of \emph{connective} topological modular forms is defined to be the connective cover of this spectrum
$$ \tmf := \tau_{\ge 0}\Tmf. $$
The spectrum of \emph{periodic topological modular forms} is defined to be the global sections of the sheaf $\mc{O}^{top}$ over the non-singular locus
$$ \TMF := \mc{O}^{top}(\mc{M}_{ell}). $$ 
We have
$$ \TMF = \tmf[\Delta^{-24}] $$
where $\Delta^{24} \in \pi_{576}\tmf$.

Inverting $\Delta^{24}$ has the effect of inverting some power of $v_2$ for every prime $p$, and as such, there is a close relationship between $\TMF$ and $\tmf_{K(2)}$.  There is an equivalence \cite[Prop.~6.6.14]{Behrenstmf}
$$ \tmf_{K(2)} \simeq \TMF^\wedge_{(p,c_4)}. $$

Up to isomorphism, there is a unique supersingular elliptic curve $C$ over $\FF_4$.  The elliptic curve $C$ admits a Weierstrass presentation \cite[III.1]{Silverman}
\begin{equation}\label{eq:Ceq}
y^2 + y = x^3. 
\end{equation}
Let $\widehat{C}$ denote the associated height $2$-formal group over $\FF_4$.
The automorphisms of $C$ induce automorphisms of $\widehat{C}$, giving rise to an inclusion
$$ \Aut(C) \hookrightarrow \GG_2. $$
The $2$-primary $K(2)$-localization of $\tmf$ is then given by \cite[Sec.~5]{Behrensbuilding}
\begin{equation}\label{eq:K2tmf}
 \tmf_{K(2)} \simeq E_2^{h\Aut(C) \rtimes \Gal} 
 \end{equation}
 where $\Gal = \Gal(\FF_4/\FF_2)$.
The form of connective Morava $K$-theory in the equivalence 
$$ \tmf \wedge Z \simeq k(2) $$
of (\ref{eq:tmfsmashZ}) is the form associated to the formal group $\widehat{C}$, regarded as a formal group over $\FF_2$.

Associated to the congruence subgroups $\Gamma_1(n) \le SL_2(\ZZ)$, Hill and Lawson constructed variants $\Tmf_1(n)$ of $\Tmf$ associated to the compactified moduli stacks $\br{\mc{M}}_1(n)$ of elliptic curves with $\Gamma_1(n)$ structure \cite{HillLawson}.  Lawson and Naumann \cite{LawsonNaumann} proved that the connective cover $\tmf_1(3)$ of $\Tmf_1(3)$ gives a form of $BP\bra{2}$ at the prime 2: 
\begin{equation}\label{eq:BP2tmf13}
 \tmf_1(3)_{(2)} \simeq \BP\bra{2} 
\end{equation}
We have
$$ \tmf_1(3)_{K(2)} \simeq E_2^{hC_3 \rtimes \Gal}. $$
Associated to the log-\'etale map
$$ \br{\mc{M}}_1(3) \rightarrow \mc{M}_{ell} $$
given by forgetting $\Gamma_1(3)$-structures, there is a map
$$ \tmf \rightarrow \tmf_1(3) $$
and hence a map
\begin{equation}\label{eq:tmfBP2map}
 \tmf \rightarrow BP\bra{2}.
 \end{equation}
The $K(2)$-localization of this map is given by the canonical inclusion
$$ E_2^{h\Aut(C) \rtimes \Gal} \rightarrow E_2^{hC_3 \rtimes \Gal}. $$

\subsection{Subalgebras and subquotients of the Steenrod algebra}\label{sec:steenrod}
Let $A$ denote the mod $2$ Steenrod algebra and let $A_*$ be its dual. The algebra $A_*$ is a polynomial algebra on the Milnor generators $\xi_i$ of degree $2^i-1$. Letting $\zeta_i = \br{\xi}_i$ denote the conjugates, $A_*$ can also be expressed as
\[A_* = \FF_2[\zeta_1, \zeta_2, \zeta_3, \ldots].\]
The coproduct on $A_*$ is given by
\[\psi(\zeta_k) = \sum_{i+j = k}\zeta_i \otimes \zeta_j^{2^i}.\] 
The elements $\zeta_i$ are dual to the elements $Q_{i-1} \in A$.  The elements $Q_n$ are primitive, satisfy $Q_n^2=0$, and generate an exterior subalgebra
$$ E[Q_0, Q_1, Q_2, \cdots ] \subseteq A. $$ 
Let $A(n)$ be the subalgebra generated by $\sq^1, \ldots, \sq^{2^n}$.

For a $B$ subalgebra of $A$, we will be interested in $A$-modules of the form 
\[A\mmod B := A \otimes_{B} \mathbb{F}_2,\] 
since we have \cite[2.1, 4.1, 4.2]{Ravenel}, \cite{Mathew}
\begin{equation}\label{eq:homologyquotients}
\begin{split}
H^*\bo &\cong A\mmod A(1), \\ 
H^*\tmf &\cong A\mmod A(2), \\ 
H^*BP{\left<n\right>} &\cong A \mmod E[Q_0, \ldots, Q_n],  \\
H^*k(n) & \cong A \mmod E[Q_n].
\end{split}
\end{equation}
We also have 
\begin{align*}
H^*Y &\cong_{A(1)} A(1)\mmod E[Q_1], \\
H^*Z &\cong_{A(2)} A(2)\mmod E[Q_2],
\end{align*}
where $\cong_{A(n)}$ denotes an isomorphism of $A(n)$-modules (the case of $Y$ is elementary, for the case of $Z$ see \cite{bhateggerZ}).

We note that the dual of $A(n)$ and $E[Q_0, \ldots, Q_n]$ are given by
\begin{align*}
A(n)_* &\cong A_*/(\zeta_1^{2^{n+1}}, \zeta_2^{2^n}, \ldots, \zeta_{n+1}^2, \zeta_{n+2}, \ldots), \\
E[Q_0, \ldots, Q_n]_* &\cong E[\zeta_{1}, \ldots, \zeta_{n+1}].
\end{align*}
We will denote the dual of $A \mmod B$ as $A \mmod B_*$.
The duals of $A\mmod A(n)$ and $A\mmod E[Q_0, \ldots, Q_n]$ are given by
\begin{align*}
A\mmod A(n)_* &\cong \FF_2[\zeta_1^{2^{n+1}}, \zeta_2^{2^n}, \ldots, \zeta_{n+1}^2, \zeta_{n+2}, \ldots], \\
A\mmod E[Q_0, \ldots, Q_n]_* &\cong \FF_2[\zeta_{1}^2, \ldots, \zeta_{n+1}^2, \zeta_{n+2}, \zeta_{n+3},  \ldots].
\end{align*}

In general, for $A_*$-comodules $M$ and $N$, the change of rings isomorphism gives
\begin{equation}\label{eq:changeofrings}
 \Ext^{*,*}_{A_*}(M,A \mmod B_* \otimes N) \cong \Ext^{*,*}_{B_*}(M,N). 
 \end{equation}


\section{The good/evil  decomposition of the $E_1$-term}\label{sec:goodevildecomp} 

The goal of this section is to analyze the $E_1$-term of the $\tmf$-resolution for $Z$.  Using (\ref{eq:tmfsmashZ}) we have 
\begin{equation}\label{eq:tmfresE1}
 \E{\tmf}{}_1^{n,t}(Z) = \pi_{t}(\tmf^{n + 1} \wedge Z) \cong k(2)_t (\tmf^{\wedge n}). 
 \end{equation}
For this reason, we will need a tool to compute connective Morava $K$-theory.

\subsection{Margolis homology}\label{sec:margolis}

For a spectrum $X$, consider the Adams spectral sequence for $k(n)_*X$
$$ \E{ass}{}_2^{s,t} = \Ext^{s,t}_{A_*}(\FF_2, H_* k(n) \wedge X) \Rightarrow k(n)_{t-s} (X). $$ 
Using (\ref{eq:homologyquotients}) and the change of rings isomorphism (\ref{eq:changeofrings}), the $E_2$-term of this Adams spectral sequence takes the form
$$ \Ext_{A_*}(\FF_2, H_*k(n) \wedge X) \cong \Ext^{*,*}_{E[Q_n]_*}(\FF_2, H_*X). $$
Note that Ext of comodules over $E[Q_n]_*$ is isomorphic to Ext of modules over $E[Q_n]$, using the dual action of $Q_n$ on homology.
$$ \Ext^{*,*}_{E[Q_n]_*}(\FF_2, H_*X) \cong \Ext^{*,*}_{E[Q_n]}(\FF_2, H_*X). $$
Because the dual action of $Q_n$ on homology lowers degree, we will regard $Q_n$ as having degree $-2^{n+1}+1$.

Margolis (see \cite[Part III]{spectraMargolis}) introduced some general tools for computing such $\Ext$ groups over exterior algebras.


\begin{defn}
Let $M$ be a module over $E[x]$. Let $\ker_x(M)$ be the kernel of multiplication by $x$ and $\im_x(M)$ be its image. Define 
\[H(M; x) := \ker_x(M)/\im_x(M).\]
\end{defn}

\begin{lem}\label{lem:marg}
Let $M$ be an $E[x]$-module, where $x$ has degree $k$. Then there is a short exact sequence
\[0 \rightarrow \im_x(M) \rightarrow \Ext_{E[x]}^{*,*}(\FF_2, M) \rightarrow \FF_2[y] \otimes H(M; x) \rightarrow 0   \]
for $y$ in $\Ext^{1, k}$ and $\im_x(M)$ is regarded as a graded $\FF_2$-vector space in cohomological degree zero.
\end{lem}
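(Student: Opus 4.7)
The plan is to exploit the fact that $E[x] = \mathbb{F}_2[x]/(x^2)$ admits a particularly simple minimal free resolution of $\mathbb{F}_2$. First, I would write down
\[
\cdots \to \Sigma^{2k} E[x] \xrightarrow{\cdot x} \Sigma^k E[x] \xrightarrow{\cdot x} E[x] \to \mathbb{F}_2 \to 0,
\]
with a generator $e_s$ in internal degree $sk$ in cohomological degree $s$. Applying $\Hom_{E[x]}(-, M)$ produces the cochain complex
\[
M \xrightarrow{\cdot x} M \xrightarrow{\cdot x} M \xrightarrow{\cdot x} \cdots,
\]
whose $s$-th term sits in bidegree $(s, sk)$, and whose cohomology is $\Ext_{E[x]}^{*,*}(\mathbb{F}_2, M)$.

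Reading off the cohomology of this complex gives $\Ext^{0,*} = \ker_x(M)$ and, because $x^2 = 0$ forces $\im_x(M) \subseteq \ker_x(M)$, $\Ext^{s,*} = H(M;x)$ for every $s \geq 1$, shifted into internal degree by $sk$. The next step is to assemble these pieces. The short exact sequence of graded $\mathbb{F}_2$-vector spaces
\[
0 \to \im_x(M) \to \ker_x(M) \to H(M;x) \to 0
\]
(which is just the definition of $H(M;x)$) splits off $\im_x(M)$ from the $s=0$ piece, leaving a quotient $H(M;x)$ in cohomological degree $0$. I would then define $y \in \Ext^{1,k}_{E[x]}(\mathbb{F}_2,\mathbb{F}_2)$ to be the class dual to the generator of the resolution in cohomological degree $1$; multiplication by $y$ identifies the $H(M;x)$ in cohomological degree $s$ with $y^s$ times the $H(M;x)$ in degree $0$. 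Consequently the quotient of $\Ext_{E[x]}^{*,*}(\mathbb{F}_2,M)$ by $\im_x(M) \subseteq \Ext^{0,*}$ is, as a bigraded module, $\mathbb{F}_2[y] \otimes H(M;x)$, giving exactly the stated short exact sequence.

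There is no real obstacle here; the only point that merits care is the bigrading bookkeeping, ensuring that the $y$-action is the correct one (i.e., that it truly corresponds to Yoneda product with the class in $\Ext^{1,k}$, which follows from the fact that the resolution is minimal and that the connecting maps are multiplication by $x$). In particular, once one writes the resolution, everything is essentially forced.
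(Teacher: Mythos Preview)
Your proof is correct and is essentially the same as the paper's. The paper packages the periodic free resolution as the differential graded algebra $E[x]\otimes\Gamma[z]$ with $d(z)=x$, but this is exactly the resolution you wrote down; after applying $\Hom_{E[x]}(-,M)$ both arguments compute the cohomology of $M\xrightarrow{\cdot x}M\xrightarrow{\cdot x}\cdots$ and then invoke the short exact sequence $0\to\im_x(M)\to\ker_x(M)\to H(M;x)\to 0$.
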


\begin{proof}
Consider the standard free resolution of $\FF_2$ as a $E[x]$-module, given by the differential graded $\FF_2$ algebra
$$ E[x] \otimes \Gamma[z] $$
where $\Gamma$ denotes the divided power algebra, $d(z) = x$, and $\abs{z} = (-1,k)$ (here the first index is the cohomological degree, which is negative because it is in positive \emph{homological} degree). 
Applying $\Hom_{E[x]}(-,M)$, gives a cochain complex
$$ C^{*,*}(M) := \FF_2[y] \otimes M $$
whose cohomology is $\Ext_{E[x]}(\FF_2, M)$ where $M$ has cohomological degree $0$, $y = z^*$, $\abs{y} = (1,-k)$, and 
$$ d(y^n \otimes m) = y^{n+1} \otimes x\cdot m. $$ 
We calculate
$$ H^{n,*}(C^{*,*}(M)) = \begin{cases}
H(M;x)\{y^n\}, & n > 0, \\
\ker_x (M), & n = 0.
\end{cases}
$$
The result then follows from the short exact sequence:
$$ 0 \rightarrow \im_x (M) \rightarrow \ker_x(M) \rightarrow H(M;x) \rightarrow 0. \qedhere $$
\end{proof}


We will apply these results to the exterior algebra $E[Q_n]$.
\begin{defn}
Let $M$ be an $A(n)$-module.
The \emph{$n$th Margolis homology} of $M$ is $H(M; Q_n)$. If $M = H_*(X)$, then we abbreviate $ H(H_*(X); Q_n)$ as $H(X; Q_n)$.
\end{defn}


Since $Q_n$ is primitive, the action of $Q_n$ on the tensor product $M \otimes N$ of $A(n)$-modules is given by  
$$ Q_n(a \otimes b) = Q_n(a) \otimes b + a \otimes Q_n(b). $$ 
From this, one can deduce the following lemma.
\begin{lem}
Let $M$ and $N$ be $A(n)$-modules of finite type. Then 
\[H(M\otimes N ; Q_n) \cong H(M ; Q_n) \otimes H(N;Q_n).\]
\end{lem}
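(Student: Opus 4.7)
The plan is to recognize the situation as a Künneth formula for chain complexes of $\FF_2$-vector spaces. Since $Q_n^2 = 0$, any $E[Q_n]$-module $M$ can be regarded as a (graded) chain complex $(M, Q_n)$ of $\FF_2$-vector spaces with differential of degree $1-2^{n+1}$, and its Margolis homology $H(M; Q_n)$ is by definition the homology of this chain complex.

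The next step is to identify $(M \otimes N, Q_n)$ with the tensor product of chain complexes $(M,Q_n) \otimes_{\FF_2} (N,Q_n)$. This is exactly where the primitivity of $Q_n$ enters: the formula $Q_n(a \otimes b) = Q_n(a) \otimes b + a \otimes Q_n(b)$ noted just before the statement is precisely the Leibniz rule for the differential on a tensor product complex (there are no signs since we are over $\FF_2$). The finite type hypothesis guarantees that each graded piece of $M \otimes N$ is a finite sum, so the graded tensor product coincides with the ungraded one degree by degree and no subtleties about completions arise.

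The last step is to invoke the algebraic Künneth theorem for chain complexes over a field. Because $\FF_2$ is a field, every $\FF_2$-module is flat and the $\mathrm{Tor}$ term in the Künneth short exact sequence vanishes, yielding the natural isomorphism
\[
H\bigl( (M,Q_n) \otimes_{\FF_2} (N,Q_n) \bigr) \cong H(M;Q_n) \otimes_{\FF_2} H(N;Q_n),
\]
which is the desired formula. There is no real obstacle here; the only thing to be careful about is verifying that the tensor product differential from the Künneth setup agrees on the nose with the $E[Q_n]$-action on $M \otimes N$, and this is an immediate consequence of primitivity.
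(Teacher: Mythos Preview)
Your proof is correct and matches the paper's approach: the paper does not give a detailed argument but simply remarks that the lemma follows from the Leibniz formula $Q_n(a\otimes b)=Q_n(a)\otimes b+a\otimes Q_n(b)$, and your K\"unneth argument is exactly the standard way to unpack that remark.
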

\medskip
\begin{cor}\label{cor:margpow}
If $M$ is an $A(n)$-module of finite type, then there is a short exact sequence 
\[0 \rightarrow V^{k,*}(M) \rightarrow \Ext_{E[Q_n]}^{*,*}(\FF_2, M^{\otimes k}) \rightarrow \FF_2[v_n]\otimes H(M ; Q_n)^{\otimes k} \rightarrow 0 \]
where 
$$ V^{k,*}(M) := \im_{Q_n}(M^{\otimes k}). $$  
\end{cor}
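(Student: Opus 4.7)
The plan is to simply feed $M^{\otimes k}$ into Lemma~\ref{lem:marg} with $x=Q_n$, and then identify the two outer terms of the resulting short exact sequence with the objects named in the statement.

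First, I would apply Lemma~\ref{lem:marg} directly to the $E[Q_n]$-module $M^{\otimes k}$. Recall that we regard $Q_n$ as an element of degree $k=-(2^{n+1}-1)$ acting on homology. Lemma~\ref{lem:marg} then produces a short exact sequence
\[
0 \rightarrow \im_{Q_n}(M^{\otimes k}) \rightarrow \Ext_{E[Q_n]}^{*,*}(\FF_2, M^{\otimes k}) \rightarrow \FF_2[y]\otimes H(M^{\otimes k}; Q_n) \rightarrow 0,
\]
with $y$ of cohomological bidegree $(1, 2^{n+1}-1)$. The leftmost term is, by definition, $V^{k,*}(M)$, so no work is needed there.

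Next, I would rewrite the rightmost term using the Künneth-type identification for Margolis homology proved in the immediately preceding lemma: since $Q_n$ is primitive, the iterated Leibniz rule gives
\[
H(M^{\otimes k}; Q_n) \cong H(M;Q_n)^{\otimes k}.
\]
(This is where finite-typeness of $M$ is used, in order to apply that Künneth lemma.)  Substituting into the short exact sequence above yields exactly the stated sequence, once we rename the polynomial generator $y$ as $v_n$, which is justified because $y \in \Ext^{1,2^{n+1}-1}_{E[Q_n]}(\FF_2,\FF_2)$ sits in precisely the bidegree of $v_n$ (and the isomorphism with the Ext coming from Morava $K$-theory's Adams $E_2$-page identifies these generators).

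There is no substantive obstacle here; the only thing to be careful about is the sign/degree bookkeeping for $Q_n$ acting on homology versus cohomology, and making sure the iterated Künneth identification for Margolis homology is correctly invoked for $k$-fold tensor products (not just binary). Both are handled cleanly once the primitivity of $Q_n$ is in hand.
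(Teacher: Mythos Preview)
Your proposal is correct and matches the paper's approach: the corollary is stated immediately after the K\"unneth lemma for Margolis homology with no separate proof, so the intended argument is exactly what you wrote---apply Lemma~\ref{lem:marg} to $M^{\otimes k}$ and invoke the K\"unneth isomorphism $H(M^{\otimes k};Q_n)\cong H(M;Q_n)^{\otimes k}$. One cosmetic point: you reused the letter $k$ for the degree of $Q_n$ when $k$ is already the tensor exponent, so pick a different symbol there.
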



The following result is a straightforward consequence the fact that the action of $Q_n$ is a derivation
and 
\[ Q_n(\zeta_k) = \begin{cases} \zeta_{k-n-1}^{2^{n+1}} & k\geq n+1, \\
 0 & k<n+1 . \end{cases} \]
\begin{lem}\label{lem:margAnEn}
There are isomorphisms
\[ H(A \mmod A(n)_*; Q_n) \cong  \FF_2[\zeta_2^{2^{n}}, \zeta_3^{2^{n-1}}, \ldots, \zeta_{n+1}^2, \zeta_{n+2}^2,  \zeta_{n+3}^2, \ldots]/(\zeta_{2}^{2^{n+1}}, \zeta_{3}^{2^{n+1}}, \ldots ) \]
and
\[ H(A \mmod E[Q_0,\ldots, Q_n]_*; Q_n) \cong  \FF_2[\zeta_1^2, \zeta_2^2, \ldots ]/(\zeta_{1}^{2^{n+1}}, \zeta_{2}^{2^{n+1}},  \ldots ). \]
\end{lem}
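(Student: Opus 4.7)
My plan is to compute both Margolis homologies by decomposing each polynomial ring as a tensor product of $Q_n$-stable sub-DGAs indexed by residue classes modulo $n+1$, compute the Margolis homology of each factor inductively, and then reassemble.

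The key observation is that the formula $Q_n(\zeta_k) = \zeta_{k-n-1}^{2^{n+1}}$ (for $k \geq n+1$) preserves the residue class of $k$ modulo $n+1$. Combined with the derivation property, this means for each $r \in \{1, 2, \ldots, n+1\}$, the sub-polynomial ring $R_r$ generated by the appropriate powers of $\{\zeta_{r + j(n+1)}\}_{j \geq 0}$ is closed under $Q_n$. The full polynomial ring ($A\mmod A(n)_*$ or $A\mmod E[Q_0, \ldots, Q_n]_*$) then factors as $\bigotimes_{r=1}^{n+1} R_r$. Since both modules are of finite type, Margolis homology commutes with tensor products, so it suffices to compute each $H(R_r; Q_n)$ and take the tensor product.

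Each factor has the shape $R_r = \FF_2[y, x_1, x_2, \ldots]$ with $d(y) = 0$, $d(x_1) = y^a$, and $d(x_i) = x_{i-1}^{2^{n+1}}$ for $i \geq 2$, where $y$ is the starting generator coming from $\zeta_r$ and $a$ is chosen so that $y^a = \zeta_r^{2^{n+1}}$. Concretely:
\begin{itemize}
\item For $A\mmod E[Q_0, \ldots, Q_n]_*$: $y = \zeta_r^2$ and $a = 2^n$.
\item For $A\mmod A(n)_*$: $y = \zeta_r^{2^{n+2-r}}$ and $a = 2^{r-1}$; the exceptional case $r=1$ gives $a=1$, meaning $y$ itself is killed, which explains why $\zeta_1$ is absent from the lemma's answer.
\end{itemize}

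To compute $H(R_r; Q_n)$, I would filter by $x_s$-degree and induct on $s$. The base case $\FF_2[y, x_1]$ with $d(x_1) = y^a$ is an elementary direct calculation, yielding $\FF_2[y, x_1^2]/(y^a)$: in characteristic $2$, only odd powers of $x_1$ contribute to $d$, so the kernel is spanned by $y^p x_1^{2m}$ and the image is $y^a \cdot \FF_2[y, x_1^2]$. For the inductive step, the filtration spectral sequence for $R_r^{(s)} = R_r^{(s-1)}[x_s]$ has $E_1 = H(R_r^{(s-1)}) \otimes \FF_2[x_s]$, and $d_1(x_s) = x_{s-1}^{2^{n+1}} = (x_{s-1}^2)^{2^n}$, which is nonzero in $H(R_r^{(s-1)})$. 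Running the same odd/even argument on $x_s$ kills this class and leaves only even powers of $x_s$, yielding $H(R_r^{(s)}) = \FF_2[y, x_1^2, \ldots, x_s^2]/(y^a, x_1^{2^{n+1}}, \ldots, x_{s-1}^{2^{n+1}})$. The spectral sequence collapses at $E_2$ because $d$ decreases $x_s$-degree by at most $1$. Passing to the colimit in $s$ gives $H(R_r) = \FF_2[y, x_1^2, x_2^2, \ldots]/(y^a, x_1^{2^{n+1}}, x_2^{2^{n+1}}, \ldots)$.

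The final step is the bookkeeping of the tensor product over $r$. In the $E[Q_0, \ldots, Q_n]$ case the exponents align uniformly and give the stated $\FF_2[\zeta_j^2]/(\zeta_j^{2^{n+1}})$. In the $A(n)$ case the $r=1$ factor loses $\zeta_1$ entirely and the remaining factors assemble to produce the generators $\zeta_r^{2^{n+2-r}}$ for $2 \le r \le n+1$ together with $\zeta_k^2$ for $k \ge n+2$, modulo $\zeta_k^{2^{n+1}}$ for $k \ge 2$. The main obstacle is the matching of exponents across residue classes and recognizing the $r=1$ degenerate case in the $A(n)$ situation; once this bookkeeping is set up, everything reduces to the inductive DGA calculation above.
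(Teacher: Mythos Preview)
Your argument is correct. The paper itself gives no real proof of this lemma---it just asserts that the result is a ``straightforward consequence'' of the derivation property of $Q_n$ together with the formula $Q_n(\zeta_k) = \zeta_{k-n-1}^{2^{n+1}}$ for $k \ge n+1$ (and $0$ otherwise)---so your residue-class decomposition and inductive DGA calculation constitute a correct and natural fleshing-out of what the paper leaves implicit. One small point worth making explicit in your inductive step: in the odd $x_s$-degree part of $E_2$ you need that the annihilator of $x_{s-1}^{2^{n+1}}$ in $H(R_r^{(s-1)})$ vanishes, which holds because $x_{s-1}^2$ is a free polynomial generator there; once that is noted, the collapse at $E_2$ and the absence of multiplicative extensions (all generators and relations lift to genuine cycles and boundaries) are clear.
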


We end this section with a topological realization result (compare with \cite[Thm.~2]{Lellmann}).  The authors are very grateful to the referee for suggesting this streamlined formulation of the result, and the proof is due to the referee.

\begin{prop}\label{prop:k(n)}
Let $X$ be a connective spectrum with the property that the Margolis homology $H(X; Q_n)$ is concentrated in even degrees.  Then the Adams spectral sequence 
$$ \E{ass}{}_2^{s,t}(k(n)\wedge X) = \Ext^{s,t}_{E[Q_n]}(\FF_2, H_*X) \Rightarrow k(n)_{t-s}(X) $$
collapses, and there are no exotic $v_n$-extensions.  There is a fiber sequence of $k(n)$ modules
$$ HV_X \to k(n) \wedge X \rightarrow K_X $$
where 
$$ V_X := \im_{Q_n}(H_*X), $$
$HV_X$ is the generalized Eilenberg-MacLane spectrum associated to the graded $\FF_2$-vector space $V_X$, and $K_X$ is a free $k(n)$-module with 
$$ \pi_* K_X \cong \FF_2[v_n] \otimes H(X; Q_n). $$ 
This fiber sequence is natural in $X$ with $H_*(X;Q_n)$ in even degrees.  The fiber sequence is split, but not naturally.
\end{prop}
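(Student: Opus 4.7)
The plan is to prove the statement in three parts: collapse of the Adams spectral sequence (ASS) for $k(n)\wedge X$ at $E_2$, identification of $V_X$ with the $v_n$-torsion of $k(n)_*X$ (ruling out hidden $v_n$-extensions), and construction of the splitting $k(n)\wedge X\simeq HV_X\vee K_X$. I would begin by invoking \autoref{cor:margpow} with $k=1$ to identify the $E_2$-page as an extension $0\to V_X\to \E{ass}{}_2^{*,*}(k(n)\wedge X)\to \FF_2[v_n]\otimes H(X;Q_n)\to 0$, with $V_X$ concentrated in Adams filtration zero.

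For the collapse, I would argue in two stages. A polynomial class $v_n^s\cdot h$ has topological degree $t-s = s(2^{n+1}-2)+|h|$, which is even since $|h|$ is even by hypothesis. Hence a Margolis generator $h\in E_2^{0,*}$ admits no nonzero $d_r$-target, for any target in the polynomial summand would have odd $t-s$. For classes $v\in V_X$, I would use that $v_n\cdot V_X = 0$ on $E_2$ (which follows by naturality, as $\Ext^{>0}_{E[Q_n]}(\FF_2,E[Q_n])=0$). Induction on $r$ then settles the collapse on $V_X$: if $d_{r'}=0$ for all $r'<r$, then $E_r=E_2$, and the Leibniz rule gives $v_n\cdot d_r(v) = d_r(v_n\cdot v) = 0$; since $v_n$ acts injectively on the polynomial summand, this forces $d_r(v)=0$. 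Because every higher-filtration class is a $v_n$-multiple of a Margolis generator, the ASS collapses at $E_2$.

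To identify $V_X$ as the $v_n$-torsion and build the splitting, I would smash the cofiber sequence $\Sigma^{|v_n|}k(n)\xrightarrow{v_n}k(n)\to H\FF_2$ with $X$; the resulting long exact sequence in homotopy has connecting map $\partial$ agreeing with $Q_n$ under the natural identification, so exactness identifies the $v_n$-torsion of $k(n)_*X$ with $\im(\partial)=V_X$, ruling out hidden $v_n$-extensions. For the splitting, I would choose a basis $\{h_j\}$ of $H(X;Q_n)$ with $|h_j|=d_j$, lift each to $\pi_{d_j}(k(n)\wedge X)$, and extend to $k(n)$-module maps to assemble $g\colon K_X := \bigvee_j \Sigma^{d_j}k(n)\to k(n)\wedge X$ hitting the polynomial summand on $\pi_*$. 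Next, pick representatives $\{a_j\}\subset H_*X$ with $\{Q_n a_j\}$ a basis of $V_X$; the canonical $H\FF_2$-module map $\bigvee_j\Sigma^{|a_j|}H\FF_2\to H\FF_2\wedge X$ determined by $\FF_2\{a_j\}\hookrightarrow H_*X$, composed with $\partial$ and desuspended by $|v_n|+1$, yields $f\colon HV_X\to k(n)\wedge X$ hitting $V_X$ on $\pi_*$. By the above, $f\vee g\colon HV_X\vee K_X\to k(n)\wedge X$ is an isomorphism on $\pi_*$, hence an equivalence; this produces the fiber sequence along with its splitting. Naturality of the fiber sequence in $X$ (within the stated class) follows from functoriality of $V_X$, $H(X;Q_n)$, and $\partial$, while the non-naturality of the splitting reflects the choices of $\{h_j\}$ and $\{a_j\}$.

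I expect the hard part to be the inductive collapse argument on $V_X$: the persistence of $v_n\cdot V_X=0$ through the pages of the ASS must be played off against $v_n$-injectivity on the polynomial summand, and requires careful use of the Leibniz rule. Everything downstream---the construction of $f$ and the equivalence $HV_X\vee K_X\simeq k(n)\wedge X$---relies on this collapse together with the no-hidden-extensions statement from the $v_n$-Bockstein long exact sequence.
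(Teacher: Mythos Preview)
Your collapse argument is correct and essentially identical to the paper's: both use the parity of the Margolis homology to rule out differentials landing in positive filtration, and the $v_n$-torsion/$v_n$-torsion-free dichotomy (via the Leibniz rule) to rule out differentials supported on $V_X$. The paper phrases it slightly more efficiently by observing that only \emph{odd}-degree classes in filtration zero can possibly support a differential, and these automatically lie in $V_X$; your version handles all of $V_X$ uniformly, which is fine but does a little more work than necessary.

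There is one genuine gap in your Bockstein step. Exactness does identify the $v_n$-torsion in $k(n)_*X$ with $\im(\partial)$, and the fact that the composite $H_*X \xrightarrow{\partial} k(n)_*X \to H_*X$ is $Q_n$ (the first $k$-invariant of $k(n)$) gives a \emph{surjection} $\ker(v_n)=\im(\partial)\twoheadrightarrow \im(Q_n)=V_X$. But injectivity---equivalently, $\ker(v_n)\cap\im(v_n)=0$---does not follow from this alone, and is exactly the ``no hidden $v_n$-extensions'' statement you are trying to prove. The paper closes this by a short diagram chase using the Adams filtration: since $v_n$ acts injectively on $E_\infty^{s,*}=E_2^{s,*}$ for $s\ge 1$ and the filtration is finite in each degree, $v_n$ is injective on $F^1$; combined with $F^1=\im(v_n)$ (again from the collapse and finiteness), one gets $\ker(v_n)\hookrightarrow k(n)_*X/F^1=E_2^{0,*}$ with image $V_X$. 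Once you insert this step, your construction of $f$ via $\partial$ and of $g$ by lifting Margolis generators, and the check that $f\vee g$ is a $\pi_*$-isomorphism, goes through exactly as you describe.
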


\begin{proof}
By hypothesis, for $s > 0$, $\E{ass}{}_2^{s,t} = 0$ unless $t-s$ is even.  Thus non-zero differentials must originate from $\E{ass}{}_2^{0,t}$ with $t$ odd.  Since $v_n$ annihilates that vector space and $\E{ass}{}_2^{*,*}$ is $v_n$-torsion free in positive cohomological degree, the ASS collapses.

There is an isomorphism 
$$ \FF_2 \otimes_{k(n)_*} k(n)_*X \cong \E{ass}{}_2^{0,*} $$
and a surjection
$$ \cdot v_n: k(n)_*X/\text{$v_n$-torsion} \twoheadrightarrow \E{ass}{}_2^{1,*}. $$ 
There is then a commutative diagram
$$
\xymatrix{
0 \ar[r] &
 V_0 \ar[r] \ar[d] &
k(n)_*X \ar[r] \ar[d] &
k(n)_*X/\text{$v_n$-torsion} \ar[r] \ar[d] &
0 \\
0 \ar[r] &
V_X \ar[r] &
\E{ass}{}_2^{0,*} \ar[r]_{\cdot v_n} &
\E{ass}{}_2^{1,*} \ar[r] &
0
}
$$
The map $V_0 \rightarrow V_X$ is an isomorphism by a diagram chase.

This defines a natural inclusion of $k(n)_*$-modules 
$$ V_X \cong V_0 \subseteq k(n)_*X. $$
A choice of basis for $V_X$ defines a map
$$ HV_X \to k(n) \wedge X $$
which, in the homotopy category of $k(n)$-modules, is independent of the choice.  Any choice of splitting of
$$ V_X \to k(n)_*X \to k(n)_*X/\text{$v_n$-torsion} $$
can be realized in the category of $k(n)$-modules.
\end{proof}

\subsection{The computation of the $E_1$-term of the $\tmf$-ASS for $Z$}\label{sec:setup}

Returning now to the computation of the $E_1$-term $\E{\tmf}{}_1^{n,*}(Z)$ (\ref{eq:tmfresE1}), we will compute the classical ASS
\begin{equation} \label{eqn:ASSfortmfSS}
\Ext_{A_*}^{s,t}(\FF_2,  H_*(k(2) \wedge \tmf^{\s n} )) \Longrightarrow k(2)_{t-s}({\tmf}^{\s n}) = \E{\tmf}{}_1^{n,t-s}(Z).
\end{equation}

Defining
\[\mc{C}^{n,*,*}_{alg}(Z) := \FF_2[v_2] \otimes H(  A\mmod A(2)_*, Q_2)^{\otimes n},\]
Lemma~\ref{lem:marg}, Corollary~\ref{cor:margpow}, and Lemma~\ref{lem:margAnEn} imply the following.

\begin{prop}\label{prop:algsplit}There is a short exact sequence of 
$\FF_2[v_2] $-modules
\begin{equation}\label{eqn:ASSn} 
0 \to V^{n,*,*}_{alg}(Z) \to \Ext_{A_*}^{*,*}( \FF_2, H_*k(2) \wedge {\tmf}^{\s n} ) \to \mc{C}^{n,*,*}_{alg}(Z) \to 0
\end{equation}
 where 
$$
\mc{C}_{alg}^{n,*,*} \cong \FF_2[v_2] \otimes \left\lbrack \FF_2[\zeta_2^4, \zeta_3^2, \zeta_4^2, \cdots]/(\zeta_2^8, \zeta_3^8, \cdots) \right\rbrack^{\otimes n}
$$ 
and
 $V^{n,*,*}_{alg}(Z)$ is a direct sum of shifted copies of $\FF_2$'s which are simple $v_2$-torsion (i.e., $v_2 \cdot x=0$ for all elements $x$) which are concentrated in Adams filtration zero:  
 \[ V^{n,*}(Z) := V^{n,0,*}_{alg}(Z) =V^{n,*,*}_{alg}(Z) .  \] 
\end{prop}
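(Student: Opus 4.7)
My plan is to assemble the proposition directly from Corollary~\ref{cor:margpow} and Lemma~\ref{lem:margAnEn}, using (\ref{eq:homologyquotients}) and the change of rings formula (\ref{eq:changeofrings}) to put the Ext group in the right form.

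First I would use the Künneth theorem together with (\ref{eq:homologyquotients}) to identify
\[
H_*(k(2) \wedge \tmf^{\wedge n}) \;\cong\; A\mmod E[Q_2]_* \,\otimes\, \bigl(A\mmod A(2)_*\bigr)^{\otimes n}
\]
as an $A_*$-comodule. Applying the change of rings isomorphism (\ref{eq:changeofrings}) with $B = E[Q_2]$ then yields
\[
\Ext^{*,*}_{A_*}\!\bigl(\FF_2, H_*(k(2)\wedge\tmf^{\wedge n})\bigr) \;\cong\; \Ext^{*,*}_{E[Q_2]_*}\!\bigl(\FF_2, (A\mmod A(2)_*)^{\otimes n}\bigr) \;\cong\; \Ext^{*,*}_{E[Q_2]}\!\bigl(\FF_2, (A\mmod A(2)_*)^{\otimes n}\bigr),
\]
using that $E[Q_2]_*$-comodule Ext agrees with $E[Q_2]$-module Ext via the dual action.

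Next I would plug $M = A\mmod A(2)_*$ and $k = n$ into Corollary~\ref{cor:margpow} (applied at height $2$). This produces a short exact sequence
\[
0 \to \im_{Q_2}\!\bigl((A\mmod A(2)_*)^{\otimes n}\bigr) \to \Ext^{*,*}_{E[Q_2]}\!\bigl(\FF_2, (A\mmod A(2)_*)^{\otimes n}\bigr) \to \FF_2[v_2] \otimes H(A\mmod A(2)_*;Q_2)^{\otimes n} \to 0.
\]
Setting $V^{n,*,*}_{alg}(Z) := \im_{Q_2}((A\mmod A(2)_*)^{\otimes n})$ and applying Lemma~\ref{lem:margAnEn} at $n=2$ to rewrite the rightmost factor as $\FF_2[\zeta_2^4,\zeta_3^2,\zeta_4^2,\ldots]/(\zeta_2^8,\zeta_3^8,\ldots)$ gives the claimed identification of $\mc{C}_{alg}^{n,*,*}$.

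It remains to verify the two subsidiary claims on $V^{n,*,*}_{alg}(Z)$: that it is concentrated in Adams filtration (cohomological degree) zero and is simple $v_2$-torsion. Both are visible from the cochain model $C^{*,*}(M) = \FF_2[y]\otimes M$ used in the proof of Lemma~\ref{lem:marg}, where $y = v_2$ and $d(y^i \otimes m) = y^{i+1}\otimes Q_2 m$. The image subspace sits inside $\ker_{Q_2}(M^{\otimes n}) = H^{0,*}(C^{*,*}(M^{\otimes n}))$, so it lies in cohomological degree zero; and for any $m = Q_2 m' \in \im_{Q_2}(M^{\otimes n})$, the product $v_2\cdot[1\otimes m] = [y\otimes Q_2 m'] = [d(1\otimes m')] = 0$, so each class is annihilated by $v_2$. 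These assemble into the stated short exact sequence of $\FF_2[v_2]$-modules.

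The steps are essentially formal assembly of tools already developed in Section~\ref{sec:margolis}, so there is no serious obstacle; the only point requiring care is the bookkeeping confirming that $V^{n,*,*}_{alg}(Z)$ really is simple $v_2$-torsion rather than merely $v_2$-power torsion, and this is immediate from the explicit coboundary formula above.
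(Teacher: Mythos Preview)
Your proposal is correct and matches the paper's approach exactly: the paper's proof is the single sentence ``Lemma~\ref{lem:marg}, Corollary~\ref{cor:margpow}, and Lemma~\ref{lem:margAnEn} imply the following,'' and you have simply unpacked this citation by applying change of rings, then Corollary~\ref{cor:margpow} with $M = A\mmod A(2)_*$, and then Lemma~\ref{lem:margAnEn} to identify the Margolis homology. Your explicit verification that $V^{n,*,*}_{alg}(Z)$ is simple $v_2$-torsion concentrated in filtration zero (via the cochain model of Lemma~\ref{lem:marg}) is a welcome elaboration of what the paper leaves implicit.
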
 

There is one subtle issue which we now must discuss: both sides of the equivalence
\begin{equation}\label{eq:tmfZ=k(2)} 
\alpha: \tmf \wedge Z \xrightarrow{\simeq} k(2) 
\end{equation} 
have potentially different notions of $v_2$-multiplication.  The spectrum $Z$ has a $v_2$-self map
$$ v_2: \Sigma^6 Z \rightarrow Z $$
and $k(2)$ has the multiplication-by-$v_2$ map
$$ \cdot v_2: \Sigma^6 k(2) \rightarrow k(2). $$ 
Since the self-map of $Z$ is a $K(2)$-equivalence, and since $\pi_6(k(2))$ only consists of 2 elements, it is easy to see that the following diagram commutes.
$$
\xymatrix{
S^6 \ar[r] \ar[dr] & 
\Sigma^6\tmf \wedge Z \ar[r]^{1 \wedge v_2} & 
\tmf \wedge Z \ar[d]^{\simeq}_{\alpha} \\
& \Sigma^6 k(2) \ar[r]_{\cdot v_2} & k(2) 
}
$$
I.e., the two notions of ``$v_2$'' are the same when regarded as elements of $\pi_6$.
However, this does not imply that the self map
$$ 1 \wedge v_2: \Sigma^6 \tmf \wedge Z \rightarrow \tmf \wedge Z $$
is homotopic to the multiplication-by-$v_2$ map on $k(2)$, because the map $1 \wedge v_2$ does not necessarily give a map of $k(2)$-modules under the equivalence (\ref{eq:tmfZ=k(2)}).

However, all of our computations of $\E{\tmf}{}_1^{*,*}(Z)$ will arise from the Adams spectral sequence, and the following lemma makes it clear that on the level of the Adams spectral sequence the two notions of $v_2$-multiplication are the same.  In particular, the ``$v_2$'' in Proposition~\ref{prop:algsplit} may be taken to be the one coming from the $v_2$-self map on $Z$.

\begin{lem}
The diagram
$$
\xymatrix{
\Sigma^6 \tmf \wedge Z \ar[r]^{1 \wedge v_2} \ar[d]_{\simeq}^{\alpha} &
\tmf \wedge Z \ar[d]_{\simeq}^{\alpha} \\
\Sigma^6 k(2) \ar[r]_{\cdot v_2} & 
k(2)
}
$$
commutes up to elements of higher Adams filtration.
\end{lem}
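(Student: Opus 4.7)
Write $f_1 := 1 \wedge v_2$ and $f_2 := \alpha^{-1} \circ (\cdot v_2) \circ \alpha$ for the two candidate self-maps $\Sigma^6 \tmf \wedge Z \to \tmf \wedge Z$ obtained by going around the diagram. By the preceding paragraph, both $f_1$ and $f_2$ send the generator of $\pi_6 \Sigma^6 \tmf \wedge Z = \pi_0 \tmf \wedge Z$ to $v_2 \in \pi_6 \tmf \wedge Z \cong \pi_6 k(2)$. The strategy is to upgrade this $\pi_6$-agreement to agreement modulo higher classical Adams filtration using the ASS for $\pi_*(\tmf \wedge Z) \simeq \pi_* k(2)$.

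Iterating the change-of-rings isomorphism (\ref{eq:changeofrings}) (using $H_* \tmf \cong A \mmod A(2)_*$ and $H^* Z \cong_{A(2)} A(2) \mmod E[Q_2]$), the $E_2$-page of this ASS reduces to
$$ \E{ass}{}_2^{*,*}(\tmf \wedge Z) \cong \Ext^{*,*}_{E[Q_2]_*}(\FF_2, \FF_2) \cong \FF_2[v_2] $$
with $v_2 \in E_2^{1, 7}$, and Proposition~\ref{prop:k(n)} guarantees that this spectral sequence collapses without exotic $v_2$-extensions. Since $E_2^{0, 6} = 0$, the element $v_2 \in \pi_6 k(2)$ has Adams filtration exactly $1$, detected uniquely by the class $v_2 \in E_\infty^{1, 7}$. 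Consequently both $f_1$ and $f_2$ have Adams filtration exactly $1$.

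The Adams-filtration-$1$ self-maps $\Sigma^6 \tmf \wedge Z \to \tmf \wedge Z$, taken modulo filtration-$2$ modifications, inject into the corresponding $\Ext^{1, 7}$ appearing as an $E_2$-term of the ASS for the mapping spectrum; by the same change-of-rings chain this classifying group is one-dimensional over $\FF_2$, generated by the class of the standard $v_2^1$-self-map. Since $f_1$ and $f_2$ both realize $v_2 \in \pi_6 k(2)$, they represent the same nonzero element of this classifying group, so $f_1 \equiv f_2$ modulo Adams filtration $2$. Equivalently, $f_1 - f_2$ has Adams filtration $\geq 2$, strictly greater than the common filtration $1$ of $f_1, f_2$, which proves the claim.

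No serious obstacle is anticipated; the only mild technical point is verifying that the classifying $\Ext^{1, 7}$ is one-dimensional, which is immediate from the change-of-rings computation above. A conceptual alternative is to observe that both $f_1$ and $f_2$ are $v_2^1$-self-maps of $\tmf \wedge Z \simeq k(2)$ and invoke the asymptotic uniqueness of $v_n$-self-maps from the Hopkins-Smith periodicity theorem, which directly forces the two to agree modulo higher Adams filtration.
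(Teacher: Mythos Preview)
Your approach is genuinely different from the paper's and is essentially correct, but the justification of the key step is too glib.

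\textbf{Comparison with the paper.} The paper argues via cofiber sequences: both $1\wedge v_2$ and $\cdot v_2$ have cofiber $H$ (since $H^*A_2\cong A(2)$), and the identity on $H$ together with $\alpha$ fills in to an equivalence $\beta:\Sigma^6\tmf\wedge Z\to\Sigma^6 k(2)$ making the square commute on the nose. Since $H^*k(2)$ is cyclic as an $A$-module, $\alpha$ and $\beta$ agree on cohomology, so $\alpha-\beta$ has positive Adams filtration; composing with $\cdot v_2$ gives the result. This avoids any Ext computation for the mapping spectrum. Your approach instead analyzes the ASS for $[\Sigma^6 k(2),k(2)]$ directly and pins down the detecting class. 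Both work; the paper's is shorter, yours makes the obstruction group explicit.

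\textbf{The gap.} The assertion that ``by the same change-of-rings chain this classifying group is one-dimensional'' does not follow as stated. The change of rings on the \emph{target} reduces $\Ext_{A_*}^{1,7}(H_*k(2),H_*k(2))$ to $\Ext_{E[Q_2]_*}^{1,7}(H_*k(2),\FF_2)$, but the \emph{source} is still the large comodule $H_*k(2)$, not $\FF_2$. To finish you must either (i) observe that $A(2)\subset A$ and $E[Q_2]\subset A(2)$ are \emph{normal} sub-Hopf algebras, so the restrictions of $A\mmod A(2)_*$ and $A(2)\mmod E[Q_2]_*$ are trivial comodules, reducing the source to a sum of shifted $\FF_2$'s with only the degree-$0$ summand contributing in bidegree $(1,7)$; or (ii) use Lemma~\ref{lem:marg} to identify $\Ext_{E[Q_2]}^{1,7}(H_*k(2),\FF_2)$ with the degree-$0$ Margolis homology $H(H_*k(2);Q_2)_0=\FF_2$. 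Either completes the argument, but neither is ``immediate from the change-of-rings computation above.'' You also silently use $H^*(v_2\colon\Sigma^6 Z\to Z)=0$ to get Adams filtration $\ge 1$ for $f_1$; this is true (it is equivalent to the short exact sequence $0\to H_*Z\to H_*A_2\to H_*\Sigma^7 Z\to 0$) but deserves a sentence.

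\textbf{The alternative via Hopkins--Smith does not work.} The periodicity theorem concerns \emph{finite} complexes, and $k(2)$ is not finite; moreover ``asymptotic uniqueness'' only says some iterates of two $v_n$-self-maps agree, which does not by itself give agreement modulo higher Adams filtration. Drop this remark.
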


\begin{proof}
The cofiber of the $v_2$-self map on $Z$
$$ \Sigma^6 Z \xrightarrow{v_2} Z \rightarrow A_2 $$
is a spectrum whose cohomology is free of rank 1 over $A(2)$.  We therefore deduce that there is a cofiber sequence
$$ \Sigma^6 \tmf \wedge Z \xrightarrow{1 \wedge v_2} \tmf \wedge Z \rightarrow H. $$ 
Consider the following diagram of cofiber sequences.
$$
\xymatrix{
\Sigma^{6} \tmf \wedge Z \ar[r]^{1 \wedge v_2} \ar@{.>}_{\beta} [d] &  
\tmf \wedge Z \ar[r] \ar[d]_{\simeq}^{\alpha} &
H \ar@{=}[d] \\
\Sigma^6 k(2) \ar[r]_{\cdot v_2} &
k(2) \ar[r] &
H  
}
$$
The right square in this diagram commutes, since $H^0(\tmf \wedge Z) = \FF_2$ has no non-trivial automorphisms.  Therefore the dotted map $\beta$ exists, making the diagram commute.  Since the top and bottom rows are cofiber sequences, $\beta$ must be an equivalence.  Since $H^*k(2)$ is generated by the non-trivial element of $H^0k(2)$ as an $A$-module, $\alpha$ and $\beta$ must induce the \emph{same} map on cohomology.  Therefore the difference $\alpha - \beta$ is in positive Adams filtration, and the result follows.
\end{proof}

Henceforth, by ``$v_2$'' we shall \emph{always} be referring to the $v_2$-multiplication arising from the self-map on $Z$.  

The following is an immediate corollary of Proposition~\ref{prop:algsplit} and Proposition~\ref{prop:k(n)}. 

\begin{cor}\label{cor:htpysplit}
There is a short exact sequence of $\FF_2[v_2]$-modules
\begin{equation}\label{eqn:htpysplit}
0 \rightarrow V^{n,*}(Z) \to \E{\tmf}{}^{n,*}_1(Z) \to \mc{C}^{n,*}(Z) \to 0,
\end{equation}
where $V^{*,*}(Z)$ is the 
module defined in Proposition~\ref{prop:algsplit}, and, 
$$
\mc{C}^{n,*}(Z) \cong \FF_2[v_2] \otimes \left\lbrack \FF_2[\zeta_2^4, \zeta_3^2, \zeta_4^2, \cdots]/(\zeta_2^8, \zeta_3^8, \cdots) \right\rbrack^{\otimes n}.
$$
\end{cor}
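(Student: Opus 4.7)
The plan is to combine the algebraic short exact sequence from Proposition~\ref{prop:algsplit} with the topological collapse result of Proposition~\ref{prop:k(n)}, applied to $X = \tmf^{\wedge n}$. By the identification $\tmf \wedge Z \simeq k(2)$ from (\ref{eq:tmfsmashZ}), we have $\E{\tmf}{}_1^{n,*}(Z) \cong k(2)_*(\tmf^{\wedge n})$, so it suffices to analyze the Adams spectral sequence (\ref{eqn:ASSfortmfSS}) converging to $k(2)_*(\tmf^{\wedge n})$.

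First I would verify the hypothesis of Proposition~\ref{prop:k(n)}: that $H(\tmf^{\wedge n}; Q_2)$ is concentrated in even degrees. By the Künneth formula for Margolis homology, $H(\tmf^{\wedge n}; Q_2) \cong H(\tmf; Q_2)^{\otimes n}$, and by (\ref{eq:homologyquotients}) combined with Lemma~\ref{lem:margAnEn}, this equals the indicated truncated polynomial algebra on $\zeta_2^4, \zeta_3^2, \zeta_4^2, \ldots$ Each generator has even internal degree (e.g., $|\zeta_2^4| = 12$, $|\zeta_3^2| = 14$, $|\zeta_k^2| = 2(2^k - 1)$), so the entire algebra sits in even degrees, and the same holds for the tensor power.

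With this hypothesis confirmed, Proposition~\ref{prop:k(n)} guarantees that the Adams spectral sequence in (\ref{eqn:ASSfortmfSS}) collapses and has no exotic $v_2$-extensions, and moreover provides a (non-canonical) splitting of $k(2) \wedge \tmf^{\wedge n}$ as $HV_{\tmf^{\wedge n}} \vee K_{\tmf^{\wedge n}}$. Taking homotopy groups converts the algebraic short exact sequence (\ref{eqn:ASSn}) of Proposition~\ref{prop:algsplit} into a corresponding short exact sequence of $\FF_2[v_2]$-modules on $\E{\tmf}{}^{n,*}_1(Z)$. The two end terms are identified as follows: the kernel $V_{\tmf^{\wedge n}} = \im_{Q_2}((A \mmod A(2)_*)^{\otimes n})$ is exactly $V^{n,*}(Z)$ since $V^{n,*,*}_{alg}(Z)$ sits entirely in Adams filtration zero, while the quotient $\FF_2[v_2] \otimes H(\tmf^{\wedge n}; Q_2)$ is identified with $\mc{C}^{n,*}(Z)$ by the Margolis homology calculation above.

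The only subtle point — and the one step that is not purely formal — is making sure the $v_2$-module structure on $\E{\tmf}{}^{n,*}_1(Z)$ coming from the self-map of $Z$ agrees with the $v_2$-structure on $\E{ass}{}_2^{*,*}$ used in Proposition~\ref{prop:algsplit} and on $k(2)_*(\tmf^{\wedge n})$ used in Proposition~\ref{prop:k(n)}. This is precisely the content of the preceding lemma showing that the two notions of $v_2$ multiplication agree modulo higher Adams filtration, so all three perspectives yield the same $\FF_2[v_2]$-module structure on the associated graded and hence on $\E{\tmf}{}_1^{n,*}(Z)$ itself (using that $V^{n,*}(Z)$ is simple $v_2$-torsion). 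No genuine obstacle arises; the result is truly a corollary once the two propositions are in place.
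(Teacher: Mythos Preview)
Your proposal is correct and follows essentially the same approach as the paper, which states the result as an immediate corollary of Proposition~\ref{prop:algsplit} and Proposition~\ref{prop:k(n)}. You have simply spelled out the details (verifying the even-degree hypothesis of Proposition~\ref{prop:k(n)} and invoking the preceding lemma on the two notions of $v_2$) that the paper leaves implicit.
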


\subsection{The good and evil complexes}

We now upgrade the decomposition of Corollary~\ref{cor:htpysplit} to a short exact sequence of chain complexes.  The first observation is the following.

\begin{prop}
	The subspace $V^{*, *}(Z)$ forms a subcomplex of $\E{\tmf}{}^{*, *}_1(Z)$.
\end{prop}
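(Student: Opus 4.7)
The plan is to identify $V^{n,*}(Z)$ as the $v_2$-torsion subgroup of $\mathsf{E}^{\tmf}_1{}^{n,*}(Z)$, and then observe that the differential $d_1$ is $\FF_2[v_2]$-linear by naturality.

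First, I would note that the short exact sequence of Corollary~\ref{cor:htpysplit} together with Proposition~\ref{prop:k(n)} identifies $V^{n,*}(Z)$ precisely as the $v_2$-torsion submodule of $\mathsf{E}^{\tmf}_1{}^{n,*}(Z) = \pi_*(\tmf^{\wedge n+1} \wedge Z)$. Indeed, Proposition~\ref{prop:algsplit} says the elements of $V^{n,*,*}_{alg}(Z)$ are simple $v_2$-torsion while $\mc{C}^{n,*,*}_{alg}(Z)$ is $v_2$-torsion free, and Proposition~\ref{prop:k(n)} guarantees that the Adams spectral sequence collapses with no exotic $v_2$-extensions, so the topological $\mc{C}^{n,*}(Z)$ remains $v_2$-torsion free. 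Hence
\[
V^{n,*}(Z) = \{ x \in \mathsf{E}^{\tmf}_1{}^{n,*}(Z) : v_2 \cdot x = 0 \}.
\]

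Next, I would argue that $d_1$ is $\FF_2[v_2]$-linear. The $\tmf$-Adams resolution is a cosimplicial construction applied to $\tmf^{\wedge \bullet+1} \wedge Z$, and the self-map $v_2 : \Sigma^6 Z \to Z$ induces, via $\mathrm{id}_{\tmf^{\wedge \bullet+1}} \wedge v_2$, a self-map of the entire cosimplicial object. Passing to homotopy groups, this yields a chain map of the $E_1$-page that commutes with the coface-sum differential $d_1$. In other words, multiplication by $v_2$ on $\mathsf{E}^{\tmf}_1{}^{*, *}(Z)$ is a chain map with respect to $d_1$.

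Combining these two observations, if $x \in V^{n,*}(Z) \subseteq \mathsf{E}^{\tmf}_1{}^{n,*}(Z)$, then $v_2 \cdot x = 0$, so $v_2 \cdot d_1(x) = d_1(v_2 \cdot x) = 0$, which means $d_1(x) \in V^{n+1,*}(Z)$. Thus $V^{*,*}(Z)$ is closed under $d_1$, i.e., it forms a subcomplex. I do not anticipate any real obstacle here; the only subtle point is checking that the notion of $v_2$-multiplication on $\mathsf{E}^{\tmf}_1{}^{*,*}(Z)$ via the self-map of $Z$ agrees with the $v_2$-multiplication in the algebraic decomposition of Proposition~\ref{prop:algsplit}, but this is exactly what the lemma preceding Corollary~\ref{cor:htpysplit} arranges.
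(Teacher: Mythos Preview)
Your proposal is correct and takes essentially the same approach as the paper: the paper's proof is the one-line observation that $V^{*,*}(Z)$ is the subspace of $v_2$-torsion and that the differentials commute with $v_2$-multiplication. Your write-up simply elaborates on why both of those facts hold, which is exactly the argument the paper has in mind.
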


\begin{proof}
This follows from the fact that the subspace $V^{*,*}(Z)$ is the subspace of of $v_2$-torsion, and the differentials commute with $v_2$-multiplication. \end{proof}

We will call $(V^{*,*}(Z), d_1)$ the \emph{evil complex}.
Since $(V^{*,*}(Z), d_1)$ forms a sub-complex of $ \E{\tmf}{}_1^{*,*}(Z)$, we can define $\mc{C}^{*,*}(Z)$ to be the quotient complex
\[ 0 \rightarrow V^{*,*}(Z) \rightarrow \E{\tmf}{}_1^{*,*}(Z) \rightarrow \mc{C}^{*,*}(Z) \rightarrow 0. \]
We will call $(\mc{C}^{*,*}(Z),d_1)$ the \emph{good complex}.

Abbreviate $ H^{*,*}(V) = H(V^{*,*}(Z),d_1)$ and $ H^{*,*}(\mc{C}) = H(\mc{C}^{*,*}(Z),{d}_1)$.
There is a long exact sequence
\begin{equation}\label{eq:LES}
\cdots \rightarrow H^{*,*}(V) \rightarrow \E{\tmf}{}_2^{*,*}(Z) \rightarrow H^{*,*}(\mc{C}) \xrightarrow{\partial} H^{*+1, *}(V) \rightarrow \cdots.
\end{equation}
We will see that $H^{*,*}(\mc{C})$ can be almost completely computed, while $H^{*,*}(V)$ is mysterious. We call the elements of $H^{*,*}(V)$ \emph{evil} and those of $H^{*,*}(\mc{C})$ \emph{good}. 

In \cite{BBBCX}, we establish a method for computing $H^{*,*}(V)$ in a range. The idea is to use the $\tmf$-Mahowald spectral sequence (MSS),
\begin{equation}\label{eqn:algss} \E{\tmf}{alg}_1^{n,s,t} = \Ext^{s,t}_{A}(H^*({\tmf}^{\s n+1} \s Z), \FF_2) \Rightarrow \Ext^{s+n,t}_{A}(H^*(Z), \FF_2). \end{equation}
with
$$ d_r: \E{\tmf}{alg}_r^{n,s,t} \rightarrow \E{\tmf}{alg}_r^{n+r, s-r+1, t}. $$
The construction of this spectral sequence is identical to that of \cite{BBBCX}.
The $E_1$-term fits into an exact sequence of chain complexes
 \[ 0 \to V^{*,*,*}_{alg}(Z) \to \E{\tmf}{alg}_1^{*,*,*} \to \mc{C}^{*,*,*}_{alg}(Z) \to 0  \]
 (see \eqref{eqn:ASSn})
from which we obtain a long exact sequence
\begin{equation}\label{eq:algLES}
\cdots \rightarrow H^{*,*,*}(V_{alg}) \rightarrow \E{\tmf}{alg}_2^{*,*,*}(Z) \rightarrow H^{*,*,*}(\mc{C}_{alg}) \xrightarrow{\partial_{alg}} H^{*+1, *,*}(V_{alg}) \rightarrow \cdots.
\end{equation}

We will compute the cohomology $H^{*,*,*}(\mc{C}_{alg}) $ explicitly, and the abutment of the $\tmf$-MSS (\ref{eqn:algss}) can be computed through a range, for example using Bruner's program. 
From this, we can inductively deduce information about $H^{*,*,*}(V_{alg})$, at least through a range. Further, $H^{n,s,t}(V_{alg})$ is concentrated in degree $s=0$ and
 the identification of cochain complexes
\[V^{n,t}(Z) \cong V^{n,0,t}_{alg}(Z)\]
implies that 
\[H^{*,*}(V) \cong H^{*,0,*}(V_{alg}). \]
This isomorphism allows us to transfer information from the $\tmf$-MSS to the $\tmf$-ASS.

In order to understand $\E{\tmf}{}_2^{*,*}(Z) $ and $ \E{\tmf}{alg}_2^{*,*,*}(Z) $, the first step is to compute $H^{*,*}(\mc{C}) $ and $H^{*,*,*}(\mc{C}_{alg})$  (see Theorems~\ref{thm:MRE1} and \ref{thm:HCalg} and Remark~\ref{rmk:MRE1}).

\section{Morava stabilizer groups and algebras}\label{sec:MSG}

Our goal will be to relate the good complex to the cobar complex for a certain subquotient $\sitd(2)$ of a form of the Morava stabilizer algebra --- this will be done in Section~\ref{sec:gooddiffs}.  The purpose of this section is to prepare some computations which we will use in the next section.  Of particular importance will be Proposition~\ref{prop:E2Z}, which gives a computation of the action of the group 
$$ G_{48} := \Aut(C) \rtimes \Gal < \GG_2 $$
on the $E_2$-homology of the finite complex $Z$.

\subsection{The Morava stabilizer algebra}\label{sec:morstabalg}

Historically, the forms of Morava $K$-theory $K(n)$ and Morava $E$-theory $E_n$ were typically taken to be those associated to the \emph{Honda height $n$ formal group} $\mc{H}_n$.  In the case of $K(n)$, it is regarded as a formal group over $\FF_p$, and in the case of $E_n$ is is regarded as a formal group over $\FF_{p^n}$.  The Honda height $n$ formal group law is characterized as the unique $p$-typical formal group law with $p$-series given by \cite[A2.1]{Ravenel}
$$ [p]_{\mc{H}_n}(x) = x^{p^n}. $$
Its endomorphism ring is given by
$$ \mr{End}(\mc{H}_n) \cong \WW(\FF_{p^n})\bra{S}/(Sa=a^\sigma S, \: S^n = p) $$
where $\WW(\FF_{p^n})$ is the Witt ring of $\FF_{p^n}$, and $\sigma$ is the lift of Frobenius.  Every endomorphism $\phi \in \mr{End}(\mc{H}_n)$ can be written uniquely as 
$$ a_0 + a_1S + a_2S^2 + \cdots $$
with $a_i \in \WW(\FF_{p^n})$ satisfying $a_i^{p^n} = a_i$.  The associated Morava stabilizer group is given by 
$$ \MS_n := \Aut(\mc{H}_n) = \{ \sum_{i}a_i S^i \in \mr{End}(\mc{H}_n) \: : \: a_0 \ne 0\}. $$

Because we are using $K(2)$, $K_2$, and $E_2$ to denote the forms of Morava $K$- and $E$-theory associated to the formal group $\widehat{C}$, we will let $K(2)'$, $K'_2$, $E_2'$ denote the forms of Morava $K$- and $E$-theory associated to the Honda height $2$ formal group $\mc{H}_2$.
The associated \emph{Morava stabilizer algebra} $\Sigma(2)$ is the Hopf algebra over $K(2)'_*$ given by \cite[Sec.~6.1]{Ravenel}
\begin{align}\label{eq:Sidef}
\Sigma(2) & := K(2)'_* \otimes_{BP_*} BP_*BP \otimes_{BP_*} K(2)'_*  \nonumber\\
& \cong \mathbb{F}_{2}[v_2^{\pm1}][t_1, t_2, \ldots]/(t_k^{4} - v_2^{2^k-1}t_k).
\end{align}
The $2$-periodic extension $K_2'$ of $K(2)'$ has homotopy groups
\[ (K'_2)_* \cong \FF_4[u^{\pm 1}]\]
with $\abs{u} = -2$ and
$$ v_2 = u^{-3}. $$  
We let 
$$ \Sigma_2 := (K'_2)_* \otimes_{K(2)'_*} \Sigma(2) $$
denote the associated Hopf algebra over $(K'_2)_*$.

Let 
$$ S_2  = \left\{ \sum_{i\geq 0} a_i S^i \in \MS_2 \: : \: a_0 = 1 \right\} $$ 
denote the $2$-Sylow subgroup of $\mathbb{S}_2$. 
The Morava stabilizer algebra $(\FF_4[u^{\pm 1}], \Sigma_2)$ can be regarded as an algebra of functions on $S_2$:
\begin{align}\label{eq:Siexdef}
\Sigma_2 
&\cong \Map^c(S_2, (K_2)_*) \nonumber \\
& \cong \mathbb{F}_{4}[u^{\pm1}][ t_1, t_2, \ldots]/(t_k^{4} - v_2^{2^k-1}t_k).
\end{align}
Here, $\Map^c$ denotes the continuous functions where $S_2$ is given its profinite topology and $(K_2)_*$ is given the discrete topology, and the functions $t_k$ are defined as
\begin{equation}\label{eq:tkdef}
t_k(1 +a_1S +a_2S^2+ \ldots) = a_k u^{1-2^k}.
\end{equation}
The coproduct $\psi$ is determined by $\psi(t_k) = \sum  t_k'\otimes t_k''$ where
\[t_k(g'g'')  = \sum t_k'(g')t_k''(g''), \ \ \ \  g',g'' \in S_2.\]

The cohomology of $\Sigma_2$ was essentially 
studied by Ravenel in \cite[Thm.~6.3.27]{Ravenel}, and Ravenel's approach to this computation will be used in Section~\ref{sec:good} to give an essential foothold in the computation of the cohomology of the good complex.

\subsection{The elliptic Morava stabilizer group}\label{sec:ellMSG}

We will begin this subsection with a discussion of the extended Morava stabilizer group associated to the unique isomorphism class of supersingular elliptic curve $C$ defined over $\FF_2$, and its relationship with both $\TMF$ and the more traditionally studied Morava stabilizer group associated to the Honda height $2$ formal group $\mc{H}_2$.
We will then introduce a certain quotient $\br{\Sigma}_2$ of $\Sigma_2$ associated to an open subgroup of this extended Morava stabilizer group.

We first recall some facts about the automorphism group of the supersingular elliptic curve $C$, and its associated formal group.
We refer to \cite{beaudry_towards} and \cite{henn_centr} for more details in this context.

Over $\FF_4$, the endomorphism ring of the elliptic curve $C : y^2+y =x^3$
is the maximal order (the Hurwitz integers)
$$ \mr{End}(C) = \ZZ\left\{1, i, j, \frac{1+i+j+k}{2}\right\} $$
in the quaternion algebra
$$ D = \QQ\bra{i,j}/(i^2 = -1, \: j^2 = -1, \: ij = -ji). $$
with $k := ij$ \cite[pp.~237-9]{Deuring}.  
Define
$$ \omega = -\frac{1}{2}(1+i+j+k). $$
Then we have 
$$ \omega^3 = 1, \quad \omega^2 + \omega + 1 = 0, $$
and 
$$ \omega i \omega^2 = j, \: \omega j \omega^2 = k, \: \omega k \omega^2 = i. $$
The automorphism group of $C$ is the subgroup of $D^\times$ generated by 
$$ Q_8 = \{ \pm 1, \pm i, \pm j, \pm k \} $$
and $\omega$, so we have
$$ G_{24} := \mr{Aut}(C) = Q_8 \rtimes C_3. $$

We define 
$$ T := j-k \in \mr{End}(C) $$
so we have
$$ T^2 = -2. $$
Then $D$ has the alternative presentation as
\begin{equation}\label{eq:Tpresentation}
\QQ(\omega)\bra{T}/(Ta = a^\sigma T, \: T^2 = -2) 
\end{equation}
where $\omega^\sigma = \omega^2$ is the action of the Galois group
$$ \Gal := \Gal(\QQ(\omega)/\QQ) \cong \Gal(\FF_4/\FF_2) = \bra{\sigma}. $$
For example, $i \in D$ can be expressed as $\frac{1}{1+2\omega}(1-T)$  in \eqref{eq:Tpresentation}.

Since the curve $C$ is defined over $\FF_2$, the Galois group $\Gal$ also acts on $\rm{End}(C)$, and hence on $\Aut(C)$ and $D$.  This action is encoded in the following lemma. 

\begin{lem}
The Galois action on an element $x \in D$ is given by
$$ x^\sigma = -\frac{1}{2}TxT. $$
\end{lem}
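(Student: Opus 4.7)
The plan is to reduce the claimed identity to a one-line calculation within the presentation (\ref{eq:Tpresentation}) of $D$, once I have identified the form of the Galois action on $D$. First I would observe that every $x \in D$ has a unique expression $x = a + bT$ with $a, b \in \QQ(\omega)$, and that the defining relations $Ta = a^\sigma T$ and $T^2 = -2$ give
$$ -\tfrac{1}{2}\, T(a + bT) T \;=\; -\tfrac{1}{2}\bigl(a^\sigma T^2 + b^\sigma T^3\bigr) \;=\; a^\sigma + b^\sigma T. $$
So it will suffice to show that the Galois action takes the form $(a + bT)^\sigma = a^\sigma + b^\sigma T$.

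For that, I would argue as follows. Since $C$ is defined over $\FF_2$, the action of $\Gal = \Gal(\FF_4/\FF_2)$ on $\mr{End}(C) \otimes \QQ = D$ must fix every endomorphism defined over $\FF_2$, and must act on the subfield $\QQ(\omega) \subset D$ by the standard nontrivial involution $\omega \mapsto \omega^2$, because $\omega \in \Aut(C)$ is the order $3$ automorphism of $C$, which is visible only over $\FF_4$. The element $T = j - k$, on the other hand, coincides up to a sign with the Frobenius endomorphism of $C / \FF_2$: the curve $y^2 + y = x^3$ has three rational points over $\FF_2$, so the trace of Frobenius vanishes and the Frobenius satisfies $\pi^2 + 2 = 0$, pinning $\pi$ down as $\pm T$. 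In particular $T$ is $\Gal$-fixed. Combining this with the action on the $\QQ(\omega)$ factor yields $(a + bT)^\sigma = a^\sigma + b^\sigma T$, and inserting this into the computation above finishes the proof.

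The only step I expect to require any thought is the identification of $T$ with the Frobenius up to sign; the remainder is a direct manipulation inside the presentation (\ref{eq:Tpresentation}), so I do not anticipate any substantive obstacle. As a consistency check, applying the formula to $\omega$ yields $-\tfrac{1}{2} T \omega T = -\tfrac{1}{2}\omega^2 T^2 = \omega^2$, and applying it to $T$ yields $-\tfrac{1}{2} T \cdot T \cdot T = -\tfrac{1}{2} T^3 = T$, both consistent with the expected $\Gal$-action.
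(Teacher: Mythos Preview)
Your reduction to proving $(a+bT)^\sigma = a^\sigma + b^\sigma T$ is exactly right, and the algebraic manipulation $-\tfrac12 T(a+bT)T = a^\sigma + b^\sigma T$ is the same closing computation the paper performs. Your argument for $\omega^\sigma = \omega^2$ is also fine: applying Frobenius to the coefficients of the explicit automorphism $(x,y)\mapsto(\omega^2 x,y)$ gives $(x,y)\mapsto(\omega x,y)$, which is $\omega^2$.

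The gap is in your justification of $T^\sigma = T$. You argue that $\pi^2 = -2 = T^2$ forces $\pi = \pm T$, but in a quaternion algebra the equation $x^2 = -2$ cuts out a $2$-sphere of pure quaternions of norm $2$; even inside the maximal order $\mr{End}(C)$ there are twelve such elements ($\pm(i\pm j)$, $\pm(i\pm k)$, $\pm(j\pm k)$), not two. So the characteristic polynomial of Frobenius does not by itself pin $\pi$ down to $\pm T$. It is in fact true that $T = -\pi$ here, but establishing this requires further input (for instance, comparing their actions on $C[3]$ and then using a short norm argument, or an explicit computation of the isogeny $j-k$), which is no easier than what you are trying to avoid.

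The paper takes a different and more direct route to $T^\sigma = T$: it computes the Galois action on the generators $i$ and $\omega$ of $G_{24}$ by writing down the representation $\td\rho: G_{48}\xrightarrow{\cong} GL_2(\FF_3)$ on the $\FF_4$-points $C(\FF_4)\cong\FF_3^2$, reads off $i^\sigma = -i$ and $\omega^\sigma = \omega^2$, and then simply expands
\[
T^\sigma = (j-k)^\sigma = (\omega i\omega^2)^\sigma - (\omega^2 i\omega)^\sigma = \omega^2(-i)\omega - \omega(-i)\omega^2 = -k + j = T.
\]
If you want to repair your argument without the $3$-torsion machinery, you can get $i^\sigma = -i$ directly by applying $\sigma$ to the coefficients of $i:(x,y)\mapsto(x+1,y+x+\omega^2)$ and recognising the result as $i^{-1}$; then the displayed line above finishes the job.
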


\begin{proof}
  As discussed in Section~\ref{sec:background}, the elliptic curve $C$ admits a Weierstrass presentation
  \begin{equation}\label{eq:Weierstrass}
   y^2 + y = x^3. 
   \end{equation}
This means that for an $\FF_2$-algebra $R$, the $R$-points of the elliptic curve $C$ is given by
$$ C(R) = \{ (x,y) \in R^2 \: : \: y^2+y=x^3 \} \cup \{\infty\}. $$
The $\FF_4$ points of $C$ form a group isomorphic to $\FF_3 \times \FF_3$.
A basis for this $\FF_3$-vector space is given in $(x,y)$ coordinates by
\begin{align*}
P_1 & := (0,0), \\
P_2 & := (1,\omega).
\end{align*}
The generators $i$ and $\omega$ of the group $G_{24} = \Aut(C)$ correspond to the automorphisms
\begin{align*}
i: (x,y) & \mapsto (x+1, y+x+\omega^2), \\
\omega: (x,y) & \mapsto (\omega^2 x, y).
\end{align*} 
The induced action of these automorphisms on the 
$\FF_4$-points of the curve $C$, 
with respect to the basis $(P_1,P_2)$, induces a representation
$$ \rho: G_{24} \hookrightarrow GL_2(\FF_3) $$
with
\begin{align*}
\rho(i) & = 
\begin{bmatrix}
0 & 1 \\ -1 & 0 
\end{bmatrix}, \\
\rho(\omega) & =   
\begin{bmatrix}
1 & -1 \\ 0 & 1 
\end{bmatrix}.
\end{align*}
The Galois action on $C(\FF_4)$ extends the representation $\rho$ to an isomorphism
\begin{equation}\label{eq:G48}
\td{\rho}: G_{48} := G_{24} \rtimes \Gal \xrightarrow{\cong} GL_2(\FF_3)
\end{equation}
given by
$$ \td{\rho}(\sigma) = 
\begin{bmatrix}
1 & 0 \\ 0 & -1 
\end{bmatrix}.
$$
One can therefore use this isomorphism to deduce that
\begin{align*}
i^\sigma & = -i, \\
\omega^\sigma & = \omega^2.
\end{align*}
One easily checks from this:
\begin{align*}
 T^\sigma & = (j-k)^\sigma \\
 & = (\omega i \omega^2 - \omega^2 i \omega)^\sigma \\
 & = \omega^2 (-i) \omega - \omega(-i)\omega^2 \\
 & = T. 
 \end{align*}
From the presentation (\ref{eq:Tpresentation}), every element $x \in D$ takes the form
$$ x_0 + x_1 T $$
with $x_i \in \QQ(\omega)$.
We then compute
\begin{align*}
-\frac{1}{2} TxT & = -\frac{1}{2}T(x_0 + x_1 T)T \\
& = -\frac{1}{2}Tx_0 T - \frac{1}{2}T x_1 T^2 \\
& = -\frac{1}{2}T^2 x_0^\sigma + T x_1 \\
& = x_0^\sigma + x_1^\sigma T \\
& = (x_0 + x_1 T)^\sigma \\
& = x^\sigma. \qedhere
\end{align*}
\end{proof}

The formal group of $\widehat{C}$ has $-2$-series
$$ [-2]_{\widehat{C}}(x) = x^4. $$
The endomorphism ring of the formal group $\widehat{C}$ is the maximal order
$$ \mr{End}(\widehat{C}) = \WW(\FF_4)\bra{T}/(Ta=a^\sigma T, \: T^2 = -2) $$
in the $2$-adic division algebra 
$$ D_2 := D \otimes \QQ_2, $$
where $\WW(\FF_4) = \ZZ_2[\omega]/(\omega^2+\omega+1)$ is the Witt ring.
The associated Morava stabilizer group 
$$ \MS_2 := \mr{Aut}(\widehat{C}) $$
is the group of units in the order $\mr{End}(\widehat{C})$.
Since $\widehat{C}$ is defined over $\FF_2$, its automorphism group $\MS_2$ also gets an action of $\Gal$, with Galois action given by
$$ g^\sigma = -\frac{1}{2}TgT, $$
and we let 
$$ \GG_2 := \MS_2 \rtimes \Gal $$
denote the resulting extended Morava stabilizer group.  The subgroup $G_{48}$ is a maximal finite subgroup of $\GG_2$.

It is observed in \cite[Sec.~3.1]{beaudry_towards} and \cite{henn_centr} that the formal group $\widehat{C}$ and the Honda formal group $\mc{H}_2$ have isomorphic endomorphism rings.  Explicitly, one gets an isomorphism
$$ \mr{End}(\widehat{C}) \cong \mr{End}(\mc{H}_2) $$ 
by mapping 
\begin{equation}\label{eq:TtoS}
 T \mapsto \alpha S, 
\end{equation} 
where
$$ \alpha = \frac{1-2\omega}{\sqrt{-7}} \in \WW(\FF_4) $$
(for the choice of $\sqrt{-7} \in \ZZ_2$ with $\sqrt{-7} \equiv 1 \pmod 4$).  The essential property of $\alpha$ is that
$$ \alpha \alpha^\sigma = -1. $$
This induces an isomorphism 
\begin{equation}\label{eq:MSGiso}
 \Aut(\mc{H}_2) \cong \aut(\widehat{C}) = \MS_2. 
 \end{equation}
\emph{However, this isomorphism is not $\Gal$-equivariant!} 

Thus the group $\MS_2$ admits two different Galois actions, one coming from the natural Galois action on $\Aut(\widehat{C})$ and one coming from the natural Galois action on $\Aut(\mc{H}_2)$ using the isomorphism (\ref{eq:MSGiso}).  We shall let
$$ \Gal < \Aut(\MS_2) $$
denote the subgroup generated by the Galois automorphism $\sigma$ coming from $C$, and let
$$ \Gal' < \Aut(\MS_2) $$
be the subgroup generated by the Galois automorphism $\sigma'$ coming from $\mc{H}_2$. 
The action of $\sigma'$ is given by 
$$ g^{\sigma'} = \frac{1}{2}SgS. $$
We will denote the corresponding  
extended Morava stabilizer group by
$$ \GG'_2 := \MS_2 \rtimes \Gal'. $$

\begin{lem}\label{lem:sigma'}
For $g \in \MS_2$ we have
$$ g^\sigma = -\alpha g^{\sigma'} \alpha^\sigma. $$
\end{lem}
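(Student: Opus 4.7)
The plan is to directly compute $-\alpha g^{\sigma'}\alpha^\sigma$ by unwinding the two definitions of the Galois action and then using the identification $T = \alpha S$. Recall that $g^{\sigma'} = \tfrac{1}{2}SgS$ is defined using the presentation $\mathrm{End}(\mc{H}_2) = \WW(\FF_4)\langle S\rangle/(Sa = a^\sigma S,\: S^2 = 2)$, while $g^\sigma = -\tfrac{1}{2}TgT$ is defined using $\mathrm{End}(\widehat{C}) = \WW(\FF_4)\langle T\rangle/(Ta = a^\sigma T,\: T^2 = -2)$. The isomorphism between the two presentations is given by $T \mapsto \alpha S$, and the relation $\alpha\alpha^\sigma = -1$ is precisely what makes this identification compatible with $T^2 = -2$ (since $(\alpha S)^2 = \alpha\alpha^\sigma S^2 = -2$) and $Ta = a^\sigma T$ (since $\alpha S a = \alpha a^\sigma S = a^\sigma \alpha S$).

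First I would substitute the definition of $g^{\sigma'}$ into the right-hand side:
\[
-\alpha g^{\sigma'} \alpha^\sigma = -\tfrac{1}{2}\,\alpha S g S \alpha^\sigma.
\]
The key move is then to use the commutation rule $Sa = a^\sigma S$ at the far right, taking $a = \alpha^\sigma$: since $\sigma^2 = 1$, this yields $S\alpha^\sigma = (\alpha^\sigma)^\sigma S = \alpha S$. Substituting,
\[
-\tfrac{1}{2}\,\alpha S g S \alpha^\sigma = -\tfrac{1}{2}\,\alpha S g \cdot \alpha S = -\tfrac{1}{2}\,(\alpha S)\,g\,(\alpha S) = -\tfrac{1}{2}\,T g T = g^\sigma,
\]
which is exactly the identity claimed.

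There is essentially no obstacle here; the lemma is a straightforward bookkeeping exercise once the two presentations and the identification $T = \alpha S$ are in hand. The one point worth flagging in the write-up is that the Frobenius lift on $\WW(\FF_4)$ intrinsic to the relation $Sa = a^\sigma S$ is a \emph{ring} automorphism of $\WW(\FF_4)$, which is unique and therefore agrees whether one views it as coming from $\sigma$ or from $\sigma'$ --- so the notation $\alpha^\sigma$ is unambiguous. Once this is noted, the computation above gives the lemma in one line.
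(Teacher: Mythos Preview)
Your proof is correct and takes essentially the same approach as the paper: both use the substitution $T=\alpha S$ and the commutation $S\alpha^\sigma=\alpha S$ (equivalently $\alpha S=S\alpha^\sigma$) to pass between $-\tfrac{1}{2}TgT$ and $-\alpha\cdot\tfrac{1}{2}SgS\cdot\alpha^\sigma$. The only difference is direction --- the paper starts from $g^\sigma$ and simplifies to $-\alpha g^{\sigma'}\alpha^\sigma$, while you start from the latter and arrive at $g^\sigma$ --- which is immaterial.
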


\begin{proof}
We compute
\begin{align*}
g^\sigma & = -\frac{1}{2}TgT \\
& = -\frac{1}{2}\alpha S g \alpha S \\
& = -\alpha \frac{1}{2}S g S \alpha^\sigma  \\
& = -\alpha g^{\sigma'} \alpha^\sigma. \qedhere
\end{align*}
\end{proof}

The inclusion of $G_{24}$ in $\MS_2$ gives a splitting of the short exact sequence
\[ 1 \to K \to \mathbb{S}_2 \to G_{24} \to 1  \]
where $K$ is the open normal subgroup of $\MS_2$ 
\begin{equation}\label{eq:K}
 K = \{ 1 + a_2S^2 + a_3 S^3 + \cdots \in \MS_2 \: : \: a_2 \in \{0,\omega\} \}
\end{equation}
discussed at length in Section 2.5 of \cite{beaudryresolution}.

The inclusion of groups
\[ K \hookrightarrow S_2  \]
corresponds to a quotient of Hopf algebras
\[\Sigma_2 \to \overline{\Sigma}_2\]
where
\begin{equation}\label{eq:barSigma_2} 
\begin{split}
\overline{\Sigma}_2 & =  \Map^c(K,\FF_4[u^{\pm}]) \\
& \cong \Sigma_2/(t_1, \omega v_2t_2+t_2^2)
\end{split}
\end{equation} 
(compare with \cite[Proposition 6.3.30]{Ravenel}, but Ravenel's choice of $K$ is Galois conjugate to ours).

\subsection{The Morava $E$-homology of $Z$}

In this subsection we will use the computations of \cite{bhateggerK2Z} to derive the following result (where $G_{48}$ is the group (\ref{eq:G48})).

\begin{prop}\label{prop:E2Z}
There is an isomorphism of $G_{48}$-modules
\[(E_2)_*Z \cong  \mr{CoInd}_{C_3 \rtimes \mr{Gal}}^{G_{48}} \FF_4[u^{\pm1}] \]
where $C_3 \rtimes \mr{Gal} $ acts on $\FF_4[u^{\pm1}]$ via
\[\omega_*(\lambda u^k) = \lambda \omega^k u^k, \ \ \ \ \sigma_*(\lambda u^k) = \lambda^{\sigma} u^k. \]
\end{prop}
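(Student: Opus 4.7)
The plan is to compute $(E_2)_*Z$ by combining the equivalence $\tmf\wedge Z\simeq k(2)$ of (\ref{eq:tmfsmashZ}) with the $K(2)$-local Galois-theoretic identifications $\tmf_{K(2)}\simeq E_2^{hG_{48}}$ from (\ref{eq:K2tmf}) and $\tmf_1(3)_{K(2)}\simeq E_2^{hC_3\rtimes\Gal}$, together with the Galois descent for the finite $G_{48}$-extension $\tmf_{K(2)}\to E_2$. The concluding identification of the $(C_3\rtimes\Gal)$-action will come from the standard action of this subgroup of $\GG_2$ on $\pi_{-2}E_2$ and on the Witt-ring coefficients.

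First, I would base-change $\tmf\wedge Z\simeq k(2)$ along the $\tmf$-algebra $\tmf\to\tmf_1(3)$ of (\ref{eq:tmfBP2map}): since $\tmf\wedge Z$ is naturally a $\tmf$-module, one obtains
\[
\tmf_1(3)\wedge Z \;\simeq\; \tmf_1(3)\wedge_\tmf(\tmf\wedge Z) \;\simeq\; \tmf_1(3)\wedge_\tmf k(2),
\]
which $K(2)$-locally, after smashing with $E_2$ over $\tmf_1(3)_{K(2)}$ and using associativity, becomes
\[
E_2\wedge Z \;\simeq\; E_2\wedge_{\tmf_{K(2)}}K(2).
\]
Next, the Galois-descent equivalence for the intermediate subgroup $C_3\rtimes\Gal\le G_{48}$ reads
\[
E_2\wedge_{\tmf_{K(2)}}\tmf_1(3)_{K(2)} \;\simeq\; \Map\bigl(G_{48}/(C_3\rtimes\Gal),\,E_2\bigr)
\]
as $(E_2,G_{48})$-modules, with $G_{48}$ acting diagonally (by translation on the coset space and by the natural action on $E_2$). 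Combining this with the above, and using the description of $K(2)$ as $K_2^{h\Gal}$ with $K_2=E_2/(2,u_1)$ a $G_{48}$-equivariant quotient, one identifies
\[
E_2\wedge Z \;\simeq\; \Map\bigl(G_{48}/(C_3\rtimes\Gal),\,K_2\bigr)
\]
as $(E_2,G_{48})$-modules.

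Taking homotopy groups and invoking the standard fact that, for a $G_{48}$-module $V$, the diagonal-action module $\Map(G_{48}/H,V)$ is canonically isomorphic to $\mr{CoInd}_{H}^{G_{48}}(V|_{H})$, one concludes
\[
(E_2)_*Z \;\cong\; \mr{CoInd}_{C_3\rtimes\Gal}^{G_{48}}\FF_4[u^{\pm 1}]
\]
as $G_{48}$-modules. The stated action formulas are then read off from Section~\ref{sec:ellMSG}: $\omega\in C_3\subset\Aut(C)$ acts on the Lubin-Tate coordinate $u\in\pi_{-2}E_2$ by multiplication by $\omega$ (from its scalar action on the formal group $\widehat{C}$), so $\omega_*(\lambda u^k)=\lambda\omega^k u^k$, and the Galois generator $\sigma$ acts as Frobenius on $\WW(\FF_4)$-coefficients while fixing $u$, giving $\sigma_*(\lambda u^k)=\lambda^\sigma u^k$.

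The main obstacle is the identification $E_2\wedge_{\tmf_{K(2)}}K(2)\simeq\Map(G_{48}/(C_3\rtimes\Gal),K_2)$ at the level of $(E_2,G_{48})$-modules. This is technically delicate because $K(2)$ is not of the form $E_2^{hH}$ for any subgroup $H$, so one cannot directly invoke an intermediate-subgroup descent formula; instead one must descend through $\tmf_1(3)_{K(2)}$ and carefully track the $G_{48}$-equivariance, which is exactly the calculation supplied by \cite{bhateggerK2Z} and flagged in the opening sentence of this subsection. A secondary point is that the equivalence $\tmf\wedge Z\simeq k(2)$ must be taken compatibly with the respective $\tmf$-module structures, so that the base-change argument along $\tmf\to\tmf_1(3)$ is valid.
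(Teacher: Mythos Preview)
Your outline has a genuine gap at exactly the step you flag as the ``main obstacle,'' and that step is essentially the entire content of the proposition.  The identification
\[
E_2\wedge_{\tmf_{K(2)}}K(2)\;\simeq\;\Map\bigl(G_{48}/(C_3\rtimes\Gal),\,K_2\bigr)
\]
as $(E_2,G_{48})$-modules is not something one can read off from Galois descent alone: Galois descent only tells you that $E_2\wedge_{\tmf_{K(2)}}K(2)$ is \emph{some} $(E_2,G_{48})$-module, and identifying which one requires knowing the $G_{48}$-action on $(E_2)_*Z$, which is what you are trying to prove.  Your suggestion to ``descend through $\tmf_1(3)_{K(2)}$'' does not help, because you have no independent description of $K(2)$ as a $\tmf_1(3)_{K(2)}$-module either.  (Incidentally, your claim $K(2)=K_2^{h\Gal}$ is false on the nose: $K(2)$ is $6$-periodic with $\pi_*K(2)=\FF_2[v_2^{\pm1}]$, while $K_2^{h\Gal}$ is $2$-periodic.)

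The paper's argument is not structural but explicitly computational, and this appears to be unavoidable.  Bhattacharya--Egger \cite{bhateggerK2Z} compute the $\MS_2$-action on $(E_2')_*Z$ by explicit formulas (their Table~1), where $E_2'$ is Morava $E$-theory for the \emph{Honda} formal group.  The paper then has to translate this to the elliptic $E_2$: the two theories have a common extension $E_2''$ over $\FF_{16}$, but the Galois actions differ by the unit $\alpha$ of Lemma~\ref{lem:sigma'}.  Using the explicit formulas, the paper produces a specific element $x\in(E_2'')_0Z$ (involving a carefully chosen $\td\omega\in\FF_{16}$) that is simultaneously fixed by $C_3$ and by the elliptic Galois generator $\sigma=\alpha\sigma'$, and that generates $(E_2'')_0Z$ freely over $\FF_{16}[Q_8]$.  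Descending to $(E_2)_0Z$ then exhibits the coinduced structure directly.  Your outline does not engage with the Honda/elliptic comparison or with the explicit generator, and the reference \cite{bhateggerK2Z} does not supply the $G_{48}$-equivariant identification you need in any packaged form---only the raw action formulas from which it must be extracted by hand.
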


\begin{cor}\label{cor:E2Z}
There is an isomorphism of $Q_8$-modules 
\[(E_2)_*Z \cong  \mr{CoInd}_{1}^{Q_8} \FF_4[u^{\pm1}]. \]
\end{cor}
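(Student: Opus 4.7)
The plan is to deduce this directly from Proposition~\ref{prop:E2Z} by a standard Mackey-style restriction argument, using the fact that the subgroups $Q_8$ and $C_3 \rtimes \mr{Gal}$ fit together to exhaust $G_{48}$ with trivial intersection.

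First I would record the set-theoretic decomposition of $G_{48}$. Recall $G_{24} = Q_8 \rtimes C_3$, so $G_{48} = Q_8 \rtimes (C_3 \rtimes \mr{Gal})$ as groups, and in particular the multiplication map
\[
Q_8 \times (C_3 \rtimes \mr{Gal}) \longrightarrow G_{48}, \qquad (q,h) \longmapsto qh
\]
is a bijection of sets. Equivalently, $Q_8 \cap (C_3 \rtimes \mr{Gal}) = \{1\}$ and $|Q_8|\cdot|C_3\rtimes\mr{Gal}| = 8 \cdot 6 = 48 = |G_{48}|$, so there is a single $(Q_8,\, C_3\rtimes\mr{Gal})$-double coset in $G_{48}$, represented by the identity.

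Next I would apply the Mackey double coset formula to Proposition~\ref{prop:E2Z}. Writing $H = C_3 \rtimes \mr{Gal}$ and $M = \FF_4[u^{\pm 1}]$ with its $H$-action, Mackey's formula gives
\[
\mr{Res}^{G_{48}}_{Q_8} \mr{CoInd}^{G_{48}}_{H} M \;\cong\; \bigoplus_{g \in Q_8\backslash G_{48}/H} \mr{CoInd}^{Q_8}_{Q_8 \cap gHg^{-1}} \bigl(\mr{Res}^{gHg^{-1}}_{Q_8 \cap gHg^{-1}}({}^gM)\bigr).
\]
By the previous paragraph the sum has a single summand, and $Q_8 \cap H = \{1\}$, so the right-hand side collapses to $\mr{CoInd}^{Q_8}_{1} M = \mr{CoInd}^{Q_8}_{1} \FF_4[u^{\pm 1}]$. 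Applying $\mr{Res}^{G_{48}}_{Q_8}$ to Proposition~\ref{prop:E2Z} yields the claimed isomorphism.

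Since the statement is essentially a formal consequence of Proposition~\ref{prop:E2Z} together with the structure of $G_{48}$, I do not anticipate any genuine obstacle; the only small point that needs care is checking $Q_8 \cap (C_3 \rtimes \mr{Gal}) = 1$, which is immediate from the isomorphism $\widetilde\rho: G_{48} \xrightarrow{\cong} GL_2(\FF_3)$ of (\ref{eq:G48}), since under $\widetilde\rho$ the subgroup $Q_8$ maps to the quaternion subgroup and $C_3 \rtimes \mr{Gal}$ maps to a subgroup of order $6$ with trivial intersection.
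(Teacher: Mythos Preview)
Your proposal is correct and is precisely the argument the paper has in mind: the corollary is stated without proof because it follows immediately from Proposition~\ref{prop:E2Z} by restricting along $Q_8 \hookrightarrow G_{48}$, and your Mackey computation (single double coset, trivial intersection $Q_8 \cap (C_3\rtimes\mr{Gal})=1$) is exactly how that restriction is carried out.
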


The proof of Proposition~\ref{prop:E2Z} will require some preliminary recollections from \cite{bhateggerK2Z}.  Recall we are using $E'_2$ to denote the Morava $E$-theory spectrum associated to the Honda height $2$ formal group over $\FF_4$.  The spectrum $E'_2$ has an action of the extended Morava stabilizer group $\GG_2' = \MS_2 \rtimes \Gal'$ of the previous subsection.

The third author and Egger computed $(E'_2)_*Z$ as
\begin{equation}\label{eq:E'Z}
 (E'_2)_* Z \cong \FF_4[u^\pm]\{\bar{x}_0, \bar{x}_2, \bar{x}_4, \bar{x}_6, \bar{y}_6, \bar{y}_8, \bar{y}_{10}, \bar{y}_{12} \}, \quad \abs{\bar{x}_i} = \abs{\bar{y}_i} = 0,
 \end{equation}
with an explicit action of $\MS_2$ \cite[Table~1]{bhateggerK2Z}.  Since the generators $u^{i/2}\bar{x}_i$ and $u^{i/2}y_i$ are in the image of the map
$$ BP_*Z \rightarrow (E'_2)_*Z, $$
they have trivial action of the Galois group $\Gal'$, and therefore $\Gal'$ acts on (\ref{eq:E'Z}) by acting on $\FF_4$.
Following the proof of \cite[Thm.~4.12]{bhateggerK2Z}, we see that for any $x \in (E_2')_0Z$ with 
\begin{equation}\label{eq:xformula}
x = \bar{y}_{12} + \alpha_0 \bar{x}_0 + \alpha_2 \bar{x}_2 + \alpha_4 \bar{x}_4 + \alpha_6 \bar{x}_6, \quad \alpha_i \in \FF_4 
\end{equation}
we have\footnote{In the notation of \cite{bhateggerK2Z}, we have $x = k \cdot c_3'+ \text{terms involving $c_i$}$, where $k \in Q_8$ is the unit quaternion.} 
\begin{equation}\label{eq:Q8E'Z}
 (E_2')_0Z = \FF_4[Q_8]\{x\}. 
 \end{equation}

\begin{proof}[Proof of Proposition~\ref{prop:E2Z}]
Let $\bar{E}_2$ denote the Morava $E$-theory associated to the height $2$ Honda formal group over the algebraic closure $\bar{\FF}_2$, with action of 
$$ \bar{\GG}'_2 = \MS_2 \rtimes \Gal(\bar{\FF}_2/\FF_2). $$
 Let $\sigma'$ denote the Frobenius, regarded as a generator of $\Gal(\bar{\FF}_2/\FF_2)$, acting on $\MS_2$ as in the previous subsection.  Then we have
 $$ E_2' \simeq \bar{E}^{h\bra{(\sigma')^2}}_2. $$
Since the formal group of the elliptic curve $C$ is isomorphic to the Honda formal group over $\bar{\FF}_2$, we deduce that the associated Morava $E$-theory is the same, but the action of the Galois group is different.  The calculations of the previous subsection imply that if we define
$$ \sigma := \alpha \sigma' \in \bar{\GG}'_2 $$ 
then the Morava $E$-theory associated to the formal group of $C$ over $\FF_4$ is given by
$$ E_2 \simeq \bar{E}^{h\bra{(\sigma)^2}}_2. $$
Since $\sigma^4 = (\sigma')^4$, we deduce that $E_2$ and $E'_2$ have the common extension
$$ E_2'' := \bar{E}_2^{h\bra{\sigma^4}}. $$
We therefore have
$$ (E_2'')_0Z = \FF_{16} \otimes_{\FF_4} (E_2')_0 Z \cong \FF_{16}[Q_8]\{ x\} $$
for any $x$ of the form (\ref{eq:xformula}) (with $\alpha_i \in \FF_{16}$).
Let $\td{\omega} \in \FF^{\times}_{16}$ be a generator, so that
$$ \td{\omega}^{\sigma^4} = \td{\omega}^{16} = \td{\omega}. $$
Since $\td{\omega} + \td{\omega}^4 \in \FF_4$ we can take $\td{\omega}$ so that
$$ \td{\omega} + \td{\omega}^4 = \omega \in \FF_4. $$
Define
$$ x := \bar{y}_{12}+(1+\td{\omega}^4 + \td{\omega}^8)\bar{x}_6 + (a+b)(\td{\omega}+\td{\omega}^8)\bar{x}_0 $$
(where $a,b \in \FF_2$ are those associated to the choice of $Z \in \td{\mc{Z}}$ as in \cite[Lem.~3.5]{bhateggerK2Z}).
Then it follows from \cite[Table~1]{bhateggerK2Z} and
$$ \alpha = 1 + 2\omega \mod 4 $$
that
\begin{enumerate}
\item $\sigma = \alpha\sigma'$ acts trivially on $x$,
\item $\bra{\omega} = C_3 < \MS_2$ acts trivially on $x$,
\item $x$ generates $(E_2'')_0Z$ as a free $\FF_{16}[Q_8]$-module.
\end{enumerate} 
It follows that $x$ generates
$$ (E_2)_0Z \cong [(E''_2)_0Z]^{\bra{\sigma^2}} $$
as an $\FF_4[Q_8]$-module.  This, together with (1) and (2) above, implies 
\[ (E_2)_*Z \cong  \mr{CoInd}_{C_3 \rtimes \mr{Gal}}^{G_{48}} \FF_4[u^{\pm 1}] \cong \Map_{C_3\rtimes \mr{Gal}}( G_{48}, \FF_4[u^{\pm1}]).  \]
\end{proof}

While Proposition~\ref{prop:E2Z} describes $(E_2)_*Z$ as a $G_{48}$-module, it is natural to ask for a similar description of $(E_2)_*Z$ as a $\GG_2$-module.  The following proposition does almost that: it computes $(E_2)_*Z$ as an $\MS_2$-module using the subgroup $K$ (\ref{eq:K}).  

\begin{prop}\label{prop:S2E2Z}
There is an isomorphism of $\MS_2$-modules
$$ (E_2)_*Z \cong \mr{CoInd}_{C_3 \ltimes K}^{\MS_2}\FF_4[u^{\pm 1} ]. $$
\end{prop}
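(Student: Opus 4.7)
The plan is to reduce Proposition~\ref{prop:S2E2Z} to two assertions: (a) the normal subgroup $K \trianglelefteq \MS_2$ acts trivially on $(E_2)_*Z$; and (b) the induced $G_{24}$-module $(E_2)_*Z$ (via $\MS_2/K \cong G_{24}$) is isomorphic to $\mr{CoInd}_{C_3}^{G_{24}} \FF_4[u^{\pm 1}]$. This reduction uses the splitting $\MS_2 \cong K \rtimes G_{24}$ from the short exact sequence preceding (\ref{eq:barSigma_2}), under which $C_3 \ltimes K \leq \MS_2$ corresponds to $C_3 \leq G_{24}$. Since the implicit $K$-action on the coefficient module $\FF_4[u^{\pm 1}]$ is trivial (so that the $(C_3 \ltimes K)$-action factors through $C_3$ with the action from Proposition~\ref{prop:E2Z}), the right-hand side $\mr{CoInd}_{C_3 \ltimes K}^{\MS_2} \FF_4[u^{\pm 1}]$ is naturally the inflation along $\MS_2 \twoheadrightarrow G_{24}$ of the $G_{24}$-module $\mr{CoInd}_{C_3}^{G_{24}} \FF_4[u^{\pm 1}]$. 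An $\MS_2$-module is such an inflation precisely when $K$ acts trivially, which gives the equivalence with (a) and (b).

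Claim (b) will follow from Proposition~\ref{prop:E2Z} by restricting along $G_{24} \leq G_{48}$. Since $G_{48} = G_{24} \cdot (C_3 \rtimes \Gal)$, the double coset space $G_{24} \backslash G_{48} / (C_3 \rtimes \Gal)$ consists of a single orbit, so the Mackey decomposition immediately yields
$$ \mr{Res}_{G_{24}}^{G_{48}} \mr{CoInd}_{C_3 \rtimes \Gal}^{G_{48}} \FF_4[u^{\pm 1}] \;\cong\; \mr{CoInd}_{C_3}^{G_{24}} \FF_4[u^{\pm 1}], $$
matching (b) on the nose (with the $C_3$-action inherited from the one in Proposition~\ref{prop:E2Z}).

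The main remaining work is claim (a), and it will be verified by direct computation using the explicit $\MS_2$-action on the $\FF_4[u^{\pm 1}]$-basis $\bar{x}_i, \bar{y}_i$ of $(E'_2)_*Z$ tabulated in \cite[Table~1]{bhateggerK2Z}. Although the $\MS_2$-actions on $(E_2)_*Z$ and $(E'_2)_*Z$ are related by conjugation with $\alpha$ in (\ref{eq:MSGiso}), $K$-triviality transports between the two because $K$ is normal in $\MS_2$. One then applies topological generators of $K$ (such as $1 + \omega S^2$ and its higher-order companions, cf.~(\ref{eq:K})) to each basis element and checks that the action is trivial. The main obstacle is this explicit verification; the key subtlety is that any corrections involving $2$ or $u_1$ appearing in the general $\MS_2$-action formulas must vanish genuinely on the type-2 spectrum $Z$, reflecting the identification $(E_2)_*Z = (K_2)_*Z$ as an $\FF_4[u^{\pm 1}]$-module.
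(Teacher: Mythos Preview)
Your approach is correct but takes a genuinely different route from the paper. The paper does not first establish that $K$ acts trivially on all of $(E_2)_*Z$; instead it constructs an explicit $C_3 \ltimes K$-equivariant projection $\pi\colon (E_2)_*Z \to \FF_4[u^{\pm 1}]$ (reading off the $\bar y_{12}$-coefficient), invokes Frobenius reciprocity to obtain an $\MS_2$-map $\td{\pi}$ to the coinduced module, and then checks $\td{\pi}$ is an isomorphism using the $Q_8$-freeness statement~(\ref{eq:Q8E'Z}) together with the bijection $Q_8 \xrightarrow{\sim} \MS_2/(C_3 \ltimes K)$. Your route is more structural: once (a) holds, the problem collapses to finite group theory and Mackey applied to Proposition~\ref{prop:E2Z}. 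The trade-off is that your computational input from \cite[Table~1]{bhateggerK2Z} is strictly stronger --- you must verify that every $k \in K$ fixes \emph{each} of the eight basis elements, whereas the paper only needs that the $K$-action never disturbs the single $\bar y_{12}$-coordinate. One small correction: the $\MS_2$-actions on $(E_2)_*Z$ and $(E_2')_*Z$ are not merely ``related by conjugation with $\alpha$''; since the generators $u^{i/2}\bar x_i, u^{i/2}\bar y_i$ lift to $BP_*Z$, the two actions are literally identical (this is exactly how the paper justifies invoking Table~1 for $(E_2)_*Z$), so your normality argument is unnecessary.
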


\begin{cor}\label{cor:S2E2Z}
There is an isomorphism of $S_2$-modules
$$ (E_2)_*Z \cong \mr{CoInd}_{K}^{S_2}\FF_4[u^{\pm 1} ]. $$
\end{cor}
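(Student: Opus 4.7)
The plan is to imitate the proof of Proposition~\ref{prop:E2Z}, replacing the extending-by-Galois argument used there with a parallel extending-by-$K$ argument. Recall that $\MS_2 = K \rtimes G_{24}$ (splitting discussed after Lemma~\ref{lem:sigma'}), so $C_3 \ltimes K$ is a subgroup of $\MS_2$ with index $[\MS_2 : C_3 \ltimes K] = |Q_8| = 8$, matching $[G_{48} : C_3 \rtimes \Gal] = 8$. Thus both $(E_2)_*Z$ and the proposed coinduction are rank-$8$ free modules over $\FF_4[u^{\pm 1}]$, and the bookkeeping matches that of Proposition~\ref{prop:E2Z}.

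First I would record the compatibility on the $G_{24}$-level. By Mackey's formula applied to the decomposition $\MS_2 = (C_3 \ltimes K)\cdot G_{24}$ with $(C_3\ltimes K)\cap G_{24} = C_3$, one has
\[
\mr{Res}^{\MS_2}_{G_{24}}\mr{CoInd}^{\MS_2}_{C_3 \ltimes K}\FF_4[u^{\pm 1}] \;\cong\; \mr{CoInd}^{G_{24}}_{C_3}\FF_4[u^{\pm 1}].
\]
Restricting Proposition~\ref{prop:E2Z} along $G_{24} \hookrightarrow G_{48}$ (via the analogous Mackey decomposition $G_{48} = (C_3\rtimes\Gal)\cdot G_{24}$) yields the identical $G_{24}$-module. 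So the two sides of the desired isomorphism already agree after restriction to $G_{24}$, and the generator $x \in (E_2)_0 Z$ produced in the proof of Proposition~\ref{prop:E2Z} is $C_3$-invariant and realizes $(E_2)_*Z$ as $\bigoplus_{h \in Q_8}\FF_4[u^{\pm 1}]\cdot hx$.

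Next I would promote this to an $\MS_2$-equivariant isomorphism by applying Frobenius reciprocity. Using the explicit tables of the $\MS_2$-action on $(E_2)_*Z$ recorded in \cite[Table~1]{bhateggerK2Z}, endow $\FF_4\{x\}\cdot u^{\pm 1}\subset (E_2)_*Z$ with the $C_3\ltimes K$-action it inherits (so that $C_3$ acts via $u \mapsto \omega u$ as in Proposition~\ref{prop:E2Z}, and $K$ acts through its effect on $x$ and on $u$). Frobenius reciprocity then produces an $\MS_2$-equivariant map
\[
\Phi:\ \mr{CoInd}^{\MS_2}_{C_3\ltimes K}\FF_4[u^{\pm 1}] \;\longrightarrow\; (E_2)_*Z.
\]
Since $\Phi$ restricts to the isomorphism on $G_{24}$-modules noted above and both source and target are free $(E_2)_*$-modules of the same rank, $\Phi$ is an isomorphism.

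The main obstacle is verifying that the action on the subspace $\FF_4\{x\}\cdot u^{\pm 1}$ really does factor through $C_3\ltimes K$ rather than through a strictly larger subgroup of $\MS_2$; equivalently, that the cyclic $C_3\ltimes K$-submodule generated by $x$ is free of rank one over $\FF_4[u^{\pm 1}]$ (rather than collapsing). Because $K$ is pro-$2$ and the action is continuous, it suffices to check this on a finite set of topological generators of $K$ modulo high enough filtration, which is a direct appeal to the formulas of \cite[Table~1]{bhateggerK2Z} together with the description of $K$ in \eqref{eq:K}. Once this is confirmed, Corollary~\ref{cor:S2E2Z} follows by restricting along $S_2 = K \rtimes Q_8 \hookrightarrow \MS_2$, since $(C_3\ltimes K)\cap S_2 = K$ and another application of Mackey collapses the $C_3$ on the subgroup side.
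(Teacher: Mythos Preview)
Your overall strategy is dual to the paper's. The paper deduces the corollary immediately from Proposition~\ref{prop:S2E2Z}, whose proof constructs a \emph{projection} $\pi:(E_2)_*Z\to\FF_4[u^{\pm1}]$ (reading off the $\bar y_{12}$-coefficient), checks from \cite[Table~1]{bhateggerK2Z} that $\pi$ is $C_3\ltimes K$-equivariant, and then applies Frobenius reciprocity in the direction $(E_2)_*Z\to\mr{CoInd}$. You instead try to build an \emph{inclusion} $\FF_4[u^{\pm1}]\hookrightarrow(E_2)_*Z$ sending $1\mapsto x$ and apply Frobenius reciprocity (using $\mr{Ind}\cong\mr{CoInd}$ for finite index) in the opposite direction. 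In principle either direction works, and your final Mackey reduction from $\MS_2$ to $S_2$ is exactly how the corollary follows from the proposition.

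The gap is in your choice of $x$. The element $x$ produced in the proof of Proposition~\ref{prop:E2Z} was engineered to be fixed by $C_3\rtimes\Gal$, not by $C_3\ltimes K$; these are different subgroups of $\GG_2$ and there is no a priori reason their fixed vectors in $(E_2)_0Z$ coincide. For your map $\Phi$ to even be defined as a $(C_3\ltimes K)$-map you need $K$ to fix $x$ (since $K$, being pro-$2$, acts trivially on the target $\FF_4[u^{\pm1}]$). You acknowledge this as ``the main obstacle'' but defer it to Table~1 without carrying it out; this is precisely the content of the argument, and for the specific $x$ of Proposition~\ref{prop:E2Z} (which lives in $\FF_{16}$-coefficients before descent) it is not obvious. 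Your parenthetical ``rather than collapsing'' also points the wrong way: the danger is not that the cyclic $(C_3\ltimes K)$-submodule generated by $x$ is too small, but that it is too large, i.e.\ that $K$ moves $x$ off the line $\FF_4[u^{\pm1}]\cdot x$. The paper's projection approach avoids this entirely: checking that $K$ preserves the hyperplane $\{\beta_{12}=0\}$ is a much weaker (upper-triangular) condition than checking that $K$ fixes a specific vector, and it is what Table~1 actually makes transparent.
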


\begin{rmk}
It is tempting to look for an analog of Proposition~\ref{prop:S2E2Z} which also incorporates the Galois action, but this is complicated by the fact that the subgroup $K$ is not Galois invariant.
\end{rmk}

\begin{proof}Using the notation of Proposition~\ref{prop:E2Z}, consider the diagram
$$
\xymatrix@R-1em{
& (E'_2)_*Z \ar[dr] \\
BP_*Z \ar[ru] \ar[dr] & & (E''_2)_*Z \\
& (E_2)_*Z \ar[ru]
} $$
Since the generators 
$$ u^{i/2}\bar{x}_i, u^{i/2}\bar{y}_i \in (E'_2)_*Z $$ 
of (\ref{eq:E'Z}) come from $BP_*Z$, it follows that 
\begin{equation}\label{eq:EZgens}
 (E_2)_* Z \cong \FF_4[u^{\pm 1}]\{\bar{x}_0, \bar{x}_2, \bar{x}_4, \bar{x}_6, \bar{y}_6, \bar{y}_8, \bar{y}_{10}, \bar{y}_{12} \}.
 \end{equation}
The action of $\MS_2$ on (\ref{eq:EZgens}) is computed in Table~1 of \cite{bhateggerK2Z}.  In particular, it is easy to check that the map
$$ \pi: (E_2)_*Z \rightarrow \FF_4[u^{\pm 1} ] $$
given by
$$ \pi(\alpha_0\bar{x}_0 \cdots \alpha_6\bar{x}_6+\beta_0\bar{y}_6+\cdots +\beta_{12}\bar{y}_{12}) = \beta_{12} $$
is $C_3 \ltimes K$-equivariant.  Thus it induces a $\MS_2$-equivariant map
$$ \td{\pi}: (E_2)_*Z \rightarrow \mr{CoInd}_{C_3\ltimes K}^{\MS_2}\FF_4[u^{\pm 1}]. $$
This can be checked to be an isomorphism using (\ref{eq:Q8E'Z}) and the fact that the composite
$$ Q_8 \rightarrow \MS_2 \rightarrow \MS_2/(C_3 \ltimes K) $$
is an isomorphism.
\end{proof}

\section{Computation of the differentials in the good complex}\label{sec:gooddiffs}

 The main result of this section (Definition~\ref{defn:S2tilde}, Theorem~\ref{thm:goodcomplex})  is that there is a sub-Hopf algebra
$$(k(2)_*, \sitd(2)) \subset ((K_2)_*, \br{\Sigma}_2) $$ 
such that the good complex is isomorphic to the associated cobar complex \cite[Definition A1.2.11]{Ravenel}:
$$ \mc{C}^{*,*}(Z) \cong C^*_{\sitd(2)}(k(2)_*).$$ 

\subsection{The good complex as a subcomplex of the cobar complex of $\br{\Sigma}_2$.}

The map $\tmf \to \TMF$ induces a map of spectral sequences 
\begin{equation}\label{eq:maptoEO}
  \E{\tmf}{}_r^{*,*}(Z) \to \E{\TMF}{}_r^{*,*}(Z). 
  \end{equation}
The kernel of $\E{\tmf}{}_1^{*,*}(Z) \to \E{\TMF}{}_1^{*,*}(Z)$ is $V^{*,*}(Z)$ and the image is  
$$ \mc{C}^{*,*}(Z) \subseteq \E{\TMF}{}_1^{*,*}(Z). $$  
We will now show the complex $\E{\TMF}{}_1(Z)$ can be regarded as a subcomplex of the cobar complex for the Hopf algebra $\br{\Sigma}_2$.  
The first step will be to express the $E_1$-term in terms of the Morava stabilizer group (Corollary~\ref{cor:E1TMFZ}). 

For a profinite set $T = \varprojlim_i T_i$ and an abelian group $M$, let 
$$ \Map^c(T,M) = \varprojlim \Map(T_i, M) $$
denote the abelian group of continuous maps, where $T$ is given the profinite topology, and $M$ is given the discrete topology.  If $G$ is a group which acts on $T$ and on $M$, then there is an induced \emph{conjugation action} on $\Map^c(T,M)$, given by
$$ (g\cdot f)(t) = g f(g^{-1}t) $$
for $g \in G$, $f \in \Map^c(T,M)$, and $t \in T$.

\begin{lem}\label{lem:2tmfcase}
There is a $\GG_2$-equivariant isomorphism
\[(E_2)_*(\TMF \smsh Z) \cong  \Map^c(\GG_2/G_{48}, (E_2)_*Z) \]
(where $\GG_2$ acts on $\Map^c$ by the conjugation action on functions), and this leads to an isomorphism
\[\pi_*  \TMF \wedge \TMF \wedge Z \cong   \Map^c_{C_3 \rtimes \mr{Gal}}(\GG_2/G_{48}, \FF_4[u^{\pm 1}])\]
where $\Map^c_{C_3 \rtimes \mr{Gal}}(\GG_2/G_{48}, \FF_4[u^{\pm 1}])$ denotes 
the $C_3 \rtimes \mr{Gal}$ equivariant continuous maps.
\end{lem}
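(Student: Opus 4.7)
The plan is to leverage the equivalence $\TMF_{K(2)} \simeq E_2^{hG_{48}}$ from \eqref{eq:K2tmf} together with the Devinatz--Hopkins description of $E_2 \wedge E_2^{hG_{48}}$, exploiting that $Z$ is of type $2$ (so smashing with $Z$ is $K(2)$-localizing).

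For the first isomorphism, since $Z$ is type $2$ we have $\TMF \wedge Z \simeq (\TMF \wedge Z)_{K(2)} \simeq \TMF_{K(2)} \wedge Z \simeq E_2^{hG_{48}} \wedge Z$. The Devinatz--Hopkins theorem furnishes a $\GG_2$-equivariant equivalence
\[E_2 \wedge E_2^{hG_{48}} \simeq \Map^c(\GG_2/G_{48}, E_2)\]
(with the conjugation $\GG_2$-action on the right). Smashing with the finite complex $Z$ passes through the continuous mapping space and taking homotopy groups gives the asserted formula for $(E_2)_*(\TMF \wedge Z)$.

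For the second isomorphism, I would smash the equivalence $\TMF \wedge Z \simeq E_2^{hG_{48}} \wedge Z$ on the left with $\TMF$ and apply $\pi_*$. Writing $\TMF \wedge \TMF \wedge Z \simeq E_2^{hG_{48}} \wedge (\TMF \wedge Z)$ and noting that the latter smash factor is $K(2)$-local (again because $Z$ is type $2$), a descent / homotopy fixed point spectral sequence gives
\[\pi_*(\TMF \wedge \TMF \wedge Z) = \bigl((E_2)_*(\TMF \wedge Z)\bigr)^{hG_{48}}.\]
The first part of the lemma identifies $(E_2)_*(\TMF \wedge Z)$ with the coinduced $G_{48}$-module $\Map^c(\GG_2/G_{48}, (E_2)_*Z)$. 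Coinduced modules have vanishing higher continuous group cohomology, so this descent spectral sequence collapses to the strict invariants $\Map^c_{G_{48}}(\GG_2/G_{48}, (E_2)_*Z)$.

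To finish, I substitute the description $(E_2)_*Z \cong \mr{CoInd}^{G_{48}}_{C_3 \rtimes \mr{Gal}} \FF_4[u^{\pm 1}]$ from Proposition~\ref{prop:E2Z} and apply the coinduction adjunction $\Hom_{G_{48}}(M, \mr{CoInd}^{G_{48}}_{H} N) \cong \Hom_H(M,N)$ with $H = C_3 \rtimes \mr{Gal}$, which produces the stated identification with $\Map^c_{C_3 \rtimes \mr{Gal}}(\GG_2/G_{48}, \FF_4[u^{\pm 1}])$. The main technical point to be careful about is the collapse and convergence of the homotopy fixed point spectral sequence, but this is formal from the cohomological acyclicity of coinduced modules and the $K(2)$-local finiteness of $\TMF \wedge Z$; all other steps are routine manipulations with adjunctions.
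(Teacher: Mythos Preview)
Your overall strategy---reduce to $E_2^{hG_{48}}$ via the type~$2$ hypothesis, invoke the Devinatz--Hopkins description for the first isomorphism, and then run the $G_{48}$-homotopy fixed point spectral sequence for the second---is exactly the paper's approach. The gap is in your justification of the collapse.

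You assert that $\Map^c(\GG_2/G_{48}, (E_2)_*Z)$, with the conjugation $G_{48}$-action, is a coinduced $G_{48}$-module and hence has vanishing higher cohomology. This is not correct. As a $\GG_2$-module it is indeed $\mr{CoInd}_{G_{48}}^{\GG_2}((E_2)_*Z)$, but restricting back to $G_{48}$ does not yield a coinduced module: the left $G_{48}$-action on $\GG_2/G_{48}$ is not free (the identity coset is fixed), and the Mackey-type decomposition contains, for instance, a summand isomorphic to $(E_2)_*Z$ itself. So there is no formal reason for $H^{>0}(G_{48}, -)$ to vanish on this module.

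The paper fixes this by reversing the order of your last two steps. One first substitutes the description of Proposition~\ref{prop:E2Z},
\[
(E_2)_*Z \;\cong\; \mr{CoInd}_{C_3 \rtimes \Gal}^{G_{48}}\, \FF_4[u^{\pm 1}],
\]
so that the module in question becomes $\Map_{C_3 \rtimes \Gal}\bigl(G_{48},\, \Map^c(\GG_2/G_{48}, \FF_4[u^{\pm 1}])\bigr)$. Shapiro's lemma then identifies the $E_2$-term of the HFPSS with
\[
H^*\bigl(C_3 \rtimes \Gal,\; \Map^c(\GG_2/G_{48}, \FF_4[u^{\pm 1}])\bigr),
\]
and \emph{this} vanishes in positive degrees for concrete reasons: $\lvert C_3\rvert$ is prime to $2$, and $\FF_4$ is free as an $\FF_2[\Gal]$-module. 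The collapse, and the identification of the invariants with $\Map^c_{C_3 \rtimes \Gal}(\GG_2/G_{48}, \FF_4[u^{\pm 1}])$, follow. In short, the collapse is not a formal consequence of coinduction from $G_{48}$; it genuinely uses the structure of $(E_2)_*Z$ supplied by Proposition~\ref{prop:E2Z}.
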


\begin{proof}
Since $Z$ is a type $2$ complex, $X \wedge Z$ is $K(2)$-local for any $E(2)$-local spectrum $X$ (see proof of \cite[Lem.~7.2]{HoveyStrickland}).  
In particular, (\ref{eq:K2tmf}) implies
\begin{equation}\label{eq:TMFZE48}
 \TMF \wedge Z \simeq E_2^{hG_{48}} \wedge Z.
 \end{equation}
Using the fact that for finite groups, homotopy fixed points and homotopy orbits of $K(2)$-local spectra are $K(2)$-locally equivalent \cite{Kuhn},
we get
 \[ \TMF \wedge \TMF \wedge Z \simeq E_2^{hG_{48}} \smsh E_2^{hG_{48}}  \smsh Z \simeq (E_2 \smsh (E_2^{hG_{48}} \smsh Z))^{hG_{48}}.\] 
 We use the homotopy fixed point spectral sequence
 \[H^s(G_{48}, (E_2)_t(E_2^{hG_{48}} \smsh Z)) \Longrightarrow \pi_{t-s}  \TMF \wedge \TMF \wedge Z .\]
 By \cite[Corollary 2.1]{detsphere},
 \[ (E_2)_*(E_2^{hG_{48}} \smsh Z)  \cong (E_2)_*(E_2 \smsh Z)^{hG_{48}} \cong \Map^c(\GG_2/G_{48}, (E_2)_*Z) \]
 with action of $G_{48}$ given by the conjugation action on functions.
 Since we have an isomorphism of $G_{48}$-modules
\[ (E_2)_*Z \cong  \mr{CoInd}_{C_3 \rtimes \mr{Gal}}^{G_{48}} \FF_4[u^{\pm 1}] \cong \Map_{C_3\rtimes \mr{Gal}}( G_{48}, \FF_4[u^{\pm1}])  \]
it follows that 
\[ (E_2)_*(E_2 \smsh Z)^{hG_{48}}  \cong  \Map_{C_3\rtimes \mr{Gal}}( G_{48},\Map^c(\GG_2/G_{48},\FF_4[u^{\pm1}])) . \]
In particular, the $E_2$-term of the homotopy fixed point spectral sequence is 
 \[H^*(G_{48}, (E_2)_*(E_2 \smsh Z)^{hG_{48}} ) \cong H^*(C_3\rtimes \mr{Gal}, \Map^c(\GG_2/G_{48},\FF_4[u^{\pm1}])).\]
 Since $C_3$ has order coprime to $2$ and $\Gal$ acts freely on $\FF_4$, the $E_2$-term is concentrated in degree $s=0$, and is given by
 \[ \Map^c(\GG_2/G_{48},\FF_4[u^{\pm1}])^{C_3\rtimes \mr{Gal}}. \]
 The spectral sequence collapses, giving the result.
\end{proof}

\begin{cor}\label{cor:E1TMFZ} 
For $s\geq 1$, there  is a $\GG_2$-equivariant isomorphism 
\[( E_2)_*(\TMF^{\wedge s} \wedge Z) \cong \Map^c( (\GG_2/G_{48})^{\times s}, (E_2)_*Z)  \]
with the diagonal action on  $(\GG_2/G_{48})^{\times s}$ and action on $\Map^c$  the conjugation action on functions. 
This leads to an isomorphism
\[  \E{\TMF}{}^{s,*}_1(Z) \cong \pi_*\TMF^{\wedge s+1} \wedge Z \cong \Map^c_{C_3 \rtimes \mr{Gal}}(\underbrace{\GG_2 \times_{G_{48}} \cdots \times_{G_{48}} \GG_2}_s/G_{48}, \FF_4[u^{\pm 1}]). \]
The left action of $C_3 \rtimes \Gal$ on 
\[\GG_2 \times_{G_{48}} \cdots \times_{G_{48}} \GG_2/G_{48}\] 
is via by left multiplication on the first factor of $\GG_2$.
\end{cor}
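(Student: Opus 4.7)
The plan is to establish both isomorphisms by iterating the argument of Lemma~\ref{lem:2tmfcase}, whose conclusions already cover the case $s=1$ of each.

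For the first isomorphism, I would induct on $s$. Since $Z$ is type~$2$, the spectrum $\TMF^{\wedge s-1} \wedge Z$ is $K(2)$-local, so (\ref{eq:K2tmf}) gives
$$\TMF^{\wedge s} \wedge Z \simeq E_2^{hG_{48}} \wedge (\TMF^{\wedge s-1} \wedge Z).$$
Applying the Devinatz--Hopkins identification \cite[Cor.~2.1]{detsphere} in the form
$$(E_2)_*(E_2^{hG_{48}} \wedge Y) \cong \Map^c(\GG_2/G_{48}, (E_2)_*Y),$$
with $Y = \TMF^{\wedge s-1} \wedge Z$, and then substituting the inductive hypothesis inside the $\Map^c$, yields
$$(E_2)_*(\TMF^{\wedge s} \wedge Z) \cong \Map^c(\GG_2/G_{48}, \Map^c((\GG_2/G_{48})^{\times s-1}, (E_2)_*Z)) \cong \Map^c((\GG_2/G_{48})^{\times s}, (E_2)_*Z),$$
with $\GG_2$ acting diagonally on $(\GG_2/G_{48})^{\times s}$ and by conjugation on the mapping space, exactly as in Lemma~\ref{lem:2tmfcase}.

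For the second isomorphism I would apply the $G_{48}$-homotopy fixed-point spectral sequence
$$H^p(G_{48}, (E_2)_q(\TMF^{\wedge s} \wedge Z)) \Rightarrow \pi_{q-p}(\TMF^{\wedge s+1} \wedge Z),$$
arising from $\TMF^{\wedge s+1} \wedge Z \simeq (E_2 \wedge \TMF^{\wedge s} \wedge Z)^{hG_{48}}$ (valid $K(2)$-locally, where finite homotopy fixed points and orbits agree). Substituting the first isomorphism and rewriting $(E_2)_*Z \cong \mr{CoInd}^{G_{48}}_{C_3 \rtimes \mr{Gal}} \FF_4[u^{\pm 1}]$ via Proposition~\ref{prop:E2Z}, Shapiro's lemma identifies the $E_2$-page with
$$H^p(C_3 \rtimes \mr{Gal}, \Map^c((\GG_2/G_{48})^{\times s}, \FF_4[u^{\pm 1}])).$$
Since $|C_3|$ is odd and $\mr{Gal}$ acts freely on $\FF_4$, this vanishes for $p>0$, so the spectral sequence collapses onto its $p=0$ line, yielding $\Map^c_{C_3 \rtimes \mr{Gal}}((\GG_2/G_{48})^{\times s}, \FF_4[u^{\pm 1}])$.

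It remains to reconcile this with the iterated-balanced-product formulation in the statement, via the bijection
$$[g_1, g_2, \ldots, g_s] \longmapsto (g_1 G_{48},\, g_1 g_2 G_{48},\, \ldots,\, g_1 g_2 \cdots g_s G_{48}),$$
which intertwines left multiplication of $C_3 \rtimes \mr{Gal}$ on the first $\GG_2$ factor with the diagonal $C_3 \rtimes \mr{Gal}$-action on $(\GG_2/G_{48})^{\times s}$. The only real obstacle is bookkeeping --- tracking the various $\GG_2$, $G_{48}$, and $C_3 \rtimes \mr{Gal}$ actions through the Devinatz--Hopkins step and Shapiro's lemma, and verifying the equivariance of this final identification; no essentially new argument beyond the $s=1$ case of Lemma~\ref{lem:2tmfcase} is required.
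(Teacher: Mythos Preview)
Your proposal is correct and follows essentially the same approach as the paper's proof: induction on $s$ via the Devinatz--Hopkins identification for the first isomorphism, the $G_{48}$-homotopy fixed point spectral sequence together with Proposition~\ref{prop:E2Z} and Shapiro for the second, and the shearing isomorphism (your explicit bijection) for the final rewriting. The only cosmetic difference is that for the inductive step the paper writes out the HFPSS collapse explicitly rather than packaging it as a direct invocation of \cite[Cor.~2.1]{detsphere}, but the content is the same.
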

\begin{proof}
We proceed by induction on $s$.  The case of $s = 1$ is Lemma~\ref{lem:2tmfcase}.
Suppose that the claim holds for $s-1$. Then
\begin{align*}
E_2 \smsh \TMF^{\wedge s} \wedge Z& \simeq  E_2 \wedge E_2^{hG_{48}} \wedge \TMF^{\wedge (s-1)} \wedge Z  \\
& \simeq  (E_2 \wedge E_2 \wedge \TMF^{\wedge (s-1)} \wedge Z)^{hG_{48}}   
\end{align*}
where  $G_{48}$ acts on the second copy of $E_2$. The $E_2$-page of the homotopy fixed point spectral sequence is given by
\[H^*(G_{48}, (E_2)_*(E_2 \wedge \TMF^{\wedge (s-1)} \wedge Z) ). \]
Furthermore, 
\begin{align*}
(E_2)_*(E_2 \wedge \TMF^{\wedge (s-1)} \wedge Z) & \cong \Map^c(\GG_2, (E_2)_*\TMF^{\wedge (s-1)} \wedge Z )\\
&\cong  \Map^c(\GG_2, \Map^c((\GG_2/G_{48})^{\times (s-1)} ,  (E_2)_*Z) ).
\end{align*}
It follows that
\begin{align*} 
H^*(G_{48}, (E_2)_*(E_2 \wedge \TMF^{\wedge (s-1)} \wedge Z) )& \cong H^0(G_{48}, (E_2)_*(E_2 \wedge \TMF^{\wedge (s-1)} \wedge Z) )\\
& \cong \Map^c((\GG_2/G_{48})^{\times s} ,  (E_2)_*Z ).
\end{align*}
which proves the first claim. 

Next, 
\begin{align*}
\TMF^{\wedge (s+1)} \wedge Z& \simeq  (E_2 \wedge \TMF^{\wedge s} \wedge Z)^{hG_{48}}.  
\end{align*}
We use the homotopy fixed point spectral sequence again, together with the fact that 
\begin{align*} 
(E_2)_* (\TMF^{\wedge s} \wedge Z) &\cong \Map^c((\GG_2/G_{48})^{\times s} ,  (E_2)_*Z ) \\
&\cong \Map^c((\GG_2/G_{48})^{\times s} ,  \Map_{C_3 \rtimes  \mr{Gal}} (G_{48}, \FF_4[u^{\pm 1}]) ) \\
&\cong  \Map_{C_3 \rtimes \mr{Gal}} (G_{48}, \Map^c((\GG_2/G_{48})^{\times s} ,\FF_4[u^{\pm 1}]) ).
\end{align*}
The proof of the first isomorphism is finished in a way analogous to that of Lemma~\ref{lem:2tmfcase}.

For a group $G$, a subgroup $H \le G$, an a $G$-set $X$, the shearing isomorphism is the isomorphism
\begin{align*}
 G \times_H X & \xrightarrow{\cong} G/H \times X, \\
 (g,x) & \mapsto (g, gx).
 \end{align*}
Note that the shearing isomorphism is $G$-equivariant, where $G$ acts on the source through its action on the left factor, and $G$ acts on the target through the diagonal action.

Iterating the shearing isomorphism yields a $\GG_2$-equivariant isomorphism
$$ \underbrace{\GG_2 \times_{G_{48}} \cdots \times_{G_{48}}\GG_2}_s/G_{48} \cong (\GG_2/G_{48})^{\times s}, $$
and we therefore have an isomorphism
\[ \Map^c_{C_3 \rtimes \mr{Gal}}((\GG_2/G_{48})^{\times s}, \FF_4[u^{\pm 1}]) \cong \Map^c_{C_3 \rtimes \mr{Gal}}(\underbrace{\GG_2 \times_{G_{48}} \cdots \times_{G_{48}} \GG_2}_s/G_{48}, \FF_4[u^{\pm 1}]).\]
\end{proof}

It is not immediately clear how the groups
$$ \Map^c_{C_3 \rtimes \mr{Gal}}(\GG_2^{\times_{G_{48}} s} /G_{48}, \FF_4[u^{\pm 1}]) $$
in Corollary~\ref{cor:E1TMFZ} form a cochain complex.  We will now address this by showing that they are a subcomplex of the $E_2$-based Adams spectral sequence for $Z$.

The map of spectra $\TMF \rightarrow E_2$ induces a map of Adams spectral sequences.  The induced map on $E_1$-terms
$$ \E{\TMF}{}_1(Z) \rightarrow \E{E_2}{}_1(Z) $$
is given by the canonical inclusion
\begin{align*}
\Map^c_{C_3 \rtimes \mr{Gal}}(\GG_2^{\times_{G_{48}} s} /G_{48}, \FF_4[u^{\pm 1}])
& \cong \Map^c_{G_{48}}(\GG_2^{\times_{G_{48}} s} /G_{48}, \mr{CoInd}_{C_3 \rtimes \mr{Gal}}^{G_{48}} \FF_4[u^{\pm 1}]) \\
& \subseteq
\Map^c(\GG_2^{s}, \mr{CoInd}_{C_3 \rtimes \mr{Gal}}^{G_{48}} \FF_4[u^{\pm 1}])
\end{align*}
where, by Proposition~\ref{prop:E2Z}, the latter is the cobar complex for $\GG_2$ acting on $(E_2)_*Z$:
$$ C^s_{\GG_2}((E_2)_*Z) \cong \E{E_2}{}_1^{s,*}(Z). $$
In particular, the differential in the cobar complex for $\GG_2$ restricts to give the differential on the subcomplex
$$ \Map^c_{C_3 \rtimes \mr{Gal}}(\GG_2^{\times_{G_{48}} s} /G_{48}, \FF_4[u^{\pm 1}])  \subseteq
\Map^c(\GG_2^{s}, (E_2)_*Z).
$$

We now have the following lemma.

\begin{lem}\label{lem:embeddingSigma2}
There is an embedding of cochain complexes
$$ \E{\TMF}{}_1(Z) \subset C^*_{\br{\Sigma}_2}((K_2)_*). $$
where $C^*_{\br{\Sigma}_2}((K_2)_*)$ is the cobar complex of the Hopf algebra $((K_2)_*, \br{\Sigma}_2)$ of (\ref{eq:barSigma_2}).
\end{lem}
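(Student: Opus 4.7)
The plan is to unwind the identification of $\E{\TMF}{}_1^s(Z)$ from Corollary~\ref{cor:E1TMFZ} and match it with the cobar complex for $\br{\Sigma}_2$ using the presentation $\br{\Sigma}_2 \cong \Map^c(K,\FF_4[u^{\pm 1}])$ from (\ref{eq:barSigma_2}).  First, dualizing $\br{\Sigma}_2 \cong \Map^c(K,\FF_4[u^{\pm 1}])$ gives an identification
$$ C^s_{\br{\Sigma}_2}((K_2)_*) \cong \Map^c(K^s, \FF_4[u^{\pm 1}]), $$
with cobar differential dual to the group multiplication on $K$ and the action of $K$ on $\FF_4[u^{\pm 1}]$.

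Next I would use the splitting of the short exact sequence $1 \to K \to \MS_2 \to G_{24} \to 1$, extended to $1 \to K \to \GG_2 \to G_{48} \to 1$, to produce a bijection $\GG_2/G_{48} \cong K$ as (pointed) $C_3 \rtimes \mr{Gal}$-sets (with $C_3 \rtimes \mr{Gal}$ acting on $K$ by conjugation, inherited from its presence in $G_{48}$).  Combining with the shearing isomorphism already used in the proof of Corollary~\ref{cor:E1TMFZ} gives a $C_3 \rtimes \mr{Gal}$-equivariant bijection $\GG_2^{\times_{G_{48}} s}/G_{48} \cong K^s$, and hence
$$ \E{\TMF}{}_1^s(Z) \cong \Map^c_{C_3 \rtimes \mr{Gal}}(K^s, \FF_4[u^{\pm 1}]) \hookrightarrow \Map^c(K^s, \FF_4[u^{\pm 1}]) \cong C^s_{\br{\Sigma}_2}((K_2)_*), $$
the embedding on underlying graded groups being simply the inclusion of $C_3\rtimes\mr{Gal}$-fixed functions.

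The main obstacle is verifying that this inclusion is a map of cochain complexes, i.e.\ that the differential on $\E{\TMF}{}_1(Z)$ is the restriction of the cobar differential for $\br{\Sigma}_2$.  The plan here is to work through both differentials via the standard embedding of the $\TMF$-ASS into the $E_2$-ASS:  the preceding paragraph in the excerpt already identifies the $d_1$ differential on $\E{\TMF}{}_1(Z)$ as the restriction of the $\GG_2$-cobar differential on $C^*_{\GG_2}((E_2)_*Z) \cong \Map^c(\GG_2^s, (E_2)_*Z)$.  Using Proposition~\ref{prop:E2Z} to rewrite $(E_2)_*Z \cong \Map_{C_3 \rtimes \mr{Gal}}(G_{48}, \FF_4[u^{\pm 1}])$ and applying Frobenius reciprocity factor-by-factor, the formula for the $\GG_2$-cobar differential on a $C_3\rtimes\mr{Gal}$-fixed function $F : \GG_2^{\times_{G_{48}} s}/G_{48} \to \FF_4[u^{\pm 1}]$ reduces, under the bijection $\GG_2/G_{48} \cong K$, to the cobar formula for the group $K$ acting on $\FF_4[u^{\pm 1}]$:  the "middle" face maps become group multiplications inside $K$, while the outer faces become the left and right $K$-actions on the coefficients, exactly as in the cobar complex of $\br{\Sigma}_2$.

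This matching of differentials is essentially a formal check, the key input being that the $K$-action coming from the conjugation action on $\Map^c(\GG_2^s, (E_2)_*Z)$ agrees, under evaluation at the basepoint of $\GG_2/G_{48}$, with the $K$-action on $\FF_4[u^{\pm 1}]$ defining $\br{\Sigma}_2$.  Having done this, the embedding of cochain complexes follows, completing the proof.
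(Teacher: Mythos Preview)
Your approach has a genuine gap: the short exact sequence $1 \to K \to \GG_2 \to G_{48} \to 1$ that you propose does not exist, because $K$ is not normal in $\GG_2$.  Concretely, $K$ is \emph{not} Galois invariant --- the paper notes this explicitly in the remark following Proposition~\ref{prop:S2E2Z}.  As a consequence, while the composite $K \to \GG_2 \to \GG_2/G_{48}$ is indeed a homeomorphism (and the paper uses exactly this), it is not $C_3 \rtimes \Gal$-equivariant for the conjugation action on $K$ you describe: if $k \in K$ then $\sigma \cdot (kG_{48}) = k^\sigma G_{48}$, but $k^\sigma$ need not lie in $K$, so the induced action of $\Gal$ on $K$ is not conjugation.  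This undermines the identification of $\E{\TMF}{}_1^s(Z)$ with the $C_3\rtimes\Gal$-fixed points inside $\Map^c(K^s,\FF_4[u^{\pm 1}])$ for a tractable action, and your subsequent ``formal check'' of the differential relies on this identification.

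The paper's proof avoids this difficulty by a more indirect route.  Rather than tracking $C_3 \rtimes \Gal$-equivariance through the bijection $K \cong \GG_2/G_{48}$, it factors the desired embedding as a composite $\gamma \circ \beta \circ \alpha$ through $\Map^c(\GG_2^s, (E_2)_*Z)$.  The map $\beta$ uses the $Q_8$-module description of $(E_2)_*Z$ from Corollary~\ref{cor:E2Z} (note: $Q_8$, not $C_3 \rtimes \Gal$), while $\gamma$ restricts to $S_2^s$, then to $K^s$, and evaluates the coinduced module at $1$ --- this last step sidesteps Galois equivariance entirely.  The chain-map property is established separately for $\beta\circ\alpha$ (via the $E_2$-based Adams spectral sequence) and for $\gamma$; injectivity follows because $\gamma\circ\beta$ is an isomorphism, which is where the homeomorphism $K \cong \GG_2/G_{48}$ enters, but only as a bijection of sets.
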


\begin{proof}
The injection comes from the map $(*)$ in the following diagram
$$
\xymatrix@R-1.5em@C+1em{
\E{\TMF}{}^{s,*}_1(Z) \ar@{=}[d] & C^s_{\br{\Sigma}_2}((K_2)_*) \ar@{=}[d]
\\
\Map^c_{C_3 \rtimes \mr{Gal}}(\GG_2^{\times_{G_{48}} s} /G_{48}, \FF_4[u^{\pm 1}]) \ar@{^{(}->}[dd]_{\alpha} \ar@{.>}[r]^-{(*)} &
\Map^c(K^s, \FF_4[u^{\pm 1}]) 
\\ \\
\Map^c(\GG_2^{\times_{G_{48}} s} /G_{48}, \FF_4[u^{\pm 1}]) \ar@{^{(}->}[r]_-\beta 
& \Map^c(\GG_2^s, (E_2)_*Z) \ar[uu]_\gamma
}
$$
where $\alpha$ is the natural inclusion,  $\beta$ is the composite coming from the isomorphism of Corollary~\ref{cor:E2Z}:
\begin{align*}
\beta: \Map^c(\GG_2^{\times_{G_{48}} s} /G_{48}, \FF_4[u^{\pm 1}]) 
& \cong \Map^c_{Q_8}(\GG_2^{\times_{G_{48}} s} /G_{48}, \mr{CoInd}_{1}^{Q_8}\FF_4[u^{\pm 1}]) \\
& \cong \Map^c_{Q_8}(\GG_2^{\times_{G_{48}} s} /G_{48}, (E_2)_*Z) \\
& \hookrightarrow \Map^c(\GG_2^{\times_{G_{48}} s} /G_{48}, (E_2)_*Z),
\end{align*}
and $\gamma$ is the composite coming from the isomorphism of Corollary~\ref{cor:S2E2Z}:
\begin{align*}
 \gamma: \Map^c(\GG_2^s, (E_2)_*Z) & \rightarrow \Map^c(S_2^s, (E_2)_*Z) \\
 & \cong \Map^c(S_2^s, \mr{CoInd}_{K}^{S_2}\FF_4[u^{\pm 1}]) \\
 & \rightarrow \Map^c(K^s, \mr{CoInd}_{K}^{S_2}\FF_4[u^{\pm 1}]) \\
 & \xrightarrow{(ev_1)_*} \Map^c(K^s, \FF_4[u^{\pm 1}]).
 \end{align*} 
Here, $(ev_1)_*$ is the map induced by the evaluation at $1 \in S_2$ map:
$$ ev_1: \mr{CoInd}_K^{S_2} \FF_4[u^{\pm 1}] = \Map_K(S_2, \FF_4[u^{\pm 1}]) \to \FF_4[u^{\pm}]. $$
The map $\gamma$ is easily seen to be a map of cochain complexes.
The discussion prior to the statement of this lemma implies that the composite $\beta \circ \alpha$ is a map of cochain complexes.  This implies that $(*)$ is a map of cochain complexes.  
It follows from the fact that the composite  
\begin{equation}\label{eq:Khomeo}
 K \rightarrow \GG_2 \rightarrow \GG_2/G_{48}
 \end{equation}
is a homeomorphism that the composite $\gamma \circ \beta$ is an isomorphism.  Since $\alpha$ is injective, we deduce that $(*)$ is injective.
\end{proof}

\subsection{The sub-Hopf algebra $\pmb{\td{\Sigma}(2) \subset \br{\Sigma}_2}$.}\label{sec:subhopf}

We shall now study a sub-Hopf algebra $(K(2)_*, \td{\Sigma}(2))$  of the Hopf algebra $((K_2)_*, \br{\Sigma}_2)$ of (\ref{eq:barSigma_2}) such that the image of $\E{\TMF}{}_1(Z)$ in the cobar complex for $\br{\Sigma}_2$ is the cobar complex for $\td{\Sigma}(2)$.

Define Hopf algebras
$$ \td{\Sigma}(2) \subset \br{\Sigma}(2) \subset \br{\Sigma}_2 $$
by letting $\td{\Sigma}(2)$ be the image of the map
\begin{align*}
\Map^c_{C_3 \rtimes \mr{Gal}}(\GG_2 /G_{48}, \FF_4[u^{\pm 1}]) & \hookrightarrow 
\Map^c(K, \FF_4[u^{\pm 1}]) = \br{\Sigma}_2
\end{align*}
and letting $\br{\Sigma}(2)$ be the image of the map
\begin{align*}
\Map^c_{C_3}(\GG_2 /G_{48}, \FF_4[u^{\pm 1}]) & \hookrightarrow 
\Map^c(K, \FF_4[u^{\pm 1}]) = \br{\Sigma}_2.
\end{align*}

Under the isomorphism
$$ \Map^c(\GG_2/G_{48}, \FF_4[u^{\pm}]) \cong \Map^c(K,\FF_4[u^{\pm}]) = \br{\Sigma}_2 $$
coming from the homeomorphism (\ref{eq:Khomeo}), the conjugation action of $C_3 \rtimes \Gal$ on $\Map^c(\GG_2/G_{48}, \FF_4[u^{\pm}])$ induces an action of $C_3 \rtimes \Gal$ on $\br{\Sigma}_2$ such that
\begin{align*}
\br{\Sigma}(2) & = \br{\Sigma}_2^{C_3},  \\
\td{\Sigma}(2) & = \br{\Sigma}(2)^{\Gal} = \br{\Sigma}_2^{C_3 \rtimes \Gal}.
\end{align*}
We now compute this action of $C_3 \rtimes \Gal$ on
\begin{align}\label{eq:defbrSiext}
 \br{\Sigma}_2 = \FF_4[u^{\pm 1}][\tb_2, \tb_3, \cdots ]/ (\tb_2^2 + \omega v_2 \tb_2, \tb_k^4 + v_2^{2^k-1}\tb_k). 
 \end{align}
Here we use $\tb_k$ to denote the image of $t_k \in \Sigma_2$ (see \ref{eq:tkdef}) in $\br{\Sigma}_2$.
Let $\sigma$ be the generator of $\Gal$, and we will denote the generator of $C_3 \subset \GG_2$ by $\omega$, our fixed choice of 3rd root of unity.

Recall \cite{beaudryresolution} that elements $x \in K$ can be written as
$$ x = 1 + a_2S^2 + a_3S^3 + \cdots $$
with $a_2 \in \{0, \omega\}$ and $a_i \in \{0,1,\omega,\omega^2\}$ for $i>2$.  The function
$$ \tb_i \in \br{\Sigma}_2 = \Map^c(K,\FF_4[u^{\pm 1}]) $$
is given on elements $x$ as above by the formula
$$ \tb_i(x) = a_i u^{1-2^i}. $$
Under the isomorphism
$$ \Map^c(K, \FF_4[u^{\pm 1}]) \cong \Map^c(\GG_2/G_{48}, \FF_4[u^{\pm 1}])$$ 
the function $\tb_i$ is given on a coset $gG_{48}$ by 
\begin{equation}\label{eq:tbidef}
 \tb_i(gG_{48}) = t_i(x)
\end{equation}
where $x$ is the unique element of $K$ so that $xG_{48} = gG_{48}$.

Note that $C_3$ acts on $\FF_4[u^{\pm 1}]$ through $\FF_4$-algebra maps by the formula
$$ \omega \cdot u = \omega u $$
and $\mr{Gal}$ acts through the Galois action on $\FF_4$,
so
$$ \FF_4[u^{\pm 1}]^{C_3 \rtimes \mr{Gal}} = \FF_2[v_2^{\pm1}]. $$

\begin{lem}
The functions $\tb_k \in \br{\Sigma}_2$ are $C_3$ equivariant, so the conjugation action of $C_3$ on $\tb_k$ is trivial. 
\end{lem}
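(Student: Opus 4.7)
My plan is to unwind the definitions and perform a direct computation. The strategy relies crucially on the explicit description $\br{\Sigma}_2 = \Map^c(K, \FF_4[u^{\pm 1}])$ together with the formula $\tb_k(1 + a_2S^2 + a_3S^3 + \cdots) = a_k u^{1-2^k}$.

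\textbf{Step 1: Translate the conjugation action to $\Map^c(K, \FF_4[u^{\pm 1}])$.} The $C_3$-action on $\br{\Sigma}_2$ comes from the conjugation action on $\Map^c(\GG_2/G_{48}, \FF_4[u^{\pm 1}])$ via the homeomorphism $K \hookrightarrow \GG_2 \twoheadrightarrow \GG_2/G_{48}$ of \eqref{eq:Khomeo}. For $\omega \in C_3 \subset G_{48}$ and $x \in K$, the coset $\omega^{-1}xG_{48}$ equals $\omega^{-1}x\omega \cdot G_{48}$, and since $K$ is normal in $\MS_2$ (as recalled in Section~\ref{sec:ellMSG}) the element $\omega^{-1}x\omega$ lies in $K$. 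Thus the pulled-back action is $(\omega \cdot f)(x) = \omega \cdot f(\omega^{-1}x\omega)$ for $f \in \br\Sigma_2$ and $x \in K$.

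\textbf{Step 2: Compute $\omega^{-1}x\omega$.} Using $S a = a^\sigma S$ in $\mr{End}(\mc{H}_2)$, an easy induction gives $S^i \omega = \omega^{\sigma^i} S^i$, whence $\omega^{-1} S^i \omega = \omega^{\sigma^i - 1} S^i$. Since $\omega$ commutes with each $a_i \in \WW(\FF_4)$, for $x = 1 + \sum_{i \geq 2} a_i S^i \in K$ one obtains
\[ \omega^{-1} x \omega = 1 + \sum_{i \geq 2} a_i \omega^{\sigma^i - 1} S^i. \]
Applying $\tb_k$ (and using the formula for $\tb_k$) yields $\tb_k(\omega^{-1} x \omega) = a_k \omega^{\sigma^k - 1} u^{1 - 2^k}$.

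\textbf{Step 3: Combine with the action on $\FF_4[u^{\pm 1}]$ and conclude.} Recall $\omega$ acts on $\FF_4[u^{\pm 1}]$ by $\omega \cdot (\lambda u^j) = \lambda \omega^j u^j$. Therefore
\[ (\omega \cdot \tb_k)(x) = \omega \cdot \left( a_k \omega^{\sigma^k - 1} u^{1 - 2^k} \right) = a_k \, \omega^{\sigma^k - 2^k} \, u^{1 - 2^k}. \]
The key observation is that $\sigma$ acts on $\mu_3 \subset \FF_4^{\times}$ by the Frobenius $\omega \mapsto \omega^2$, so $\omega^{\sigma^k} = \omega^{2^k}$. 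Hence $\omega^{\sigma^k - 2^k} = 1$ and $(\omega \cdot \tb_k)(x) = a_k u^{1 - 2^k} = \tb_k(x)$, proving that $\tb_k$ is fixed by $C_3$.

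I do not anticipate any serious obstacle: the result is essentially a bookkeeping exercise once the pullback of the conjugation action to $\Map^c(K, -)$ is identified. The only real subtlety lies in Step~1, where one must use both that $C_3 \subset G_{48}$ and that $K \lhd \MS_2$ to replace $\omega^{-1}x$ by its $K$-representative $\omega^{-1}x\omega$; conceptually, Step~3 works because the same Frobenius appears twice (once from conjugating $S$ past $\omega$, once from the weight grading on $u$) and the contributions cancel.
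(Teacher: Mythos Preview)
Your proof is correct and takes essentially the same approach as the paper: both are direct computations using the commutation relation $Sa = a^\sigma S$ to move $\omega$ past powers of $S$, then checking the resulting power of $\omega$ against the $C_3$-action on $u^{1-2^k}$. Your write-up is slightly more streamlined in that you record the identity $\omega^{\sigma^k} = \omega^{2^k}$ once and use it uniformly, whereas the paper splits into the cases $k$ even and $k$ odd; but this is only a cosmetic difference.
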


\begin{proof}
We have (for $a_2 \in \{0,\omega\}$):
\begin{align*}
\tb_k(\omega(1+a_2S^2+a_3 S^3+\cdots)G_{48}) & = \tb_k((\omega+\omega a_2S^2+\omega a_3 S^3 +\cdots)G_{48}) \\
& = \tb_k((\omega+\omega a_2S^2+\omega a_3 S^3 +\cdots)\omega^2 G_{48}) \\
& = \tb_k((1+ a_2S^2+\omega^2 a_3 S^3+\cdots) G_{48}) \\
& = \begin{cases}
a_k u^{1-2^k}, & k \: \text{even}, \\
\omega^2 a_k u^{1-2^k}, & k \: \text{odd}
\end{cases} \\
& = \omega \cdot a_ku^{1-2^k} \\
& = \omega \cdot \tb_k((1+a_2S^2+a_3 S^3+\cdots)G_{48}). \qedhere
\end{align*}
\end{proof}

\begin{cor}\label{cor:brSidesc}
The sub-Hopf algebra $\br{\Sigma}(2) \subset \br{\Sigma}_2$ is given by
$$ \br{\Sigma}(2) = \FF_4[v_2^{\pm1}][\tb_2, \tb_3, \cdots ]/ (\tb_2^2 + \omega v_2 \tb_2, \tb_k^4 + v_2^{2^k-1}\tb_k). $$
\end{cor}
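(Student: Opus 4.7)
The plan is to combine the preceding lemma with a direct basis/weight argument on the presentation (\ref{eq:defbrSiext}).

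First I would record the easy inclusion $R \subseteq \br{\Sigma}(2)$, where
$$ R := \FF_4[v_2^{\pm 1}][\tb_2, \tb_3, \ldots]/(\tb_2^2 + \omega v_2 \tb_2, \tb_k^4 + v_2^{2^k-1}\tb_k). $$
Indeed, the preceding lemma shows $\tb_k \in \br{\Sigma}_2^{C_3}$ for every $k \geq 2$, the $C_3$-action on $\FF_4$ is trivial, and $v_2 = u^{-3}$ is fixed by the action $u \mapsto \omega u$. Hence $R$ lies inside $\br{\Sigma}(2) = \br{\Sigma}_2^{C_3}$, and the defining relations of $R$ pull back from those of $\br{\Sigma}_2$.

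For the reverse inclusion, I would use the relations in (\ref{eq:defbrSiext}) to normalize any element of $\br{\Sigma}_2$ as an $\FF_4[u^{\pm 1}]$-linear combination of monomials
$$ \tb_2^{a_2} \tb_3^{a_3} \tb_4^{a_4} \cdots \qquad \text{with } a_2 \in \{0,1\},\ a_k \in \{0,1,2,3\} \text{ for } k\geq 3, $$
only finitely many factors appearing in each monomial. This reduction uses the explicit rules $\tb_2^{n+1} = (\omega v_2)^n \tb_2$ (for $n \geq 1$) and $\tb_k^{n+3} = v_2^{2^k-1}\tb_k^n$ (for $k \geq 3$, $n \geq 1$), and is the only computation that must be checked. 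Combining this with the $\FF_4$-basis $\{u^m\}_{m \in \ZZ}$ of $\FF_4[u^{\pm 1}]$ yields an $\FF_4$-basis of $\br{\Sigma}_2$ consisting of the monomials $u^m \tb_2^{a_2} \tb_3^{a_3} \cdots$.

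Since $C_3$ acts trivially on $\FF_4$ and on each $\tb_k$ (by the lemma), and sends $u^m \mapsto \omega^m u^m$, the action on each basis monomial is scalar multiplication by $\omega^m$. The invariants are therefore spanned by the basis monomials with $m \equiv 0 \pmod{3}$, i.e.\ those of the form $v_2^n \tb_2^{a_2}\tb_3^{a_3}\cdots$, and these span exactly $R$. The main (and only) obstacle is verifying the basis claim for $\br{\Sigma}_2$; everything else is formal.
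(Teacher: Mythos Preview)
Your proof is correct and follows the same approach as the paper, which simply states the result as a corollary without further argument: both rely on the preceding lemma (trivial $C_3$-action on $\tb_k$) together with the observation that $\FF_4[u^{\pm 1}]^{C_3}=\FF_4[v_2^{\pm 1}]$. Your explicit basis/weight computation is a careful justification of the reverse inclusion that the paper leaves implicit.
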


\begin{lem}
We have 
$$\sigma \cdot \tb_2 = \omega\tb_2$$
and the element $\td{t}_2 := \omega^2\tb_2 \in \br{\Sigma}(2)$ is Galois invariant.
\end{lem}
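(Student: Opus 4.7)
The plan is to compute $\sigma \cdot \tb_2$ directly via the concrete realization
\[
\br{\Sigma}_2 \cong \Map^c(\GG_2/G_{48}, \FF_4[u^{\pm 1}])
\]
coming from the homeomorphism $K \cong \GG_2/G_{48}$ of \eqref{eq:Khomeo}, combined with the definition \eqref{eq:tbidef} of $\tb_2$ and the conjugation action formula $(\sigma \cdot f)(xG_{48}) = \sigma \cdot f(\sigma^{-1} x G_{48})$. Since $\sigma$ lies in $G_{48}$ and squares to $1$, the translate $\sigma^{-1} x G_{48}$ simplifies to $x^\sigma G_{48}$, so the computation reduces to locating the unique $K$-representative of $x^\sigma$ modulo the subgroup $\MS_2 \cap G_{48} = G_{24}$ and evaluating $\tb_2$ there.

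The core technical claim is that $\sigma$ acts \emph{trivially} on $K$ modulo $S^3$; this is all the precision required since $\tb_2$ only sees the $S^2$-coefficient. To justify it, reduce modulo $S^3$: elements of $\MS_2$ of the form $1 + a_2 S^2$ are parametrized by $a_2 \in \FF_4$, and the only elements of $G_{24}$ lying in this set are $\pm 1$, where $-1$ admits the Teichm\"uller expansion $-1 \equiv 1 + S^2 \pmod{S^3}$. Hence the image of $G_{24}$ in the $S^2$-coordinate is $\FF_2 \subset \FF_4$, and $K$ modulo $S^3$ is identified with $\FF_4/\FF_2 \cong \ZZ/2$. The Galois action $g \mapsto g^\sigma = -\tfrac{1}{2}TgT$, upon substituting $T = \alpha S$ and applying $\alpha\alpha^\sigma = -1$, acts on the $S^2$-coefficient by the Frobenius $a_2 \mapsto a_2^\sigma$, which is trivial on $\FF_4/\FF_2$ since $\omega^2 \equiv \omega \pmod{\FF_2}$.

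Granting this, $\tb_2((x^\sigma)_K) = \tb_2(x)$ for every $x \in K$, so $(\sigma \cdot \tb_2)(x) = \sigma(\tb_2(x))$. Since $\tb_2(x) \in \{0, \omega v_2\}$ and $\sigma(\omega v_2) = \omega^2 v_2 = \omega \cdot \omega v_2$, one concludes $\sigma \cdot \tb_2 = \omega \tb_2$. The Galois invariance of $\td{t}_2 = \omega^2 \tb_2$ is then immediate from $\sigma \cdot (\omega^2 \tb_2) = \sigma(\omega^2) \cdot \sigma(\tb_2) = \omega \cdot \omega \tb_2 = \omega^2 \tb_2$. The main obstacle is the $S^3$-truncated computation of the preceding paragraph — in particular, correctly tracking the Honda/elliptic coordinate conversion via $\alpha$ and verifying the Teichm\"uller expansion of $-1$ — but once this is done, the rest is formal.
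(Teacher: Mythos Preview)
Your proposal is correct and follows essentially the same approach as the paper's proof: both reduce to computing the $S^2$-coefficient of the $K$-representative of $x^\sigma$, observe (via $-\alpha(-)\alpha^\sigma$ and $\alpha \equiv 1 \pmod 2$) that the Galois action sends $a_2 \mapsto a_2^\sigma$, and then correct by the $G_{24}$-element $-1 \equiv 1 + S^2$ when $a_2^\sigma = \omega^2 \notin \{0,\omega\}$. The paper carries this out as an explicit case split on $a_2 \in \{0,\omega\}$, whereas you package the same two cases as the single statement ``Frobenius is trivial on $\FF_4/\FF_2$''; this is a mild reorganization rather than a different route.
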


\begin{proof}
For $a_2 \in \{0,\omega\}$, we compute the conjugation action on $\tb_2$ (\ref{eq:tbidef}) using the fact that $\sigma^{-1}=\sigma$, Lemma~\ref{lem:sigma'}, and the fact that $\alpha \equiv 1 \pmod 2$:
\begin{align*}
(\sigma \cdot \tb_2)((1+a_2S^2 + \cdots)G_{48}) 
& = \sigma [\tb_2(\sigma(1+a_2S^2 + \cdots)G_{48})] \\
& =  \sigma [\tb_2(-\alpha(1+a^\sigma_2 S^2+\cdots)\alpha^\sigma G_{48})] \\
& =  \sigma [ \tb_2((1+a^\sigma_2 S^2+\cdots) G_{48})] \\
& = \begin{cases}
\sigma [\tb_2((1+ 0S^2 + \cdots) G_{48})], & a_2 = 0, \\
\sigma [\tb_2((1+ \omega^2S^2 +\cdots) G_{48})], & a_2 = \omega.
\end{cases}
\end{align*}
Now if $a_2 = 0$, it follows we have
\begin{align*}
\sigma [\tb_2(\sigma(1+0S^2+ \cdots)G_{48})]
& = 0 \\
& = \omega\tb_2((1+0S^2+\cdots)G_{48}).
\end{align*}
However, if $a_2 = \omega$, the element
$$ (1+ \omega^2S^2 +\cdots) $$
is not in $K$, and we have to rectify this by adjusting it by right multiplication with 
$$ -1 = 1 + S^2 + S^4 + \cdots \in \GG_2 $$
 to get it into $K$.  
We have 
\begin{align*}
\sigma [\tb_2((1+ \omega^2S^2 + \cdots) G_{48})]
 & = \sigma [\tb_2((1+ \omega^2S^2 + \cdots) (-1)G_{48})] \\
 & = \sigma [t_2((1+ \omega^2S^2 + \cdots) (-1))] \\
& = \sigma[\omega u^{-3}] \\
& =  \omega^2 u^{-3} \\
& = \omega \tb_2((1+\omega S^2 + \cdots)G_{48}).\qedhere
\end{align*}
\end{proof}

\begin{defn}\label{defn:S2tilde}
Define $\sitd(2)$ to be the image of the composite
$$ \pi_* \tmf \wedge \tmf \wedge Z \rightarrow \pi_*\TMF \wedge \TMF \wedge Z \hookrightarrow \br{\Sigma}(2). $$
\end{defn}

\begin{lem}
The Hopf algebra structure on $(\FF_4[v_2^{\pm1}], \br{\Sigma}(2))$ restricts to a Hopf algebra structure on $(k(2)_*,\sitd(2))$.
\end{lem}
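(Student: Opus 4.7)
The plan is to use naturality of cosimplicial structure. The Hopf algebroid structure on $\br{\Sigma}(2)$ arises from the cosimplicial structure of $\pi_*(\TMF^{\wedge \bullet + 1} \wedge Z)$ under the embedding of Lemma~\ref{lem:embeddingSigma2}. The map of $E_\infty$-ring spectra $\tmf \to \TMF$ gives a compatible map of cosimplicial spectra $\tmf^{\wedge \bullet + 1} \wedge Z \to \TMF^{\wedge \bullet + 1} \wedge Z$; by naturality, each of the structure maps on the target preserves the image of the source on $\pi_*$, which is our candidate Hopf algebroid $\sitd(2)$ over $k(2)_*$.

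First, I would identify the structure maps of $\br{\Sigma}(2)$ explicitly with their cosimplicial origins. The left and right units $\eta_L, \eta_R: \FF_4[v_2^{\pm 1}] \to \br{\Sigma}(2)$ come from the two coface maps $d^0, d^1: \TMF \wedge Z \to \TMF \wedge \TMF \wedge Z$. The counit $\epsilon$ comes from the multiplication codegeneracy $s^0: \TMF \wedge \TMF \wedge Z \to \TMF \wedge Z$. The coproduct $\Delta$ comes from the middle coface $d^1: \TMF \wedge \TMF \wedge Z \to \TMF \wedge \TMF \wedge \TMF \wedge Z$ combined with the K\"unneth identification $\pi_*(\TMF^{\wedge 3} \wedge Z) \cong \br{\Sigma}(2) \otimes_{\FF_4[v_2^{\pm 1}]} \br{\Sigma}(2)$ visible from Corollary~\ref{cor:E1TMFZ}. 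The antipode comes from the involution swapping the two $\TMF$-factors.

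Second, I would invoke naturality for each coface or codegeneracy $f$ via the commuting square
\[
\xymatrix{
\pi_*(\tmf^{\wedge m+1} \wedge Z) \ar[r]^{f_*} \ar[d] & \pi_*(\tmf^{\wedge n+1} \wedge Z) \ar[d] \\
\pi_*(\TMF^{\wedge m+1} \wedge Z) \ar[r]^{f_*} & \pi_*(\TMF^{\wedge n+1} \wedge Z)
}
\]
to conclude that $f_*$ on the bottom restricts to a map between the images of the vertical arrows. By Corollary~\ref{cor:htpysplit}, the image in simplicial degree $n$ is the good complex
\[
\mc{C}^{n,*}(Z) \cong \FF_2[v_2] \otimes \left[\FF_2[\zeta_2^4, \zeta_3^2, \ldots]/(\zeta_2^8, \zeta_3^8, \ldots)\right]^{\otimes n},
\]
which in simplicial degree $0$ is $k(2)_*$ and in simplicial degree $1$ is $\sitd(2)$ by Definition~\ref{defn:S2tilde}. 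Since $\sitd(2)$ is $k(2)_*$-free on the monomial basis in the $\zeta$'s, we have $\mc{C}^{n,*}(Z) \cong \sitd(2)^{\otimes_{k(2)_*} n}$, so the restriction of the coproduct lands in $\sitd(2) \otimes_{k(2)_*} \sitd(2)$. The restricted $\eta_L, \eta_R, \epsilon, \Delta$, antipode, and multiplication thus assemble into the required Hopf algebroid structure on $(k(2)_*, \sitd(2))$, compatible with the inclusion into $(\FF_4[v_2^{\pm 1}], \br{\Sigma}(2))$.

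The main technical point is the K\"unneth identification $\pi_*(\TMF^{\wedge n+1} \wedge Z) \cong \br{\Sigma}(2)^{\otimes_{\FF_4[v_2^{\pm 1}]} n}$ and its compatibility with the analogous $\mc{C}^{n,*}(Z) \cong \sitd(2)^{\otimes_{k(2)_*} n}$. Both are free-module statements following from the explicit descriptions already in hand via Corollary~\ref{cor:E1TMFZ} (with Lemma~\ref{lem:embeddingSigma2}) and Corollary~\ref{cor:htpysplit}, so the verification reduces to matching two compatible formulas rather than a substantial further computation.
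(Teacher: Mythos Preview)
Your approach is the same as the paper's in spirit: both use naturality of the cosimplicial structure to transport the coface maps. The paper, like you, singles out the coproduct as the only nontrivial point. But your justification of the ``main technical point'' is incomplete, and it is precisely this point that the paper's proof is devoted to.

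You claim that the image of $\pi_*(\tmf^{\wedge 3}\wedge Z)$ inside $\br{\Sigma}(2)\otimes_{\FF_4[v_2^{\pm 1}]}\br{\Sigma}(2)$ equals $\sitd(2)\otimes_{k(2)_*}\sitd(2)$, and that this ``reduces to matching two compatible formulas'' from Corollary~\ref{cor:htpysplit} and Corollary~\ref{cor:E1TMFZ}. But Corollary~\ref{cor:htpysplit} only gives an \emph{abstract} $k(2)_*$-module isomorphism $\mc{C}^{2,*}(Z)\cong \sitd(2)^{\otimes 2}$; it does not identify the two as submodules of $\br{\Sigma}(2)^{\otimes 2}$. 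The obstruction is that the K\"unneth map
\[
k(2)_*(\tmf)\otimes_{k(2)_*}k(2)_*(\tmf)\longrightarrow k(2)_*(\tmf\wedge\tmf)
\]
is \emph{not} an isomorphism---the evil summand $V^{*,*}(Z)$, which is $v_2$-torsion, prevents $k(2)_*\tmf$ from being $k(2)_*$-flat. So there is no direct K\"unneth identification on the connective side to match with the one on the periodic side.

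The paper closes this gap by observing that this K\"unneth map (labelled $(*)$ in the paper's diagram) \emph{becomes} an isomorphism after inverting $v_2$. Since the target $K(2)_*(\TMF\wedge\TMF)$ is already $v_2$-periodic, the image of $k(2)_*(\tmf\wedge\tmf)$ and the image of $k(2)_*(\tmf)\otimes_{k(2)_*}k(2)_*(\tmf)$ in it coincide. This is exactly what identifies $\mc{C}^{2,*}(Z)$ with $\sitd(2)\otimes_{k(2)_*}\sitd(2)$ as subobjects, and hence shows the coproduct restricts. Your outline is correct once you insert this argument; without it, the step ``so the restriction of the coproduct lands in $\sitd(2)\otimes_{k(2)_*}\sitd(2)$'' does not follow.
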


\begin{proof}
The only thing which is not obvious is that the coproduct of $\br{\Sigma}(2)$ restricts to a coproduct on $\sitd(2)$.
Using the fact that $\tmf \wedge Z \simeq k(2)$, it suffices to consider the diagram, where $\ell$ is the unit:
$$
\xymatrix@C-1em{
k(2)_*(S \wedge \tmf) \ar[r] \ar[d]_{(\ell\wedge 1)_*} & 
K(2)_*(S \wedge \TMF) \ar[r] \ar[d]_{(\ell \wedge 1)_*} & 
\br{\Sigma}(2) \ar[dd]^{\psi} \\
k(2)_* (\tmf \wedge \tmf) \ar[r]^{(1)} & 
K(2)_* (\TMF \wedge \TMF) &  \\
k(2)_*(\tmf) \otimes_{k(2)_*} k(2)_*\tmf \ar[r]_-{(2)} \ar[u]^{(*)} &
K(2)_*(\TMF) \otimes_{K(2)_*} K(2)_*\TMF \ar[r] \ar[u]_\cong &
\br{\Sigma}(2) \otimes_{\FF_4[v_2^{\pm1}]} \br{\Sigma}(2)
}
$$
Since $(*)$ is an isomorphism after inverting $v_2$, it follows that maps $(1)$ and $(2)$ have isomorphic images. The result follows.
\end{proof}

We will now explain how $\sitd(2)$ has a decreasing ``Adams filtration''.  Recall that we have
\begin{align*}
\E{ass}{}^{s,t}_2(\tmf \wedge \tmf \wedge Z) & \cong 
\E{ass}{}^{s,t}_2(k(2) \wedge \tmf) \\
& \cong
\FF_2[v_2][\zeta_2^4, \zeta_3^2, \zeta_4^2, \ldots]/(\zeta_i^8) \\
& \quad \oplus \text{simple $v_2$-torsion in the $s = 0$ line}.
\end{align*}
Here the generators lie in $(t-s,s)$ bidegrees:
\begin{align*} 
\abs{\zeta_2^4} & = (12,0), \\
\abs{\zeta_i^2} & = (2(2^{i}-1),0), \\
\abs{v_2} & = (6,1).
\end{align*}
The Adams spectral sequence collapses, and endows $k(2)_*\tmf$ with its \emph{Adams filtration}.
 
The \emph{$v_2$-Bockstein filtration} on $k(2)_*\tmf$ is the decreasing filtration given by 
$$ \{ (v_2^i)k(2)_*\tmf \}. $$
The Adams filtrations and $v_2$-Bockstein filtrations on $k(2)_*\tmf$ agree. 
This implies that if we endow $\br{\Sigma}(2)$ with a decreasing multiplicative Adams filtration where we declare that $v_2$ has Adams filtration $1$ and that $\br{t}_i$ has Adams filtration $0$, then the map
$$ k(2)_*\tmf \to \br{\Sigma}(2) $$
preserves Adams filtration, and therefore the image of this map $\sitd(2)$ inherits an Adams filtration which is compatible with that of $k(2)_*\tmf$ and $\br{\Sigma}(2)$. 

\begin{thm}\label{thm:goodcomplex}
The Hopf algebra $\sitd(2) \subset \br{\Sigma}(2)$ has the form 
$$ \sitd(2) = \FF_2[v_2^{\pm1}][\td{t^2_2}, \td{t}_3, \ldots ]/((\td{t_2^2})^2 = v^2_2 \td{t_2^2}, \: \td{t}_k^4 = \text{terms with Adams filtration $> 0$}) $$
where $\td{t_2^2} = (\omega^2 \tb_2)^2$ and for $k \ge 3$
$$ \td{t}_k = \tb_k + \text{terms of higher Adams filtration}. $$
There is an isomorphism of cochain complexes
$$ \mc{C}^{*,*}(Z) \cong C^*_{\sitd(2)}(k(2)_*). $$
\end{thm}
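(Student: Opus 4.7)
The plan is to prove the theorem in two stages: first, establish the explicit Hopf algebra presentation of $\sitd(2)$ by computing Galois-invariant elements in $\br{\Sigma}(2)$ in appropriate Adams filtrations; second, identify the good complex $\mc{C}^{*,*}(Z)$ with the cobar complex of $\sitd(2)$ by leveraging the embedding into $C^*_{\br{\Sigma}_2}((K_2)_*)$ from Lemma~\ref{lem:embeddingSigma2} and its higher-$s$ analog.

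For the Hopf algebra structure, I would first verify the relation involving $\td{t_2^2}$ by a direct calculation: since $\omega^2\tb_2$ is already known to be Galois-invariant, squaring gives $(\omega^2\tb_2)^2 = \omega\tb_2^2 = \omega^2 v_2\tb_2$, and then $(\td{t_2^2})^2 = \omega^2 v_2^3 \tb_2 = v_2^2 \td{t_2^2}$. For $k \ge 3$, I would compute $\sigma\cdot\tb_k$ inductively using Lemma~\ref{lem:sigma'} applied to the defining formula (\ref{eq:tbidef}) of $\tb_k$ as a function on $K$; the calculation at the $\tb_2$ level shows that Galois-equivariance defects can be absorbed by lower-degree generators multiplied by $v_2$, and an analogous analysis shows $\sigma\cdot\tb_k = \tb_k + (\text{strictly higher Adams filtration terms})$, allowing one to inductively build a Galois-invariant lift $\td{t}_k = \tb_k + (\text{higher filtration})$. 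The relation $\td{t}_k^4 = (\text{higher filtration})$ then follows from $\tb_k^4 = v_2^{2^k-1}\tb_k$ together with the corrections. A subtlety is that $\omega^2\tb_2$ itself does not lie in $\sitd(2)$, since it has Adams filtration $0$ in degree $6$, while the only class of $k(2)_*\tmf$ in degree $6$ is $v_2$, which sits in Adams filtration $1$; so only the square $(\omega^2\tb_2)^2$ of degree $12$, matching $\zeta_2^4$, enters the image.

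To show these generators actually exhaust $\sitd(2)$, I would combine an Adams-filtration comparison with $v_2$-inversion. Since $Z$ is type $2$, the map $\tmf \wedge Z \to \TMF \wedge Z$ is a $v_2$-periodic equivalence (using $\TMF = \tmf[\Delta^{-24}]$ and that $\Delta^{24}$ acts by a power of $v_2$ on $K(2)$-local objects), so $v_2^{-1}\sitd(2) = \br{\Sigma}(2)^{\Gal}$. The sub-Hopf algebra $\sitd(2)$ is thus fully determined by its Adams-filtration-graded pieces, and a direct comparison with the known presentation
\[ k(2)_*\tmf \ \cong\ \FF_2[v_2]\otimes \FF_2[\zeta_2^4,\zeta_3^2,\zeta_4^2,\ldots]/(\zeta_i^8) \]
(modulo $v_2$-torsion) pins down the generators and relations of $\sitd(2)$ exactly as claimed.

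For the cobar complex identification, I would apply the analogue of Lemma~\ref{lem:embeddingSigma2} for arbitrary $s$, producing $\E{\TMF}{}_1^{s,*}(Z) \hookrightarrow \br{\Sigma}_2^{\otimes_{(K_2)_*} s}$ with image the $C_3\rtimes\Gal$-invariants. By definition, $\mc{C}^{s,*}(Z)$ is the image of $\E{\tmf}{}_1^{s,*}(Z)$ in $\E{\TMF}{}_1^{s,*}(Z)$. To show this image equals $\sitd(2)^{\otimes_{k(2)_*} s}$, I would iterate the splitting of Proposition~\ref{prop:k(n)}: at each step, $k(2)\wedge\tmf$ decomposes (as a $k(2)$-module) as a sum of a free piece, whose homotopy is the $v_2$-torsion-free $\sitd(2)$, and a generalized Eilenberg-MacLane piece contributing the $v_2$-torsion ``evil'' part. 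Iterating this splitting and passing to the $v_2$-torsion-free quotient yields $\mc{C}^{s,*}(Z) \cong \sitd(2)^{\otimes_{k(2)_*} s}$, which is precisely $C^s_{\sitd(2)}(k(2)_*)$. Finally, the $\tmf$-resolution differential on $\mc{C}^{*,*}(Z)$ matches the cobar differential on $C^*_{\sitd(2)}(k(2)_*)$ because the former is induced by the image of the coaction of $\tmf\wedge\tmf$ on $\tmf$-modules under the map into $\br{\Sigma}_2$, which by construction restricts to the cobar coproduct on $\sitd(2)$. The main obstacle is the Kunneth step: controlling, via the iterated splitting, how tensor powers of the good summand of $k(2)\wedge\tmf$ assemble inside $k(2)_*(\tmf^{\wedge s})$ to give exactly $\sitd(2)^{\otimes_{k(2)_*} s}$ with matching differential, without accidentally mixing in evil $v_2$-torsion contributions across tensor factors.
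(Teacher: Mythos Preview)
Your overall architecture is sound, but you diverge from the paper at the crucial technical step, and your proposed route has a real gap.

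The paper does \emph{not} compute the Galois action on $\tb_k$ for $k \ge 3$. In fact, Remark~\ref{rmk:S2barstructure} explicitly says the Galois action on $\br{\Sigma}(2)$ is complicated and not fully determined. Your assertion that ``an analogous analysis shows $\sigma\cdot\tb_k = \tb_k + (\text{higher Adams filtration})$'' is precisely the hard computation the paper avoids: the conjugation action on $\tb_k$ is induced through the identification $K \cong \GG_2/G_{48}$, so computing $\sigma\cdot\tb_k$ requires taking $\sigma(1+a_2S^2+\cdots)\sigma^{-1}$ and right-multiplying by a suitable element of $G_{48}$ to land back in $K$; for $k=2$ this already needed explicit coset work, and for $k \ge 3$ the $G_{48}$-correction cascades into all higher coefficients. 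You give no mechanism to control this.

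The paper's route is both different and simpler. For $k \ge 3$ it works from the source rather than the target: it \emph{defines} $\td{t}_k \in \sitd(2)$ to be the image of an element of $k(2)_*\tmf$ detected by $\zeta_k^2$ in the Adams spectral sequence, and then identifies the leading term via the map $\tmf \to BP\bra{2} = \tmf_1(3)$. In $k(2)_*BP\bra{2}$ the element $t_k$ has Adams filtration $0$ and is detected by $\zeta_k^2$, so pushing forward shows $\td{t}_k = t_k + (\text{positive Adams filtration})$, hence $\td{t}_k = \tb_k + (\text{higher})$ in $\br{\Sigma}(2)$. Even for $\td{t_2^2}$, the paper does more than the Galois-invariance check: it computes $\td{t}_2$ as a function on all of $\GG_2/G_{48}$ (not just $K$), obtaining $\td{t}_2(gG_{48}) = (\mr{Tr}(a_2)+\mr{N}(a_1))u^{-3}$, which identifies its image in $k(2)_*BP\bra{2}$ as $t_2 + t_2^2 v_2^{-1} + t_1^3$ and thereby determines exactly which $v_2$-multiple lies in $\sitd(2)$. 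Your rank-plus-$v_2$-inversion argument is in the right spirit but does not by itself identify the leading term of the image of $\zeta_k^2$; the $BP\bra{2}$ comparison is what supplies this.

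Your Part 2 (cobar identification) is close to the paper's, which simply says ``similar reasoning'' handles $\sitd(2)^{\otimes_{k(2)_*} n}$; the K\"unneth obstacle you flag is handled by the same Adams-filtration $=$ $v_2$-Bockstein-filtration argument applied to $k(2)_*\tmf^{\wedge n}$.
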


\begin{proof}
By Lemma~\ref{lem:embeddingSigma2}, it suffices to establish that the image of the map
$$ \pi_* \tmf^{\wedge n+1} \wedge Z \rightarrow \E{\TMF}{}_1(Z) \hookrightarrow C^*_{\br{\Sigma}(2)}(K(2)_*) $$
is what we claim it is.  We focus on the case of $n = 2$; it will be apparent that the general case is essentially the same.  
By Lemma~\ref{lem:margAnEn} and Proposition~\ref{prop:k(n)}, the ASS
$$ \E{ass}{}_2^{s,t}(k(2) \wedge BP\bra{2}) \Rightarrow k(2)_{t-s}BP\bra{2} $$
has $E_2$ term
\begin{align*} 
\E{ass}{}^{s,t}_2(k(2) \wedge \tmf)
&  \cong
\FF_2[v_2][\zeta_1^2, \zeta_2^2, \zeta_3^2, \ldots]/(\zeta_i^8) \\
& \quad \oplus \text{simple $v_2$-torsion in the $s = 0$ line}.
\end{align*}
The images of the elements $t_i \in BP_*BP$ under the map
\begin{equation}\label{eq:tiBPBP}
 BP_*BP \to k(2)_*BP\bra{2} 
 \end{equation}
(where $BP\bra{2}$ is the Wilson spectrum of (\ref{eq:BP2tmf13})) are detected by $\zeta_i^2$ in the ASS for $k(2)_*BP\bra{2}$.  In particular, the elements $t_i \in k(2)_*BP\bra{2}$ have Adams filtration $0$.

Since the Adams filtration and $v_2$-Bockstein filtration on $k(2)_*\tmf$ agree, an element in $K(2)_*\tmf \cong K(2)_*\TMF$ is in the image of the map
$$\E{\tmf}{}_1^{1,*}(Z) \cong k(2)_*\tmf \rightarrow v_2^{-1}k(2)_*\tmf \cong K(2)_*\TMF\cong \E{\TMF}{}_1^{1,*}(Z) $$
if and only if it is detected (in the localized Adams spectral sequence) by an element in the image of the map
$$ \E{ass}{}_2(k(2)_*\tmf) \rightarrow v_2^{-1}\E{ass}{}_2(k(2)_*\tmf). $$
Consider the commutative diagram coming from the map (\ref{eq:tmfBP2map})
\begin{equation}\label{eq:k2tmfsquare}
\xymatrix{
k(2)_*\tmf \ar[r] \ar[d]_{(1)} & K(2)_*\TMF \ar[d] \ar@{^{(}->}[r] & \Map^c(\GG_2/G_{48}, \FF_4[u^{\pm 1}]) \ar@{^{(}->}[d]^{(3)} \\
k(2)_*BP\bra{2} \ar[r]_{(2)} & K(2)_*E_2 \ar@{^{(}->}[r] & \Map^c(\GG_2,\FF_4[u^{\pm 1}]) 
}
\end{equation}
We wish to determine which $v_2$ multiple of $\td{t}_2$ is in positive Adams filtration.  To that end, we must compute the image of $\td{t}_2$ under map (3) in (\ref{eq:k2tmfsquare}).
This is tantamount to computing, for $g \in \GG_2$, the value $\td{t}_2(gG_{48})$.
Since we have already established $\td{t}_2$ is $C_3 \rtimes \mr{Gal}$-equivariant, we may assume
$$ g = 1 + a_1 S + a_2S^2 + \cdots. $$
Write $a_1 = \alpha \omega + \beta \omega^2$ with $\alpha, \beta \in \FF_2$.  Using the fact that the elements $j$ and $k$ in $G_{48}$ are given by
\begin{align*}
j & = 1 + \omega^2S + \omega S^2 + \cdots \\
k & = 1 + \omega S+\omega S^2 + \cdots 
\end{align*}
(see \cite{beaudryresolution}) we compute:
\begin{align*}
\td{t}_2(gG_{48})
& = \td{t}_2((1 + (\alpha\omega + \beta \omega^2)S + a_2S^2 + \cdots )G_{48}) \\
& = \td{t}_2((1 + (\alpha\omega + \beta \omega^2)S + a_2S^2 + \cdots )k^\alpha j^\beta G_{48}) \\
& = \td{t}_2((1 + (a_2 + (\alpha+\beta) + \alpha\beta \omega^2) S^2 + \cdots ) G_{48}).
\end{align*}
Let 
$$ \mr{Tr}, \mr{N}: \FF_4 \rightarrow \FF_2 $$
be the trace and norm, respectively, so that $\mr{Tr}(a) =a +a^{\sigma}$ and $\mr{N}(a) = aa^{\sigma}$.
From the definition of $\td{t}_2$ we find
$$ \td{t}_2((1+a_2S^2 + \cdots)G_{48}) = \mr{Tr}(a_2)u^{-3}. $$
It follows from the above calculation that
$$ \td{t}_2((1+a_1S + a_2S^2 + \cdots)G_{48}) = (\mr{Tr}(a_2)+\mr{N}(a_1))u^{-3}. $$
Thus the image of $\td{t}_2$ under map (3) in (\ref{eq:k2tmfsquare}) is the image of
$$ t_2+t_2^2v_2^{-1}+t_1^3 $$
under map (2).  Since the elements $t_i \in k(2)_*BP\bra{2}$ all have Adams filtration $0$, it follows that $v_2\td{t}_2 = \td{t}_2^2 \in K(2)_*\TMF$ lifts to an element
\begin{equation}\label{eq:tdt2}
\td{t}_2^2 = v_2t_2 + t_2^2 + v_2t_1^3 
\end{equation}
of $k(2)_*\tmf$.  

For $k \ge 3$, we \emph{define} $\td{t}_k \in \sitd(2)$ 
to be the image of an element of $k(2)_*\tmf$ detected by $\zeta_k^2$.
Since in the Adams spectral sequence for $k(2)_*BP\bra{2}$ the element $\zeta_k^2$ detects $t_k$, we deduce that the image of $\td{t}_k$ under (1) satisfies
$$ \td{t}_k = t_k + \text{terms of positive Adams filtration}. $$
The result for $n = 2$ follows.

Similar reasoning shows that the image of 
$$\E{\tmf}{}_1^{n,*}(Z) \cong   k(2)_*\tmf^{\wedge n} \rightarrow K(2)_*\TMF^{\wedge n} = \td{\Sigma}(2)^{\otimes_{K(2)_*} n}  \cong \E{\TMF}{}_1^{n,*}(Z)$$
is $\sitd(2)^{\otimes_{k(2)_*}n}$.
\end{proof}

\begin{rmk}\label{rmk:S2barstructure}
Note that while we do not know the full structure of $\sitd(2)$ because of the complicated action of $\Gal$ on $\br{\Sigma}(2)$, we do completely know the structure of $\sibr(2) := \sitd(2) \otimes \FF_4 \subset \br{\Sigma}(2)$:
\begin{align}\label{eq:sibrdef} \sibr(2) = \FF_4[v_2][\td{t^2_2}, \tb_3, \cdots ]/ ((\td{t_2^2})^2 + v_2^2 \td{t^2_2}, \tb_k^4 + v_2^{2^k-1}\tb_k). \end{align}
\end{rmk}

\begin{table}
\begin{center}
\begin{tabular}{|  c | c | }
\hline
Name & Location  \\
\hline
\hline
$\Sigma(2) $   &   \eqref{eq:Sidef} \\
$\Sigma_2$  &   \eqref{eq:Siexdef}  \\
$\overline{\Sigma}_2 $ & \eqref{eq:barSigma_2}  \& \eqref{eq:defbrSiext} \\
$ \br{\Sigma}(2)$  & Cor.~\ref{cor:brSidesc}   \\
$\td{\Sigma}(2)$ & Sec.~\ref{sec:subhopf}  \\
$\si(2)$ &  \eqref{eq:sidef} \\
$\sibr(2)$   & \eqref{eq:sibrdef}  \\
$\sitd(2)$   & Def.~\ref{defn:S2tilde} \& Th.~\ref{thm:goodcomplex}\\
\hline
\end{tabular}
\end{center}
\caption{List of Hopf algebras and where to find them. Our convention is to use the bar to
remind the reader that we have taken a quotient. A symbol with a tilde denotes a sub-algebra of the same symbol with a bar. The lowercase denotes the respective
connective versions. }
\label{fig:listofhopfs}
\end{table}


\section{The cohomology of the good complex}\label{sec:good}

In the previous section we established that
$$ \mc{C}^{*,*}(Z) \cong C^*_{\sitd(2)}(k(2)_*). $$
In this section we will compute the $E_1$-term of a spectral sequence which computes the cohomology
\[H^*(\sitd(2) ) := H^*(C^*_{\sitd(2)}(k(2)_*))\cong   H(\mc{C}^{*,*}(Z)) =: H^{*,*}(\mc{C}).  \]  
In our low dimensional range, it will turn out that there are no possible differentials in this spectral sequence.

\subsection{Overview of the strategy}

Recall from the previous section that we really only have a complete understanding of the base change 
\begin{equation}\label{eq:S2bardef}
 \sibr(2) := \sitd(2) \otimes \FF_4
\end{equation} 
and we only know the generators of $\sitd(2)$ in $\sibr(2)$ modulo terms of higher Adams filtration.  Our approach to understanding $H^*(\sitd(2))$ will be to understand aspects of the cohomology of $\sibr(2)$, and then to infer results about the cohomology of $\sitd(2)$.

Our method of computing the cohomology of $\sibr(2)$, and comparing it with the cohomology of $\sitd(2)$, will be to adapt a filtration employed by Ravenel to compute the cohomology of the Morava stabilizer algebras.  This filtration will result in a pair of May-type spectral sequences, which we refer to as \emph{May-Ravenel spectral sequences}:
$$ 
\xymatrix{
\E{MR}{}_1(\sitd(2)) = H^*(E^{MR}_{0}\sitd(2)) \ar@{=>}[r] \ar[d] & H^*(\sitd(2)) \ar[d] \\ 
\E{MR}{}_1(\sibr(2)) = H^*(E^{MR}_{0}\sibr(2)) \ar@{=>}[r] & H^*(\sibr(2)) 
}$$
The $E_1$-terms $\E{MR}{}_1$ will be computed by endowing $E_0^{MR} \sitd(2)$ and $E_0^{MR}\sibr(2)$ with Adams filtrations, resulting in a pair of \emph{Adams filtration spectral sequences} (AFSS)
 $$ 
 \xymatrix{
 \E{AF}{}_1(\sitd(2)) \ar@{=>}[r] \ar[d] & H^*(E_0^{MR}\sitd(2)) \ar[d] \\ 
 \E{AF}{}_1(\sibr(2)) \ar@{=>}[r] & H^*(E_0^{MR}\sibr(2)) 
 }$$
 
The May-Ravenel $E_1$-term $\E{MR}{}_1(\sibr(2))$ is the cohomology of a certain restricted Lie algebra $\br{l}(2)$.  This cohomology may be computed by a Chevallay-Eilenberg complex, whose differentials were explicitly computed by Ravenel.  
The key observations which we employ are:
\begin{enumerate}
\item The Chevallay-Eilenberg complex for $\br{l}(2)$ is isomorphic to $\E{AF}{}_1(\sibr(2))$.
\item The differentials in the Adams filtration spectral sequence $\{\E{AF}{}_r(\sibr(2))\}$ are determined by the differentials in the Chevallay-Eilenberg complex.
\item The image of $\E{AF}{}_1(\sitd(2))$ in $\E{AF}{}_1(\sibr(2))$ can be computed precisely, since we know the generators of $\sitd(2)$ modulo terms of higher Adams filtration.  This allows us to completely compute the differentials in the Adams filtration spectral sequence $\{\E{AF}{}_r(\sitd(2))\}$. 
\end{enumerate}

Even with knowing the differentials, the combinatorics for computing the spectral sequence $\{\E{AF}{}_r(\sibr(2))\}$ is complicated.  The computation of the spectral sequence $\{\E{AF}{}_r(\sibr(2))\}$ will be facilitated by refining the Adams filtration with a lexicographical filtration. This results in a \emph{lexicographical filtration spectral sequence} (LFSS)
$$  \E{AF}{}_1(\sibr(2)) = \E{LF}{}_0(\sibr(2)) \Rightarrow \E{MR}{}_1(\sibr(2)). $$ 
We will completely compute the LFSS spectral sequence, deduce from this the AFSS for $\sibr(2)$, deduce from that the AFSS for $\sitd(2)$, and thus completely compute $\E{MR}{}_1(\sitd(2))$.  In the low dimensional range we consider for our application, there will be no possible differentials in the May-Ravenel spectral sequence
$$ \E{MR}{}_1(\sitd(2)) \Rightarrow H^*(\sitd(2)). $$

\subsection{The May-Ravenel spectral sequence}

Let $(\FF_2, \Sirav(2))$ be the Hopf algebra obtained from $(K(2)_*,\Sigma(2))$ by setting $v_2=1$. In \cite[Chapter 3]{Ravenel}, Ravenel computed 
\[H^*(\Sirav(2)) = \Ext_{\Sirav(2)}^{*}(\FF_2, \FF_2). \]
The computation for $(K(2)_*,\Sigma(2))$ and $((K_2)_*, \Sigma_2)$ can be done using similar methods and all differentials follow from Ravenel's work by reintroducing the grading. We begin by summarizing Ravenel's method, which we then apply to our cases.

In \cite[Section 4.3]{Ravenel}, Ravenel defines a filtration of Hopf algebroids on $BP_*BP/I_N$. Specializing to the case of $N = p = 2$, this induces a filtration on $(k(2)_*,\si(2))$, where
\begin{align}\label{eq:sidef}
\si(2) = \FF_2[v_2] [t_1, t_2, \ldots]/(t_k^{4}-v_2^{2^k-1}t_k).
\end{align}
There is a unique increasing multiplicative filtration (which we call the \emph{May-Ravenel filtration}) on $\si(2)$ such that
\begin{align*}
\deg_{MR}(v_2)&=0, \\
\deg_{MR}(t^{2^j}_1)&= 1, \\
\deg_{MR}(t_{2k+1}^{2^j})& = 3\cdot 2^{k-1}, \quad k > 0, \\
\deg_{MR}(t_{2k}^{2^j}) &= 2^k.
\end{align*}
Further, Ravenel \cite[4.3.24]{Ravenel} proves that this is a filtration of Hopf algebroids, so that the associated graded $E_0(\si(2))$ 
is a Hopf algebra. It is given by the exterior algebra
\[ E_0(\si(2)) \cong  \FF_2[v_2] \otimes E[t_{i,j} \: : \: 0<i, j \in \{0,1\} ] \]
where $t_{i,j}$ is the image of $t_i^{2^j}$. 

From this filtration, we get a May type spectral sequence, which we call the \emph{May-Ravenel spectral sequence}:
\[  \E{MR}{}^{s,t,f}_1(\si(2))  
\Longrightarrow H^s(\si(2))_t. \]
Here $s$ is the cohomological degree, $t$ is the internal degree, and $f$ is the May-Ravenel filtration.
The first step is to compute $ \E{MR}{}_1(\si(2))$. 

Let $E^0(\si(2))$ be the $\FF_2$-linear dual of  $E_0(\si(2))$ and $x_{i,j}$ be the dual of $t_{i,j}$. Since the $t_{i,j}$'s form a basis of the indecomposables of $E_0(\si(2))$, it follows that $x_{i,j}$ forms a basis for the restricted Lie algebra of primitives 
$$ l(2) := PE^0(\si(2)) $$
and $\E{MR}{}_1 = H^*(l(2))$.  
Applying the methods of {\cite[Remark 10]{May_lie}}, we obtain a Chevallay-Eilenberg cochain complex 
\[  C^{*,*,*}_{CE}(l(2))  := \FF_2[v_2] \otimes \FF_2[h_{i,j} \: : \: 0<i, \: 0\leq j \le 1  ]\]
for elements $h_{i,j}$ of cohomological degree $s = 1$, internal degree $t = 2^{j+1}(2^i-1)$ and with May-Ravenel filtration given by that of $t_i^{2^j}$.  
Here, $h_{i,j}$ represents the dual of the element May calls $\gamma_1(x_{i,j})$. 
The $E_1$-term of the May-Ravenel spectral sequence is the cohomology of the Chevallay-Eilenberg complex:
$$ H^{s,t,f}(C^{*,*,*}_{CE}(l(2))) = \E{MR}{}^{s,t,f}_1(\si(2)). $$

The differentials are determined by the Lie bracket and restriction of $PE^0(\si(2))$.  For $\si(2)$,  these are obtained by ``remembering the grading'' in \cite[6.3.3]{Ravenel}.
We obtain the following differentials.

\begin{thm}\label{thm:diffs}
Let 
$\chi_2 = v_2h_{2,0} + h_{2,1}$. 
The differentials in $C^{*,*,*}_{CE}(l(2))$ are determined by $d(h_{1,0})= d(h_{1,1}) = 0$ and 
\begin{align*}
d(h_{2,0}) &=    h_{1,0} h_{1,1}  & d(h_{2,1})  &= v_2 h_{1,0} h_{1,1}\\
d(h_{3,0}) &= h_{1,0}\chi_2    & d(h_{3,1}) &= v_2^2 h_{1,1}\chi_2    \\
d(h_{4,0}) &=h_{1,0}h_{3,1}+v_2^2h_{1,1}h_{3,0}+v_2\chi_2^2 & d(h_{4,1})  &=v_2^5h_{1,0}h_{3,1}+v_2^{7}h_{1,1}h_{3,0}+v_2^6\chi_2^2   \\
d(h_{i,0}) &= v_2 h_{i-2,1}^2  & d(h_{i,1})  &=  v_2^{2^{i-1}}h_{i-2,0}^2 \\
\end{align*}
where the last two identities hold for $i\geq 5$.
\end{thm}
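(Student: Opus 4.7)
The plan is to derive these formulas from Ravenel's computation of the analogous Chevallay--Eilenberg differentials for the ungraded restricted Lie algebra $PE^0(\Sirav(2))$ recorded in \cite[Theorem~6.3.3]{Ravenel}, and then to \emph{reintroduce the internal grading} that Ravenel suppressed by setting $v_2 = 1$. First I would recall the general recipe due to May: for a restricted Lie algebra $L$, the Chevallay--Eilenberg differential on a degree-one generator $h = \gamma_1(x^{\vee})$ dual to $x \in L$ encodes both the dual of the Lie bracket and the dual of the restriction map. For $l(2) = PE^0(\si(2))$, both structure maps are determined by the coproduct of $\si(2)$, which is inherited modulo $I_2 = (2, v_1)$ from $BP_*BP$. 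Ravenel carries out exactly this bracket-and-restriction computation in \cite[Section~6.3]{Ravenel}, yielding the formulas of \cite[Theorem~6.3.3]{Ravenel} after inverting $v_2$; the task here is to match notations and lift each identity to our graded setting.

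The second step is to observe that the $v_2$-exponents in each claimed formula are forced by internal-degree balance. With $|v_2| = 6$ and $|h_{i,j}| = 2^{j+1}(2^i-1)$, and $d$ preserving the internal degree $t$, each monomial on the right-hand side comes attached to a unique power of $v_2$. For instance, $d(h_{3,1})$ must land in $t = 28$, and $h_{1,1}\chi_2$ already lies in $t = 16$, forcing a coefficient of $v_2^2$. One checks in each case that the required exponent is a non-negative integer, so Ravenel's identities lift verbatim once the $v_2$-weights are restored. The May--Ravenel filtration on both sides automatically matches, because $\deg_{MR}(v_2) = 0$ and $\deg_{MR}(t_i^{2^j})$ was chosen to agree with the filtration in \cite[4.3.24]{Ravenel}.

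The main obstacle I anticipate is the formulas for $d(h_{4,0})$ and $d(h_{4,1})$, where three distinct monomials of different apparent shapes must sum to a single internal degree ($t = 30$ and $t = 60$ respectively). Here one must individually restore the $v_2$-weights of the three pieces $h_{1,0}h_{3,1}$, $h_{1,1}h_{3,0}$, and $\chi_2^2$, and then verify that the coefficients Ravenel obtained in the $v_2 = 1$ picture are still compatible once each piece has been lifted. The stable recursion $d(h_{i,0}) = v_2 h_{i-2,1}^2$ and $d(h_{i,1}) = v_2^{2^{i-1}} h_{i-2,0}^2$ for $i \ge 5$ is a direct reflection of the iterated restriction $\xi(x_{i-2,j}) \propto x_{i,1-j}$ on $l(2)$, coming from the shape of the $[2]$-series of the formal group law underlying $\si(2)$; tracking the $v_2$-weight through the two restriction calls distinguishes the exponents $1$ and $2^{i-1}$, and, as before, these are the unique non-negative integers compatible with the internal grading.
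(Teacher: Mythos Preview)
Your proposal is correct and matches the paper's own argument essentially verbatim: the paper derives these formulas by citing \cite[6.3.3]{Ravenel} and ``remembering the grading,'' which is exactly your strategy of restoring the unique $v_2$-exponent on each term of Ravenel's ungraded identities via internal-degree bookkeeping. Your explicit degree checks (e.g.\ for $d(h_{4,0})$ and $d(h_{4,1})$) are more detailed than what the paper records, but the underlying method is the same.
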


Now, we can put the same filtration on $\Sigma_2$, and this induces a filtration on $\overline{\Sigma}_2$ which restricts to a filtration on $\sibr(2)$ (\ref{eq:S2bardef}) and $\sitd(2)$.  The corresponding associated graded Hopf algebra in the case of $\sibr(2)$ is given by
\[E^{MR}_0(\sibr(2)) \cong \FF_4[v_2] \otimes E[\td{t}_{2,1}, \tb_{3,0}, \tb_{3,1}, \tb_{4,0}, \tb_{4,1}, \cdots].\]

As before, we have a May-Ravenel spectral sequence 
\[  \E{MR}{}^{s,t,f}_1(\sibr(2))  \Longrightarrow H^{s,t}(\sibr(2)), \]
and $\E{MR}{}^{s,t,f}_1(\sibr(2)) = H^{s,t,f}(\br{l}(2))$ is the cohomology of the Chevallay-Eilenberg complex
\[ C^{*,*,*}_{CE}(\br{l}(2)) \cong  \FF_4[v_2, \td{h}_{2,1}, {h}_{3,0}, {h}_{3,1}, {h}_{4,0}, {h}_{4,1}, \ldots ]. \]


\begin{thm}\label{thm:diffsquotient} 
The differentials in the Chevallay-Eilenberg complex $C_{CE}^{*,*,*}(\br{l}(2))$ are determined by 
\[d(\td{h}_{2,1}) =d(\bh_{3,0})  =d(\bh_{3,1}) =0 \] 
and
\begin{align*}
d(\bh_{4,0}) &= v_2\td{h}_{2,1}^2 & d(\bh_{4,1})  &=v_2^6\td{h}_{2,1}^2 \\
d(\bh_{i,0}) &= v_2 \bh_{i-2,1}^2  & d(\bh_{i,1})  &=  v_2^{2^{i-1}}\bh_{i-2,0}^2 \\
\end{align*}
where the last two identities hold for $i\geq 5$.
\end{thm}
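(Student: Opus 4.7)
The plan is to deduce Theorem~\ref{thm:diffsquotient} from Theorem~\ref{thm:diffs} by comparing the May-Ravenel spectral sequences for $\si(2)$ and $\sibr(2)$ through the intermediate Hopf algebra $\br{\Sigma}_2$. Both the quotient $\Sigma_2 \otimes \FF_4 \twoheadrightarrow \br{\Sigma}_2$ (by $t_1$ and $t_2^2 + \omega v_2 t_2$) and the sub-Hopf-algebra inclusion $\sibr(2) \hookrightarrow \br{\Sigma}_2$ preserve the MR-filtration, so their associated graded versions yield a correspondence between $l(2) \otimes \FF_4$ and $\br{l}(2)$ which, dually, gives a comparison of Chevalley-Eilenberg cochain complexes.

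Concretely, each generator of $C^{*,*,*}_{CE}(\br{l}(2))$ corresponds to a canonical cochain in $C^{*,*,*}_{CE}(l(2)) \otimes \FF_4$: $\bh_{i,j}$ corresponds to $h_{i,j}$ for $i \geq 3$, and $\td{h}_{2,1}$ corresponds to $\chi_2 = v_2 h_{2,0} + h_{2,1}$. The identification of $\td{h}_{2,1}$ with $\chi_2$ (rather than with $h_{2,0}$ or $h_{2,1}$ individually) is forced by the MR-symbol of the defining relation $t_2^2 + \omega v_2 t_2$ in $\br{\Sigma}_2$, and it is consistent with Theorem~\ref{thm:diffs}: one computes directly that $d(\chi_2) = v_2 d(h_{2,0}) + d(h_{2,1}) = 2 v_2 h_{1,0} h_{1,1} = 0$ in characteristic $2$, matching the required $d(\td{h}_{2,1}) = 0$. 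The remaining differentials are obtained by applying Theorem~\ref{thm:diffs} to the appropriate generators and projecting away any term involving $h_{1,0}$ or $h_{1,1}$. For instance, $d(\bh_{4,0})$ is read off from $d(h_{4,0}) = h_{1,0} h_{3,1} + v_2^2 h_{1,1} h_{3,0} + v_2 \chi_2^2$ by killing the first two summands, giving $v_2 \td{h}_{2,1}^2$; analogously $d(\bh_{4,1}) = v_2^6 \td{h}_{2,1}^2$. For $i \geq 5$, the formulas of Theorem~\ref{thm:diffs} transport verbatim since they only involve $h_{i-2,*}$ with $i-2 \geq 3$.

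The principal obstacle is to rigorously verify that the projection described above is well defined and a chain map, which requires a careful analysis of how the primitives of $E_0 \br{\Sigma}_2$ relate to those of $E_0 \Sigma_2 \otimes \FF_4$ together with the sub-Hopf-algebra $E_0 \sibr(2) \hookrightarrow E_0 \br{\Sigma}_2$. A cleaner alternative, and probably the preferred route in the paper, is to follow Ravenel's derivation of Theorem~\ref{thm:diffs} verbatim but applied directly to $\br{l}(2)$: one computes the Lie bracket and restriction on the primitives of $E_0 \sibr(2)$ using the coproduct inherited from $\br{\Sigma}_2$, and observes that the reduced list of generators, together with the relations killing $t_1$ and identifying $t_2^2$ with $\omega v_2 t_2$, suppresses precisely the terms absent from Theorem~\ref{thm:diffsquotient} while leaving the rest intact.
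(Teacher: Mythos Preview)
Your proposal is essentially the paper's argument. The paper's proof is just a terse version of your first paragraph: it invokes the map $\si(2) \to \sibr(2)$ (under which $t_1 \mapsto 0$), uses the explicit formula $\td{t_2^2} = v_2 t_2 + t_2^2 + v_2 t_1^3$ from (\ref{eq:tdt2}) to conclude that $v_2 t_2 + t_2^2 \mapsto \td{t_2^2}$, and hence that the induced map of Chevalley--Eilenberg complexes sends $\chi_2 \mapsto \td{h}_{2,1}$; the differentials then follow immediately from Theorem~\ref{thm:diffs} by killing anything involving $h_{1,j}$. Your detour through $\br{\Sigma}_2$ as a common target of a quotient and source of an inclusion is unnecessary --- the paper simply uses the single Hopf algebra map $\si(2) \to \sibr(2)$ and the induced chain map, which is the ``cleaner alternative'' you gesture at in your final paragraph.
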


\begin{proof}
The element $\td{t_2^2}$ of (\ref{eq:tdt2}) is given by 
$$ \td{t_2^2} = v_2t_2 + t_2^2+v_2t_1^3. $$
Therefore, since we have $t_1 \equiv 0$ in $\sibr(2) \subset \br{\Sigma}_2$ (\ref{eq:barSigma_2}), we deduce that under the map 
$$ \si(2) \rightarrow \sibr(2) $$
we have   
$$ v_2t_2 + t_2^2 \mapsto \td{t_2^2}. $$
It follows that under the map of Chevallay-Eilenberg complexes 
$$ C^{*,*,*}_{CE}(l(2)) \rightarrow C^{*,*,*}_{CE}(\br{l}(2)) $$
we have
$$ \chi_2 \mapsto \td{h}_{2,1}. $$ 
The result therefore follows from Theorem~\ref{thm:diffs} (and the fact that $t_1 \equiv 0$ in $\sibr(2)$). 
\end{proof}

\subsection{The lexicographical filtration spectral sequence}

In order to compute the cohomology of the Chevallay-Eilenberg complex $C^{*,*,*}_{CE}(\br{l}(2))$, 
we place an increasing filtration on the Chevallay-Eilenberg complex by declaring that a monomial
$$ v_2^m h^{k_3}_{3,0}h^{k_4}_{4,0} \cdots \td{h}^{l_2}_{2,1} h_{3,1}^{l_3} \td{h}^{l_4}_{4,1}\cdots $$
has lexicographical filtration tridegree
$$ \mr{deg}_{LF} = (-m, l, k) $$
where 
\begin{align*}
l & = \bar{l}_5 + 2\bar{l}_6 + 2^2\bar{l}_7 + 2^3\bar{l}_8 + \cdots, \\
k & = \bar{k}_4+ 2 \bar{k}_5 + 2^2 \bar{k}_6 + 2^3 \bar{k}_7 + \cdots, 
\end{align*}
$\bar{n} \in \{0,1\}$ is $n$ mod $2$, and 
$$ \td{h}_{4,1} := h_{4,1} + v_2^5h_{4,0}. $$
We order these tridegrees via left lexicographical order.  That is to say, 
$$ (m,l,k) < (m',l',k') $$
if $m < m'$, or $m = m'$ and $l < l'$, or $m=m'$ and $l = l'$ and $k < k'$.

Note that the value $m$ above is the negative of the Adams filtration (defined by declaring the Adams filtration of $v_2$ is $1$, and all other generators have Adams filtration $0$), so lexicographical filtration is a refinement of Adams filtration.
The differentials of Theorem~\ref{thm:diffsquotient}  decrease lexicographical ordering, resulting in an increasing filtration on $C^{*,*,*}_{CE}(\br{l}(2))$ and a transfinite lexicographical filtration spectral sequence (LFSS)
$$ C^{s,t,f}_{CE}(\br{l}(2))_{m,k,l} = \E{LF}{}^{s,t,f,(m,l,k)}_{1,0,0} \Rightarrow \E{MR}{}^{s,t,f}_1(\sibr(2)). $$
Transfinite spectral sequences were introduced by Hu in \cite{Hu}.  Hu's indexing uses ordinals, but to simplify matters we are repackaging the relevant ordinals as lexicographical tridegrees.  In this way, we can explain the transfinite nature of the spectral sequence in our particular case.

Namely, the lexicographical filtration spectral sequence has terms
$$ \E{LF}{}^{s,t,f,(m,l,k)}_{r,r',r''} $$
with differentials
$$ d_{r,r',r''}: \E{LF}{}^{s,t,f,(m,l,k)}_{r,r',r''} \to \E{LF}{}^{s+1,t,f,(m-r,l-r',k-r'')}_{r,r',r''} $$
and
$$ \E{LF}{}^{s,t,f,(m,l,k)}_{r,r',r''+1} \cong H^{s,t,f,(m,l,k)}(\E{LF}{}_{r,r',r''}, d_{r,r',r''}). $$
Because this spectral sequence is finitely generated in each multi-degree, convergence can be explained as follows.  For $r'' \gg 0$, we have
$$ \E{LF}{}^{s,t,f,(m,l,k)}_{r,r',r''} = \E{LF}{}^{s,t,f,(m,l,k)}_{r, r'+1, 0}. $$
and for $r' \gg 0$ we have 
$$ \E{LF}{}^{s,t,f,(m,l,k)}_{r,r',0} = \E{LF}{}^{s,t,f,(m,l,k)}_{r+1, 0, 0}. $$
There is a lexicographically indexed increasing filtration $\{F^{s,t,f}_{m,l,k}\}$ on $\E{MR}{}_1^{s,t,f}$ such that for $r \gg 0$,
$$ \E{LF}{}^{s,t,f,(m,l,k)}_{r,0,0} \cong F^{s,t,f}_{m,l,k}/F^{s,t,f}_{m,l,k-1}. $$
Because the lexicographical filtration is a multiplicative filtration on a differential graded algebra, the lexicographical filtration spectral sequence is a spectral sequence of algebras.  By Theorem~\ref{thm:diffsquotient}, 
the elements
$$ \td{h}_{2,1}, \: h_{3,0}, \: h_{3,1}, \: \mr{and} \: \td{h}_{4,1} $$
are permanent cycles in the lexicographical filtration spectral sequence, and we have 
\begin{equation}\label{eq:LFSSdiffs}
\begin{split}
d_{1,0,1}(h_{4,0}) & = v_2 \td{h}_{2,1}^2,  \\
d_{1,0,2^{i-4}}(h_{i,0}) & = v_2 h^2_{i-2,1}, \\
d_{2^{i-1},2^{i-5},0}(h_{i,1}) & = v_2^{2^{i-1}} h^2_{i-2,0}.
\end{split}
\end{equation}
We note that the elements $h^2_{i,0}$ and $h^2_{i,1}$ are permanent cycles, because they correspond to cocyles in the Chevallay-Eilenberg complex.

We now run the lexicographical filtration spectral sequence.  We will run the differentials in two rounds.  The first round (Lemma~\ref{lem:AFE1})) will consist of those differentials of the form $d_{1,r',r''}$ which change Adams filtration by $1$.  The second round (Theorem~\ref{thm:MRE1bar}) will consist of those differentials of the form $d_{r,r',r''}$ with $r > 1$ which change Adams filtration by a quantity greater than 1.

\begin{lem}\label{lem:AFE1}
The $E_{2,0,0}$ page of the lexicographical filtration spectral sequence obtained by running all differentials of the form $d_{1,r',r''}$ has a basis given by:
\begin{align*}
\mr{(I)} \quad & v_2^m h^{k_3}_{3,0}h^{2k_4}_{4,0} h^{2k_5}_{5,0}  \cdots \td{h}^{\epsilon_2}_{2,1} h_{3,1}^{\epsilon_3} \cdots, \qquad \qquad \qquad
m, k_j \ge 0; \: \epsilon_j \in \{0,1\}, \\
\mr{(II)} \quad & h^{k_3}_{3,0}h^{2k_4}_{4,0} \cdots h^{2k_{i+2}}_{i+2,0} h^{k_{i+3}}_{i+3,0}\cdots \td{h}^{\epsilon_2}_{2,1} \cdots h_{i-1,1}^{\epsilon_{i-1}}h_{i,1}^{l_i+2} h^{l_{i+1}}_{i+1,1}\cdots,
\shortintertext{\begin{flushright}
$i \ge 2; \: k_j, l_j \ge 0; \: \epsilon_j \in \{0,1\}.$
\end{flushright}}
\end{align*}
\end{lem}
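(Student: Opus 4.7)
My plan is to compute $E_{2,0,0} = H^*(C_{CE}(\br{l}(2)), d_{1,0,\bullet})$ by factoring the complex and applying K\"unneth, then identifying the basis with (I) $\cup$ (II).

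By Leibniz, the total $d_{1,0,r''}$-differential on $C_{CE}(\br{l}(2))$ is the derivation sending $h_{i,0} \mapsto v_2 y_{i-2}^2$ for $i \geq 4$ (where $y_j$ denotes $\td{h}_{2,1}, h_{3,1}, \td{h}_{4,1}, h_{j,1}$ according as $j = 2, 3, 4,$ or $\geq 5$) and vanishing on $v_2, h_{3,0}$, and on each $y_j$. The complex therefore decomposes as a tensor product of differential graded algebras over $\FF_4[v_2]$:
\[
C_{CE}(\br{l}(2)) \;\cong\; \FF_4[v_2, h_{3,0}] \otimes_{\FF_4[v_2]} \bigotimes_{i \geq 4} A_i, \qquad A_i := \FF_4[v_2, h_{i,0}, y_{i-2}],
\]
the infinite tensor product being taken over $\FF_4[v_2]$, with each $A_i$ carrying the differential $d(h_{i,0}) = v_2 y_{i-2}^2$. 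Since $h_{i,0}^2$ is a cocycle in characteristic $2$, a direct calculation gives
\[
H^*(A_i) \;\cong\; \FF_4[v_2, h_{i,0}^2, y_{i-2}]/(v_2 y_{i-2}^2),
\]
which decomposes as an $\FF_4[v_2]$-module into a free part $\FF_4[v_2, h_{i,0}^2]\{1, y_{i-2}\}$ and a $v_2$-torsion part $\bigoplus_{b \geq 2} \FF_4[h_{i,0}^2]\{y_{i-2}^b\}$. Each $A_i$ is flat over $\FF_4[v_2]$, so $H^*$ of the tensor product assembles iteratively via K\"unneth, accounting for both ordinary tensor summands and $\mathrm{Tor}^{\FF_4[v_2]}_1$ contributions between pairs of $v_2$-torsion summands.

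I would then identify the resulting classes with (I) and (II) via their lex-leading monomials. Tensor products involving at most one torsion summand produce monomials with every $y$-exponent $\leq 1$: these are exactly the monomials of (I). A tensor product with a single torsion summand $T_{i+2}^b$ ($b \geq 2$) paired against free summands elsewhere gives the even-$x$-exponent part of (II) with index $i$. The $\mathrm{Tor}^1$ contributions from pairs of torsion summands $T_{i+2}^b \otimes T_{j+2}^{b'}$ (with $j > i$) are realized by Koszul syzygies $y_i^b y_j^{b'-2} h_{j+2,0} + y_i^{b-2} y_j^{b'} h_{i+2,0}$; their lex-leading monomial $y_i^b y_j^{b'-2} h_{j+2, 0}$ has odd $h_{j+2, 0}$-exponent and precisely populates the ``free $x$-zone'' $j+2 \geq i+3$ of (II) with index $i$.

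The main obstacle is the combinatorial bookkeeping of higher K\"unneth-Tor contributions arising from multi-fold tensor products (higher Koszul syzygies such as $y_i^2 h_{j+2,0} h_{k+2,0} + y_j^2 h_{i+2,0} h_{k+2,0} + y_k^2 h_{i+2,0} h_{j+2,0}$ for triples $i < j < k$), and verifying that all such leading monomials exactly fill out the free zones of (II) without overcounting. A cleaner alternative (avoiding K\"unneth entirely) is to run the $d_{1,0,2^{i-4}}$-differentials inductively on $i = 4, 5, 6, \ldots$ directly: a monomial $M$ with odd $h_{i,0}$-exponent dies at stage $d_{1,0,2^{i-4}}$ unless its target $v_2 y_{i-2}^2 \cdot M / h_{i,0}$ has already been killed at an earlier stage, which happens precisely when $M$ has a $y_{i^*}$-factor of exponent $\geq 2$ with $i^* \leq i - 3$. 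Induction on $i$ then identifies the survivors at $E_{2,0,0}$ with (I) $\cup$ (II), parameterized by $i^* := \min\{j : y_j\text{-exponent} \geq 2\}$ (with $i^* = \infty$ giving (I)).
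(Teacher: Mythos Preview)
Your second approach---running the $d_{1,0,2^{i-4}}$ differentials inductively in $i$ and tracking survivors by the parameter $i^* = \min\{j: y_j\text{-exponent}\ge 2\}$---is exactly the paper's method. The paper also runs these differentials page by page ($d_{1,0,1}$, then $d_{1,0,2}$, and so on), recording at each stage which monomials survive as cycles and which are killed as boundaries. Your one-line characterization is correct in spirit but underspecified: you describe when a monomial with odd $h_{i,0}$-exponent survives \emph{as a source}, but you omit the boundary analysis, which is what forces the $v_2$-exponent to be zero in type~(II). The paper handles this by first exhibiting explicit cycle completions for the type~(II) monomials---adding correction terms of lower lexicographic filtration so that $d^{CE}$ applied to the sum changes Adams filtration by more than $1$---and only then running the differentials to kill everything else. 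These explicit representatives $x(\ldots)$ are not mere bookkeeping: they are reused verbatim in the proof of the next theorem, where they are further completed to genuine $d^{CE}$-cocycles.

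Your first approach (tensor decomposition and K\"unneth over $\FF_4[v_2]$) is a genuinely different route that the paper does not take. It is sound: the identification $E_{2,0,0} = H^*(C_{CE}(\br{l}(2)), d_{(1)})$ holds because the lexicographic filtration refines the Adams filtration, so $E^{LF}_{2,0,0}$ coincides with $E^{AF}_2$, which is the homology of the Adams-filtration-$1$ part of $d^{CE}$. Your computation of each $H^*(A_i)$ is correct. The obstacle you flag is real: the elements $v_2 y_2^2, v_2 y_3^2, \ldots$ share the common factor $v_2$, so the sequence is far from regular, and the higher Koszul syzygies are precisely what produce the odd $h_{j,0}$-exponents in the ``free zone'' $j \ge i+3$ of~(II). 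This can be organized to work, but the inductive approach is cleaner and, more importantly, yields the explicit cycle representatives needed immediately afterward.
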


\begin{proof}
The strategy will be to first observe that the monomials of type (I) and (II) are $d_{1,r',r''}$ cycles for all $r'$ and $r''$.  We will then show that all of the other monomials are either the source or target of a non-trivial differential in the lexicographical filtration spectral sequence.

To show that a monomial $x$ of the form (I) or (II) of Adams filtration $m$ is a $d_{1,r',r''}$-cycles, it suffices to show that the element
$$ x \in C^{*,*,*}_{CE}(\br{l}(2)) $$
in the Chevallay-Eilenberg complex can be completed to an element
$$ x + y \in C^{*,*,*}_{CE}(\br{l}(2)) $$
where
\begin{enumerate}
\item $y$ has lower lexicographical filtration than $x$,  and
\item the Chevallay-Eilenberg differential
$$ d^{CE}(x+y)
$$
has Adams filtration greater than $m+1$. 
\end{enumerate}
In the case of the monomials of type (I), this is trivially true - the elements $x$ of Adams filtration $m$ already satisfy the property that $d^{CE}(x)$ has Adams filtration greater than $m+1$.  In the case of terms of type (II), one can check that the sum (with $\bar{\epsilon}_j \in \{0,1\}$)
\begin{multline*}
x(k_3,2k_4, \ldots 2k_{i+2}, 2k_{i+3}+\bar{\epsilon}_{i+3}, \ldots; \epsilon_2, \ldots \epsilon_{i-1}, l_{i}+2, l_{i+1}, \ldots ) := 
\\
h^{k_3}_{3,0}h^{2k_4}_{4,0} \cdots h^{2k_{i+2}}_{i+2,0} h^{2k_{i+3}+\bar{\epsilon}_{i+3}}_{i+3,0}h^{2k_{i+4}+\bar{\epsilon}_{i+4}}_{i+4,0}h^{2k_{i+5}+\bar{\epsilon}_{i+5}}_{i+5,0}\cdots \td{h}^{\epsilon_2}_{2,1} \cdots h_{i-1,1}^{\epsilon_{i-1}}h_{i,1}^{l_i+2} h^{l_{i+1}}_{i+1,1}\cdots 
\\
+ \bar{\epsilon}_{i+3} h^{k_3}_{3,0}h^{2k_4}_{4,0} \cdots h^{2k_{i+2}+1}_{i+2,0} h^{2k_{i+3}}_{i+3,0}h^{2k_{i+4}+\bar{\epsilon}_{i+4}}_{i+4,0}h^{2k_{i+5}+\bar{\epsilon}_{i+5}}_{i+5,0}\cdots \td{h}^{\epsilon_2}_{2,1} \cdots h_{i-1,1}^{\epsilon_{i-1}}h_{i,1}^{l_i} h^{l_{i+1}+2}_{i+1,1} h^{l_{i+2}}_{i+2,1}\cdots 
\\
+ \bar{\epsilon}_{i+4} h^{k_3}_{3,0}h^{2k_4}_{4,0} \cdots h^{2k_{i+2}+1}_{i+2,0} h^{2k_{i+3}+\bar{\epsilon}_{i+3}}_{i+3,0}h^{2k_{i+4}}_{i+4,0}h^{2k_{i+5}+\bar{\epsilon}_{i+5}}_{i+5,0}\cdots \td{h}^{\epsilon_2}_{2,1} \cdots h_{i-1,1}^{\epsilon_{i-1}}h_{i,1}^{l_i} h^{l_{i+1}}_{i+1,1} h^{l_{i+2}+2}_{i+2,1}\cdots 
\\
+ \bar{\epsilon}_{i+5} h^{k_3}_{3,0}h^{2k_4}_{4,0} \cdots h^{2k_{i+2}+1}_{i+2,0} h^{2k_{i+3}+\bar{\epsilon}_{i+3}}_{i+3,0}h^{2k_{i+4}+\bar{\epsilon}_{i+4}}_{i+4,0}h^{2k_{i+5}}_{i+5,0}\cdots \td{h}^{\epsilon_2}_{2,1} \cdots h_{i-1,1}^{\epsilon_{i-1}}h_{i,1}^{l_i} \cdots h^{l_{i+2}}_{i+2,1} h^{l_{i+3}+2}_{i+3,1}\cdots 
\\
+ \cdots
\end{multline*}
satisfies (1) and (2) above.

We now compute the differentials of the form $d_{1,r',r''}$ in the lexicographical filtration spectral sequence, using (\ref{eq:LFSSdiffs}).
The $d_{1,0,1}$ differentials are
$$ d_{1,0,1}(v_2^m h^{k_3}_{3,0}h^{2k_4+1}_{4,0} h^{k_5}_{5,0} \cdots \td{h}^{l_2}_{2,1} h_{3,1}^{l_3} \td{h}^{l_4}_{4,1}\cdots) = 
v_2^{m+1} h^{k_3}_{3,0}h^{2k_4}_{4,0} h^{k_5}_{5,0} \cdots \td{h}^{l_2+2}_{2,1} h_{3,1}^{l_3} \td{h}^{l_4}_{4,1}\cdots
. $$
The remaining monomials
\begin{align*}
& h^{k_3}_{3,0}h^{2k_4}_{4,0} h^{k_5}_{5,0} \cdots \td{h}^{l_2+2}_{2,1} h_{3,1}^{l_3} \td{h}^{l_4}_{4,1}\cdots, \\
& v_2^m h^{k_3}_{3,0}h^{2k_4}_{4,0} h^{k_5}_{5,0} \cdots \td{h}^{\epsilon_2}_{2,1} h_{3,1}^{l_3} \td{h}^{l_4}_{4,1}\cdots,
\end{align*}
where $\epsilon_2 \in \{0,1\}$, are $d_{1,0,1}$-cycles, and therefore constitute a basis for the $E_{1,0,2}$ page.

The $d_{1,0,2}$ differentials are of the form
$$ d_{1,0,2}(v_2^m h^{k_3}_{3,0}h^{2k_4}_{4,0} h^{2k_5+1}_{5,0}h^{k_6}_{6,0} \cdots \td{h}^{\epsilon_2}_{2,1} h_{3,1}^{l_3} \td{h}^{l_4}_{4,1}\cdots) = 
v_2^{m+1} h^{k_3}_{3,0}h^{2k_4}_{4,0} h^{2k_5}_{5,0} h^{k_6}_{6,0}\cdots \td{h}^{\epsilon_2}_{2,1} h_{3,1}^{l_3+2} \td{h}^{l_4}_{4,1}\cdots
. $$
The remaining monomials  
\begin{align}
& h^{k_3}_{3,0}h^{2k_4}_{4,0} h^{k_5}_{5,0} \cdots \td{h}^{l_2+2}_{2,1} h_{3,1}^{l_3} \td{h}^{l_4}_{4,1}\cdots, \label{eq:E102.1} \\ 
& h^{k_3}_{3,0}h^{2k_4}_{4,0} h^{2k_5}_{5,0} h^{k_6}_{6,0}\cdots \td{h}^{\epsilon_2}_{2,1} h_{3,1}^{l_3+2} \td{h}^{l_4}_{4,1}\cdots, \label{eq:E102.2} \\ 
& v_2^m h^{k_3}_{3,0}h^{2k_4}_{4,0} h^{2k_5}_{5,0} h^{k_6}_{6,0} \cdots \td{h}^{\epsilon_2}_{2,1} h_{3,1}^{\epsilon_3} \td{h}^{l_4}_{4,1}\cdots, \label{eq:E102.3} 
\end{align}
with $\epsilon_3 \in \{0,1\}$, are $d_{1,0,2}$ and $d_{1,0,3}$ cycles (in the case of (\ref{eq:E102.1}), this is because it is a cocycle of type (II), and in the cases of (\ref{eq:E102.2}) and (\ref{eq:E102.3}), this follows from (\ref{eq:LFSSdiffs})).  Thus these form a basis of the $E_{1,0,4}$-page.
 
Repeating this process, the result follows.
\end{proof}

We now run the differentials which change Adams filtration by more than $1$.  The idea is that these differentials are non-trivial only on terms of type (I), and these differentials hit terms of type (I).  Terms of type (II) are going to be permanent cycles in the lexicographical filtration spectral sequence.

\begin{thm}\label{thm:MRE1bar}
The May-Ravenel $E_1$-term $\E{MR}{}_1(\sibr(2))$ has a basis over $\FF_4$ whose representatives in the lexicographical filtration spectral sequence are given by:
\begin{align*}
\mr{(I')} & \quad v_2^m h^{\bar{\epsilon}_3}_{3,0}\td{h}_{2,1}^{\epsilon_2} h^{\epsilon_3}_{3,1} \td{h}^{\epsilon_4}_{4,1}, 
\shortintertext{\begin{flushright}
$m \ge 0; \: \epsilon_j, \bar{\epsilon}_j \in \{0,1\},$
\end{flushright}}
\mr{(I'')} & \quad v_2^{<2^{i+1}}h^{\bar{\epsilon}_3}_{3,0} h^{2(k_i+1)}_{i,0} h^{2k_{i+1}}_{i+1,0} h^{2k_{i+2}}_{i+2,0} \cdots \td{h}_{2,1}^{\epsilon_2} h^{\epsilon_3}_{3,1} \td{h}^{\epsilon_4}_{4,1} h^{\epsilon_{i+3}}_{i+3,1} \cdots.  
\shortintertext{\begin{flushright}
$i \ge 3; \: k_j \ge 0; \: \epsilon_j, \bar{\epsilon}_j \in \{0,1\},$
\end{flushright}}
\mr{(II)} & \quad h^{k_3}_{3,0}h^{2k_4}_{4,0} \cdots h^{2k_{i+2}}_{i+2,0} h^{k_{i+3}}_{i+3,0}\cdots \td{h}^{\epsilon_2}_{2,1} \cdots h_{i-1,1}^{\epsilon_{i-1}}h_{i,1}^{l_i+2} h^{l_{i+1}}_{i+1,1}\cdots.
\shortintertext{\begin{flushright}
$i \ge 2; \: k_j, l_j \ge 0; \: \epsilon_j \in \{0,1\}$.
\end{flushright}}
\end{align*}
For the monomials of type (I''), the notation $v_2^{<n}x$ means monomials of the form $v_2^ix$ for $i < n$.
\end{thm}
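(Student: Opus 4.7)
The proof starts from Lemma~\ref{lem:AFE1}, which provides a basis of the $E_{2,0,0}$-page of the LFSS consisting of monomials of types (I) and (II). The strategy is to run the remaining LFSS differentials and identify $E_\infty$, which is the lex-associated graded of $\E{MR}{}_1(\sibr(2))$. From Theorem~\ref{thm:diffsquotient} and the lex-shift computation, the only LFSS differentials with $r \ge 2$ are $d_{2^{j-1},\,2^{j-5},\,0}$ coming from $d(h_{j,1}) = v_2^{2^{j-1}} h_{j-2,0}^2$ for $j \ge 5$, each a single-term differential, to be run in order $j = 5, 6, 7, \ldots$.

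First I would show that every type (II) monomial survives to $E_\infty$. It cannot be in the image of any such higher differential because every such image carries an explicit $v_2^{2^{j-1}}$-factor while type (II) has no $v_2$. For the cocycle claim, if $M$ is of type (II), Leibniz in characteristic $2$ may produce a non-zero CE-target
\[T = v_2^{2^{j-1}} h_{j-2,0}^{2k_{j-2}+2}\cdot (M/h_{j,1}).\]
One verifies that $T$ is the leading $d_{1,0,r''}$-contribution in $d^{\mr{CE}}(X)$, where $X$ is obtained from the source of $T$ by decreasing the $v_2$-exponent by one, inserting a single $h_{j,0}$-factor, and compensating with a $-2$ shift on the appropriate $h_{j-2,1}$-power; here one uses $d(h_{j,0}) = v_2 h_{j-2,1}^2$ for $j \ge 5$ and $d(h_{4,0}) = v_2\td{h}_{2,1}^2$. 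Thus $T$ vanishes on $E_{2,0,0}$, so type (II) monomials are cocycles there and persist to $E_\infty$.

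For type (I), $d_{2^{j-1},2^{j-5},0}$ pairs an element with $\epsilon_j = 1$ to the one obtained by setting $\epsilon_j = 0$, $k_{j-2} \mapsto k_{j-2}+1$, and $m \mapsto m + 2^{j-1}$, provided that image lies in the current-page basis. Running these in order $j = 5, 6, 7, \ldots$ yields a ``spiral'' survival criterion: letting $i \ge 3$ denote the smallest index with $k_3 \ge 2$ (if $i = 3$) or $k_i \ge 1$ (if $i \ge 4$) when such exists, the monomial survives iff $m < 2^{i+1}$, the intermediate $k_{j'}$ for $4 \le j' < i$ vanish (their pairings have targets outside the running basis since $m + 2^{j'+1} \ge 2^{i+1}$), $\epsilon_{j''} = 0$ for $5 \le j'' \le i+2$ (these are paired up with non-zero targets in the current basis), while $\epsilon_{j''}$ for $j'' \ge i+3$ are free (their would-be targets have $m' \ge 2^{j''-1} > 2^{i+1}$ and hence fall outside the current basis). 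The case with no enlarged $i$ yields precisely the type (I') representatives, and the case with smallest enlarged index $i$ yields type (I''), where the packaging $h_{3,0}^{\bar\epsilon_3}\cdot h_{i,0}^{2(k_i+1)}$ encodes the parity constraints (combining for $i = 3$ into $h_{3,0}^{\ge 2}$).

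Combined with the type (II) survival of Step 2, this shows $E_\infty = (\mr{I}') \sqcup (\mr{I}'') \sqcup (\mr{II})$, giving the claimed basis of $\E{MR}{}_1(\sibr(2))$. The principal obstacle is the combinatorial bookkeeping of Step 3: at each iteration one must verify that ``target in the current-page basis'' is the correct vanishing condition (as opposed to ``target in $E_\infty$'' or ``target in the original CE complex''), and that the survival conditions stabilize precisely into (I') $\cup$ (I'') once all $j \ge 5$ have been processed.
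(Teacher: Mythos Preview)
Your proposal follows essentially the same strategy as the paper: start from the $E_{2,0,0}$-page of Lemma~\ref{lem:AFE1}, then run the differentials $d_{2^{j-1},2^{j-5},0}$ for $j = 5,6,\ldots$ on the type (I) monomials to isolate the survivors (I') and (I''), while showing type (II) monomials persist. Your inductive bookkeeping for the type (I) part matches the paper's iterative computation.

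The one methodological difference worth noting is in the handling of permanence for type (II) (and implicitly (I'')). The paper proves permanence by explicitly completing each such monomial to a \emph{cocycle} in the Chevallay--Eilenberg complex: it writes down a correction so that $d^{CE}$ vanishes identically, which immediately forces all subsequent LFSS differentials to vanish. You instead argue that each target $T$ is already zero on $E_{2,0,0}$ by exhibiting it as a $d_{1,0,r''}$-boundary. This is valid in principle, but your description of the preimage $X$ has a notational slip: you overload the index $j$, once for the source differential $d(h_{j,1}) = v_2^{2^{j-1}}h_{j-2,0}^2$ and once for the correcting differential $d(h_{j,0}) = v_2 h_{j-2,1}^2$. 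The correct $X$ uses $h_{i+2,0}$ and $h_{i,1}^{-2}$ where $i$ is the type-(II) index of $M$ (which guarantees $T$ retains an $h_{i,1}^{\ge 2}$ factor), not the $j$ from the differential being run. Once this is straightened out, one must also check that $T$ is genuinely the leading lex term of $d^{CE}(X)$; the paper sidesteps this by working with exact CE-cocycles instead. Your phrasing around ``intermediate $k_{j'}$ for $4 \le j' < i$ vanish'' with the justification ``$m + 2^{j'+1} \ge 2^{i+1}$'' is also garbled --- that inequality need not hold, and the vanishing of those $k_{j'}$ is simply the definition of $i$ as the smallest enlarged index.
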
 

\begin{proof}
We proceed using the strategy of the proof of Lemma~\ref{lem:AFE1}.  We start by showing that the monomials of types (I'), (I''), and (II) are permanent cycles by showing they complete to cocycles in the Chevallay-Eilenberg complex.

The terms $\mr{(I')}$ are simply cocycles.  The terms $\mr{(I'')}$ complete to cocycles given by
\begin{multline*}
h^{\bar{\epsilon}_3}_{3,0} h^{2(k_i+1)}_{i,0} h^{2k_{i+1}}_{i+1,0} \cdots \td{h}_{2,1}^{\epsilon_2} h^{\epsilon_3}_{3,1} \td{h}^{\epsilon_4}_{4,1} h^{\epsilon_{i+3}}_{i+3,1} \cdots \\
+
\epsilon_{i+3} v_2^{2^{i+2}-2^{i+1}}h^{\bar{\epsilon}_3}_{3,0} h^{2k_i}_{i,0} h^{2(k_{i+1}+1)}_{i+1,0}h^{2k_{i+2}}_{i+2,0} \cdots \td{h}_{2,1}^{\epsilon_2} h^{\epsilon_3}_{3,1} \td{h}^{\epsilon_4}_{4,1} h_{i+2,1}h^{\epsilon_{i+4}}_{i+4,1} \cdots \\
+
\epsilon_{i+4} v_2^{2^{i+3}-2^{i+1}}h^{\bar{\epsilon}_3}_{3,0} h^{2k_i}_{i,0} h^{2k_{i+1}}_{i+1,0}h^{2(k_{i+2}+1)}_{i+2,0} \cdots \td{h}_{2,1}^{\epsilon_2} h^{\epsilon_3}_{3,1} \td{h}^{\epsilon_4}_{4,1} h_{i+2,1}h^{\epsilon_{i+3}}_{i+3,1} h^{\epsilon_{i+5}}_{i+5,1} \cdots
\\ + \cdots 
\end{multline*}
For the terms of type (II), we observe that the Cartan-Eilenberg differential $d^{CE}$ is given on the terms $x(-)$ appearing in the proof of Lemma~\ref{lem:AFE1} by
\begin{multline*}
d^{CE}x(k_3,2k_4, \ldots 2k_{i+2}, 2k_{i+3}+\bar{\epsilon}_{i+3}, \ldots; \epsilon_2, \ldots \epsilon_{i-1}, l_{i}+2, l_{i+1}, \ldots ) =
\\
\epsilon_5 v_2^{2^4} x(k_3+2,2k_4, \ldots 2k_{i+2}, 2k_{i+3}+\bar{\epsilon}_{i+3}, \ldots; \epsilon_2, \ldots, \epsilon_4, 0, \epsilon_{6}, \ldots,  \epsilon_{i-1}, l_{i}+2, l_{i+1}, \ldots )
\\
+ 
\epsilon_6 v_2^{2^5} x(k_3,2(k_4+1), \ldots 2k_{i+2}, 2k_{i+3}+\bar{\epsilon}_{i+3}, \ldots; \epsilon_2, \ldots, \epsilon_4, \epsilon_5, 0, \epsilon_7, \ldots,  \epsilon_{i-1}, l_{i}+2, l_{i+1}, \ldots )
\\
+ \cdots
\\
+ \bar{l}_{i} v_2^{2^{i-1}}x(k_3,2k_4, \ldots, 2(k_{i-2}+1), \ldots, 2k_{i+2}, 2k_{i+3}+\bar{\epsilon}_{i+3}, \ldots; \epsilon_2, \ldots \epsilon_{i-1}, l_{i}-1+2, l_{i+1}, \ldots )
\\
+ \bar{l}_{i+1} v_2^{2^{i}}x(k_3,2k_4, \ldots, 2(k_{i-1}+1), \ldots, 2k_{i+2}, 2k_{i+3}+\bar{\epsilon}_{i+3}, \ldots; \epsilon_2, \ldots \epsilon_{i-1}, l_{i}+2, l_{i+1}-1, \ldots )
\\
+ \cdots.
\end{multline*}
However, also note that 
\begin{multline*}
d^{CE}(h^{k_3}_{3,0}h^{2k_4}_{4,0} \cdots h^{2k_{i+2}+1}_{i+2,0} h^{2k_{i+3}+\bar{\epsilon}_{i+3}}_{i+3,0}h^{2k_{i+4}+\bar{\epsilon}_{i+4}}_{i+4,0}\cdots \td{h}^{\epsilon_2}_{2,1} \cdots h_{i-1,1}^{\epsilon_{i-1}}h_{i,1}^{l_i} h^{l_{i+1}}_{i+1,1}\cdots 
\\ 
= v_2 x(k_3,2k_4, \ldots, 2k_{i+2}, 2k_{i+3}+\bar{\epsilon}_{i+3}, \ldots; \epsilon_2, \ldots \epsilon_{i-1}, l_{i}+2, l_{i+1}, \ldots ).
\end{multline*}
We therefore find that the terms of type (II) complete to the following cocycles:
\begin{multline*}
x(k_3,2k_4, \ldots 2k_{i+2}, 2k_{i+3}+\bar{\epsilon}_{i+3}, \ldots; \epsilon_2, \ldots \epsilon_{i-1}, l_{i}+2, l_{i+1}, \ldots )
\\
+ \epsilon_5 v_2^{2^4-1} h_{3,0}^{k_3+2}h_{4,0}^{2k_4} \cdots h_{i+2,0}^{2k_{i+2}+1} h_{i+3,0}^{2k_{i+3}+\bar{\epsilon}_{i+3}} \cdots \td{h}_{2,1}^{\epsilon_2} \cdots \td{h}_{4,1}^{\epsilon_4}h_{6,1}^{\epsilon_{6}} \cdots h_{i-1,1}^{\epsilon_{i-1}}h_{i,1}^{l_{i}} \cdots 
\\
+ 
\epsilon_6 v_2^{2^5-1} h_{3,0}^{k_3}h_{4,0}^{2(k_4+1)} \cdots h_{i+2,0}^{2k_{i+2}+1} h_{i+3,0}^{2k_{i+3}+\bar{\epsilon}_{i+3}} \cdots \td{h}_{2,1}^{\epsilon_2} \cdots \td{h}_{4,1}^{\epsilon_4}h_{5,1}^{\epsilon_{5}}h_{7,1}^{\epsilon_{7}} \cdots h_{i-1,1}^{\epsilon_{i-1}}h_{i,1}^{l_{i}} \cdots 
\\
+ \cdots
\\
+ \bar{l}_{i} v_2^{2^{i-1}-1}h_{3,0}^{k_3}h_{4,0}^{2k_4} \cdots h_{i-2,0}^{2(k_{i-2}+1)} \cdots h_{i+2,0}^{2k_{i+2}+1} h_{i+3,0}^{2k_{i+3}+\bar{\epsilon}_{i+3}} \cdots \td{h}_{2,1}^{\epsilon_2} \cdots \td{h}_{4,1}^{\epsilon_4}h_{5,1}^{\epsilon_{5}} \cdots h_{i-1,1}^{\epsilon_{i-1}}h_{i,1}^{l_{i}-1} \cdots
\\
+ \bar{l}_{i+1} v_2^{2^{i}-1}h_{3,0}^{k_3}h_{4,0}^{2k_4} \cdots h_{i-2,0}^{2(k_{i-1}+1)} \cdots h_{i+2,0}^{2k_{i+2}+1} h_{i+3,0}^{2k_{i+3}+\bar{\epsilon}_{i+3}} \cdots \td{h}_{2,1}^{\epsilon_2} \cdots \td{h}_{4,1}^{\epsilon_4}h_{5,1}^{\epsilon_{5}} \cdots h_{i-1,1}^{\epsilon_{i-1}} h_{i,1}^{l_{i}}h_{i+1,1}^{l_{i+1}-1} \cdots
\\
+ \cdots 
\\
+ \bar{l}_{i+4} v_2^{2^{i+3}-1}h_{3,0}^{k_3}h_{4,0}^{2k_4} \cdots h_{i+2,0}^{2(k_{i+2}+1)+1} h_{i+3,0}^{2k_{i+3}+\bar{\epsilon}_{i+3}} \cdots \td{h}_{2,1}^{\epsilon_2} \cdots \td{h}_{4,1}^{\epsilon_4}h_{5,1}^{\epsilon_{5}} \cdots h_{i-1,1}^{\epsilon_{i-1}}h_{i,1}^{l_{i}} \cdots h_{i+4,1}^{l_{i+4}-1} \cdots
\\
+ \bar{l}_{i+5} v_2^{2^{i+4}-1}h_{3,0}^{k_3}h_{4,0}^{2k_4} \cdots h_{i+2,0}^{2k_{i+2}+1} h_{i+3,0}^{2(k_{i+3}+1)+\bar{\epsilon}_{i+3}} \cdots \td{h}_{2,1}^{\epsilon_2} \cdots \td{h}_{4,1}^{\epsilon_4}h_{6,1}^{\epsilon_{6}} \cdots h_{i-1,1}^{\epsilon_{i-1}}h_{i,1}^{l_{i}} \cdots h_{i+5,1}^{l_{i+5}-1} \cdots
\\
+ \cdots.
\end{multline*}

We now will proceed by showing that the rest of the monomials of type (I) are either sources or targets of non-trivial $d_{r,r', r''}$ differentials with $r \ge 2$. 

The first round of differentials in the LFSS will be of the form
\begin{multline*}
d_{16,1,0}(v_2^m h^{\bar{\epsilon}_{3}}_{3,0}h^{2k_3}_{3,0}h^{2k_4}_{4,0} h^{2k_5}_{5,0}  \cdots \td{h}^{\epsilon_2}_{2,1} h_{3,1}^{\epsilon_3} \td{h}^{\epsilon_4}_{4,1}h_{5,1}h^{\epsilon_6}_{6,1} \cdots)
\\
= v_2^{m+16} h^{\bar{\epsilon}_3}_{3,0}h^{2(k_3+1)}_{3,0}h^{2k_4}_{4,0} h^{2k_5}_{5,0}  \cdots \td{h}^{\epsilon_2}_{2,1} h_{3,1}^{\epsilon_3} \td{h}^{\epsilon_4}_{4,1}h^{\epsilon_6}_{6,1} \cdots
\end{multline*}
with $m, k_j \in \mb{N}$ and $\epsilon_j, \bar{\epsilon}_j \in \{0,1\}$.
Of the terms of type (I), what remains are terms of the forms
\begin{gather*}
v_2^m h^{\bar{\epsilon}_3}_{3,0}h^{2k_4}_{4,0} h^{2k_5}_{5,0} \cdots \td{h}_{2,1}^{\epsilon_2} h^{\epsilon_3}_{3,1} \td{h}^{\epsilon_4}_{4,1} h^{\epsilon_6}_{6,1} \cdots, 
\\
v_2^{<16}h^{\bar{\epsilon}_3}_{3,0} h^{2(k_3+1)}_{3,0} h^{2k_4}_{4,0} \cdots \td{h}_{2,1}^{\epsilon_2} h^{\epsilon_3}_{3,1} \td{h}^{\epsilon_4}_{4,1} h^{\epsilon_6}_{6,1} \cdots.  
\end{gather*}
These are seen to persist to the $E_{32,2,0}$-page by $(\ref{eq:LFSSdiffs})$.
The next round of differentials will be of the form
\begin{multline*}
d_{32,2,0}(v_2^m h^{\bar{\epsilon}_{3}}_{3,0}h^{2k_4}_{4,0} h^{2k_5}_{5,0}  \cdots \td{h}^{\epsilon_2}_{2,1} h_{3,1}^{\epsilon_3} \td{h}^{\epsilon_4}_{4,1}h_{6,1}h^{\epsilon_7}_{7,1} \cdots)
\\
= v_2^{m+32} h^{\bar{\epsilon}_3}_{3,0}h^{2(k_4+1)}_{4,0} h^{2k_5}_{5,0}  \cdots \td{h}^{\epsilon_2}_{2,1} h_{3,1}^{\epsilon_3} \td{h}^{\epsilon_4}_{4,1}h^{\epsilon_7}_{7,1} \cdots.
\end{multline*}
Of the terms of type (I), what remain are terms of the forms
\begin{gather}
v_2^m h^{\bar{\epsilon}_3}_{3,0}h^{2k_5}_{5,0} \cdots \td{h}_{2,1}^{\epsilon_2} h^{\epsilon_3}_{3,1} \td{h}^{\epsilon_4}_{4,1} h^{\epsilon_7}_{7,1} \cdots, 
\label{eq:E3220.1}\\
v_2^{<16}h^{\bar{\epsilon}_3}_{3,0} h^{2(k_3+1)}_{3,0} h^{2k_4}_{4,0} \cdots \td{h}_{2,1}^{\epsilon_2} h^{\epsilon_3}_{3,1} \td{h}^{\epsilon_4}_{4,1} h^{\epsilon_6}_{6,1} \cdots,  
\label{eq:E3220.2}\\
v_2^{<32}h^{\bar{\epsilon}_3}_{3,0} h^{2(k_4+1)}_{4,0} h^{2k_5}_{5,0} \cdots \td{h}_{2,1}^{\epsilon_2} h^{\epsilon_3}_{3,1} \td{h}^{\epsilon_4}_{4,1} h^{\epsilon_7}_{7,1} \cdots.
\label{eq:E3220.3}  
\end{gather}

The terms (\ref{eq:E3220.2}) are cocycles of type (I''), and the terms (\ref{eq:E3220.1}), (\ref{eq:E3220.3}) persist to $E_{64,4,0}$ by (\ref{eq:LFSSdiffs}).

Continuing in this manner, we get all of the differentials in the LFSS.
\end{proof}

\subsection{The Adams filtration spectral sequence}

Ideally we would like to reproduce the analysis of the previous section by replacing the Hopf algebra $\sibr(2)$ with $\sitd(2)$.  However, we do not have an analog of Theorem~\ref{thm:diffsquotient} for $\sitd(2)$.  This would be a prerequisite to forming a LFSS, as we need to know the May-Ravenel $d_0$-differentials decrease lexicographical filtration.  We instead will work with the coarser filtration given by Adams filtration.  The advantage of Adams filtration is that we know differentials preserve Adams filtration for topological reasons. 

Endow $\sibr(2)$ and its subalgebra $\sitd(2)$ with an increasing ``Adams filtration,'' by declaring $AF(v_2) = 1$, and giving all other generators ``Adams filtration'' $0$.  Note that in the case of $\sitd(2) = k(2)_*\tmf/(\text{$v_2$-torsion})$, this agrees with the filtration coming from the ASS for $k(2) \wedge \tmf$.
Therefore, the differentials in the cobar complex for $\sitd(2)$ respect Adams filtration because, by Theorem~\ref{thm:goodcomplex}, they come from maps of spectra (the connecting maps in the $\tmf$-Adams resolution for $Z$).  Since $\sibr(2) \cong \sitd(2) \otimes \FF_4$, the same is true for the cobar complex of $\sibr(2)$.

The algebra generators of
$$ E_0^{AF}\sibr(2) = \FF_4[v_2, \td{t^2_2}, \tb_3, \tb_{4}, \ldots]/(\td{t_2^2} = 0, \tb_k^4 = 0) $$
are seen to be primitive (see, for example,  \cite[Prop.~B.5.15]{Ravenelorange}).  Furthermore, Theorem~\ref{thm:goodcomplex} implies that $E_0^{AF}\sitd(2)$ is the primitively generated $k(2)_*$-subalgebra
$$ \FF_2[v_2, \td{t^2_2}, \tb_3, \tb_{4}, \ldots]/(\td{t_2^2} = 0, \tb_k^4 = 0) \subset E_0^{AF}\sibr(2). $$
Since there is an isomorphism of cochain complexes
$$ \mc{C}^{*,*,*}_{alg}(Z) \cong C^*_{E_0^{AF}\sitd(2)} (k(2)_*), $$
we immediately deduce the following important algebraic consequence.

\begin{thm}\label{thm:HCalg}
The cohomology of the algebraic good complex for $Z$ is given by
$$ H^{*,*,*}( \mc{C}_{alg} )\cong \FF_2[v_2, \td{h}_{2,1}, h_{i,j}]_{\genfrac{}{}{0pt}{}{i \ge 3}{j = 0,1}}. $$
\end{thm}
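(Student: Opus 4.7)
The plan is to reduce the computation to a standard Künneth argument for $\Ext$ over primitively generated commutative Hopf algebras. Since the preceding discussion has identified
$$ \mc{C}^{*,*,*}_{alg}(Z) \cong C^*_{E_0^{AF}\sitd(2)}(k(2)_*), $$
the theorem amounts to computing $\Ext^{*,*}_{E_0^{AF}\sitd(2)}(k(2)_*, k(2)_*)$. Because $v_2$ is central and the Adams filtration kills the $v_2$-multiples appearing on the right-hand sides of the defining relations $(\td{t_2^2})^2 = v_2^2 \td{t_2^2}$ and $\tb_k^4 = v_2^{2^k-1}\tb_k$ of $\sibr(2)$, the associated graded splits as a tensor product
$$ E_0^{AF}\sitd(2) \cong k(2)_* \otimes_{\FF_2} \bar{A}, \qquad \bar{A} := \FF_2[\td{t_2^2}, \tb_3, \tb_4,\ldots]/((\td{t_2^2})^2, \tb_k^4), $$
with all generators of $\bar A$ primitive. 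Thus it suffices to prove
$$ \Ext^{*,*}_{\bar A}(\FF_2, \FF_2) \cong \FF_2[\td{h}_{2,1}, h_{i,j}]_{i \ge 3,\, j=0,1}, $$
since the full answer follows by flat base change along $\FF_2 \to k(2)_*$.

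The first step is to decompose $\bar A$ as a Hopf algebra into a tensor product of exterior factors. The standard observation at $p=2$ is that for any primitive element $x$, each Frobenius power $x^{2^j}$ is again primitive, and the multiplication map
$$ \bigotimes_{j=0}^{n-1} E[x^{2^j}] \xrightarrow{\cong} \FF_2[x]/x^{2^n} $$
is an isomorphism of Hopf algebras. Applying this to each truncated polynomial factor $\FF_2[\tb_k]/\tb_k^4$ yields
$$ \bar A \cong E[\td{t_2^2}] \otimes \bigotimes_{k \ge 3}\bigl(E[\tb_k] \otimes E[\tb_k^2]\bigr) $$
as Hopf algebras over $\FF_2$.

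The second step combines this decomposition with the Künneth isomorphism
$$ \Ext^{*,*}_{B \otimes C}(\FF_2,\FF_2) \cong \Ext^{*,*}_{B}(\FF_2,\FF_2) \otimes \Ext^{*,*}_{C}(\FF_2,\FF_2) $$
for tensor products of Hopf algebras, reducing everything to the elementary calculation
$$ \Ext^{*,*}_{E[x]}(\FF_2,\FF_2) \cong \FF_2[h_x], $$
where $h_x$ has cohomological degree $1$ and internal degree $\abs{x}$. Assembling these polynomial rings, with $\td{h}_{2,1}$ dual to $\td{t_2^2}$, $h_{k,0}$ dual to $\tb_k$, and $h_{k,1}$ dual to $\tb_k^2$, produces precisely the claimed polynomial algebra. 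Tensoring with $k(2)_* = \FF_2[v_2]$ gives the final answer.

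The only potential obstacle is the Hopf-algebra splitting $E_0^{AF}\sitd(2) \cong k(2)_* \otimes \bar A$, which requires verifying that all generators $\td{t_2^2},\tb_k$ are primitive in the associated graded. This is not a deep point: the relevant decomposable terms in the coproducts inherited from $\br{\Sigma}_2$ involve positive powers of $v_2$ (or of $t_1$, which vanishes in $\sibr(2)$), and hence lie in strictly higher Adams filtration than the generators themselves, so they become zero after passing to the associated graded. No genuine obstruction remains, and the theorem follows.
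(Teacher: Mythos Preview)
Your proposal is correct and takes essentially the same approach as the paper. The paper's ``proof'' is just the sentence preceding the theorem: having established that $E_0^{AF}\sitd(2)$ is primitively generated (citing Ravenel's orange book, Prop.~B.5.15) and that $\mc{C}_{alg}^{*,*,*}(Z)$ is its cobar complex, the authors simply write ``we immediately deduce the following important algebraic consequence'' and state the theorem. You have spelled out the standard computation underlying that word ``immediately'': split into exterior factors at $p=2$, apply K\"unneth, and use $\Ext_{E[x]}(\FF_2,\FF_2)=\FF_2[h_x]$. One minor caution: your closing justification for primitivity (``decomposable terms in the coproducts\ldots involve positive powers of $v_2$'') is a bit glib, since the raw coproduct in $\Sigma(2)$ has terms not manifestly divisible by $v_2$; the paper sidesteps this by citing Ravenel rather than arguing it directly, so you are not doing less than they are, but if you want a self-contained argument you should either invoke that reference or trace through the coproduct formulas in $\br{\Sigma}_2$ more carefully.
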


We may likewise endow $E_0^{MR}\sibr(2)$ and $E^{MR}_0\sitd(2)$ with Adams filtration. Then $E_0^{AF}E_0^{MR}\sibr(2)$ is given by 
$$
\FF_4[v_2]\otimes E[\td{t}_{2,1}, \br{t}_{i,j}]_{\genfrac{}{}{0pt}{}{i \ge 3}{j = 0,1}} 
$$
with $\td{t}_{2,1}$, $\br{t}_{i,j}$ primitive,
and $E_0^{AF}E_0^{MR}\sitd(2)$ is given by the subalgebra
$$ \FF_2[v_2]\otimes E[\td{t}_{2,1}, \br{t}_{i,j}]_{\genfrac{}{}{0pt}{}{i \ge 3}{j = 0,1}}. $$
This results in a pair of 
\emph{Adams filtration} spectral sequences
 $$ 
 \xymatrix{
 \E{AF}{}_1(\sitd(2)) \ar@{=>}[r] \ar[d] & H^*(E_0^{MR}\sitd(2)) \ar[d] \\ 
 \E{AF}{}_1(\sibr(2)) \ar@{=>}[r] & H^*(E_0^{MR}\sibr(2)) 
 }$$
with
\begin{align*}
\E{AF}{}_1(\sitd(2)) & \cong \FF_2[v_2, \td{h}_{2,1}, h_{i,j}]_{\genfrac{}{}{0pt}{}{i \ge 3}{j = 0,1}}, \\
\E{AF}{}_1(\sibr(2)) & \cong \FF_4[v_2, \td{h}_{2,1}, h_{i,j}]_{\genfrac{}{}{0pt}{}{i \ge 3}{j = 0,1}}.
\end{align*}

We will now compute the Adams filtration spectral sequence $\E{AF}{}_r(\sibr(2))$ by relating it to the LFSS.  

The Chevallay-Eilenberg complex $C^*_{CE}(\br{l}(2))$ is a quotient of the cobar complex for $E^{MR}_0\sibr(2)$
$$ C^*_{E^{MR}_0\sibr(2)} \twoheadrightarrow C^*_{CE}(\br{l}(2)). $$
By endowing $C^*_{CE}(\br{l}(2))$ with an Adams filtration, we get an associated spectral sequence $\E{AF}{}_r(\br{l}(2))$ and a map of spectral sequences
 $$ 
 \xymatrix{
 \E{AF}{}_1(\sibr(2)) \ar@{=>}[r] \ar[d] & H^*(E_0^{MR}\sibr(2)) \ar@{=}[d] \\ 
 \E{AF}{}_1(\br{l}(2)) \ar@{=>}[r] & H^*(E_0^{MR}\sibr(2)) 
 }$$
From Theorem~\ref{thm:diffsquotient} we see that all differentials in $C^*_{CE}(\br{l}(2))$ increase Adams filtration, and thus
\begin{align*}
\E{AF}{}_1(\br{l}(2)) & = H^*(E_0^{AF}C^*_{CE}(\br{l}(2))) \\
& = E_0^{AF}C^*_{CE}(\br{l}(2)) \\
& = \FF_4[v_2, \td{h}_{2,1}, h_{i,j}]_{\genfrac{}{}{0pt}{}{i \ge 3}{j = 0,1}}.
\end{align*}

We deduce the following.

\begin{prop}
The map
$$ \E{AF}{}_1(\sibr(2)) \rightarrow \E{AF}{}_1(\br{l}(2)) $$
is an isomorphism, and thus there is an isomorphism of spectral sequences
$$ \{ \E{AF}{}_r(\sibr(2))\} \cong \{\E{AF}{}_r(\br{l}(2))\}. $$
\end{prop}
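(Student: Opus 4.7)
The plan is to compare the two AFSS through the map induced by the quotient $\pi\colon C^*_{E^{MR}_0\sibr(2)}(k(2)_*)\twoheadrightarrow C^*_{CE}(\br{l}(2))$ of cochain complexes, and to verify that $\pi$ induces an isomorphism on $E_1^{AF}$ pages. Both spectral sequences converge to $H^*(E^{MR}_0\sibr(2))$, and on this common abutment $\pi_*$ is an isomorphism by the very construction of $C^*_{CE}(\br{l}(2))$; hence an isomorphism on $E_1$ will automatically propagate to every subsequent page and produce the claimed isomorphism of spectral sequences.

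First I would record that both $E_1$ pages have already been abstractly identified with the same polynomial algebra $\FF_4[v_2,\td{h}_{2,1},h_{i,j}]_{i\ge 3,\,j\in\{0,1\}}$: on the Chevallay--Eilenberg side this is immediate from Theorem~\ref{thm:diffsquotient}, since every differential of $C^*_{CE}(\br{l}(2))$ carries a positive power of $v_2$ and therefore strictly raises Adams filtration, so $E_0^{AF}C^*_{CE}(\br{l}(2))$ has zero induced differential; on the cobar side it follows from the fact that $E_0^{AF}E^{MR}_0\sibr(2)$ is a primitively generated exterior Hopf algebra over $\FF_4[v_2]$, whose cobar cohomology is the polynomial algebra on the degree-$1$ duals of its primitives.

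Next I would verify that $\pi_*$ at the $E_1^{AF}$ level sends each polynomial generator to the corresponding polynomial generator. The coefficient $v_2$ is preserved tautologically. The class $h_{i,j}\in \E{AF}{}_1(\br{l}(2))$ is, by construction of $C^*_{CE}(\br{l}(2))$, dual to the primitive $\tb_{i,j}$, and pulls back under $\pi$ to the cohomology class of the $1$-cocycle $\tb_{i,j}\in C^1_{E^{MR}_0\sibr(2)}$, which is precisely the generator labelled $h_{i,j}$ on the cobar side under the abstract identification above; the analogous statement holds for $\td{h}_{2,1}$ and $\td{t}_{2,1}$. Therefore $\pi_*$ is a map of $\FF_4[v_2]$-polynomial algebras that is the identity on generators, hence an isomorphism.

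The point I expect to require the most care is this last identification of $\pi_*$ on generators: checking that May's realization of $C^*_{CE}(\br{l}(2))$ as a quotient of the cobar complex is compatible on $E_0^{AF}$ with the standard identification of the cohomology of a primitively generated exterior Hopf algebra as a polynomial algebra on duals of primitives, so that the two labellings of generators genuinely match up in each multidegree. Once this book-keeping is in place, the remainder of the argument is formal.
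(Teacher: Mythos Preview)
Your proposal is correct and follows essentially the same approach as the paper, which is quite terse here: the paper simply records that both $E_1^{AF}$ pages coincide with the polynomial algebra $\FF_4[v_2,\td h_{2,1},h_{i,j}]$ and states the proposition without further argument. Your added step---checking that the quotient map $\pi$ carries the cobar $1$-cocycle $[\tb_{i,j}]$ to the Chevallay--Eilenberg generator $h_{i,j}$, so that $\pi_*$ is the identity on polynomial generators---fills in exactly the detail the paper leaves implicit, and your identification of this bookkeeping as the only point requiring care is accurate.
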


Since lexicographic filtration is a refinement of Adams filtration, the differentials in the AFSS $\E{AF}{}_r(\br{l}(2))$ are those differentials in the LFSS which change Adams filtration by $r$.  We therefore have
$$ \E{AF}{}_r(\sibr(2)) = \E{LF}{}_{r,0,0}, $$
and for every differential
$$ d^{LF}_{r,r',r''}(x) = y $$ in the LFSS we have a corresponding differential
$$ d^{AF}_r(x) = y $$
in the AFSS.
Therefore, as we have determined the LFSS, we have implicitly determined the AFSS for $\br{l}(2)$, and therefore the AFSS for $\sibr(2)$.  We deduce that the AFSS for $\sitd(2)$ is obtained by restricting the differentials from the AFSS for $\sibr(2)$.   Therefore, from Theorem~\ref{thm:MRE1bar}  we deduce:

\begin{thm}\label{thm:MRE1}
The May-Ravenel $E_1$-term $\E{MR}{}_1(\sitd(2))$ has a basis over $\FF_2$ whose representatives in the Adams filtration spectral sequence have leading terms (with respect to lexicographical filtration) given by:
\begin{align*}
\mr{(I')} & \quad v_2^m h^{\bar{\epsilon}_3}_{3,0}\td{h}_{2,1}^{\epsilon_2} h^{\epsilon_3}_{3,1} \td{h}^{\epsilon_4}_{4,1}, 
\shortintertext{\begin{flushright}
$m \ge 0; \: \epsilon_j, \bar{\epsilon}_j \in \{0,1\},$
\end{flushright}}
\mr{(I'')} & \quad v_2^{<2^{i+1}}h^{\bar{\epsilon}_3}_{3,0} h^{2(k_i+1)}_{i,0} h^{2k_{i+1}}_{i+1,0} h^{2k_{i+2}}_{i+2,0} \cdots \td{h}_{2,1}^{\epsilon_2} h^{\epsilon_3}_{3,1} \td{h}^{\epsilon_4}_{4,1} h^{\epsilon_{i+3}}_{i+3,1} \cdots.  
\shortintertext{\begin{flushright}
$i \ge 3; \: k_j \ge 0; \: \epsilon_j, \bar{\epsilon}_j \in \{0,1\},$
\end{flushright}}
\mr{(II)} & \quad h^{k_3}_{3,0}h^{2k_4}_{4,0} \cdots h^{2k_{i+2}}_{i+2,0} h^{k_{i+3}}_{i+3,0}\cdots \td{h}^{\epsilon_2}_{2,1} \cdots h_{i-1,1}^{\epsilon_{i-1}}h_{i,1}^{l_i+2} h^{l_{i+1}}_{i+1,1}\cdots.
\shortintertext{\begin{flushright}
$i \ge 2; \: k_j, l_j \ge 0; \: \epsilon_j \in \{0,1\}$.
\end{flushright}}
\end{align*}
For the monomials of type (I''), the notation $v_2^{<n}x$ means monomials of the form $v_2^ix$ for $i < n$.
\end{thm}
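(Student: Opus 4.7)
The plan is to descend Theorem~\ref{thm:MRE1bar} from $\sibr(2)$ to $\sitd(2)$ by a Galois descent argument, using the structural observation recorded just before the theorem statement: the AFSS for $\sitd(2)$ is the restriction to the $\FF_2$-subalgebra of the AFSS for $\sibr(2)$. Since $\sibr(2) = \sitd(2)\otimes_{\FF_2} \FF_4$ as filtered Hopf algebras (for both the May--Ravenel and the Adams filtrations), and $\FF_4/\FF_2$ is separable, base change is exact and commutes with cohomology, so we have natural isomorphisms of spectral sequences
\[
\{\E{AF}{}_r(\sibr(2))\} \cong \{\E{AF}{}_r(\sitd(2))\}\otimes_{\FF_2}\FF_4, \qquad \E{MR}{}_1(\sibr(2)) \cong \E{MR}{}_1(\sitd(2))\otimes_{\FF_2}\FF_4.
\]

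First, I would verify that every basis monomial listed in families (I$'$), (I$''$), and (II) of Theorem~\ref{thm:MRE1bar} is Galois-fixed: each uses only the generators $v_2$, $\td h_{2,1}$, and $h_{i,j}$ for $i\ge 3$, $j\in\{0,1\}$, none of which contain a factor of $\omega$. Hence they are genuine elements of $\E{AF}{}_1(\sitd(2))=\FF_2[v_2,\td h_{2,1},h_{i,j}]_{i\ge 3,\, j=0,1}$ and their $\FF_2$-span is a candidate basis for $\E{MR}{}_1(\sitd(2))$.

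Second, I would verify that each listed representative is an infinite cycle in the Adams filtration spectral sequence for $\sitd(2)$. By the preceding identification, this is equivalent to being an infinite cycle in the AFSS for $\sibr(2)$, which in turn was established (together with the accompanying explicit Chevallay--Eilenberg cocycle completions) in the proof of Theorem~\ref{thm:MRE1bar}. A direct inspection of those completions shows that the auxiliary correction terms introduced to upgrade each leading monomial to an honest cocycle involve only integer (i.e.\ $\FF_2$) coefficients and only the Galois-invariant generators $v_2$, $\td h_{2,1}$, and $h_{i,j}$. Consequently the completions lie in $C^{*}_{CE}(\sitd(2)\text{-part})$, so the monomials remain permanent cycles after descending to $\sitd(2)$.

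Finally, by counting: the listed $\FF_2$-monomials become, after $\otimes_{\FF_2}\FF_4$, exactly the $\FF_4$-basis of $\E{MR}{}_1(\sibr(2))$ from Theorem~\ref{thm:MRE1bar}. Since $\E{MR}{}_1(\sitd(2))\otimes_{\FF_2}\FF_4 \cong \E{MR}{}_1(\sibr(2))$, dimension counting in each internal bidegree forces the Galois-invariant monomials to be an $\FF_2$-basis of $\E{MR}{}_1(\sitd(2))$, giving the theorem. The only potential obstacle is checking that no genuinely new $\FF_2$-basis element appears (and none is inadvertently killed) during the restriction; but since every differential in (\ref{eq:LFSSdiffs}) already has $\FF_2$-coefficients, the restriction of the AFSS to the $\FF_2$-form is strictly compatible with the computation, and the descent is automatic.
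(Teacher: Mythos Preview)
Your proposal is correct and takes essentially the same approach as the paper: the paper's proof is the terse observation that ``the AFSS for $\sitd(2)$ is obtained by restricting the differentials from the AFSS for $\sibr(2)$'' and then deduces the result directly from Theorem~\ref{thm:MRE1bar}. Your Galois-descent framing, together with the explicit checks that the basis monomials and their cocycle completions are defined over $\FF_2$ and the dimension count, simply spells out the details behind that one sentence; the paper's argument and yours are the same in substance.
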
 

\begin{rmk}\label{rmk:MRE1}
We do not know if there are differentials in the May-Ravenel spectral sequence
$$ \E{MR}{}_1(\sitd(2)) \Rightarrow H^*(\sitd(2)) \cong H^{*,*}(\mc{C}). $$
  Even in relatively low degrees, possibilities are plentiful.  For example, there could be a differential
$$ d_4^{MR}(h^2_{5,0}) \overset{?}{=} v_2^{14} \td{h}_{2,1}h_{3,0}^2. $$
We also do not know if there are possible hidden $v_2$-extensions in the May-Ravenel spectral sequence. Again, there are endless possibilities - as an example, there could be a hidden extension
$$ v_2^{16} h_{3,0}^4 \overset{?}{=} v_2^{14}h_{2,1}h_{3,0}^2h_{3,1}. $$
However, in the very low degrees  which  are relevant  to the computations later in this paper, there  are no possibilities of differentials or hidden $v_2$-extensions.
\end{rmk}


\section{The agathokakological method}\label{sec:agatho}

In this section we will adapt the agathokakological method introduced in \cite{BBBCX} to our present setting, to compute the $E_2$-term of the tmf-ASS for $Z$.

\subsection{Overview of the method}

The goal is to compute $\E{\tmf}{}_2^{*,*}(Z)$.  The short exact sequences
\begin{gather*}
 0 \rightarrow V^{*,*}(Z) \rightarrow \E{\tmf}{}_1^{*,*}(Z) \rightarrow \mc{C}^{*,*}(Z) \rightarrow 0, 
 \\
0 \to V^{*,*,*}_{alg}(Z) \to \E{\tmf}{alg}_1^{*,*,*}(Z) \to \mc{C}^{*,*,*}_{alg}(Z) \to 0 
\end{gather*}
give rise to long exact sequences
\begin{gather}
\cdots \rightarrow H^{*,*}(V) \rightarrow \E{\tmf}{}_2^{*,*}(Z) \rightarrow H^{*,*}(\mc{C}) \xrightarrow{\partial} H^{*+1, *}(V) \rightarrow \cdots, 
\label{eq:partialakss} \\
\cdots \rightarrow H^{*,*,*}(V_{alg}) \rightarrow \E{\tmf}{alg}_2^{*,*,*}(Z) \rightarrow H^{*,*,*}(\mc{C}_{alg}) \xrightarrow{\partial_{alg}} H^{*+1, *,*}(V_{alg}) \rightarrow \cdots \label{eq:partialalgakss}
\end{gather}
In particular, (\ref{eq:partialakss}) reduces the computation of $\E{\tmf}{}_2(Z)$ to the computation of $H^{*,*}(\mc{C})$ and $H^{*,*}(V)$.  We have established a means to understand $H^{*,*}(\mc{C})$ (Theorem~\ref{thm:MRE1}).  We are therefore left to compute $H^{*,*}(V)$.

By using Bruner's Ext program to compute 
$$ \Ext^{*,*}_{A_*}(\FF_2, H_*Z) $$ 
through a range, we can then use the Mahowald spectral sequence
$$ \E{\tmf}{alg}^{*,*,*}_2(Z) \Rightarrow \Ext_{A_*}^{*,*}(\FF_2, H_*Z) $$
to deduce $\E{\tmf}{alg}^{*,*,*}_2(Z)$ by reverse engineering (note that this is backwards from the usual direction of deduction with a spectral sequence).
We have computed $ H^{*,*,*}(\mc{C}_{alg})$ (Theorem~\ref{thm:HCalg}).
We can then use (\ref{eq:partialalgakss}) to deduce 
$H^{*,*}(V) = H^{*,0,*}(V_{alg}) $.  

\begin{rmk}
Note that our only interest in $\Ext^{*,*}_{A_*}(\FF_2, H_*Z)$ is to determine $H^{*,*}(V)$.  We are not investigating the classical Adams spectral sequence of $Z$.
\end{rmk}

\subsection{The agathokakological spectral sequences}

The strategy outlined in the previous subsection will be aided by the construction of a pair of spectral sequences: the \emph{topological agathokakological spectral sequence} (topological AKSS), and the \emph{algebraic agathokakological spectral sequence} (algebraic AKSS).

We begin with the topological AKSS.
Consider the short exact sequence
\begin{equation}\label{eq:topSES} 
0 \rightarrow V^{n,*}(Z) \rightarrow \E{\tmf}{}^{n,*}_1(Z) \xrightarrow{g} \mc{C}^{n,*}(Z) \rightarrow 0.
\end{equation}
We will now introduce a refinement of the $\tmf$-ASS which separates the good and evil complexes into different degrees.  The good complex $\mc{C}^{n,*}(Z)$ will be regarded as being in filtration $n$, and the evil complex $V^{n,*}(Z)$ will be regarded as in filtration $n+\epsilon$, where $\epsilon$ is regarded as a fixed quantity with
$$ n < n+\epsilon < n+1-\epsilon < n+1. $$

Let $\br{\tmf}$ denote the fiber of the unit
$$ \br{\tmf} \rightarrow S \rightarrow \tmf. $$
The $\tmf$-ASS for $Z$ arises from the decreasing filtration of $Z$ given by 
$$
\xymatrix{
Z \ar@{=}[r] &
F_0 \ar[d] & 
F_1 \ar[l] \ar[d] &
F_2 \ar[l] \ar[d] &
\cdots \ar[l] 
\\
& k(2) &
k(2) \wedge \br{\tmf} &
k(2) \wedge \br{\tmf}^{2}
}
$$
with 
$$ F_s := \br{\tmf}^s \wedge Z. $$
By Proposition~\ref{prop:k(n)}, there are fiber sequences
$$ H_s \rightarrow k(2) \wedge \br{\tmf}^s \rightarrow K_s $$
where $H_s$ is a wedge of mod 2 Eilenberg-MacLane spectra and $K_s$ is a wedge of $k(2)$'s.  By Verdier's axiom we get a braid of fiber sequences

$$
\xymatrix{
F_{s+1} \ar@/^1.5pc/[rr] \ar[dr] && F_s \ar[dr] \ar@/^1.5pc/[rr] && K_s \\
& F_{s+\epsilon} \ar[ur] \ar[dr] && k(2) \wedge \br{\tmf}^s \ar[ur] \\
&& H_s \ar[ur]
}
$$
 
where $F_{s+\epsilon}$ is defined to be the fiber of the map $F_s \to K_s$. This results in a refinement of the $\tmf$-Adams filtration of $Z$
\begin{equation}\label{eq:akfilt}
\xymatrix{
Z \ar@{=}[r] &
F_0 \ar[d] & 
F_\epsilon \ar[l] \ar[d] &
F_1 \ar[l] \ar[d] &
F_{1+\epsilon} \ar[l] \ar[d] &
F_2 \ar[l] \ar[d] &
F_{2+\epsilon} \ar[l] \ar[d] &
\cdots \ar[l] 
\\
& K_0 &
H_0 &
K_1 & 
H_1 &
K_2 & 
H_2 &
}
\end{equation}

The spectral sequence associated to this filtration is the \emph{topological agathokakological spectral sequence (AKSS)}:
$$ \{ \E{akss}{}^{n+\alpha\epsilon,t}_{r + \beta \epsilon} \} \Rightarrow \pi_{t-n}(Z) $$
\begin{align*}
n, t \in \mb{N}, \\
\alpha \in \{ 0, 1\}, \\
\beta \in \{ -1, 0, 1\} 
\end{align*}
The pages of this spectral sequence are ordered by
$$ n - \epsilon < n < n + \epsilon < n+1 $$
with differentials 
\begin{align*}
d^{akss}_{r-\epsilon}:  & \E{akss}{}_{r - \epsilon}^{n+ \epsilon,t} \rightarrow \E{akss}{}_{r- \epsilon}^{n+r,t},  \\
d^{akss}_{r}:  & \E{akss}{}_{r}^{n+ \alpha\epsilon,t} \rightarrow \E{akss}{}_{r}^{n+r +\alpha\epsilon, t},  \\
d^{akss}_{r+\epsilon}:  & \E{akss}{}_{r+\epsilon}^{n, t} \rightarrow \E{akss}{}_{r+ \epsilon}^{n+r +\epsilon, t} .
\end{align*}

\begin{rmk}
The reader will notice that the AKSS could be reindexed to a more standard format by reindexing the filtration by:
\begin{align*}
n & \mapsto 2n, \\
n+\epsilon & \mapsto 2n+1.
\end{align*}
Our reason for choosing this non-standard indexing is that it displays the AKSS as a refinement of the $\tmf$-ASS, so that there are short exact sequences
$$ 0 \rightarrow \E{akss}{}^{n+\epsilon,t}_{r-\epsilon} \to \E{\tmf}{}^{n,t}_r  \to \E{akss}{}^{n,t}_{r-\epsilon} \to 0. $$ 
\end{rmk}

The $E_1$-term takes the form
$$\E{akss}{}^{n+\alpha \epsilon, t}_1= 
\begin{cases}
\mc{C}^{n,t}(Z), & \alpha = 0, \\
V^{n, t}(Z), & \alpha = 1. 
\end{cases}
$$
The $d_1$-differential 
$$ d^{akss}_{1}: \E{akss}{}^{n+\alpha \epsilon, t}_1 \rightarrow \E{akss}{}^{n+ 1+\alpha\epsilon, t}_1
$$
is given by the differentials in the good and evil complexes:
$$d_1^{akss} = 
\begin{cases}
d_1^{good}, & \alpha = 0, \\
d_1^{evil}, & \alpha = 1. 
\end{cases}
$$
We therefore have
$$\E{akss}{}^{n+\alpha \epsilon, t}_{1+\epsilon} = 
\begin{cases}
H^{n,t}(\mc{C}), & \alpha = 0, \\
H^{n, t}(V), & \alpha = 1.
\end{cases}
$$
The only nonzero $d_{1+\epsilon}$-differentials are of the form
$$ H^{n, t}(\mc{C}) = \E{akss}{}^{n, t}_{1+\epsilon} \xrightarrow{d_{1+\epsilon}} \E{akss}{}^{n+1 + \epsilon, t}_{1+\epsilon} = H^{n+1, t}(V), $$
for which we have 
$$ d_{1+\epsilon} = \partial $$
where $\partial$ is the connecting homomorphism of (\ref{eq:partialakss}).
 
The algebraic AKSS is constructed by applying $\Ext^{*,*}_{A_*}(\FF_2, H_*(-))$ to the diagram (\ref{eq:akfilt}) (compare with \cite[Sec.~7]{BBBCX}).  The resulting spectral sequence takes the form
$$ \{ \E{akss}{alg}^{n+\alpha\epsilon,s,t}_{r+\beta \epsilon}(Z) \} \Rightarrow \E{ass}{}^{n+s,t}_2(Z) $$  
with differentials 
\begin{align*}
d^{akss}_{r-\epsilon}:  & \E{akss}{alg}_{r - \epsilon}^{n+ \epsilon, s, t} \rightarrow \E{akss}{alg}_{r- \epsilon}^{n+r, s-r+1, t},  \\
d^{akss}_{r}:  & \E{akss}{alg}_{r}^{n+ \alpha\epsilon, s, t} \rightarrow \E{akss}{alg}_{r}^{n+r +\alpha\epsilon, s-r+1, t},  \\
d^{akss}_{r+\epsilon}:  & \E{akss}{alg}_{r+\epsilon}^{n, s, t} \rightarrow \E{akss}{alg}_{r+ \epsilon}^{n+r +\epsilon, s-r+1, t} .
\end{align*} 
We have
$$
\E{akss}{alg}^{n+\alpha\epsilon,s,t}_{1+\epsilon}(Z) =
\begin{cases}
H^{n,s,t}(\mc{C}_{alg}), & \alpha = 0, \\
H^{n,t}(V), & \alpha = 1, s = 0, \\
0, & \text{otherwise}
\end{cases}
$$
and 
$$ d^{akss}_{1+\epsilon} = \partial_{alg} $$
where $\partial$ is the connecting homomorphism of (\ref{eq:partialalgakss}).

Because for $s > 0$ we have 
$$ \E{akss}{alg}^{n+\epsilon,s,t}_1 = 0, $$
there are no non-trivial differentials
$$ d_{r+\beta\epsilon}(x) = y $$
with $x$ in filtration $n+\epsilon$ and $r > 1$.

The following very useful lemma shows that the $d_{1+\epsilon}$ differentials in the topological AKSS can be deduced from the $d^{alg}_{1+\epsilon}$ differentials in the algebraic AKSS.

\begin{lem}\label{lem:d1epsilon}
For $n = 0$, the differentials
$$ d_{1+\epsilon}: \E{akss}{}^{n, t}_{1+\epsilon} \xrightarrow{d_{1+\epsilon}} \E{akss}{}^{n+1 + \epsilon, t}_{1+\epsilon} $$
are trivial.  For $n \ge 1$, they are determined by the following commutative diagram:
$$
\xymatrix{
\E{akss}{}^{n, t}_{1+\epsilon} \ar[d] \ar[r]^-{d_{1+\epsilon}} &
\E{akss}{}^{n+1 + \epsilon, t}_{1+\epsilon} \ar@{=}[d] \\
\E{akss}{alg}^{n,{0}, t}_{1+\epsilon}  \ar[r]_-{d^{alg}_{1+\epsilon}} &
\E{akss}{alg}^{n+1 + \epsilon, {0}, t}_{1+\epsilon}
}
$$
\end{lem}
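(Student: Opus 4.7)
The plan is to derive the statement by comparing the two AKSS, both of which arise from the single topological filtration~\eqref{eq:akfilt}: the topological AKSS by applying $\pi_*$ and the algebraic AKSS by applying $\Ext^{*,*}_{A_*}(\FF_2, H_*(-))$ term by term. Since each spectrum $\tmf^{\wedge n+1} \wedge Z$ in this filtration has a collapsing classical ASS by Proposition~\ref{prop:k(n)}, the Adams filtration provides a comparison: the $\tmf$-ASS $d_1$ is induced by a map of spectra, hence preserves the Adams filtration index $s$ on each inner ASS, and its associated graded is exactly the differential of the algebraic AKSS.

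Building on this setup, I would first establish that, using the splittings afforded by Proposition~\ref{prop:k(n)}, one has an identification $\mc{C}^{n,t}(Z) \cong \bigoplus_s \mc{C}^{n,s,t}_{alg}(Z)$ of cochain complexes in which the $s$-summand on the right detects classes of Adams filtration exactly $s$, and $V^{n,t}(Z) = V^{n,0,t}_{alg}(Z)$ sits entirely in Adams filtration $0$ (as already observed before Proposition~\ref{prop:algsplit}). The connecting map $d_{1+\epsilon}$ in the topological AKSS sends $\mc{C}^{n}$ into $V^{n+1}$, and since $V^{n+1}$ lies in Adams filtration $0$ while $d_1^{\tmf\text{-}\mathrm{ASS}}$ preserves Adams filtration, only the $s=0$ summand of $\mc{C}^{n,t}$ can contribute. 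On that summand the map is precisely the $s=0$ part of $d_1^{\tmf\text{-}\mathrm{ASS}}$, which by the associated-graded identification is $d^{alg}_{1+\epsilon}$. This proves the commutative diagram for $n \ge 1$ (and in fact for all $n$), with the left-hand vertical arrow being the projection onto the $s=0$ summand of the Adams filtration decomposition.

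For $n=0$, I would then give a direct geometric argument. Since $V^{0,*}(Z)=0$, we have $\mc{C}^{0,*}(Z) \cong \E{\tmf}{}_1^{0,*}(Z) \cong \pi_*k(2) \cong \FF_2[v_2]$, and any class $v_2^k$ admits the canonical lift $v_2^k \in \pi_{6k}(\tmf \wedge Z)$. This lift lies in the image of the unit map $Z \to \tmf \wedge Z$, since $v_2^k \in \pi_{6k}Z$ is defined from the $v_2$-self map on $Z$. But any class in the image of the unit is a $d_1$-cycle in the $\tmf$-ASS, because the $\tmf$-ASS $d_1$ is given by $\eta_\tmf \wedge 1 - 1 \wedge \eta_\tmf$, which vanishes on images of the unit. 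Hence $d_1^{\tmf\text{-}\mathrm{ASS}}(v_2^k) = 0$, and in particular its component $d_{1+\epsilon}(v_2^k) \in V^{1,*}(Z)$ is zero.

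The main subtlety will be the careful setup of the splitting $\mc{C}^{n,t}(Z) \cong \bigoplus_s \mc{C}^{n,s,t}_{alg}(Z)$ and verifying that the Adams-filtration projection is compatible with the connecting maps of both AKSS; once this bookkeeping is in place, the lemma reduces to the naturality of the connecting homomorphism together with the elementary observation about units for the $n=0$ case.
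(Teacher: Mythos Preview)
Your proposal is correct and follows essentially the same approach as the paper's proof. Both arguments recognize that the topological and algebraic $d_{1+\epsilon}$ arise from the same composite of spectra $K_n \to \tmf^{\wedge n+1}\wedge Z \to \tmf^{\wedge n+2}\wedge Z \to H_{n+1}$, with the comparison coming from the edge map $\pi_* \to \E{ass}{}_2^{0,*}$, which is an isomorphism on the evil part; your treatment of the $n=0$ case (lifting $v_2^k$ to $\pi_*Z$ via the unit) simply makes explicit what the paper's one-line remark about powers of $v_2$ is invoking.
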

\begin{proof}
Topologically, $d_{1+\epsilon}$ derives from applying $\pi_*$ to the composite
\begin{equation}\label{eq:topconnect}
K_n  \rightarrow \tmf^{\smsh n+1} \smsh Z \rightarrow \tmf^{\smsh n+2} \smsh Z \rightarrow H_{n+1}.
\end{equation}
The first statement follows from the fact that the only elements in $H^{n,*}(\mc{C})$ for $n = 0$ are powers of $v_2$.
The second statement follows from the fact that $d^{alg}_{1+\epsilon}$ is the induced map of Adams $E^{0,*}_2$-terms coming from the composite (\ref{eq:topconnect}):
\[
\mc{C}_{alg}^{n,0,*} = \E{ass}{}^{0,*}\left(K_n\right) \rightarrow \E{ass}{}^{0,*}\left(H_{n+1}\right) = V^{n+1,*}. \qedhere \]
\end{proof}

The $E_2$-term of the $\tmf$-ASS is deduced from the short exact sequence
$$ 0 \rightarrow \E{akss}{}^{n+\epsilon,t}_2 \rightarrow \E{\tmf}{}^{n,t}_2 \rightarrow \E{akss}{}^{n,t}_2 \rightarrow 0.
$$


\subsection{The dichotomy principle}

Elements in $\E{akss}{alg}^{n,s,t}_{r+\beta \epsilon}(Z)$ are called \emph{good}, and elements in $\E{akss}{alg}^{n+\epsilon,s,t}_{r+\beta \epsilon}(Z)$ are called \emph{evil}.  Non-trivial elements of $\E{ass}{}_2(Z)$ are called \emph{good} (respectively \emph{evil}) if they are detected in the AKSS by good (respectively evil) classes.  

The key to computing the algebraic AKSS is to determine which elements of $\E{ass}{}_2(Z)$ are good and which are evil.  This is done by linking $v_2$-periodicity with goodness.  An element of $\E{ass}{}_2(Z)$ is \emph{$v_2$-periodic} if its image under the homomorphism
$$ \E{ass}{}_2(Z) \rightarrow v_2^{-1}\E{ass}{}_2(Z) $$
is non-trivial.  Otherwise it is said to be \emph{$v_2$-torsion}.

The following two propositions give a practical means of determining whether an element of $\E{ass}{}_2(Z)$ is $v_2$-periodic.

\begin{prop}\label{prop:v2Ext}
We have
$$ v_2^{-1}\E{ass}{}_2(Z) \cong \FF_2[v_2^{\pm}, \td{h}_{2,1}, h_{3,0}, h_{3,1}, h_{4,0}, h_{4,1}, \cdots ]. $$
\end{prop}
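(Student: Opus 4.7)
The plan is to compute $v_2^{-1}\E{ass}{}_2(Z)$ as the abutment of the $v_2$-localization of the algebraic $\tmf$-Mahowald spectral sequence~\eqref{eqn:algss}. The first step is to analyze the $E_1$-page via the good/evil decomposition. The short exact sequence~\eqref{eqn:ASSn} of cochain complexes exhibits $\E{\tmf}{alg}_1^{*,*,*}(Z)$ as an extension of $\mc{C}_{alg}^{*,*,*}(Z)$ by $V_{alg}^{*,*,*}(Z)$, and by Proposition~\ref{prop:algsplit} the evil complex $V_{alg}^{*,*,*}(Z)$ is concentrated in $s = 0$ and is simple $v_2$-torsion. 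Consequently $v_2$-inversion kills the evil subcomplex entirely, giving
\[
v_2^{-1}\E{\tmf}{alg}_1^{*,*,*}(Z) \cong v_2^{-1}\mc{C}_{alg}^{*,*,*}(Z),
\]
and passing to $d_1$-cohomology together with Theorem~\ref{thm:HCalg} yields
\[
v_2^{-1}\E{\tmf}{alg}_2^{*,*,*}(Z) \cong v_2^{-1}H^{*,*,*}(\mc{C}_{alg}) \cong \FF_2[v_2^{\pm}, \td{h}_{2,1}, h_{i,j}]_{i \ge 3,\, j \in \{0,1\}}.
\]

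My next step is to show that the $v_2$-localized MSS collapses at $E_2$. The class $v_2$ is a permanent cycle since it detects the $v_2^1$-self map. The remaining polynomial generators $\td{h}_{2,1}$ and $h_{i,j}$ all live in tridegree $(n=1,\, s=0,\, t)$. Since $d_r$ shifts tridegree by $(+r,\, -r+1,\, 0)$, for $r \ge 2$ its target has negative $s$, and the un-localized $E_r$-page vanishes in negative $s$; hence each such generator is $d_r$-closed in the un-localized MSS for every $r \ge 2$. Because $v_2$-localization is exact and the $v_2$-localized $d_r$ is the $v_2$-equivariant extension of $d_r$ in the un-localized MSS, the same vanishing holds after inverting $v_2$. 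Multiplicativity and the Leibniz rule then force $d_r = 0$ on the entire polynomial algebra, so $v_2^{-1}\E{\tmf}{alg}_\infty = v_2^{-1}\E{\tmf}{alg}_2$.

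Finally, I would verify convergence and the absence of hidden multiplicative extensions. Since $\widehat{Z} = v_2^{-1}Z$, we have $H_*\widehat{Z} = v_2^{-1}H_*Z$, and $\Ext_{A_*}(\FF_2,-)$ commutes with filtered colimits in its second argument, so $v_2^{-1}\E{ass}{}_2(Z) \cong \E{ass}{}_2(\widehat{Z})$. The $v_2$-localized $\tmf$-MSS is the $\tmf$-MSS for $\widehat{Z}$, which converges to $\E{ass}{}_2(\widehat{Z})$; combined with collapse at $E_2$, its $E_\infty$-page is a free polynomial algebra on the stated generators, leaving no room for hidden multiplicative extensions. The one genuinely subtle point is tracking the algebra structure through localization, but this is cleanly handled because $v_2$ is central and every $d_r$ is a derivation, so the identification of the associated graded as a free polynomial ring lifts to an identification of the abutment.
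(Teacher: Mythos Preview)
Your strategy of localizing the algebraic $\tmf$-MSS at $v_2$ and identifying the localized $E_2$-page via Theorem~\ref{thm:HCalg} is sound. However, the collapse argument has genuine gaps. Your use of the Leibniz rule presumes the MSS for $Z$ is a spectral sequence of algebras, but $Z$ is not a ring spectrum (as the paper itself notes in Section~\ref{sec:telescope}) and there is no evident $A_*$-comodule algebra structure on $H_*Z$; without multiplicativity, showing the generators are permanent cycles does not force all monomials to be. Separately, your claim that each $h_{i,j}$ is a $d_r$-cycle in the \emph{unlocalized} MSS (via the $s<0$ argument) presupposes that $h_{i,j}$ survives to the unlocalized $E_2$-page, i.e.\ that $\partial_{alg}(h_{i,j})=0$. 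This fails in general: for instance $\partial_{alg}(h_{3,1})\ne 0$ and $\partial_{alg}(h_{4,0})\ne 0$ (see the $d_{1+\epsilon}$-differentials listed in Section~\ref{sec:stemwise}). If you instead represent $h_{i,j}$ in $v_2^{-1}E_2$ by the unlocalized class $v_2 h_{i,j}$ (which does lift, since $H(V_{alg})$ is $v_2$-torsion), you are at $s=1$, and the target of $d_2$ then lies in $s=0$, so the vanishing argument no longer applies.

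A smaller error: your convergence paragraph identifies $v_2^{-1}\E{ass}{}_2(Z)$ with $\E{ass}{}_2(\widehat{Z})$, but the $v_2$-self map has positive Adams filtration, hence is zero on $H_*$, giving $H_*\widehat{Z}=0$ and $\E{ass}{}_2(\widehat{Z})=0$. Convergence of the localized MSS to $v_2^{-1}\E{ass}{}_2(Z)$ follows directly from exactness of localization and needs no reference to $\widehat{Z}$. The paper itself bypasses all of this by appealing to the direct computation of \cite[(2.20)]{MahowaldRavenelShick}.
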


\begin{proof}
The computation is almost identical to that of \cite[(2.20)]{MahowaldRavenelShick}.
\end{proof}

\begin{cor}\label{cor:gooddiffs}
For $r > 1$, there are no $d_r$ differentials between good classes in the algebraic AKSS.
\end{cor}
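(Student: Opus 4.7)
The plan is to prove this by inverting $v_2$ and comparing ranks. The essential idea is that the evil part of the algebraic AKSS is $v_2$-torsion, so it disappears upon $v_2$-inversion, and the localized AKSS then collapses for degree reasons.

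First, I would observe that by Proposition~\ref{prop:algsplit}, the evil complex $V^{n,s,t}_{alg}(Z)$ is concentrated in $s=0$ and is annihilated by $v_2$ (it consists of simple $v_2$-torsion). Consequently $v_2^{-1} V_{alg}^{*,*,*} = 0$, and the long exact sequence (\ref{eq:partialalgakss}) gives $v_2^{-1} H^{*,*,*}(V_{alg}) = 0$ as well. Hence, in the $v_2$-localized algebraic AKSS,
\[
v_2^{-1} \E{akss}{alg}^{n+\alpha\epsilon,s,t}_{1+\epsilon}(Z) =
\begin{cases}
v_2^{-1} H^{n,s,t}(\mc{C}_{alg}), & \alpha = 0, \\
0, & \alpha = 1,
\end{cases}
\]
and by Theorem~\ref{thm:HCalg},
\[
v_2^{-1} H^{*,*,*}(\mc{C}_{alg}) \cong \FF_2[v_2^{\pm},\, \td{h}_{2,1},\, h_{i,j}]_{\genfrac{}{}{0pt}{}{i\ge 3}{j=0,1}}.
\]

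Next, the algebraic AKSS abuts to $\E{ass}{}_2^{*,*}(Z)$, so its $v_2$-localization abuts to $v_2^{-1}\E{ass}{}_2(Z)$. By Proposition~\ref{prop:v2Ext}, the abutment is also $\FF_2[v_2^{\pm}, \td{h}_{2,1}, h_{i,j}]$. Comparing bidegrees, the $v_2$-localized $E_{1+\epsilon}$-page and the abutment have identical (finite in each internal degree) Poincar\'e series in $(s,t)$, so the localized algebraic AKSS must collapse at $E_{1+\epsilon}$ with no possible hidden extensions, and in particular admits no $d_r$-differentials for $r > 1$.

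To conclude, I would use that $H^{*,*,*}(\mc{C}_{alg})$ is $v_2$-torsion free (it is a polynomial ring on $v_2$ and the $h_{i,j}$'s), so the localization map
\[
H^{*,*,*}(\mc{C}_{alg}) \hookrightarrow v_2^{-1} H^{*,*,*}(\mc{C}_{alg})
\]
is injective, and this injectivity persists to all later pages on the good part, since each successive page of the good part is a subquotient of the previous one by the $d_r$'s (with $r>1$) which, by induction, are all into good classes by hypothesis on $r > 1$. Hence a $d_r$-differential between good classes must hit a class which becomes zero after inverting $v_2$, and by injectivity it is already zero. The only real subtlety, and the step deserving the most care, is the compatibility of the identification in Proposition~\ref{prop:v2Ext} with the good-classes identification of Theorem~\ref{thm:HCalg} under the natural edge map from $\E{akss}{alg}_{1+\epsilon}$ to $\E{ass}{}_2(Z)$; this reduces to checking that the names $\td{h}_{2,1}$ and $h_{i,j}$ in the two descriptions detect the same Ext classes, which follows from Theorem~\ref{thm:goodcomplex} and the construction of the generators in Section~\ref{sec:gooddiffs}.
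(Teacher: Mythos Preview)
Your approach is essentially the same as the paper's: localize at $v_2$, observe that the evil part vanishes and the localized spectral sequence collapses (by comparing with Proposition~\ref{prop:v2Ext}), and then use that $H^{*,*,*}(\mc{C}_{alg})$ is $v_2$-torsion free to transport the vanishing of differentials back to the unlocalized spectral sequence. The paper's proof is just a terser version of exactly this.

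One point in your final paragraph deserves tightening. You argue that ``injectivity persists to all later pages on the good part, since each successive page of the good part is a subquotient of the previous one.'' A subquotient of something that injects need not inject; what you actually need is that the good part at each later page is a \emph{subobject} of $H^{*,*,*}(\mc{C}_{alg})$, not merely a subquotient. This follows once you observe (as the paper notes just before the statement of the corollary) that for $r>1$ evil classes support no differentials, since their targets would lie in negative $s$-degree; hence nothing hits the good part except possibly good classes, which by your inductive hypothesis do not. With that in hand the induction is clean. The ``subtlety'' you flag about matching generators between Proposition~\ref{prop:v2Ext} and Theorem~\ref{thm:HCalg} is not actually needed: your Poincar\'e-series comparison already forces collapse without identifying the generators.
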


\begin{proof}
Proposition~\ref{prop:v2Ext} implies that the $v_2$-localized algebraic AKSS collapses at $E_{1+\epsilon}$.  The result follows from the fact that the map
$$ \E{akss}{alg}^{n+\alpha\epsilon,s,t}_{1+\epsilon}(Z) \hookrightarrow v_2^{-1} \E{akss}{alg}^{n+\alpha\epsilon,s,t}_{1+\epsilon}(Z) $$
is an injection for $\alpha = 0$ (the good part). 
\end{proof}

In order to state and prove the dichotomy principle, we will need to establish bounds on $v_2$-periodicity in $\Ext$, and on the evil complex.  Let $A_2$ denote the cofiber of the $v_2$-self map 
$$ \Sigma^6 Z \rightarrow Z. $$
We have
$$ H^*(A_2) \cong A(2) $$
as an $A(2)$-module (see \cite[Sec.~2]{bhateggerZ}).

\begin{lem}\label{lem:A2vanishing}
We have
$$ \E{ass}{}^{s,t}_2(A_2) = 0 $$
for
$$ s > \frac{(t-s)+12}{11}. $$
\end{lem}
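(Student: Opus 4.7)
The plan is to bound $\Ext^{s,t}_{A_*}(\FF_2, H_*A_2)$ using the hypothesis $H^*A_2 \cong A(2)$ as an $A(2)$-module together with a change-of-rings argument. Dually, $H_*A_2$ is cofree of corank $1$ as an $A(2)_*$-comodule, with socle in internal degree $23$. Consequently, in any Cartan--Eilenberg--type spectral sequence for the sub-Hopf-algebra $A(2)_* \subset A_*$, the inner term $\Ext_{A(2)_*}^{q,v}(\FF_2, H_*A_2)$ is concentrated in bidegree $(q,v) = (0, 23)$. This should collapse the spectral sequence to an isomorphism
\[
\Ext^{s,t}_{A_*}(\FF_2, H_*A_2) \;\cong\; \Ext^{s, t - 23}_{?}(\FF_2, \FF_2),
\]
where the right-hand side is Ext/Cotor over a suitable coalgebra extracted from $A_*$ and $A(2)_*$ (morally $A \mmod A(2)_*$, after navigating the subtlety that this is only a coalgebra and not a sub-Hopf-algebra of $A_*$).

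To produce the slope $1/11$ vanishing line for this reduced Ext, I would apply a May--Ravenel filtration argument entirely analogous to the one used in Section~\ref{sec:good} for the good complex. The relevant indecomposable generators come from $\zeta_1^8, \zeta_2^4, \zeta_3^2, \zeta_4, \zeta_5, \ldots$ in degrees $8, 12, 14, 15, 31, \ldots$, and an explicit enumeration of the resulting $E_1$-page identifies the lowest-slope non-nilpotent class as the image of $g_2 \in \Ext^{4,48}_{A_*}(\FF_2, \FF_2)$, which sits at $(t-s, s) = (44, 4)$, giving slope exactly $1/11$. Since subsequent May--Ravenel differentials can only steepen the apparent vanishing line, this slope persists through $E_\infty$. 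The intercept $+12$ is then extracted by tracking the shift by $23$ and the position of the image of $g_2$ after the shift.

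The main obstacle is the technical issue that $A \mmod A(2)$ is only an $A$-module quotient of $A$ and not a Hopf algebra quotient (the left ideal $A \cdot A(2)_+$ is not two-sided), so some care is needed in setting up the Cartan--Eilenberg spectral sequence. A convenient workaround is to use the topological $\tmf$-based Mahowald spectral sequence for $A_2$ directly: since $H^*A_2$ is free as an $A(2)$-module, the shear isomorphism shows that $H^*(\tmf^{\wedge n+1}\wedge A_2)$ is free as an $A(2)$-module for every $n$, which by (\ref{eq:changeofrings}) collapses the Mahowald $E_1$-page onto cohomological degree $s=0$ (equivalent to the statement $\tmf \wedge A_2 \simeq \Sigma^{23}\tmf$). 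The $d_1$-differential then produces the same cobar/Cotor complex, and the May--Ravenel analysis proceeds as above. The precise intercept $+12$ can be cross-verified in low degrees using Bruner's \texttt{Ext} software, which is invoked elsewhere in the paper.
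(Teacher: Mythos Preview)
Your reduction via the tmf-Mahowald spectral sequence to $\mr{Cotor}$ over the coalgebra $(A\mmod A(2))_*$ is correct, and is in fact exactly what the paper observes in the first paragraph of the proof of Proposition~\ref{prop:evilvanishing}. But the vanishing-line analysis of this Cotor has a gap: among the indecomposables you list, $\zeta_1^8$ sits in internal degree $8$ and so contributes a cohomological-degree-$1$ class at $(t-n,n)=(7,1)$, slope $1/7$, steeper than $1/11$. Saying that the corresponding class $h_3$ is nilpotent is true ($h_3^4=0$ in $\Ext_{A_*}$) but not sufficient by itself: to extract both the slope and the specific intercept you must track $h_3, h_3^2, h_3^3$ and their products with everything else. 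The ``shift by $23$'' you invoke does not produce the intercept $+12$; the $23$ is the top degree of $A(2)$ and plays no role in this computation.

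The paper's argument is different and handles precisely this point. Rather than working with the tmf-MSS, it writes down the May spectral sequence for $\E{ass}{}_2(A_2\wedge C\sigma)$, whose $E_1$-term is the polynomial algebra on $h_{i,j}$ with $j_1\ge 4$, $j_2\ge 2$, $j_3\ge 1$, and $j_k\ge 0$ for $k\ge 4$ --- smashing with $C\sigma$ has removed exactly the offending generator $h_{1,3}$. One checks directly that the steepest remaining generator is $h_{2,2}$ at $(11,1)$, giving a slope-$1/11$ vanishing line through the origin for $A_2\wedge C\sigma$. Then the $h_{1,3}$-Bockstein spectral sequence
\[
\E{ass}{}_2(A_2\wedge C\sigma)[h_{1,3}]\Rightarrow\E{ass}{}_2(A_2)
\]
is invoked; since $h_{1,3}^4=0$ in $\E{ass}{}_2(S)$, this has a horizontal vanishing line at $E_\infty$, and the $1/11$ line for $A_2$ is obtained as the translate passing through $h_{1,3}^3$ at $(t-s,s)=(21,3)$, which is exactly $s=\frac{(t-s)+12}{11}$.
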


\begin{proof}
The May spectral sequence 
for $\E{ass}{}_2(A_2 \wedge C\sigma)$ has $E_1$-term of the form
\begin{equation}\label{eq:Mayss}
 \E{May}{}_1^{*,*,*}(A_2 \wedge C\sigma) \cong \FF_2[h_{1,j_1}, h_{2,j_2}, h_{3,j_3}, \cdots \: : \: j_1 \ge 4; j_2 \ge 2; j_3 \ge 1; j_k \ge 0, k \ge 4]. \end{equation}
One checks that the smallest slope $\frac{s}{t-s}$ of these generators is $\frac{1}{11}$, given by $h_{2,2}$.  Therefore we have 
$$ \E{ass}{}_2^{s,t}(A_2 \wedge C\sigma) = 0 $$
for
$$ s > \frac{t-s}{11}. $$
It follows from the fact that $h_{1,3}^4 = 0$ in $\E{ass}{}_2^{*,*}(S)$ that the $h_{1,3}$-Bockstein spectral sequence
$$ \E{ass}{}^{*,*}_2(A_2 \wedge C\sigma)[h_{1,3}] \Rightarrow \E{ass}{}^{*,*}_2(A_2) $$
has a horizontal vanishing line at $E_\infty$, and one deduces that the translation of this $\frac{1}{11}$-vanishing line passing through $(t-s,s) = (21,3)$ (the bidegree of $h_{1,3}^3$) serves as a vanishing line for $\E{ass}{}_2^{*,*}(A_2)$.
\end{proof}

\begin{rmk}
The reader will notice that the notation $h_{i,j}$ is used both for the May spectral sequence generators of (\ref{eq:Mayss}) and for the May-Ravenel spectral sequence generators in $H^{*,*,*}(\mc{C}_{alg}(Z))$.
\emph{We warn the reader that these naming conventions are not consistent.}  The May spectral sequence generator $h_{i,j}$ corresponds to the 
element $\zeta_i^{2^j} \in A_*$, whereas the May-Ravenel generator $h_{i,j}$ corresponds to the element $t_i^{2^j} \in BP_*BP$.  Since under the map
$$ BP_*BP \to A_* $$
we have
$$ t^{2^j}_i \mapsto \zeta^{2^{j+1}}_{i}, $$ 
and the May-Ravenel generator $h_{i,j}$ actually corresponds to the May generator $h_{i,j+1}$.
\end{rmk}

\begin{prop}\label{prop:evilvanishing}
The evil complex $V^{n,t}(Z)$ satisfies
$$ H^{n,t}(V^{*,*}) = 0 $$
for
$$ n > \frac{(t-n)+12}{11}. $$
\end{prop}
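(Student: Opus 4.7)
My plan is to derive the slope-$1/11$ vanishing line from the corresponding line for $\E{ass}{}_2^{*,*}(A_2)$ in Lemma~\ref{lem:A2vanishing}, by embedding the evil cochain complex $V^{*,*}(Z)$ into a cochain complex whose cohomology is $\Ext_{A(2)}(\FF_2,\FF_2)$. The central input is the equivalence $\tmf \wedge A_2 \simeq H\FF_2$, which follows from $\tmf \wedge Z \simeq k(2)$ together with $k(2)/v_2 \simeq H\FF_2$ applied to the cofiber $\Sigma^6 Z \xrightarrow{v_2} Z \to A_2$. Consequently
\[
\E{\tmf}{}_1^{n,t}(A_2) \;\cong\; H_t(\tmf^{\wedge n}) \;\cong\; (A\mmod A(2)_*)^{\otimes n}_t ,
\]
and the $d_1$-differential exhibits $\{H_t(\tmf^{\wedge n})\}_n$ as the cobar complex for the comodule $\FF_2$ over $H_*\tmf$. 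By Milnor--Moore change of rings its cohomology is $\Ext_{A(2)}^{n,t}(\FF_2,\FF_2) \cong \E{ass}{}_2^{n,t}(A_2)$, for which Lemma~\ref{lem:A2vanishing} supplies exactly the vanishing line we want.

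To couple $V^{*,*}(Z)$ to this complex, I would apply $\pi_*(\tmf^{\wedge n+1} \wedge -)$ to the same cofiber sequence. For each $n$ the resulting long exact sequence extracts a short exact sequence of cochain complexes
\[
0 \to \E{\tmf}{}_1^{*,*}(Z)/v_2 \xrightarrow{\iota} \E{\tmf}{}_1^{*,*}(A_2) \to V^{*,*-7}(Z) \to 0 .
\]
Restricted to the subcomplex $V^{*,*}(Z) \subset \E{\tmf}{}_1^{*,*}(Z)$, the map $\iota$ is injective: since $v_2 \cdot V = 0$, we have $\mathrm{im}(v_2) = v_2 \cdot \mc{C}^{*,*-6}(Z) \subseteq \mc{C}^{*,*}(Z)$, which meets $V^{*,*}(Z)$ trivially under the vector-space splitting of Corollary~\ref{cor:htpysplit}. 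We therefore obtain a cochain-level monomorphism $V^{*,*}(Z) \hookrightarrow \E{\tmf}{}_1^{*,*}(A_2)$ and an induced map on cohomology $H^{n,t}(V) \to \Ext_{A(2)}^{n,t}(\FF_2,\FF_2)$.

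The main obstacle is controlling the kernel of this map, consisting of cocycles in $V$ that become coboundaries only after enlarging the cochain space to $\E{\tmf}{}_1(A_2)$. The long exact sequence associated to the cokernel $\mathcal{R} = \E{\tmf}{}_1(A_2)/V$ identifies this kernel with a quotient of $H^{n-1,t}(\mathcal{R})$, and $\mathcal{R}$ itself fits into $0 \to \mc{C}^{*,*}(Z)/v_2 \to \mathcal{R} \to V^{*,*-7}(Z) \to 0$ (a formal consequence of the two short exact sequences above), so that its cohomology recursively involves $H^{*,*}(V)$. I would close the loop by inducting on the internal degree, using the slope data of Theorem~\ref{thm:MRE1} to control $\mc{C}^{*,*}(Z)/v_2$ (the classes on or above slope $1/11$ all arise from the $\td{h}_{2,1}$-tower, and their images are accounted for via $\partial_{alg}$). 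An appealing alternative route is to invoke the dichotomy principle (Theorem~\ref{thm:dichotomy}) to recognize surviving evil cohomology classes as $v_2$-torsion in $\E{ass}{}_2^{*,*}(Z)$, and then to bound that $v_2$-torsion directly via the cofiber long exact sequence in ordinary $\Ext_A$, returning once again to Lemma~\ref{lem:A2vanishing}.
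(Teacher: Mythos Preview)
Your main approach is essentially the same as the paper's. The paper works on the algebraic side with $\E{\tmf}{alg}_1^{*,0,*}$, but since $\tmf \wedge A_2 \simeq H$ this coincides with your topological $\E{\tmf}{}_1^{*,*}(A_2)$, and your $\E{\tmf}{}_1^{*,*}(Z)/v_2$ is exactly their $\E{\tmf}{alg}_1^{*,0,*}(Z)$. Both arguments assemble the same three-step filtration
\[
0 \subset V^{*,*}(Z) \subset \E{\tmf}{}_1^{*,*}(Z)/v_2 \subset \E{\tmf}{}_1^{*,*}(A_2)
\]
with associated graded $V^{*,*}(Z)$, $\mc{C}_{alg}^{*,0,*}(Z)$, $V^{*,*-7}(Z)$; the paper packages this as a small spectral sequence converging to $\E{ass}{}_2^{*,*}(A_2)$, while you unroll it as successive long exact sequences. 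The induction on $t-n$ and the input from Lemma~\ref{lem:A2vanishing} together with the slope-$1/11$ vanishing of $H^{*,0,*}(\mc{C}_{alg})$ (which is really Theorem~\ref{thm:HCalg}, not Theorem~\ref{thm:MRE1}) are identical.

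One warning: your ``alternative route'' via the dichotomy principle is circular. Theorem~\ref{thm:dichotomy}(2) is proved \emph{using} this proposition, so you cannot invoke it here.
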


\begin{proof}
We explain the relationship between $H^{*,*}(V)$ and $\E{ass}{}^{*,*}_2(A_2)$ by constructing a spectral sequence which relates them.  We first note that because $H^*(A_2) \cong A(2)$, we have
$$ \E{\tmf}{alg}^{n,s,t}_1(A_2) = 0 $$
for $s > 0$.  Therefore, the only possible non-trivial differentials in the $\tmf$-MSS are $d_1$ differentials, and
$$ \E{\tmf}{alg}^{n,0,t}_2(A_2) \cong \E{ass}{}^{n,t}_2(A_2). $$
The short exact sequence of $A_*$-comodules 
$$ 0 \to H_*Z \to H_*A_2 \to H_* \Sigma^7 Z \to 0 $$
induces a long exact sequence
$$ 0 \to \E{\tmf}{alg}^{n,0,t}_1(Z) \to \E{\tmf}{alg}^{n,0,t}_1(A_2) \to \E{\tmf}{alg}_1^{n,0,t-7}(Z) \xrightarrow{v_2} \E{\tmf}{alg}_1^{n,1,t}(Z) \to \cdots $$
We therefore deduce that there is a short exact sequence
$$ 0 \to \E{\tmf}{alg}^{n,0,t}_1(Z) \to \E{\tmf}{}^{n,0,t}_1(A_2) \to V^{n,t-7}(Z) \to 0. $$
This allows us to consider the decreasing filtration of cochain complexes, with associated filtration quotients:
$$  
\xymatrix{
\E{\tmf}{alg}_1^{n,0,t}(A_2) \ar@{->>}[d] & \E{\tmf}{alg}^{n,0,t}_1(Z) \ar@{_{(}->}[l] \ar@{->>}[d]  & V^{n,t}(Z) \ar@{_{(}->}[l] \ar@{=}[d] & 0 \ar[l] \\
V^{n,t-7}(Z) & \mc{C}_{alg}^{n,0,t}(Z) & V^{n,t}(Z)
}
$$
Taking cohomology, we get a strange little spectral sequence which we will dub the \emph{algebraic AKSS for $A_2$} as it more or less arises as a kind of mod $v_2$ version of the algebraic AKSS for $Z$.  If we index it as follows:\footnote{With this indexing convention the map $Z \to A_2$ results in a map of spectral sequences $\E{akss}{alg}^{n+\alpha\epsilon,s,t}_*(Z) \to \E{akss}{alg}_*^{n+\alpha\epsilon,t}(A_2)$ (which one takes to be the zero map on terms with $s > 0$).} 
\begin{align*}
\E{akss}{alg}^{n-\epsilon,t}_{1+\epsilon}(A_2) & =
H^{n, t-7}(V), \\
\E{akss}{alg}^{n,t}_{1+\epsilon}(A_2) & =
H^{n,0,t}(\mc{C}_{alg}), \\
\E{akss}{alg}^{n+\epsilon,t}_{1+\epsilon}(A_2) & =
H^{n,t}(V), \\
\end{align*}
then the resulting spectral sequence takes the form
$$ \E{akss}{alg}^{n+\alpha\epsilon,t}_{1+\epsilon}(A_2) \Rightarrow \E{ass}{}_2^{n,t}(A_2)  $$
with differentials
\begin{align*}
 d_{1+\epsilon}: & \E{akss}{alg}_{1+\epsilon}^{n-\epsilon,t}(A_2) \to \E{akss}{alg}_{1+\epsilon}^{n+1,t}(A_2) \\
 d_{1+\epsilon}: & \E{akss}{alg}_{1+\epsilon}^{n,t}(A_2) \to \E{akss}{alg}_{1+\epsilon}^{n+1+\epsilon,t}(A_2) \\
 d_{1+2\epsilon}: & \E{akss}{alg}_{1+2\epsilon}^{n-\epsilon,t}(A_2) \to \E{akss}{alg}_{1+2\epsilon}^{n+1+\epsilon,t}(A_2)
 \end{align*} 
and
$$ \E{akss}{alg}_2^{n+\alpha\epsilon,t}(A_2) = \E{akss}{alg}_\infty^{n+\alpha\epsilon,t}(A_2).
$$
The result follows for dimensional reasons (by induction on $t-n$) using Lemma~\ref{lem:A2vanishing} and the fact that
$$ H^{n,0,t}(\mc{C}_{alg}) = 0 $$
for
$$ n > \frac{t-n}{11} $$
(since the generator of $H^{*,*,*}(\mc{C}_{alg})$ with lowest slope is $\td{h}_{2,1}$, with slope $\frac{n}{t-n} = \frac{1}{11}$).
\end{proof}

\begin{prop}\label{prop:v2line}
The map
$$ \E{ass}{}^{s,t}_2(Z) \rightarrow v_2^{-1}\E{ass}{}^{s,t}_2(Z) $$
is an isomorphism for
$$ s > 
\frac{(t-s) + 12}{11}.$$
\end{prop}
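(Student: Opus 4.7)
The plan is to establish surjectivity and injectivity of the localization map separately, using different tools for each, and then combine them.

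For \emph{surjectivity}, I would apply the long exact sequence in $\Ext_{A_*}$ arising from the short exact sequence of $A_*$-comodules
\[ 0 \to H_* Z \to H_* A_2 \to \Sigma^7 H_* Z \to 0 \]
induced by the cofibre sequence $\Sigma^6 Z \xrightarrow{v_2} Z \to A_2$. The connecting homomorphism is Yoneda multiplication by $v_2$, so the relevant segment reads
\[ \Ext^{s+j,\,t+7j}(Z) \xrightarrow{\,v_2\,} \Ext^{s+j+1,\,t+7(j+1)}(Z) \xrightarrow{\,i_*\,} \Ext^{s+j+1,\,t+7(j+1)}(A_2). \]
Thus $v_2$ is surjective in the indicated bidegree as soon as the right-hand $\Ext(A_2)$ term vanishes. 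By Lemma~\ref{lem:A2vanishing}, this vanishing is implied by $11s + 5j > (t-s) + 7$, a condition automatic for all $j \ge 0$ under our hypothesis $11s > (t-s) + 12$. Iterating, $v_2^{j}$ is surjective for every $j \ge 0$, and every class of $v_2^{-1}\E{ass}{}_2^{s,t}(Z)$ lifts to $\E{ass}{}_2^{s,t}(Z)$.

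For \emph{injectivity}, the naive LES approach only yields the weaker bound $11s > (t-s) + 19$: the kernel of $v_2$ at $(s,t)$ is the image of $p_*\colon \Ext^{s,\,t+7}(A_2) \to \Ext^{s,t}(Z)$, and vanishing of this $\Ext(A_2)$ group via Lemma~\ref{lem:A2vanishing} requires $11s > (t-s) + 19$. To attain the sharp bound $11s > (t-s) + 12$, I would invoke the dichotomy principle (Theorem~\ref{thm:dichotomy}): a class in $\E{ass}{}_2^{s,t}(Z)$ is $v_2$-power torsion if and only if it is detected by an evil class in the algebraic AKSS. Evil classes contributing to bidegree $(s,t)$ live in $\E{akss}{alg}_{1+\epsilon}^{s+\epsilon,\,0,\,t} = H^{s,t}(V)$, which by Proposition~\ref{prop:evilvanishing} vanishes for $s > (t-s+12)/11$. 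Above that line, $\E{ass}{}_2^{s,t}(Z)$ is therefore free of $v_2$-power torsion, so the localization map is injective. Combined with the surjectivity above, this gives the asserted isomorphism.

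The \emph{main obstacle} is the injectivity step. The direct LES approach misses the sharp bound by $7/11$ in slope, a gap that reflects the fact that $v_2$ itself has bidegree $(1,7)$, so shifting $(s,t)$ to $(s, t+7)$ costs exactly the extra $7$ in the internal-degree numerator of the vanishing line for $\Ext(A_2)$. Bridging this gap is precisely what the dichotomy principle is designed to do: it identifies the $v_2$-power torsion subgroup of $\E{ass}{}_2^{*,*}(Z)$ with the evil part of the algebraic AKSS, whose vanishing line in Proposition~\ref{prop:evilvanishing} matches the stated bound on the nose.
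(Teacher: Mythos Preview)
Your proof is correct. For injectivity, what you do is essentially what the paper does: the paper compares the algebraic AKSS with its $v_2$-localization and observes that above the stated line the evil contributions vanish (Proposition~\ref{prop:evilvanishing}) while the good part injects into its localization, so nothing $v_2$-torsion can survive. Your appeal to part~(1) of Theorem~\ref{thm:dichotomy} is a repackaging of this same observation. One bookkeeping remark: in the paper Theorem~\ref{thm:dichotomy} is stated \emph{after} Proposition~\ref{prop:v2line}, so citing it here is a forward reference. There is no circularity---the only part you use is~(1), which the paper deduces from Corollary~\ref{cor:gooddiffs} alone---but you could tidy this by invoking Corollary~\ref{cor:gooddiffs} directly and spelling out the one-line argument that a class detected by a good $E_{1+\epsilon}$-term must map to something nonzero in the (collapsed) localized AKSS. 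Also, you only need the forward implication ``$v_2$-torsion $\Rightarrow$ evil''; the ``only if'' you state is not needed and in fact need not hold (hidden $v_2$-extensions).

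Your surjectivity argument, by contrast, is a genuinely different and more elementary route. The paper establishes surjectivity inside the same AKSS comparison, via the observation that $H^{n,s,t}(\mc{C}_{alg}) \to v_2^{-1}H^{n,s,t}(\mc{C}_{alg})$ is already an isomorphism for $n+s > (t-n-s)/11$. You instead iterate the long exact sequence associated to the cofibre $\Sigma^6 Z \to Z \to A_2$ together with the vanishing line for $\Ext(A_2)$ from Lemma~\ref{lem:A2vanishing}; your inequality $11s + 5j > (t-s) + 7$ is correct and is automatic for all $j \ge 0$ once $11s > (t-s)+12$. This buys you a surjectivity argument that never touches the AKSS. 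The paper's approach buys uniformity: both injectivity and surjectivity fall out of a single spectral-sequence comparison using Proposition~\ref{prop:v2Ext}, Corollary~\ref{cor:gooddiffs}, and Proposition~\ref{prop:evilvanishing}.
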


\begin{proof}
The result follows from considering the map of algebraic AKSS's
$$ \E{akss}{alg}^{*,*,*}_*(Z) \rightarrow v_2^{-1}\E{akss}{alg}_*^{*,*,*}(Z) $$
and using 
Proposition~\ref{prop:v2Ext}, Corollary~\ref{cor:gooddiffs}, Proposition~\ref{prop:evilvanishing}, and the observation that the map
$$ H^{n,s,t}(\mc{C}_{alg}) \to v_2^{-1}H^{n,s,t}(\mc{C}_{alg}) $$
is an isomorphism for
$$ n+s > \frac{t-n-s}{11}. $$
\end{proof}
 
Given a class $x \in \E{ass}{}_2(Z)$, Proposition~\ref{prop:v2line} gives a straightforward technique to determine from low dimensional computations if $x$ is $v_2$-periodic. Let $k$ be chosen such that $v_2^k x$ lies in the range of Proposition~\ref{prop:v2line}. Then $x$ is $v_2$-periodic if an only if $v_2^k x \ne 0$.

The following theorem, analogous to the dichotomy principle in \cite{BBBCX}, completely determines whether classes in $\E{ass}{}_2$ are good or evil.  Note that because of Corollary~\ref{cor:gooddiffs} (which does not have an analog in the context studied in \cite{BBBCX}), the proof of the dichotomy principle for the algebraic AKSS is much more straightforward in the present context.

\begin{thm}[Dichotomy Principle]\label{thm:dichotomy}
Suppose that $x$ is a non-trivial class in $\E{ass}{}^{s,t}_2(Z)$.
\begin{enumerate}
\item If $x$ is $v_2$-torsion, it is evil.

\item Every class in the range 
\begin{equation}\label{eq:goodrange}
 s > 
\frac{(t-s) + 12}{11}
\end{equation}
is good.

\item Suppose $x$ is $v_2$-periodic,  and suppose that $k$ is taken large enough so that $v_2^k x$ lies in the range (\ref{eq:goodrange}).  Suppose that $v_2^k x$ is detected in the algebraic AKSS by a class in $\E{akss}{alg}^{n,*,*}_{1+\epsilon}$.  Then $x$ is good if and only if 
$$ s \ge n. $$
\end{enumerate} 
\end{thm}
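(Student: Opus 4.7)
The plan is to establish the three parts in sequence, leveraging the computation $H^{*,*,*}(\mc{C}_{alg}) \cong \FF_2[v_2, \td{h}_{2,1}, h_{i,j}]$ from Theorem~\ref{thm:HCalg} (which is $v_2$-torsion free) together with Corollary~\ref{cor:gooddiffs} (no $d_r$ differentials between good classes for $r>1$) and Proposition~\ref{prop:algsplit} (evil classes concentrated in complex degree zero and simple $v_2$-torsion).

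For part (1), I would argue contrapositively. Suppose $x$ is good, detected by $\bar{x} \in H^{n,s',t}(\mc{C}_{alg})$ in the good $E_{1+\epsilon}$-part of the algebraic AKSS. Then $v_2^k \bar{x} \ne 0$ for all $k$ by $v_2$-torsion freeness. By Corollary~\ref{cor:gooddiffs}, no $d_r$-differential for $r>1$ between good classes can kill $v_2^k \bar{x}$; since evil classes only exist in complex degree $0$, no differential from an evil source can hit $v_2^k\bar{x}$ either (the target degrees simply don't match). Hence $v_2^k \bar{x}$ survives to $E_\infty$ and detects $v_2^k x$, which is therefore nonzero, contradicting $v_2$-torsion of $x$.

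For part (2), I would note that any evil detecting class lies in $\E{akss}{alg}^{n+\epsilon,0,t}_{1+\epsilon} = H^{n,t}(V)$, contributing to Adams filtration $s = n$ (because evil is concentrated in complex degree zero). By Proposition~\ref{prop:evilvanishing}, this group vanishes when $s = n > (t-n+12)/11 = (t-s+12)/11$. Thus no evil detecting classes are available in the asserted range, so $x$ must be good.

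For part (3), the forward direction uses that $v_2$-multiplication on the good $E_{1+\epsilon}$-part preserves the $\tmf$-resolution index $n$: if $x$ is good, detected by $\bar{x} \in H^{n,s',t}(\mc{C}_{alg})$, then by the argument of (1), $v_2^k \bar{x}$ detects $v_2^k x$ in the same AKSS filtration $n$, and the total Adams filtration is $s = n + s' \ge n$. For the reverse direction, suppose $x$ is evil, detected in $\E{akss}{alg}^{n'+\epsilon,0,t}_{1+\epsilon}$, so $s = n'$. Because $V^{*,*}$ is simple $v_2$-torsion, $v_2 \cdot \bar{x} = 0$ at $E_{1+\epsilon}$. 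Since $x$ is $v_2$-periodic, $v_2 x \ne 0$ in $\E{ass}{}_2(Z)$ and must be detected in AKSS filtration strictly greater than $n' + \epsilon$, i.e.\ $\ge n' + 1$. Iterating, $v_2^k x$ is detected in AKSS filtration $\ge n'+1$, and once $k$ is taken large enough for $v_2^k x$ to enter the good range of part (2), its detecting class lies in some $H^{n,*,*}(\mc{C}_{alg})$ with $n \ge n' + 1 > n' = s$.

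The main obstacle is the careful bookkeeping of three distinct filtrations — the $\tmf$-index $n$ of the AKSS, the internal cohomological degree $s'$ of the $\tmf$-MSS, and the total Adams $s = n + s'$ — and the verification that hidden $v_2$-extensions out of evil classes strictly raise the AKSS filtration while $v_2$-multiplication on good classes preserves it. Once these two complementary rigidities are in place, the dichotomy $s \ge n \Leftrightarrow \text{good}$ drops out.
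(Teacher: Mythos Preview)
Your proof is correct and follows essentially the same approach as the paper's: part (1) via the $v_2$-torsion freeness of $H^{*,*,*}(\mc{C}_{alg})$ together with Corollary~\ref{cor:gooddiffs}, part (2) via Proposition~\ref{prop:evilvanishing}, and part (3) by contrasting the fact that $v_2$-multiplication preserves AKSS filtration on good classes while hidden $v_2$-extensions out of evil classes strictly raise it. Your version is a bit more explicit about why good classes at $E_{1+\epsilon}$ cannot be boundaries (evil sources have $s=0$, so $d_{r-\epsilon}$ for $r\ge 2$ lands in negative complex degree), which is a nice clarification of a point the paper leaves implicit.
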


\begin{proof}
We deduce (1) from Corollary~\ref{cor:gooddiffs}.
We deduce (2) from Proposition~\ref{prop:evilvanishing}.  For (3) suppose that $x$ is $v_2$-periodic with $v_2^k x$ detected in $\E{akss}{alg}^{n,*,*}_{1+\epsilon}$.  We will first consider the case where $x$ is evil, and then we will consider the case where $x$ is good. For the first case, suppose that $x$ is detected by an evil class 
$$ \td{x} \in \E{akss}{alg}^{n'+\epsilon,s-n',t}_{1+\epsilon}$$ in the algebraic AKSS.  Then we must have
$$ s = n'. $$  
Since $\td{x}$ is $v_2$-torsion, we deduce that the $v_2^k$-multiplication must arise from a hidden extension in the AKSS, and therefore
$$ s = n' < n. $$
For the second case, suppose that $x$ is detected by a good class  
$$ \td{x} \in \E{akss}{alg}^{n',s-n',t}_{1+\epsilon}.$$
Then we must have
$$ s - n' \ge 0. $$
We deduce from the proof of Corollary~\ref{cor:gooddiffs} that $n' = n$, and therefore $s - n \ge 0$ and
$$ s \ge n. $$
\end{proof}

\begin{war}
There is no dichotomy principle in the topological AKSS.
\end{war}


\section{Stem by stem computations}\label{sec:stemwise}

In this section, we apply the agathokakological techniques of the previous section to do low dimensional computations of $\pi_*Z$. Furthermore, we settle the ambiguity left in \cite{bhateggerK2Z} regarding the differentials in the Adams Novikov spectral sequence for $Z_{E(2)}$ (Theorem~\ref{thm:k2locss}).

\subsection{The algebraic AKSS}\label{sec:algAKSS}

In this section, we use the algebraic AKSS
$$ \{ \E{akss}{alg}^{n+\alpha\epsilon,s,t}_{r+\beta \epsilon}(Z) \} \Rightarrow \E{ass}{}^{n+s,t}_2(Z) $$  
to identify $H^*(V(Z))$ in the range relevant for computing $\pi_*Z$ in degrees $* \leq 39$.

 More specifically, we do these computations for a specific choice of $Z$ and $v_2$-self map. It is shown in \cite[\S 2]{bhateggerZ}
 that for any $Z \in \td{\mathcal{Z}}$ and $v_2^1$-self map $f \colon \Sigma^6 Z \to Z$, there is a cofiber sequence
\begin{equation}\label{eq:lesZA} \xymatrix{ \Sigma^6 Z \ar[r]^-{f} & Z \ar[r] & C(f) \ar[r] & \Sigma^7Z }\end{equation}
where $C(f)$ is a spectrum with the property that $H^*C(f)$ is isomorphic to $A(2)$ as an $A(2)$-module. Different choices of $Z \in \td{\mathcal{Z}}$ and $v_2^1$-self maps give rise to different $A$-module structures on $A(2)$.  

We will be working with a specific choice of $Z$.  To this end, 
endow the subalgebra $A(2) \subset A$ with the $A$-module structure given by Roth in \cite[p.30]{Roth}. Appendix~\ref{apx:data} gives the Bruner module definition data that encodes this $A$-module strucure.
Following \cite{bhateggerZ},\footnote{In  \cite{bhateggerZ}, $A(2)$ is denoted by $A_2$ and $B(2)$ by $B_2$.} define $B(2)$ as 
\[ B(2):= A(2) \otimes_{E(Q_2)} \mathbb{F}_2. \] 
The Bruner module definition data for this $A$-module is given in \cite[Appendix 1]{bhateggerZ}.

For the rest of this section we restrict our attention to those $Z \in \widetilde{\mathcal{Z}}$ 
with
$$ H^*Z \cong B(2) $$
as $A$-modules.
By \cite[Remark 5.4]{bhateggerZ}, there are four different homotopy types of finite spectra realizing $B(2)$. 
As explained in \cite[Sec.~2]{bhateggerZ}, the cofiber of any $v_2^1$-self map of our chosen $Z$ is a realization of the module $A(2)$. 

Since
$\Ext^{s,s+1}_{A}(A(2), A(2))=0$ for $s \geq 2$, it follows from \cite[Prop.~5.1]{bhateggerZ} that 
there is a unique homotopy type of spectra realizing our chosen $A$-module structure on $A(2)$.
Therefore, different choices of a $v_2^1$-self map on our chosen $Z$ will not affect the calculations that follow. For this choice, we let
\[A_2  := C(f).\]

In this section, we also define
\begin{align*}
\Ext_{A}^{s,t}(Z) &:= \Ext^{s,t}_A(H^*(Z), \FF_2),  &   \Ext_{A}^{s,t}(A_2) &:= \Ext^{s,t}_A(H^*(A_2), \FF_2).
\end{align*}
Both $ \Ext_{A}^{*,*}(Z) $ and $ \Ext_{A}^{*,*}(A_2) $ can be computed using Bruner's program \cite{Bruner}. The results are depicted in Figure~\ref{fig:EXTA2CHART} and Figure~\ref{fig:EXTA2CHART2} in Adams grading $(x,y) = (t-s,s)$.

\subsection{$v_2$-multiplication in $\Ext_A(Z)$}
To proceed with our computations, we will need to determine which classes in $\Ext_A^{*,*}(Z)$ are detected by evil classes, and which are detected by good classes. This will be done using the dichotomy principle (Theorem~\ref{thm:dichotomy}), and so we need to identify the $v_2$-periodic classes in $\Ext_A^{*,*}(Z)$. To do this, we proceed as follows.

Note that there is a long exact sequence
\begin{equation}\label{eq:lesZA} \xymatrix@C=1.5pc{\ldots \ar[r] & \Ext_{A}^{s,t}(Z) \ar[r] &\Ext_{A}^{s,t}(A_2)  \ar[r] &  \Ext_{A}^{s,t}(\Sigma^7Z) \ar[r]^{\delta} & \Ext_{A}^{s+1,t}(Z) \ar[r] & \ldots  } \end{equation}
where the connecting homomorphism $\delta$ corresponds to multiplication by $v_2$, 
\[ \delta = v_2 \colon  \Ext_{A}^{s,t}(\Sigma^7Z)  \cong   \Ext_{A}^{s,t-7}(Z) \to  \Ext_{A}^{s+1,t}(Z)  . \] 
The $v_2$-multiplications in $ \Ext_{A}^{*,*}(Z) $ are indicated by dotted lines of slope $(6,1)$ in Figures~\ref{fig:EXTA2CHART} and \ref{fig:EXTA2CHART2}. The indicated multiplications are completely determined by the long exact sequence \eqref{eq:lesZA}. 
In Example~\ref{ex:v2muliplication}, we give a sample proof deducing the existence of a $v_2$-multiplication from the long exact sequence.
The proofs for the other $v_2$-multiplications indicated in Figures~\ref{fig:EXTA2CHART} and \ref{fig:EXTA2CHART2} are also straightforward, though the arguments involving classes in stems $*\geq 40$ become more tedious due to the growing dimensions of $ \Ext_{A}^{*,*}(A_2)$ and of $ \Ext_{A}^{*,*}(Z)$. The $v_2$-multiplication data in Figures~\ref{fig:EXTA2CHART} and \ref{fig:EXTA2CHART2} is complete in stems $x \leq 39$. In stems $40 \leq  x\leq 60$, we only draw those multiplications which are necessary to apply part (3) of Theorem~\ref{thm:dichotomy} to do computations up to $* =39$.

\begin{ex}\label{ex:v2muliplication}
If $x$ is the non-zero class in $(t-s,s) = (15,1)$ of $\Ext^{*, *}_A(Z)$, then $v_2x\neq 0$. Indeed, in degree $(t-s,s) =(21,2)$ (the target of $v_2$-multiplication on $x$), $\Ext^{*, *}_A(A_2)$ is one dimensional over $\FF_2$. However, there are two possible contributions to $\Ext^{*, *}_A(A_2)$ in this degree from the long exact sequence \eqref{eq:lesZA}. (See Figure~\ref{fig:connecting} and its caption.) There is a class $\Sigma^7 y$ of $\Ext_A^{*,*}(\Sigma^7 Z)$, labeled ${\color{gray} \bullet 1}$ of Figure~\ref{fig:connecting}, where $y$ is the class labeled $1 \bullet$ in degree $(14,2)$ of Figure~\ref{fig:connecting}. There is also a class $z$ of $\Ext_A^{*,*}(Z)$, labeled $\bullet 6$ in Figure~\ref{fig:connecting}. Since $v_2y=0$ for degree reasons, $\Sigma^7 y$ is in the kernel of the connecting homomorphism $\delta$. Therefore, the non-zero element of $\Ext^{*, *}_A(A_2)$  corresponds to the class $\Sigma^7y$. For degree reasons, $\delta(z)=0$, and so there must be a class $w$ of degree $(22,1)$ in $\Ext_A^{*,*}(\Sigma^7 Z)$ such that $\delta(w) =z$. The only possibility is the class labeled by  ${\color{gray} \bullet 4}$ of Figure~\ref{fig:connecting}. The class $x$ corresponds to $4 \bullet$ in Figure~\ref{fig:connecting}, and so $w=\Sigma^7x$. It follows that $v_2x = z$.
\end{ex}

\begin{figure}
\centering
\includegraphics[angle=90, height=0.9\textheight]{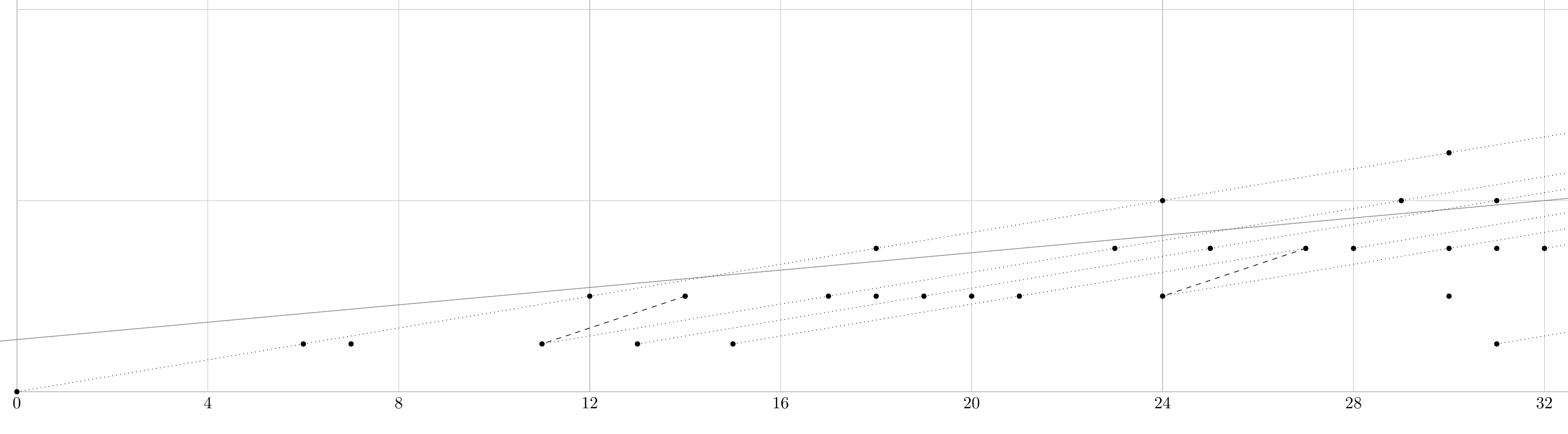}
\includegraphics[angle=90, height=0.9\textheight]{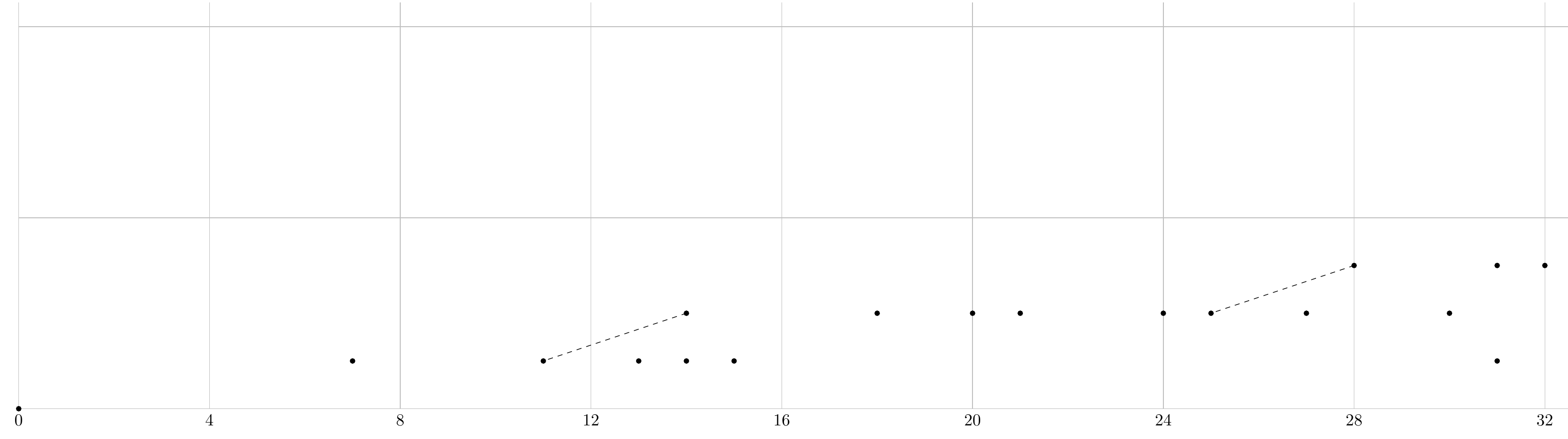}
\caption{$\mathrm{Ext}^{s,t}_{A}(Z)$ (left) and  $\mathrm{Ext}^{s,t}_{A}(A_2)$ (right) drawn in Adams coordinates $(x,y) = (t-s, s)$ in degrees $x\leq 32$. The dotted lines of slope $(6,1)$ denote $v_2$-multiplication. The solid lines of slope $(1,1)$ denote  $h_1$ (i.e. $\eta$) multiplications and those of slope $(3,1)$ denote $h_2$ (i.e. $\nu$) multiplications. The gray line of slope $1/11$ is the line of Proposition~\ref{prop:v2line}.}
\label{fig:EXTA2CHART}
\label{fig:EXTZCHART}
\end{figure}

\begin{figure}
\centering
\includegraphics[angle=90, height=0.86\textheight]{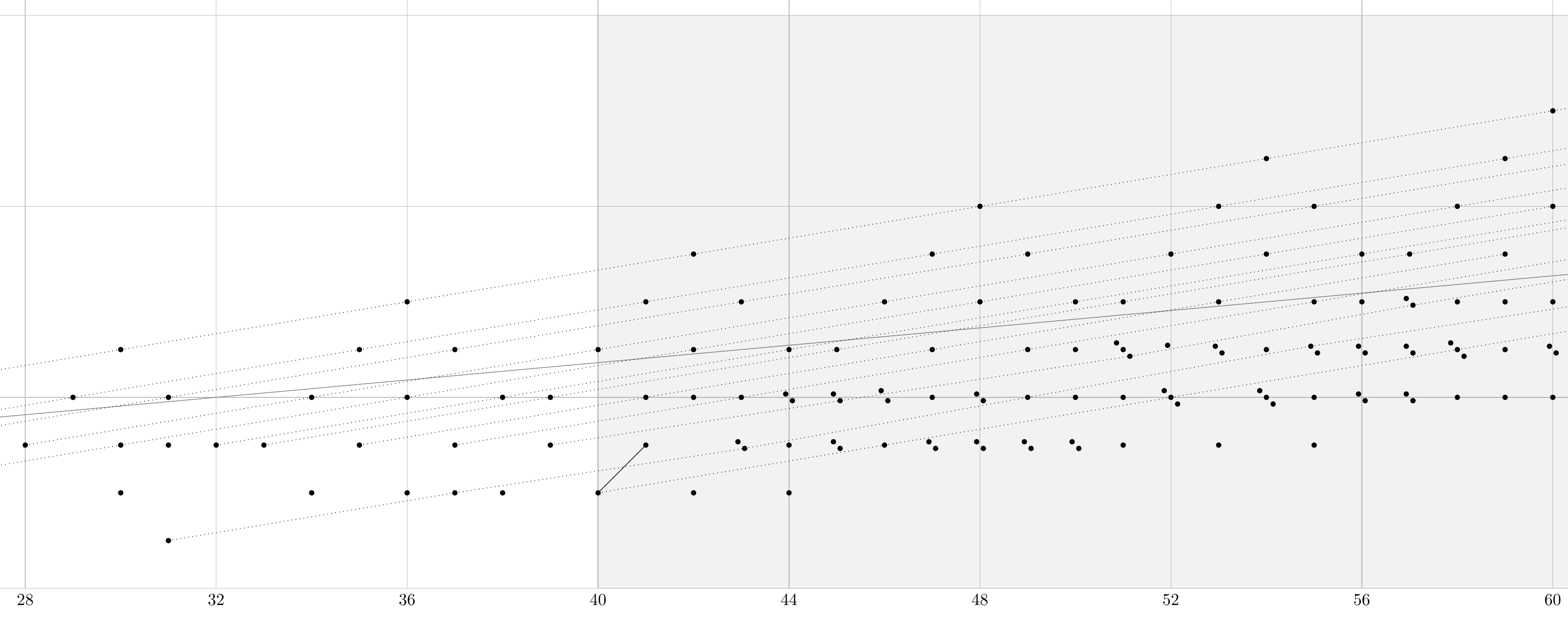}
\includegraphics[angle=90, height=0.86\textheight]{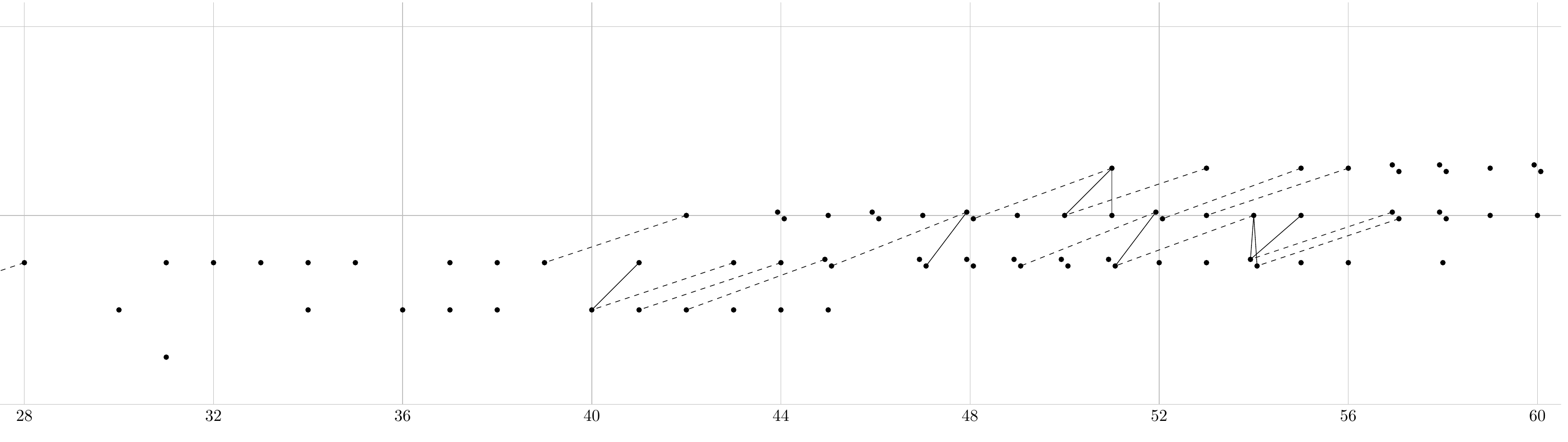}
\caption{$\mathrm{Ext}^{s,t}_{A}(Z)$ (left) and  $\mathrm{Ext}^{s,t}_{A}(A_2)$ (right) drawn in Adams coordinates $(x,y) = (t-s, s)$ in degrees $28 \leq x \leq 60$. In $\mathrm{Ext}^{s,t}_{A}(Z)$, not all $v_2$-multiplications are drawn in the shaded area, but we have included those needed for our computation. The gray line of slope $1/11$ is the line of Proposition~\ref{prop:v2line}.}
\label{fig:EXTA2CHART2}
\label{fig:EXTZCHART2}
\end{figure}

\begin{figure}
\centering
\includegraphics[angle=90, height=0.9\textheight]{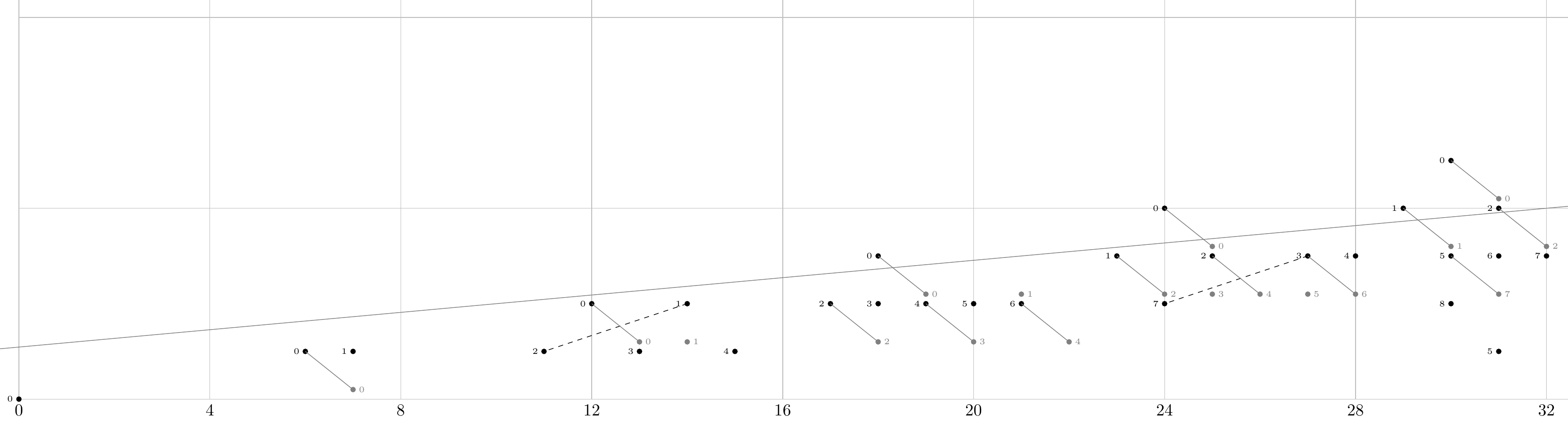}
\caption{The connecting homomorphism $ \Ext^{s,t}_A(\Sigma^7 Z) \to \Ext^{s+1,t}_A(Z)$. The gray classes are elements of $\Ext^{s,t}_A(\Sigma^7 Z)$, the black classes are elements of $\Ext^{s+1,t}(Z)$. The gray lines of slope $(-1,1)$ give the connecting homomorphism, which in turn corresponds to $v_2$-multiplication. The gray line of slope $1/11$ is the line of Proposition~\ref{prop:v2line}.  }
\label{fig:connecting}
\end{figure}

\begin{figure}
\centering
\includegraphics[angle=90, height=0.9\textheight]{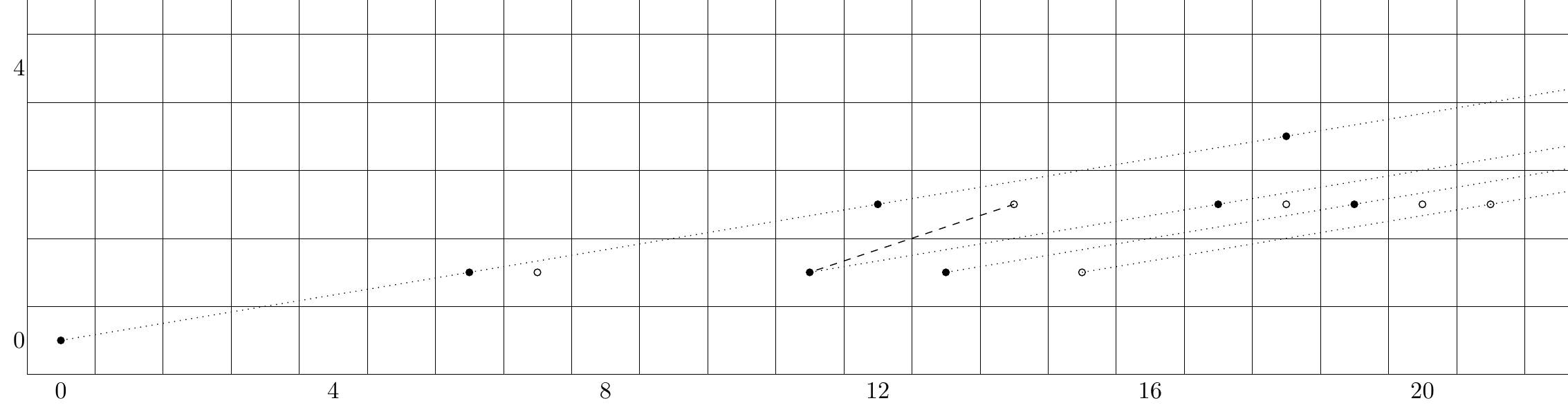}
\includegraphics[angle=90, height=0.9\textheight]{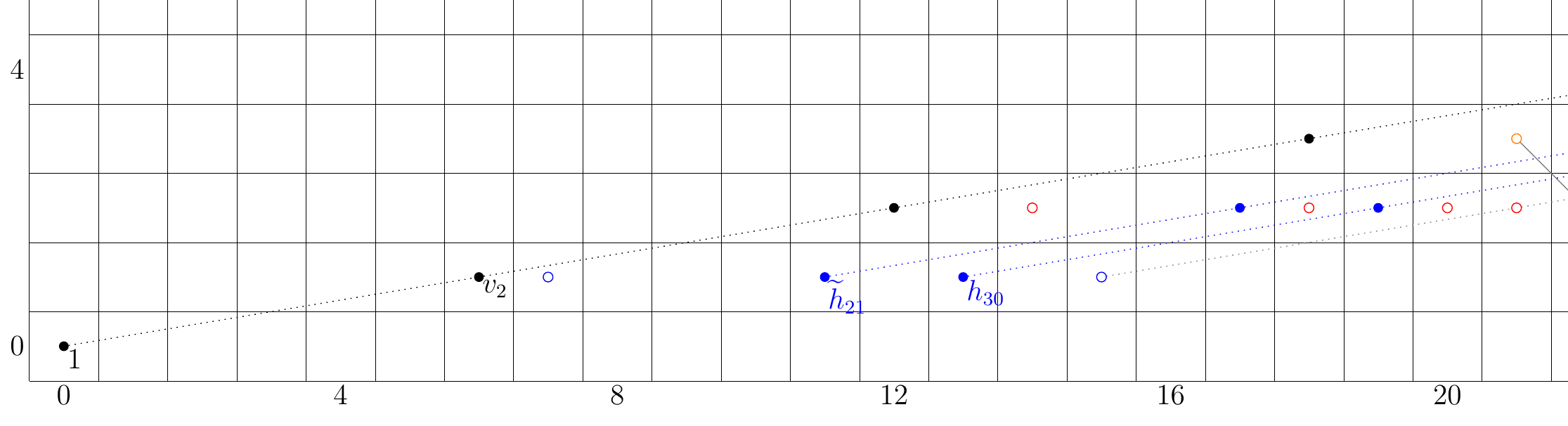}
\caption{The left chart is the $E_2$-term of the ASS for $Z$ in stems $0 \leq t-s \leq 21$. Classes detected by good are denoted by $\bullet$ and classes detected by evil by $\circ$. The right chart is the algebraic AKSS for $Z$, starting at the $E_{1+\epsilon}$-page.}
\label{fig:goodandevil}
\label{fig:AAKSSchart}
\end{figure}

\begin{figure}
\centering
\includegraphics[angle=90, height=0.8\textheight]{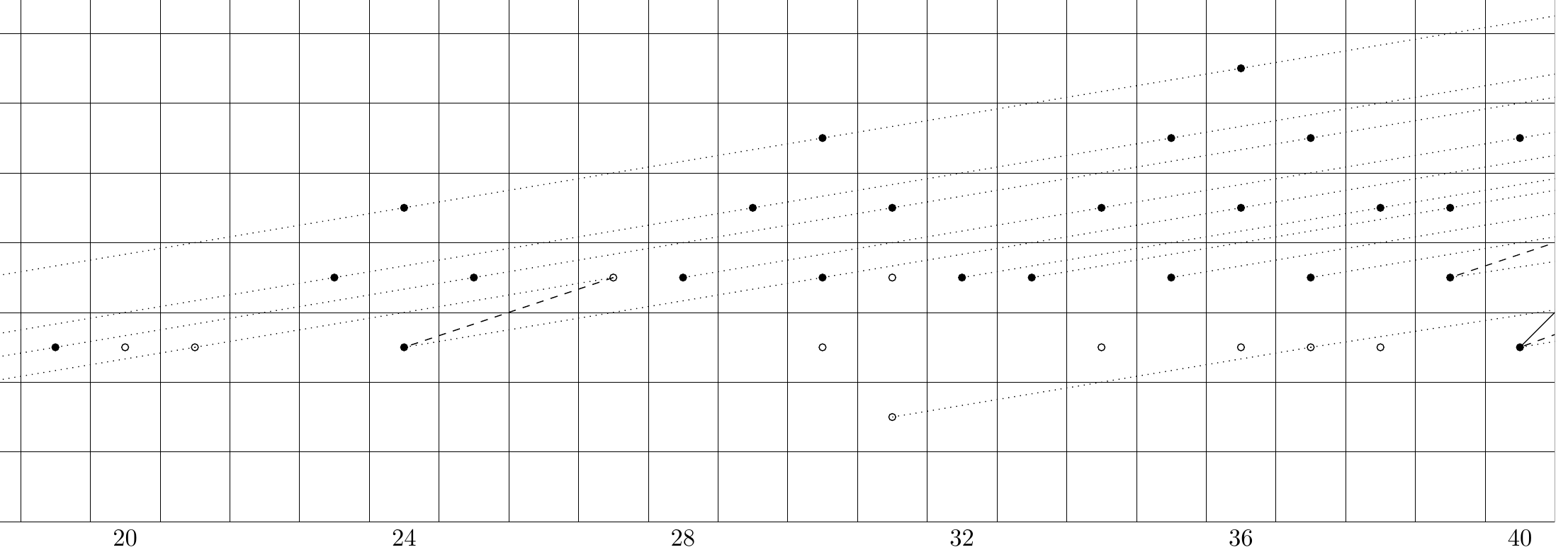}
\includegraphics[angle=90, height=0.8\textheight]{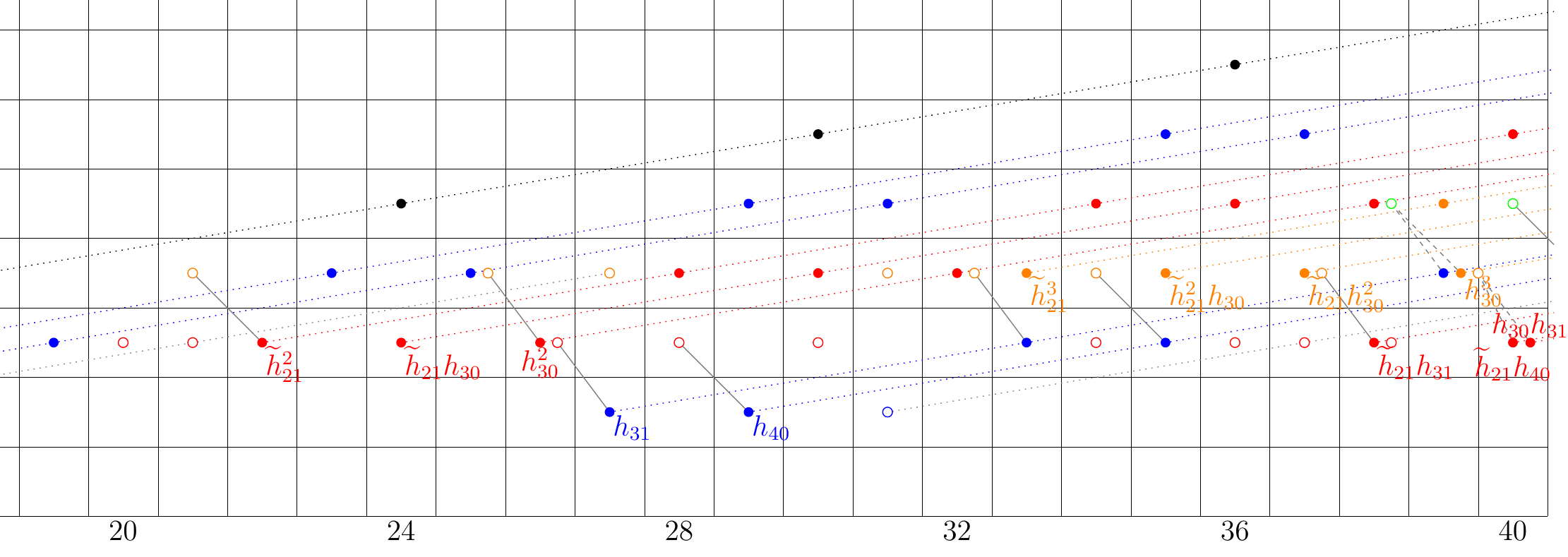}
\caption{The left chart is the $E_2$-term of the ASS for $Z$ in stems $19 \leq t-s \leq 40$. Classes detected by good are denoted by $\bullet$ and classes detected by evil by $\circ$. The right chart is the algebraic AKSS for $Z$, starting at the $E_{1+\epsilon}$-page.}
\label{fig:goodandevil2}
\label{fig:AAKSSchart2}
\end{figure}

\subsection{The differentials in the algebraic AKSS}
We turn to the computation of the algebraic AKSS. From Theorem~\ref{thm:HCalg}, we have that 
\begin{equation}\label{eq:alggood}
H^{*,*,*}(\mc{C}_{alg}) \cong \FF_2[v_2, \widetilde{h}_{2,1}, h_{3,0}, h_{3,1}, h_{4,0}, h_{4,1}, \ldots]  .\end{equation}
We use the dichotomy principle to determine which classes of $\Ext_A(Z)$ are good and which are evil. With \eqref{eq:alggood} and the results of the previous section on $v_2$-multiplications, this is  straightforward and result of this analysis is depicted in Figure~\ref{fig:goodandevil}. 

Having determined which classes in $\Ext_A^{*,*}(Z)$ are detected by good and evil, we can now deduce $H^{*,*}(V)$ from the algebraic AKSS. 
We name the evil classes in the algebraic AKSS (Figure~\ref{fig:AAKSSchart}) by
\[(x,y: n)^{ev},\]
where $(x,y)=(t-(s+n),s+n)$ is the Adams coordinate and $n$ is the $\tmf$-filtration. These classes are denoted by open circles in Figure~\ref{fig:AAKSSchart}. The good classes are denoted by solid circles.  For example, the class in degree $(x,y)= (7,1)$ in $\Ext_A(Z)$ is detected by evil and denoted by $\ev{blue}{7}{1}$ in the algebraic AKSS.

\begin{table}
\begin{tabular}{ | c | c | }
  \hline
  $n$ & color \\
  \hline
  \hline
  0 & black  \\
  \hline
  1 & {\color{blue}blue}  \\
  \hline
  2 & {\color{red}red}   \\
  \hline
   3 & {\color{orange}orange}  \\
  \hline
   4 & {\color{limegreen}green} \\
  \hline
\end{tabular}
\caption{The $\tmf$-filtration.}
\label{tab:colortmf}
\end{table}

In stems $0\leq x\leq 39$, the following evil classes exist for degree reasons. More precisely, these evil classes detect a class in $\Ext_A(Z)$ in a degree which contains no non-zero element of  $H^*(\mc{C}_{alg}) $:
\begin{align*}
\ev{blue}{7}{1} 		& & \ev{red}{14}{2}  		& &  \ev{orange}{27}{3}  						 \\
\ev{blue}{15}{1} 	& &  \ev{red}{18}{2} 		& &  \ev{orange}{31}{3} 				  	\\
\ev{blue}{31}{1}  	& &  \ev{red}{20}{2}  		& & 										\\
				& &  \ev{red}{21}{2}  		& & 				  	  		 		 \\
				& &  \ev{red}{30}{2} 		& & 											 \\
				& & 	\ev{red}{34}{2} 		& &								 \\
				& & 	\ev{red}{36}{2} 		& &								 \\
				& & 	\ev{red}{37}{2} 		& &						 			 \\
				& & 	\ev{red}{38}{2} 		& &						 	  
\end{align*}

The following evil classes exist because of the following differentials
 \begin{align*}
d_{1+\epsilon}(\widetilde{h}_{2,1}^2) &= \ev{orange}{21}{3}   \\
d_{1+\epsilon}(h_{3,0}^2) &= \ev{orange}{25}{3}  \\
d_{1+\epsilon}(h_{3,1}) &= \ev{red}{26}{2}  \\
d_{1+\epsilon}(h_{4,0}) &= \ev{red}{28}{2}  \\
d_{2+\epsilon}(v_2h_{3,1}) &= \ev{orange}{32}{3}  \\
d_{2+\epsilon}(v_2h_{4,0}) &= \ev{orange}{34}{3}  \\
d_{1+\epsilon}(\widetilde{h}_{2,1}h_{3,1}) &= \ev{orange}{37}{3}   \\
d_{3+\epsilon}(v_2^2h_{4,0}) &= \ev{limegreen}{40}{4}   .
\end{align*}
Examples of how we deduce these differentials is given in Example~\ref{ex:diff}.

\begin{ex}\label{ex:diff}
In degree $(t-s,s) = (26,2)$, $\Ext_A(Z)$ is trivial. Therefore, ${h}_{3,0}^2$ cannot survive the spectral sequence so must support a differential. Since the class in $(25,3)$ of $\Ext_A(Z)$ is detected by a good class, the only good class in that bidegree ($v_2^2h_{3,0}$) cannot be hit by a differential. So the target of the differential on  ${h}_{3,0}^2$ must be evil, and we obtain the differential
\[d_{1+\epsilon}(h_{3,0}^2) = \ev{orange}{25}{3}  .\]
The only non-trivial class in degree $(38,2)$ of $\Ext_A(Z)$ is detected by evil. Therefore $\widetilde{h}_{2,1}h_{3,1}$ must support a non-trivial differential. A similar analysis as before gives the differential
\[d_{1+\epsilon}(\widetilde{h}_{2,1}h_{3,1}) = \ev{orange}{37}{3} . \]
\end{ex}

Furthermore, 
 \begin{align}
 \label{eq:h30h31orh21h40}
d_{1+\epsilon}(h_{3,0}h_{3,1}) &= \alpha_1\ev{orange}{39}{3}   & \text{and} &  & d_{1+\epsilon}(\widetilde{h}_{2,1}h_{4,0}) &= \alpha_2\ev{orange}{39}{3}  
\end{align}
where at least one of the coefficients $\alpha_i$ is non-zero.
Similarly, at least one of the following $d_{2+\epsilon}$-differentials must occur
 \begin{align*}
d_{3+\epsilon}(v_2^2h_{3,1}) &= \ev{limegreen}{38}{4} & \text{or}  & & d_{1+\epsilon}(h_{3,0}^3) &= \ev{limegreen}{38}{4}   
\end{align*}
These ambiguities will be mostly settled in the next section.


\subsection{The topological AKSS and the computation of the $\tmf$-based ASS for $Z$}

Now, we turn to our analysis of the spectral sequence 
\[ \E{\tmf}{}_1^{n,t} = \pi_t ({\tmf}^{\s n+1} \s Z) \Longrightarrow \pi_{t-n} (Z)\]
and low-dimensional computations of $\pi_*Z$. Our analysis of the algebraic AKSS has allowed us to identify $H^{*,*}(V)$, together with the boundary homomorphism
\[H^{*,*,*}(\mc{C}_{alg}) \xrightarrow{\partial_{alg}} H^{*,*}(V) \]
in the form of $d_{1+\epsilon}$ differentials in the algebraic AKSS. Theorem~\ref{thm:MRE1} gives the $E_1$-term of the May-Ravenel SS
\begin{equation}\label{eq:EMR} \E{MR}{}_1(\sitd(2)) \Rightarrow H^{*,*}(\mc{C}). \end{equation}
It does not exclude the possibility of differentials, but there are no possibilities of differentials in the range of interest.

We record the following fundamental observations regarding the $d_1$-differential in the $\tmf$-ASS.
\begin{itemize}
\item An evil class cannot kill a good class via a $d_1$-differential since $V^{*,*}(Z)$ is a subcomplex of $\E{\tmf}{}_1^{*,*}(Z)$.
\item The $d_1$-differentials between evil classes are completely determined by those in the algebraic AKSS since $V^{*,*}(Z)  \cong V^{*,0,*}_{alg}(Z)$.
\item The $d_{1}$-differentials from good classes to evil classes are determined by the differentials in the algebraic AKSS. This is Lemma~\ref{lem:d1epsilon}.
\end{itemize}

In Figure~\ref{fig:tmfss}, we draw $\E{MR}{}_1(\sitd(2))$ in the range $0\leq t-n \leq 40$, together with the information about $H^{*,*}(V)$ and differentials obtained from the algebraic AKSS.

We use the map of spectral sequences from $\tmf$-based ASS to the classical ASS to ascertain that, in the range $t-s\leq 39$, there are no additional differentials. 

\begin{prop}\label{prop:classicalASS}
There are no non-trivial differentials in the classical ASS for $Z$ with source in stem $t-s \leq 39$.
\end{prop}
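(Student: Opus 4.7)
The plan is to leverage our $\tmf$-ASS computation of $\pi_*Z$ through stem $39$ and combine it with Bruner's computation of $\Ext_A^{*,*}(Z)$ to force the classical ASS to collapse in the relevant range via a dimension count. The map of ring spectra $\tmf \to H\FF_2$ (arising from $H^*\tmf \cong A \mmod A(2)$) induces a map of Adams-type spectral sequences $\E{\tmf}{}_r^{*,*}(Z) \to \E{ass}{}_r^{*,*}(Z)$, both converging to $\pi_*Z$ and compatible with the respective filtrations on the abutment.

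First I would read off $\dim_{\FF_2}\pi_n Z$ for $n \leq 39$ from the $\tmf$-ASS computation summarized in Figure~\ref{fig:tmfss}. Because $Z$ is of type $2$, the identity satisfies $2 \cdot 1_Z = 0$, so $\pi_*Z$ is an $\FF_2$-vector space and no hidden $2$-extensions can affect dimension counts; consequently $\dim_{\FF_2}\pi_n Z$ equals the total $\FF_2$-dimension of $\E{\tmf}{}_\infty^{*, *}(Z)$ in stem $n$, which is determined by the preceding analysis.

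Next I would compare this stem-by-stem with the classical ASS $E_2$-page, verifying for each $k \leq 39$ the equality
\[
\sum_s \dim_{\FF_2} \E{ass}{}_2^{s,s+k}(Z) \;=\; \dim_{\FF_2}\pi_k Z
\]
using Bruner's $\Ext$ charts (Figures~\ref{fig:EXTZCHART} and \ref{fig:EXTZCHART2}). A non-trivial classical ASS differential $d_r$ with source in a stem $k \leq 39$ has target in stem $k-1 \leq 38$ and would kill one class in each of these stems, forcing $\dim \E{ass}{}_\infty^{*, *}(Z)$ to be strictly less than $\dim \E{ass}{}_2^{*,*}(Z)$ in at least one stem $\leq 39$. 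The dimension equality above would therefore rule out all such differentials.

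The main obstacle is practical rather than conceptual: the substantive work has already been carried out in the previous subsection (identifying $H^{*,*}(V)$ via the algebraic AKSS and assembling the full $\tmf$-ASS computation), and what remains is a finite bookkeeping check comparing the two charts. One subtle point worth noting is that the $\tmf$-filtration on $\pi_*Z$ is typically coarser than the classical Adams filtration, so the map of spectral sequences need not be stem-wise surjective at each bidegree; nevertheless, the total dimension in each stem is pinned down by $\pi_*Z$, which is all that is needed for the collapse argument.
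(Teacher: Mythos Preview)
Your dimension-count strategy is sound in principle, but as written it is circular in the paper's logical development. You propose to read off $\dim_{\FF_2}\pi_n Z$ from the completed $\tmf$-ASS chart (Figure~\ref{fig:tmfss}) and compare with the classical Adams $E_2$-page. However, at this point in the paper the $\tmf$-ASS is \emph{not} yet known to be complete: the possible higher differentials in the $\tmf$-ASS (the potential $d_2$ and $d_3$ on $v_2^k h_{3,1}$) are only resolved in Proposition~\ref{prop:higherdiffs}, and that proposition explicitly invokes Proposition~\ref{prop:classicalASS} in its proof to pin down $|\pi_{32}Z|$ and $|\pi_{33}Z|$. So you cannot use the full $\tmf$-ASS output as input here. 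The stems affected by the unresolved $\tmf$-ASS differentials are $32$, $33$, $38$, $39$, and stem $38$ is one of the stems where a classical Adams differential could a priori originate --- so your dimension equality is not available there.

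The paper avoids this by not computing $\pi_*Z$ first. It uses the map of spectral sequences $\E{\tmf}{}_r(Z) \to \E{ass}{}_r(Z)$ directly: the handful of potential sources in the classical ASS (stems $30, 31, 36, 37, 38$) are identified as images of specific evil classes in $H^{*,*}(V) \subset \E{\tmf}{}_2(Z)$, and those evil classes are permanent cycles in the $\tmf$-ASS purely from the $d_1$-analysis and degree considerations already in hand --- no knowledge of higher $\tmf$-ASS differentials is needed. Since a map of spectral sequences carries permanent cycles to permanent cycles, the result follows. Your approach could be repaired by restricting the dimension comparison to the \emph{target} stems $29, 30, 35, 36, 37$, all of which lie outside $\{32,33,38,39\}$ and hence have $\pi_*$ already determined; but that is not the argument you wrote.
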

\begin{proof}
In the computations of $\pi_*Z$ for $0\leq *\leq 39$, the possible differentials have source in stems 
\[t-s= 30,31, 36, 37, 38, 40. \]
In stems $t-s<40$, the potential sources for differentials are the image of evil classes which are permanent cycles in the $\tmf$-based ASS.  Indeed, for degree reasons, these classes are permanent cycles provided that they are $d_1$-cycles. Since all $d_1$-differentials on evil classes have been recorded in Figure~\ref{fig:tmfss} and all of the potential sources are $d_1$-cycles, the claim follows. 
\end{proof}

\begin{rmk}
There is a potential $d_2$-differential in stem $t-s=40$ in the classical ASS for $Z$. In fact, this problem is tied to the ambiguity in \eqref{eq:h30h31orh21h40}, as we will see in the proof of the next proposition, where we will establish that such a non-trivial $d_2$ differential must occur in the ASS for $Z$. 
 \end{rmk}

 \begin{prop}\label{prop:higherdiffs}
The only non-trivial differential $d_r$ for $r>1$ in the $\tmf$-based ASS with source in the range $t-n\leq 40$ is
\[d_2(v_2h_{3,1}) = \ev{orange}{32}{3}. \]
 \end{prop}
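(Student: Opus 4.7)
The strategy is a comparison argument between the topological $\tmf$-ASS and the classical Adams spectral sequence for $Z$. By Proposition~\ref{prop:classicalASS} the classical ASS has no differentials with source in $t-s\le 39$, so in this range $\Ext_A^{*,*}(Z)$ \emph{is} the associated graded of $\pi_*Z$ for the classical Adams filtration. On the $\tmf$-ASS side, Theorem~\ref{thm:MRE1} combined with Remark~\ref{rmk:MRE1} (no possible May--Ravenel differentials or hidden $v_2$-extensions in this range) computes $H^{*,*}(\mc{C})$, and the algebraic AKSS of \S\ref{sec:algAKSS} identifies $H^{*,*}(V)$ together with all $d_{1+\epsilon}$-differentials. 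Via the long exact sequence~\eqref{eq:partialakss} this assembles the full $E_2$-page of the $\tmf$-ASS in the range $t-n\le 40$ (as depicted in Figure~\ref{fig:tmfss}). The plan is to compare the two spectral sequences stem by stem via the natural map $\tmf\text{-ASS}\to\text{ASS}$.

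First, I would enumerate every candidate pair (source, target) for a $d_r$-differential in the $\tmf$-ASS with $r\ge 2$ and source in $t-n\le 40$. For each candidate, naturality of the comparison yields a clean dichotomy: if the source represents a non-trivial class in $\Ext_A^{*,*}(Z)$, then by Proposition~\ref{prop:classicalASS} that class survives to $\pi_*Z$, forcing the source to be a permanent cycle in the $\tmf$-ASS and making the differential zero; symmetrically, if the target represents a non-trivial class in $\Ext_A^{*,*}(Z)$, the differential cannot hit it. This rules out the overwhelming majority of candidates, leaving only pairs of classes that are both already killed in $\Ext_A^{*,*}(Z)$.

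Next I would pin down the one surviving differential. The algebraic AKSS computation $d^{alg}_{2+\epsilon}(v_2 h_{3,1})=\ev{orange}{32}{3}$ in \S\ref{sec:algAKSS} shows that \emph{both} $v_2h_{3,1}$ and $\ev{orange}{32}{3}$ vanish in $\Ext_A^{*,*}(Z)$, so by the comparison both must die in the $\tmf$-ASS. Inspection of Figure~\ref{fig:tmfss} shows that the only higher differential whose source and target lie in the relevant $\tmf$-ASS bidegrees is a $d_2$ between them, so the stated $d_2(v_2h_{3,1})=\ev{orange}{32}{3}$ is forced. The remaining pairs of algebraically-cancelled classes in $t-n\le 40$ turn out, upon inspection of the chart, to already cancel at the $d_{1+\epsilon}$-level of the topological AKSS, so no further higher $\tmf$-ASS differential occurs in range.

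The main obstacle will be the stem-$40$ ambiguities recorded in~\eqref{eq:h30h31orh21h40} (and the accompanying choice between a $d_{3+\epsilon}$ on $v_2^2h_{3,1}$ and a $d_{1+\epsilon}$ on $h_{3,0}^3$), where several classes are simultaneously killed algebraically and the assignment of cancelling partners is not unique on the nose. There I would use the $v_2$-periodic structure of Proposition~\ref{prop:v2Ext} and the dichotomy principle (Theorem~\ref{thm:dichotomy}) to verify that, regardless of how these ambiguities resolve, the resulting cancellations can be realized without introducing any additional higher $\tmf$-ASS differential with source in $t-n\le 40$; the ambiguous data lives at or beyond stem $40$ and so falls outside the range of the proposition.
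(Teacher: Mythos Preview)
Your overall strategy—compare the $\tmf$-ASS with the classical ASS via Proposition~\ref{prop:classicalASS} and use counting arguments on $\pi_*Z$—is exactly what the paper does, and your treatment of the $d_2(v_2h_{3,1})=\ev{orange}{32}{3}$ differential is essentially the paper's argument.

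The gap is in your final paragraph. The potential differential
\[
d_3(v_2^2h_{3,1})=\ev{limegreen}{38}{4}
\]
has source $v_2^2h_{3,1}$ in stem $t-n=39$, which is \emph{inside} the range of the proposition, not ``at or beyond stem $40$.'' Your proposed cure—invoke Proposition~\ref{prop:v2Ext} and the dichotomy principle—does not decide this: those results govern the \emph{algebraic} AKSS, and the algebraic cancellation $d^{alg}_{3+\epsilon}(v_2^2h_{3,1})=\ev{limegreen}{38}{4}$ (if that is how the ambiguity resolves) does not force or forbid the corresponding topological $d_3$. Nor can you fall back on the classical ASS for $\pi_{39}Z$: as the paper notes in the Remark preceding the proposition, there is a potential classical $d_2$ with \emph{source in stem $40$}, so Proposition~\ref{prop:classicalASS} (which only rules out differentials with source in $t-s\le 39$) does not by itself determine $|\pi_{39}Z|$.

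The paper supplies a genuinely new ingredient here that you are missing: it computes $\pi_{39}A_2=\ZZ/2$ directly from the ASS for $A_2$, observes from the $\tmf$-ASS that $v_2:\pi_{33}Z\to\pi_{39}Z$ is zero, and then uses the long exact sequence of the cofiber sequence $\Sigma^6 Z\xrightarrow{v_2} Z\to A_2$ to conclude $\pi_{39}Z=\ZZ/2$. This simultaneously resolves the $\alpha_1$ ambiguity in~\eqref{eq:h30h31orh21h40} (forcing $\alpha_1=1$) and shows $d_3(v_2^2h_{3,1})=0$. Without this $A_2$ argument, or an equivalent independent computation of $\pi_{39}Z$, your proof is incomplete.
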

 
 \begin{proof}
 Combining degree arguments with $v_2$-linearity, the only two possibilities are 
\begin{align*}
 d_2(v_2h_{3,1}) & = \ev{orange}{32}{3}, \\
 d_3(v_2^2 h_{3,1}) & = \ev{limegreen}{38}{4}.
\end{align*}
 By Proposition~\ref{prop:classicalASS}, the classical ASS for $Z$ collapses in this range. Therefore, $\pi_{32}Z$ and $\pi_{33}Z$ have order $2$. For this to be the case, we must have
 $d_2(v_2h_{3,1}) = \ev{orange}{32}{3}$
 in the $\tmf$-based ASS.  This settles the first possibility.
  
 We turn now to the second possible differential
 $d_3(v_2^2 h_{3,1})$.
 Recall from  (\ref{eq:h30h31orh21h40}) that we were unable to determine the coefficient $\alpha_1$ in 
  \[d_{1+\epsilon}(h_{3,0}h_{3,1}) = \alpha_1 \ev{orange}{39}{3}. \]
 It will turn out that these two ambiguities are interrelated, and through analyzing this relationship we will settle both.
  
 Since $\widetilde{h}_{2,1}h_{4,0}$ is not an element in $H^{*,*}(\mathcal{C})$, if $\alpha_1 = 0$ and 
  \[d_{1+\epsilon}(h_{3,0}h_{3,1})  = 0 , \]
 then it follows from the $\tmf$-based ASS that we must have
 $$ d_3(v_2^2 h_{3,1}) = \ev{orange}{39}{3} $$
 and $\pi_{39}Z$ has order $4$.
If, however, $\alpha_1 = 0$ and
 \[d_{1+\epsilon}(h_{3,0}h_{3,1}) = \ev{orange}{39}{3} , \]
 then it follows from the $\tmf$-based ASS that $\pi_{39}Z$ has order $2$. 
  
From the structure of the $\tmf$-ASS we deduce that the map
  $$ v_2: \pi_{33}(Z) \xrightarrow{v_2} \pi_{39}(Z) $$
  is zero.  
 It is immediate from Figure~\ref{fig:EXTA2CHART2} that the ASS for $A_2$ collapses in degree 39 to give
  $$ \pi_{39} (A_2) = \ZZ/2. $$
  It follows from the long exact sequence associated to the cofiber sequence
  $$ \Sigma^6 Z \xrightarrow{v_2} Z \to A_2 $$
  that we must have
  $$ \pi_{39}(Z) = \ZZ/2. $$
We therefore conclude that $\alpha_1 = 1$, so
  \[d_{1+\epsilon}(h_{3,0}h_{3,1}) = \ev{orange}{39}{3} \]
 and
  $$ d_3(v_2^2 h_{3,1}) = 0. $$
 \end{proof}
 
It follows from Proposition~\ref{prop:classicalASS} and Proposition~\ref{prop:higherdiffs} that Figure~\ref{fig:tmfss} 
is complete.

\begin{figure}
\centering
\includegraphics[angle=90, height=0.95\textheight]{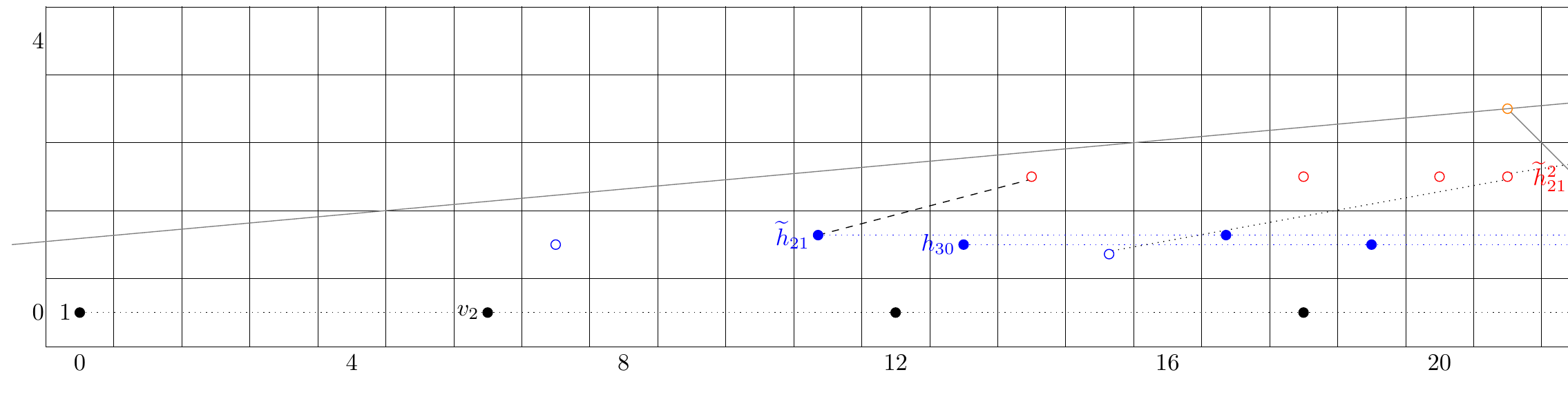}
\includegraphics[angle=90, height=0.95\textheight]{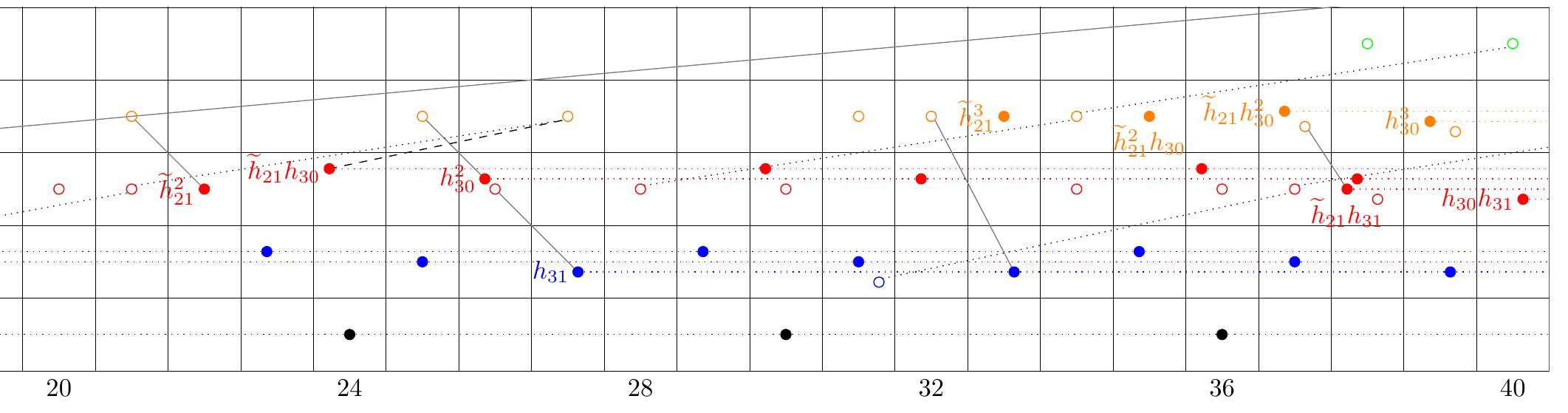}
\caption{The topological AKSS computing $\pi_{t-n} (Z)$ drawn in grading $(x,y) = (t-n, n)$, starting at the $E_{1+\epsilon}$-page. Gray lines are differentials. They are dashed, if our method is inconclusive. 
Dotted lines are known $v_2$-multiplications. Dashed line are known $\nu$-multiplications. The gray line of slope $1/11$ is the line of Proposition~\ref{prop:v2line}.}
\label{fig:tmfss}
\end{figure}

\subsection{The $E(2)$-localization of $Z$}

We end this section with one of the main goals of this paper, which is to determine the homotopy groups of $\pi_*Z_{E(2)}$.

\begin{thm}\label{thm:k2locss}
The Adams Novikov spectral sequence for $Z_{E(2)}$
collapses at the $E_2$-term.
\end{thm}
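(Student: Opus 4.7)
The strategy is to exploit the comparison map from the (unlocalized) $\tmf$-ASS to the $K(2)$-local ANSS for $Z$. By the identification of the $v_2$-localized good complex with the cobar complex for $(E_2)_*Z$, inverting $v_2$ on the $\tmf$-ASS gives the ANSS for $Z_{E(2)}\simeq Z_{K(2)}$; so any class in $\E{\tmf}{}_2^{*,*}(Z)$ which is a permanent cycle maps to a permanent cycle in $\E{anss}{}_2^{*,*}(Z_{E(2)})$. The computation of Bhattacharya-Egger \cite{bhateggerK2Z} provides an explicit, finite list of multiplicative generators for $\E{anss}{}_2^{*,*}(Z_{E(2)})$; the plan is to lift each of these generators to an element of the unlocalized $\tmf$-ASS and verify it is a permanent cycle using the stem-by-stem computations carried out in Section~\ref{sec:stemwise}.

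More concretely, I would first enumerate the Bhattacharya-Egger generators (including the $v_2$-periodic classes and the classes representing the Greek letter families that appear in the ANSS) together with their $(t-s,s)$-bidegrees, all of which live within the range $t-s < 40$ up to multiplication by $v_2^{\pm 1}$ and by $\Delta^{24}$-type periodicity. For each such bidegree, locate a good-class lift in $\E{\tmf}{}_2^{*,*}(Z)$ using the cohomology of the good complex $H^{*,*}(\mc{C})$ computed via the May-Ravenel $E_1$-page in Theorem~\ref{thm:MRE1}; by Corollary~\ref{cor:gooddiffs} and the dichotomy principle, a non-zero $v_2$-periodic generator in the ANSS $E_2$ must be detected by a good class, and the computations in Figures~\ref{fig:AAKSSchart}--\ref{fig:tmfss} exhibit such a lift. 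Then read off from Figure~\ref{fig:tmfss} that each of these lifts is a permanent cycle in the $\tmf$-ASS: the only non-trivial higher differential in the computed range is the $d_2$ of Proposition~\ref{prop:higherdiffs}, whose source is $v_2$-torsion in $\E{\tmf}{}_2^{*,*}(Z)$ and therefore dies in the localized ANSS.

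To globalize from the low-dimensional range to the full ANSS $E_2$-term, I would use $v_2$-periodicity: the $E_2$-page of the $K(2)$-local ANSS is a module over $\pi_*E_2^{hG_{48}}$, and in particular over the units $v_2^{\pm 1}$ and the periodicity generator arising from $\Delta$; since the $\tmf$-ASS map is compatible with this multiplicative structure and since $v_2$-multiplication is realized by a self-map of $Z$, multiplying a permanent-cycle lift by powers of $v_2$ yields permanent cycles everywhere. Together with the fact that the generators span $\E{anss}{}_2^{*,*}(Z_{E(2)})$ as a module over these periodicity operators, this shows every class in the $E_2$-page is a permanent cycle, so the spectral sequence collapses.

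The main obstacle is the bookkeeping in the middle step: one must check that the lifts chosen actually survive to $\E{\tmf}{}_\infty$ (and not merely to $\E{\tmf}{}_2$), and one must be careful that the map $\E{\tmf}{}_r^{*,*}(Z) \to \E{anss}{}_r^{*,*}(Z_{E(2)})$ does not annihilate the lift, i.e.\ that a chosen lift still has non-zero image after $v_2$-inversion. Both issues are resolved by the explicit identification of the $v_2$-localized good complex with the $E_2$-cobar complex from Section~\ref{sec:gooddiffs} and by the explicit stem-by-stem data in Figure~\ref{fig:tmfss}, so in the end the argument reduces to a finite verification in the range $t-n < 40$ combined with $v_2$-periodicity.
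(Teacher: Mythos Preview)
Your overall strategy---map the unlocalized $\tmf$-ASS to the localized one and lift generators to permanent cycles---is the same as the paper's, but your execution has a genuine gap and a factual error.

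The gap: you assert that all of the Bhattacharya--Egger generators ``live within the range $t-s < 40$ up to multiplication by $v_2^{\pm 1}$.'' This is false in the unlocalized world. The class $\td{h}_{4,1}$ has $(t-n,n)=(59,1)$, and the eight $\FF_2[v_2^{\pm}]$-module generators involving $\td{h}_{4,1}$ have connective representatives starting at $t-n=59$, well outside the range computed in Section~\ref{sec:stemwise}. You cannot multiply by \emph{negative} powers of $v_2$ in the unlocalized $\tmf$-ASS, so you have no permanent-cycle lift available for these classes. The paper sidesteps this entirely: rather than lifting all sixteen generators, it observes that the localized $E_2$-page is an exterior algebra on four classes with a horizontal vanishing line at $n=5$, and a direct check of bidegrees modulo $6$ shows the \emph{only} possible higher differential is a $d_3$ with source $v_2^k h_{3,1}$ (the unit class rules out the one other candidate). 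Then the single fact $d_3(v_2^2 h_{3,1})=0$ from Proposition~\ref{prop:higherdiffs} finishes the argument by $v_2$-linearity. You are missing this sparsity reduction.

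The factual error: you write that the $d_2$ of Proposition~\ref{prop:higherdiffs} has $v_2$-torsion \emph{source}. In fact $d_2(v_2 h_{3,1})$ hits an evil class; the source $v_2 h_{3,1}$ is good and $v_2$-periodic, while the \emph{target} is the simple $v_2$-torsion. The correct deduction is that $d_2(v_2^2 h_{3,1}) = v_2 \cdot d_2(v_2 h_{3,1}) = 0$ by $v_2$-linearity, so $v_2^2 h_{3,1}$ (not $v_2 h_{3,1}$) is the permanent cycle you want. Finally, be careful with the phrase ``multiplicative generators'': $Z$ is not a ring spectrum (the paper notes this explicitly), so the ANSS for $Z_{E(2)}$ is not multiplicative; what you mean is the sixteen $\FF_2[v_2^{\pm}]$-module generators, and only $v_2$-linearity of differentials is available to propagate information.
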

\begin{proof}
This spectral sequence is isomorphic ($E_2$ onwards) to the $v_2$-localized $\tmf$-ASS
\[  v_2^{-1}\E{\tmf}{}_1^{n,t}(Z) \Longrightarrow \pi_{t-n}Z_{E(2)}.\]
Inverting $v_2$ in the short exact sequence
$$ 0 \to V^{*,*}(Z) \to \E{\tmf}{}^{*,*}_1(Z) \to \mc{C}^{*,*}(Z) \to 0 $$
gives an isomorphism
$$ v_2^{-1}\E{\tmf}{}_1^{*,*}(Z) \cong v_2^{-1}\mc{C}^{*,*}(Z), $$
and hence an isomorphism
\begin{equation}\label{eq:v2locMR}
 v_2^{-1}\E{\tmf}{}_2(Z) \cong v_2^{-1}H^{*,*}(\mc{C}(Z)). 
 \end{equation}
Consider the $v_2$-localized May-Ravenel spectral sequence
$$
v_2^{-1}\E{MR}{}_1(\sitd(2)) \Rightarrow v_2^{-1}H^{*,*}(\mc{C}(Z)). 
$$ 
The $E_1$-term is given by inverting $v_2$ in Theorem~\ref{thm:MRE1}, and so is isomorphic to
\begin{equation}\label{eq:v2locgood}
\FF_2[v_2^{\pm 1}] \otimes E[ h_{3,0} , \td{h}_{2,1}, h_{3,1} , \td{h}_{4,1}]. 
\end{equation}
Since the $E_2$-term of ANSS for $Z_{E(2)}$ was computed in \cite{bhateggerK2Z} to be isomorphic to (\ref{eq:v2locgood}), we deduce from (\ref{eq:v2locMR}) that the $v_2$-localized May-Ravenel spectral sequence must collapse at $E_1$.
The $v_2$-localized $\tmf$-ASS for $Z$ is displayed in   Figure~\ref{fig:k2locss}. 
All differentials are $v_2$-linear since $Z_{E(2)}$ has a $v_2^1$-self map.
Furthermore, there is a horizontal vanishing line at $E_2$. Indeed, $E_2^{n,t}=0$ for $n\geq 5$. The class labeled by $1$ is the image of $\pi_0S^0 \to \pi_0 Z_{E(2)}$ so is a permanent cycle.
For degree reasons, the only possible non-trivial differentials are $d_3$'s with sources $v_2^kh_{31}$. However, since $d_3(v_2^2h_{3,1})$ in the $\tmf$-based ASS is zero, $v_2^2h_{3,1}$ maps to a $d_3$-cycle in $v_2^{-1}\E{\tmf}{}_1^{n,t}$. 
\end{proof}

\begin{figure}
\centering
\includegraphics[width=\textwidth]{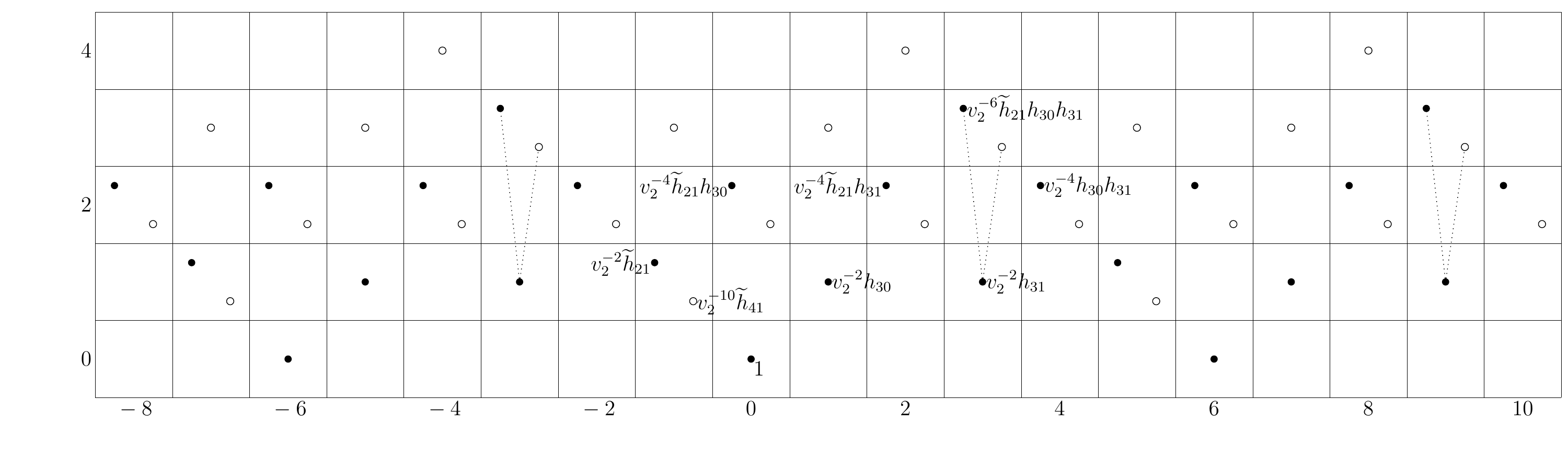}
\caption{The $E_{\infty}$-page of the Adams Novikov spectral sequence for $Z_{E(2)} = Z_{K(2)}$. The only possible non-trivial multiplication by $2$ extensions are dotted. Classes denoted by $\circ$ are multiples of $\zeta_2 \in \pi_{-1}S^0_{K(2)}$.}
\label{fig:k2locss}
\end{figure}

Next, we solve all but one exotic extension:
\begin{thm}\label{thm:k2locext}
For $k\not\equiv 3 \mod 6$, the groups $\pi_{k}Z_{E(2)}$ are annihilated by multiplication by $2$.
\end{thm}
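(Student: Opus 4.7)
The plan is to use the collapse result of Theorem~\ref{thm:k2locss} to reduce to a finite check of hidden $2$-extensions on the $E_\infty$-page, and then to kill the potential extensions residue class by residue class using the $\pi_*S^0_{K(2)}$-module structure and naturality. Because every basis class on $E_\infty$ (Figure~\ref{fig:k2locss}) is already annihilated by $2$, the group $\pi_k Z_{E(2)}$ is $2$-torsion if and only if every candidate hidden (filtration-raising) $2$-extension into stem $k$ is trivial.

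First I would enumerate the candidates. The $E_\infty$-page is a free $\FF_2[v_2^{\pm 1}]$-module on the sixteen monomials $h_{3,0}^{a_1}\td{h}_{2,1}^{a_2}h_{3,1}^{a_3}\td{h}_{4,1}^{a_4}$ of filtration $a_1+a_2+a_3+a_4$. Since $|v_2|\equiv 0$, $|h_{3,0}|\equiv 1$, $|\td{h}_{2,1}|\equiv 5$, $|h_{3,1}|\equiv 3$, $|\td{h}_{4,1}|\equiv 5 \pmod 6$, a direct tabulation determines which monomials share a stem mod~$6$ and could be linked by a $2$-extension. The dotted lines in Figure~\ref{fig:k2locss} list precisely these candidates, and they are $v_2$-periodic families.

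Second I would kill the candidates in each residue class $k\in\{0,1,2,4,5\}\pmod 6$ by combining two inputs. (a) The class $\zeta_2\in\pi_{-1}S^0_{K(2)}$ coming from the reduced determinant extension $1\to S\GG_2\to \GG_2\to\ZZ_2\to 1$ satisfies $2\zeta_2=0$, and the open-circle classes in Figure~\ref{fig:k2locss} are by construction $\zeta_2$-multiples; since $\pi_* Z_{E(2)}$ is a module over $\pi_*S^0_{K(2)}$, every $\zeta_2$-multiple is $2$-torsion. (b) The unit $S^0\to Z$ sends $1\in\pi_0 S^0$ to a generator of $\pi_0 Z=\FF_2$, so $2=0$ in $\pi_0 Z_{E(2)}$; then $v_2$-linearity forces $2\cdot v_2^k=0$ in stems divisible by $6$. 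Combining (a) and (b) with the fact that every source of a dotted extension in residues $\{0,1,2,4,5\}\pmod 6$ is a $v_2^k$-multiple of a class whose image under either mechanism is already known to be $2$-torsion, I would verify residue by residue that the candidate extensions are forced to vanish.

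The main obstacle will be rigorously matching the open-circle marking of Figure~\ref{fig:k2locss} with the $\zeta_2$-action on the cobar complex for $\GG_2$ acting on $(E_2)_*Z$ via Proposition~\ref{prop:E2Z}---that is, verifying at the cocycle level that the targets of all dotted extensions outside residue $3\pmod 6$ actually lie in the image of multiplication by $\zeta_2$. This is precisely where residue $3\pmod 6$ breaks down: the potential extensions $2\cdot v_2^k h_{3,1}\mapsto v_2^{k'}h_{3,0}\td{h}_{2,1}h_{3,1}$ and $2\cdot v_2^k h_{3,1}\mapsto v_2^{k'}h_{3,0}h_{3,1}\td{h}_{4,1}$ have targets containing the factor $h_{3,1}$ but no factor that forces them to be $\zeta_2$-divisible, explaining the exclusion in the statement.
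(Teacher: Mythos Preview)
Your two mechanisms (a) $\zeta_2$-divisibility and (b) the unit map do not cover all the sources of the candidate hidden $2$-extensions. Concretely, in residue $1\pmod 6$ the source is (the $v_2$-family of) $h_{3,0}$, and in residue $5\pmod 6$ one of the two sources is $\td{h}_{2,1}$. Neither of these is a $\zeta_2$-multiple (only the eight monomials containing the factor $\td{h}_{4,1}$ are open circles in Figure~\ref{fig:k2locss}), and neither is a $v_2$-power of $1$. So your sentence ``every source of a dotted extension in residues $\{0,1,2,4,5\}$ is a $v_2^k$-multiple of a class whose image under either mechanism is already known to be $2$-torsion'' is simply false, and the argument stalls at residues $1$ and $5$.

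The paper closes exactly this gap by importing information from the \emph{unlocalized} $\tmf$-ASS computation of Section~\ref{sec:stemwise}: in $\pi_{11}Z$ and $\pi_{13}Z$ (before localization) there is no room in the $\tmf$-ASS for exotic $2$-extensions, so the classes detected by $\td{h}_{2,1}$ and $h_{3,0}$ already have order~$2$ in $\pi_*Z$, hence in $\pi_*Z_{E(2)}$. Your proposal never appeals to the unlocalized computation, and without that input (or a substitute) you cannot rule out, for instance, a hidden extension $2\cdot v_2^k h_{3,0} = v_2^{k'}\td{h}_{2,1}h_{3,1}\td{h}_{4,1}$. The ``main obstacle'' you identify---matching open circles to the $\zeta_2$-action---is a secondary bookkeeping issue; the real missing ingredient is an independent reason that $\td{h}_{2,1}$ and $h_{3,0}$ have order~$2$.
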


\begin{proof}
The class detected by $\widetilde{h}_{2,1}$ in $\pi_{11}Z$ and $h_{3,0}$ in $\pi_{13}Z$ have order $2$ since there is no room in the $\tmf$-based ASS for exotic extensions in these degrees.  Therefore, their images in $\pi_*Z_{E(2)} = \pi_*Z_{K(2)}$ have order $2$, and so do all their multiples. The class detected by $v_2^{-10}\widetilde{h}_{4,1}$ is in the image of the bottom cell, $S^0_{K(2)} \to Z_{K(2)}$. Indeed, it is the image of the element $\zeta_2 \in \pi_{-1}S^0_{K(2)}$ discussed in \cite[Proposition 2.2.1]{DH}.\footnote{Our notation differs from \cite[(3.4) Theorem]{Ravenel_coh}. In this reference, our class $v_2^{-10}\widetilde{h}_{4,1}$ is closely related to $\rho_2$ and Ravenel's $\zeta_2$ is closely related to $v_2^{-2}\widetilde{h}_{2,1}$.} So, any multiple of $v_2^{-10}\widetilde{h}_{4,1}$ has order $2$.
\end{proof}

\begin{rmk}
In \cite{bhateggerK2Z}, the authors study the Adams Novikov spectral sequence for $Z_{K(2)}$, where $K(2)$ is the Morava $K$-theory whose formal group law is the Honda formal group law. Since the homotopy type of $Z_{K(2)}$ is independent of the choice of $K(2)$, Theorem~\ref{thm:k2locss} and Theorem~\ref{thm:k2locext} settle Conjecture 1 of \cite{bhateggerK2Z} for our particular choice of $Z \in \td{\mathcal{Z}}$, except for the group structure of $\pi_{3+6n}Z_{K(2)}$. 
\end{rmk}


\section{Discussion of the telescope conjecture for $Z$.}\label{sec:telescope}

While the telescope conjecture was initially proposed by Ravenel \cite{1984}, Ravenel was also the first to propose that it should be false for chromatic levels $\ge 2$ \cite{Raveneltelescope}.  The method of disproof proposed in \cite{Raveneltelescope} (the \emph{parameterized Adams spectral sequence}) turned out to not be sufficient to provide a counterexample to the telescope conjecture, but it laid out the blueprint for what could go wrong.  

A more detailed account of this story is laid out by 
Mahowald-Ravenel-Shick \cite{MahowaldRavenelShick}, who studied a family of Thom spectra $y(n)$ (defined for all primes $p$ and all $n \ge 1$) and some conjectures about their localized Adams spectral sequences, which, if true, would provide counterexamples to the telescope conjecture for all primes $p$ and all $n \ge 2$.  These conjectures lay the groundwork for a concrete counter-conjecture for the homotopy of the telescopes proposed by Ravenel in \cite{Ravenelparabola}, which we shall call the \emph{parabola conjecture}. 

In this section we outline the analog of this conjectural story for $Z$, and explain how the structure of the $\tmf$-ASS for $Z$ described in this paper is consistent with the parabola conjecture.
Specifically, let $\widehat{Z}$ denote the telescope of the $v_2$-self map on $Z$.
The telescope conjecture predicts that the map
\begin{equation}\label{eq:telmap}
 \widehat{Z} \rightarrow Z_{E(2)}
 \end{equation}
is an equivalence.  In Theorem~\ref{thm:k2locss}, we have already verified (up to a potential additive extension) that
$$ \pi_*Z_{E(2)} \cong \FF_2[v_2^{\pm 1}]\otimes E[\td{h}_{2,1}, h_{3,0}, h_{3,1}, \td{h}_{4,1}]. $$
The parabola conjecture predicts the structure of $\pi_*\widehat{Z}$, and 
in particular predicts that the map (\ref{eq:telmap}) is neither injective nor surjective in homotopy.

\subsection{The localized Adams spectral sequence for $Z$}

Consider the localized Adams spectral sequence
\begin{equation}\label{eq:LASSZ}
 v_2^{-1}\E{ass}{}_2^{*,*}(Z) \Rightarrow \pi_{*} \widehat{Z}.
 \end{equation}
The $E_2$-term of this spectral sequence was computed in Proposition~\ref{prop:v2Ext}:
$$ v_2^{-1}\E{ass}{}_2^{*,*}(Z) \cong \FF_2[v_2^{\pm}, \td{h}_{2,1}, h_{3,0}, h_{3,1}, h_{4,0}, h_{4,1}, \ldots]. $$
The analog of Mahowald-Ravenel-Shick's \emph{differentials conjecture} \cite[Conj.~3.16]{MahowaldRavenelShick} is the following.

\begin{conjecture}(Differentials Conjecture)\label{conj:diffs}
In the localized Adams spectral sequence (\ref{eq:LASSZ}) we have
\begin{align*}
 d_2(h_{4,0}) & = v_2 \td{h}_{2,1}^2, \\
 d_2(h_{i,0}) & = v_2 h_{i-2,1}^2, \\
 d_4(h_{i,1}) & = v_2 h_{i-1,0}^4.
 \end{align*}
\end{conjecture}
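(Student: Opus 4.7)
The plan is to proceed in analogy with Mahowald-Ravenel-Shick's approach to the differentials conjecture for $y(n)$, lifting the May-Ravenel differentials of Theorem~\ref{thm:diffsquotient} from the algebra $\sibr(2)$ to topological differentials in the localized Adams spectral sequence. The comparison map
\[ v_2^{-1}\E{ass}{}_2^{*,*}(Z) \to v_2^{-1}\E{\tmf}{}_2^{*,*}(Z) \cong \pi_*Z_{E(2)} \]
has target which collapses by Theorem~\ref{thm:k2locss}, and the $E_2$-term of the localized Adams spectral sequence for $Z$ is polynomial on the generators $v_2^{\pm1}, \td{h}_{2,1}, h_{i,0}, h_{i,1}$ while the target is exterior on the subset $\{\td{h}_{2,1}, h_{3,0}, h_{3,1}, \td{h}_{4,1}\}$ over $\FF_2[v_2^{\pm 1}]$. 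Hence for purely numerical reasons, each generator $h_{i,0}$ for $i \ge 4$, each $h_{i,1}$ for $i \ne 3$ not related to $\td{h}_{4,1}$, and every square of a ``wrong'' generator must die in some differential. This forces most of the conjectured differentials to exist in some form; the content of the conjecture is that they hit the specific classes stated.

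The first step is to exhibit the differentials algebraically, by observing that Theorem~\ref{thm:diffsquotient} (describing the Chevallay-Eilenberg differentials in $C^{*,*,*}_{CE}(\br l(2))$) has exactly the form of the conjectured topological differentials, up to powers of $v_2$ that arise from the May-Ravenel reindexing. The second step is to realize these as differentials in the Adams spectral sequence itself. Here the strategy is to introduce an \emph{algebraic Novikov} (or \emph{parameterized Adams}) spectral sequence as in Ravenel~\cite{Raveneltelescope}, whose input is Ext over $BP_*BP$ and whose output is Ext over $A_*$. In this spectral sequence, the algebraic differentials of Theorem~\ref{thm:diffsquotient} correspond precisely to $d_1$ differentials after inverting $v_2$, and a geometric boundary argument propagates them to the requisite $d_2$ and $d_4$ Adams differentials. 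The powers of $v_2$ in Theorem~\ref{thm:diffsquotient} (specifically $v_2^{2^{i-1}}h_{i-2,0}^2$) must be reconciled with the conjectured $v_2 h_{i-1,0}^4$ by using the relation $h_{i-2,0}^2 \equiv v_2^{2^{i-1}-1}h_{i-1,0}^{?}$ coming from the Milnor-Moore type relations $\tb_k^4 = v_2^{2^k-1}\tb_k$ in $\sibr(2)$ (see \eqref{eq:sibrdef}).

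The main obstacle is the geometric boundary / lifting step. In principle, the algebraic-Novikov $d_1$ gives a differential in the Adams spectral sequence only modulo a filtration shift, and one must argue that no lower-filtration cancellations arise. In the height~$1$ analog for $y(n)$, Mahowald-Ravenel-Shick handle this by an explicit comparison between the algebraic Novikov and Adams $d_r$ using the Toda bracket / power operation argument of \cite{Raveneltelescope}; the analogous step here would require an analysis of power operations on $v_2^{-1}\Ext_{A_*}(\FF_2, H_*Z)$ compatible with the action of the Morava stabilizer group described in Section~\ref{sec:MSG}. A secondary difficulty is that the localized Adams $E_2$-term is a much larger polynomial algebra than the localized $\tmf$-Adams $E_2$-term, so one cannot simply read off the differentials from the $\tmf$-resolution directly; instead one must compare via the agathokakological machinery of Section~\ref{sec:agatho} to see that evil classes in $\E{ass}{}_2(Z)$ map compatibly to the $v_2$-torsion piece whose fate is controlled by the May-Ravenel spectral sequence.

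Finally, once the differentials of Conjecture~\ref{conj:diffs} are established, one should verify that the $E_{\infty}$-page of the localized Adams spectral sequence assembles into the $v_2$-parabola pattern described after Theorem~\ref{thm:MRE1}, giving the parabola conjecture for $\pi_*\widehat Z$. This is expected to be the easier combinatorial consequence once the hard differential-lifting step is achieved.
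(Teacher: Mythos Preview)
The statement you are attempting to prove is a \emph{conjecture}, not a theorem: the paper offers no proof, only a one-sentence heuristic. Immediately after stating Conjecture~\ref{conj:diffs}, the authors write that ``the $d_2$ differentials in the above conjecture are lifted from the analogous differentials in the May-Ravenel spectral sequence (Theorem~\ref{thm:diffsquotient}), and that the $d_4$ differentials arise from these through an extended power argument \cite{Raveneltelescope}.'' That is the entirety of what the paper says about why one should believe the conjecture; it is explicitly left open.

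Your proposal is therefore not comparable to a proof in the paper, and on its own terms it is not a proof either --- you yourself flag the main obstacle (the geometric-boundary/lifting step) as unresolved. Two more specific points are worth noting. First, your numerical argument in the opening paragraph only shows that \emph{some} differentials must occur on the extra generators; it does not pin down the targets, which is precisely the content of the conjecture. Second, your attempt to derive the $d_4$ differentials by reconciling the May--Ravenel relation $d(\bh_{i,1}) = v_2^{2^{i-1}}\bh_{i-2,0}^2$ with the conjectured $d_4(h_{i,1}) = v_2 h_{i-1,0}^4$ via relations in $\sibr(2)$ is not the mechanism the paper has in mind. The paper's heuristic is that the $d_4$'s are \emph{Toda-type} differentials produced by applying an extended power (squaring) operation to the already-conjectured $d_2$'s, in the spirit of Ravenel's argument in \cite{Raveneltelescope}; they are not meant to be direct lifts of the $d(\bh_{i,1})$ differentials. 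The index shift ($h_{i-1,0}^4$ rather than $h_{i-2,0}^2$) reflects this power-operation origin, not an algebraic relation in the Hopf algebra.
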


The idea is that the $d_2$ differentials in the above conjecture are lifted from the analogous differentials in the May-Ravenel spectral sequence (Theorem~\ref{thm:diffsquotient}), and that the $d_4$ differentials arise from these through an extended power argument \cite{Raveneltelescope}.

Note that $Z$ is \emph{not} a ring spectrum, as we have already seen in the topological AKSS, where $\td{h}_{2,1}$ is a permanent cycle but $\td{h}_{2,1}^2$ supports a non-trivial differential.
However, assuming these are the only $d_r$ differentials for $r \le 4$, and that they satisfy the Leibniz rule, we would have
$$ v_2^{-1}\E{ass}{}_5^{*,*}(Z) \cong \FF_2[v_2^{\pm}] \otimes E[\td{h}_{2,1}, h_{3,0}, h_{3,1}, x_3, x_4, x_5, \cdots]
$$
where
$$ x_{i} := h_{i,0}^2. $$
In particular, we have $h_{3,0}^2 = x_3$ rather than $h_{3,0}^2 = 0$, but this is somewhat irrelevant given that $Z$ is not a ring spectrum.  Our choice to present $v_2^{-1}\E{}{}_5$ in this manner leads to a more uniform discussion.

In the discussion after Conjecture~5.12 of \cite{MahowaldRavenelShick} (see also \cite{Raveneltelescope}), Mahowald-Ravenel-Shick predict the collapse of the localized ASS for $y(n)$ at a finite stage.  The analog of their conjecture in our context is the following.

\begin{conjecture}[Parabola Conjecture]\label{conj:parabola}
The localized ASS for $Z$ collapses at $E_5$, and therefore
$$ \pi_*\widehat{Z} \cong \FF_2[v_2^{\pm}] \otimes E[\td{h}_{2,1}, h_{3,0}, h_{3,1}, x_3, x_4, x_5, \cdots]. $$
Moreover, the telescope conjecture is false, and the kernel of (\ref{eq:telmap}) is the ideal 
$$ (x_3, x_4, \cdots) \subset \pi_*\widehat{Z} $$
and the ideal
$$ (\td{h}_{4,1}) \subset \pi_*Z_{E(2)} $$
maps isomorphically onto the cokernel of (\ref{eq:telmap}).
\end{conjecture}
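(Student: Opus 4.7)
The plan is to mirror, in our setting, the strategy laid out by Mahowald--Ravenel--Shick \cite{MahowaldRavenelShick} and Ravenel \cite{Raveneltelescope} for $y(n)$ and $V(1)$: first establish Conjecture~\ref{conj:diffs} on the differentials in the localized ASS, then prove collapse at $E_5$, and finally identify the image of $\pi_*\widehat{Z}\to \pi_*Z_{E(2)}$ from the resulting $E_\infty$-description. The identification of the permanent cycles $\td{h}_{2,1}, h_{3,0}, h_{3,1}$ is already consistent with what we have shown: each of these classes is a permanent cycle in the topological AKSS through the $40$-stem and detects an element in $\pi_*Z$ which lifts a non-trivial class in $\pi_*Z_{E(2)}$. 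The real content is therefore to pin down the differentials on the remaining generators $h_{i,j}$ for $(i,j)\neq (2,1),(3,0),(3,1)$.

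For the $d_2$-differentials of Conjecture~\ref{conj:diffs}, the approach is to lift the May-Ravenel differentials of Theorem~\ref{thm:diffsquotient} from the good complex to the localized classical ASS. More precisely, the $\tmf$-resolution provides a bridge: the differentials $d(\bh_{i,0})=v_2\bh_{i-2,1}^2$ in $C^{*,*,*}_{CE}(\br l(2))$ already occur in the $E_1$-term of the lexicographic/May-Ravenel filtration on $\mc{C}^{*,*}(Z)$, and the map of spectral sequences from the $v_2$-localized ASS to the $v_2$-localized $\tmf$-ASS (which is the ANSS by Theorem~\ref{thm:k2locss}) must be compatible. Since $v_2^{-1}\E{ass}{}_2^{*,*}(Z)$ maps onto $v_2^{-1} H^{*,*}(\mc{C})$ and the latter was shown to be given by (\ref{eq:v2locgood}), the classes $h_{i,0}$ with $i\geq 4$ and $h_{i,1}$ with $i\geq 4$ must die, and the only way for $h_{i,0}$ to die in filtration consistent with the May-Ravenel analysis is via a $d_2$ onto $v_2 h_{i-2,1}^2$.

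The $d_4$-differentials $d_4(h_{i,1})=v_2 h_{i-1,0}^4$ are the hard part. These should come from a power operation / extended power argument of the type used by Ravenel in \cite{Raveneltelescope} for $V(1)$, where one constructs a secondary differential using the $H_\infty$-structure of the $K(2)$-local ring spectra under consideration and the Toda--May relations among the $h_{i,j}$. Because $Z$ is not itself a ring spectrum, one would execute this argument on an auxiliary spectrum $Z\wedge DZ$ or on $\tmf\wedge Z$ (which by (\ref{eq:tmfsmashZ}) is $k(2)$, itself an $E_2$-algebra for our purposes) and transport the differential back to $Z$. This is the step most sensitive to subtleties: one has to rule out that a prior differential of length $3$ already kills $h_{i,1}$, and verify the Toda bracket calculation producing $v_2 h_{i-1,0}^4$ as the primary obstruction.

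Assuming Conjecture~\ref{conj:diffs}, collapse at $E_5$ follows from the shape of $v_2^{-1}\E{ass}{}_5^{*,*}(Z) \cong \FF_2[v_2^{\pm}]\otimes E[\td h_{2,1}, h_{3,0}, h_{3,1}, x_3, x_4, \ldots]$: the generators $\td h_{2,1}, h_{3,0}, h_{3,1}$ are permanent cycles by direct computation in the $\tmf$-ASS through their relevant range and by the image-of-$J$-type arguments detecting them in $\pi_*Z_{E(2)}$, while a class $x_i=h_{i,0}^2$ lies on a $v_2$-parabola (in the sense described in the paper) so that no nonzero differential can land on or leave it without violating the vanishing line and the $d_2,d_4$ pattern already recorded. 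The map to $\pi_*Z_{E(2)}$ is then read off by comparing the $E_\infty$-page above with the $E_2=E_\infty$-page of the $K(2)$-local ANSS from Theorem~\ref{thm:k2locss}: the classes $\td h_{2,1}, h_{3,0}, h_{3,1}$ map to their namesakes, the classes $x_i$ map to zero (since $h_{i,0}$ does not even appear in $v_2^{-1}H^{*,*}(\mc{C})$ for $i\geq 4$, and $x_3=h_{3,0}^2=0$ in the exterior algebra (\ref{eq:v2locgood})), and $\td h_{4,1}\in \pi_*Z_{E(2)}$ has no preimage because it would have to come from a class of May-Ravenel filtration beyond what the parabola structure allows. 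The principal obstacle, as already emphasized in \cite{MahowaldRavenelShick}, is that a bizarre pattern of later differentials among $v_2$-parabolas could conspire to make (\ref{eq:telmap}) an isomorphism after all; ruling this out appears to require genuinely new input beyond the methods of this paper.
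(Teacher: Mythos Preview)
The statement you are attempting to prove is a \emph{conjecture}, and the paper does not prove it --- it is labeled Conjecture precisely because it remains open. There is no ``paper's own proof'' to compare against. What the paper does is explain the heuristic reasoning behind the conjecture (the analogy with Mahowald--Ravenel--Shick, the lifting of May--Ravenel differentials, the expected power-operation source for the $d_4$'s) and then spend the remainder of Section~\ref{sec:telescope} formulating the auxiliary conjectures (Differentials Conjecture, Torsion Conjecture, Extension Conjecture, Vanishing Line Conjecture) that would together yield the Parabola Conjecture.

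Your proposal is not a proof but a restatement of exactly this heuristic strategy, and you correctly identify its gaps. The $d_2$ argument you sketch --- ``the only way for $h_{i,0}$ to die in filtration consistent with the May--Ravenel analysis is via a $d_2$'' --- is not a proof: compatibility with a map of spectral sequences constrains but does not determine differentials in the source. The $d_4$ argument via power operations is, as you note, the hard part, and transporting such an argument through $\tmf\wedge Z\simeq k(2)$ or $Z\wedge DZ$ is precisely the kind of step nobody has carried out; this is why the Differentials Conjecture remains a conjecture. Your collapse argument at $E_5$ invokes the parabola structure and vanishing line to rule out further differentials, but the paper explicitly flags (in the discussion after Conjecture~\ref{conj:diffsv2p2}) that differentials \emph{between} parabolas of different masses cannot be excluded by these methods. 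Finally, your own last sentence concedes that ruling out the ``bizarre pattern of later differentials'' requires new input --- which is an acknowledgment that the proposal is incomplete as a proof. In short: you have accurately summarized the conjectural picture the paper lays out, but neither you nor the paper has a proof.
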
 

\begin{rmk}\label{rmk:zeta}
Note that the element $v_2^{-10}\td{h}_{4,1}$ is the image of the element $\zeta_2 \in \pi_{-1}S_{K(2)}$ (see the proof of Theorem~\ref{thm:k2locss}), so the second part of the parabola conjecture predicts that $\zeta_2$ is not in the image of the telescopic homotopy.  Note that this was the basis of Ravenel's initial attempt to disprove the telescope conjecture \cite{Raveneltelescope}.
\end{rmk}

We will now explain why we call Conjecture~\ref{conj:parabola} the ``parabola conjecture.''

\subsection{Unbounded $v_2$-torsion in the $\tmf$-ASS for $Z$}

The key to Mahowald's proof of the telescope conjecture at chromatic level $1$ was his \emph{bounded torsion theorem} \cite{Mahowaldbo}, which states that the $E_2$-page of the $\bo$-ASS for the sphere decomposes into a direct sum of $v_1$-periodic classes, and $v_1^2$-torsion classes.  We will explain how the analogous phenomenon likely fails in the context of the $\tmf$-ASS for $Z$.

We have already seen (Theorem~\ref{thm:MRE1}) that the May-Ravenel $E_1$-page has unbounded $v_2$-torsion.  But we must run some more differentials in the $\tmf$-ASS to relate this unbounded $v_2$-torsion to the kernel of the map (\ref{eq:telmap}).

We will assume the following optimistic conjecture in order to simplify our discussion.

\begin{conjecture}[Torsion Conjecture]\label{conj:btc}
The May-Ravenel spectral sequence collapses at $E_1$ with no hidden $v_2$-extensions.
\end{conjecture}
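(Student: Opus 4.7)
The plan is to attack this conjecture by separating it into two independent stages and exploiting the comparison with the $v_2$-localized picture, which we understand completely. The first stage is to establish collapse at $E_1$; the second is to rule out hidden $v_2$-extensions.

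For the collapse, I would first observe that by Theorem~\ref{thm:k2locss} the $v_2$-localized $\tmf$-ASS collapses, so the $v_2$-localized May-Ravenel spectral sequence must also collapse at $E_1$. Any potential $d_r$ with $r \ge 1$ therefore must target a $v_2$-torsion class, and by Theorem~\ref{thm:MRE1} such classes are precisely the type (II) monomials. A differential lands in type (II) from a class which must either also be type (II) (since types (I') and (I'') become permanent cycles after inverting $v_2$ and are $v_2$-periodic, but their image under $d_r$ must be as well) or could a priori come from a type (I') or (I'') class. One would ring-fence the possibilities by a tridegree analysis in $(s,t,f)$, using the fact that $d_r$ shifts the May-Ravenel filtration degree by $-r$ while increasing cohomological degree by $1$ and preserving internal degree. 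The infinite family structure of type (II) classes (indexed by $i \ge 2$ and the exponents $k_j, l_j$) means that this analysis must be uniform, which is the combinatorial heart of the problem.

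For the absence of hidden $v_2$-extensions, the strategy is to work with explicit cocycle representatives in the cobar complex $C^*_{\sitd(2)}(k(2)_*)$. Since $\sibr(2) = \sitd(2) \otimes \FF_4$ has a completely explicit presentation (\ref{eq:sibrdef}), one could first resolve the $\FF_4$-base-changed problem by exhibiting cochains witnessing the $v_2$-divisibility pattern on type (II) classes, and then Galois descend via $\sitd(2) = \sibr(2)^{\mr{Gal}}$. An alternative, more structural route would be to express the type (II) classes as Massey products of the permanent cycles $\td{h}_{2,1}$, $h_{3,0}$, $h_{3,1}$, $\td{h}_{4,1}$ together with Hopf invariants of the $h_{i,0}^2$, and then propagate $v_2$-multiplications through Massey product juggling formulas.

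The main obstacle is the second stage: hidden $v_2$-extensions could in principle link type (II) families with different index $i$, and neither the localization argument nor any naive bidegree count rules these out. What is really needed is either a structural symmetry of $\sitd(2)$ (perhaps arising from the action of an automorphism of the Morava stabilizer group preserving $G_{48}$) that forces the extension matrix to be block-diagonal with trivial blocks, or a direct comparison with Ravenel's analysis \cite[Ch.~6]{Ravenel} of the cohomology of the full Morava stabilizer algebra $\Sigma(2)$ pulled back along the canonical map $\sitd(2) \to \br{\Sigma}_2$. The authors have verified both statements in the dimensional range relevant to Section~\ref{sec:stemwise} simply because no possibilities exist there (Remark~\ref{rmk:MRE1}); pushing this from a finite-range verification to an all-degrees statement is the genuine content of the conjecture, and it is likely to require a fundamentally more conceptual input than the combinatorial arguments developed so far.
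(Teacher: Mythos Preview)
The statement you are addressing is a \emph{conjecture}, and the paper provides no proof of it. Remark~\ref{rmk:MRE1} says explicitly that the authors do not know whether the May-Ravenel spectral sequence has differentials or hidden $v_2$-extensions beyond the low-dimensional range where none are possible for degree reasons; the conjecture is stated in Section~\ref{sec:telescope} only as a simplifying hypothesis for the speculative discussion of the parabola conjecture. There is therefore no proof in the paper to compare your proposal against --- you are sketching an approach to an open problem, and your final paragraph correctly acknowledges that the argument is incomplete.

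That said, your outline contains a substantive error that would derail the first stage. You assert that type (I'') classes ``become permanent cycles after inverting $v_2$ and are $v_2$-periodic,'' but this is false: the type (I'') monomials of Theorem~\ref{thm:MRE1} carry $v_2$-exponent bounded by $2^{i+1}$, so they are $v_2$-torsion and vanish upon localization. Only the type (I') classes survive $v_2$-inversion. Consequently the collapse of the $v_2$-localized spectral sequence constrains only differentials among type (I') classes; it says nothing about differentials supported on type (I'') or (II), nor about differentials landing in type (I''). The possible differential $d_4^{MR}(h_{5,0}^2) \overset{?}{=} v_2^{14}\td{h}_{2,1}h_{3,0}^2$ flagged in Remark~\ref{rmk:MRE1} is exactly of this shape --- a type (I'') source hitting a type (I'') target, invisible to the localized spectral sequence --- and your tridegree analysis would have to rule it out without any help from localization. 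Similarly, the hidden extension $v_2^{16}h_{3,0}^4 \overset{?}{=} v_2^{14}\td{h}_{2,1}h_{3,0}^2 h_{3,1}$ mentioned there links two type (I'') families, not a type (II) to another type (II), so your second-stage plan is aimed at the wrong target.
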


Then $H^{*,*}(\mc{C})$ has basis: 
\begin{align*}
\mr{(I')} & \quad v_2^m h^{\bar{\epsilon}_3}_{3,0}\td{h}_{2,1}^{\epsilon_2} h^{\epsilon_3}_{3,1} \td{h}^{\epsilon_4}_{4,1}, 
\shortintertext{\begin{flushright}
$m \ge 0; \: \epsilon_j, \bar{\epsilon}_j \in \{0,1\},$
\end{flushright}}
\mr{(I'')} & \quad v_2^{<2^{i+1}}h^{\bar{\epsilon}_3}_{3,0} x_i^{k_i+1} x_{i+1}^{k_{i+1}} x_{i+2}^{k_{i+2}} \cdots \td{h}_{2,1}^{\epsilon_2} h^{\epsilon_3}_{3,1} \td{h}^{\epsilon_4}_{4,1} h^{\epsilon_{i+3}}_{i+3,1} \cdots,  
\shortintertext{\begin{flushright}
$i \ge 3; \: k_j \ge 0; \: \epsilon_j, \bar{\epsilon}_j \in \{0,1\},$
\end{flushright}}
\mr{(II)} & \quad h^{\bar{\epsilon}_3}_{3,0}h^{\bar{\epsilon}_{i+3}}_{i+3,0}h^{\bar{\epsilon}_{i+4}}_{i+4,0} \cdots x_3^{k_3}x_4^{k_4} \cdots   \td{h}^{\epsilon_2}_{2,1} \cdots h_{i-1,1}^{\epsilon_{i-1}}h_{i,1}^{l_i+2} h^{l_{i+1}}_{i+1,1}\cdots,
\shortintertext{\begin{flushright}
$i \ge 2; \: k_j, l_j \ge 0; \: \epsilon_j, \bar{\epsilon}_j \in \{0,1\}$.
\end{flushright}}
\end{align*}
The long exact sequence (\ref{eq:LES}) implies that the unbounded $v_2$-torsion in $\E{\tmf}{}^{*,*}_2(Z)$ arises from the terms $\mr{(I')}$ and $\mr{(I'')}$ above.  Since the terms $\mr{(II)}$ above, as well as $H^{*,*}(V)$ are $v_2^1$-torsion, the elements of $\E{\tmf}{}^{*,*}_2(Z)$ not mapping to terms of the form $\mr{(I')}$ or $\mr{(I'')}$ are at most $v_2^2$-torsion.

The $d_4$-differentials of the Differentials Conjecture~\ref{conj:diffs} suggest the following analogous conjecture for the $\tmf$-ASS.

\begin{conjecture}[Differentials Conjecture, v2, part 1]\label{conj:diffsv2}
In the $\tmf$-ASS for $Z$ there are differentials
\begin{multline*}
d_3(v_2^{m}h^{\bar{\epsilon}_3}_{3,0} x_i^{k_i+1} x_{i+1}^{k_{i+1}} x_{i+2}^{\br{\epsilon}_{i+2}} x_{i+3}^{\br{\epsilon}_{i+3}} \cdots 
x_{l-1}^{k_{l-1}}x_{l}^{k_{l}} \cdots
\td{h}_{2,1}^{\epsilon_2} h^{\epsilon_3}_{3,1} \td{h}^{\epsilon_4}_{4,1} h_{l,1} h^{\epsilon_{l+1}}_{l+1,1} h^{\epsilon_{l+2}}_{l+2,1} \cdots) \\
= v_2^{m+1}h^{\bar{\epsilon}_3}_{3,0} x_i^{k_i+1} x_{i+1}^{k_{i+1}} x_{i+2}^{\br{\epsilon}_{i+2}} x_{i+3}^{\br{\epsilon}_{i+3}} \cdots 
x_{l-1}^{k_{l-1}+2}x_{l}^{k_{l}} \cdots
\td{h}_{2,1}^{\epsilon_2} h^{\epsilon_3}_{3,1} \td{h}^{\epsilon_4}_{4,1}  h^{\epsilon_{l+1}}_{l+1,1} h^{\epsilon_{l+2}}_{l+2,1} \cdots \\
+ \cdots
\end{multline*}
for $i \ge 3$, $l \ge i+3$, $m < 2^{i+1}-1$, $k_j \ge 0$, and $\epsilon_j, \bar{\epsilon}_j \in \{0,1\}$.  
\end{conjecture}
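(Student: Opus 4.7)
The plan is to lift the conjectural localized Adams differentials $d_4(h_{l,1}) = v_2 h_{l-1,0}^4$ of Conjecture~\ref{conj:diffs} to the unlocalized $\tmf$-ASS, using power operation arguments analogous to those developed by Ravenel \cite{Raveneltelescope} and Mahowald-Ravenel-Shick \cite{MahowaldRavenelShick} in the $y(n)$ setting. The algebraic shadow of these differentials is already visible in Theorem~\ref{thm:goodcomplex}: the defining relation $\td{t}_k^4 = \text{(positive Adams filtration terms)}$ in $\sitd(2)$ strongly suggests that $[\td{t}_l^4]$ and some $v_2$-multiple of $[\td{t}_{l-1}]^4$ are cohomologously linked in $\mc{C}^{*,*}(Z)$, which is exactly the shape of the proposed $d_3$. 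Note that the filtration jump from $h_{l,1}$ (cohomological degree $1$) to $v_2 \cdot x_{l-1}^2 = v_2 h_{l-1,0}^4$ (cohomological degree $4$) is exactly $3$, as required.

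First, I would settle the conjecture for small values of $l$ (say $l = 3, 4, 5$) by extending the stemwise computation of Section~\ref{sec:stemwise} well past stem $40$ using Bruner's Ext program \cite{Bruner}. Within the range where Conjecture~\ref{conj:btc} can be verified by hand, the dichotomy principle (Theorem~\ref{thm:dichotomy}), the computation of $\E{tmf}{}^{*,*}_{1+\epsilon}(Z)$ from $H^{*,*}(\mc{C}) \oplus H^{*,*}(V)$, and the known structure of $\Ext_A^{*,*}(Z)$ should pin down not only the existence of the conjectured differentials but also the precise form of the ``$+ \cdots$'' correction terms in each bidegree. This mimics the technique by which Propositions~\ref{prop:classicalASS} and \ref{prop:higherdiffs} were proved.

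Second, to bootstrap from low $l$ to all $l$, I would use the $E_\infty$-structure on $\tmf$ to obtain power operations on the $\tmf$-resolution tower, and argue that applying the total squaring operation to the (already-established) $d_3$-differential on $h_{l-1,1}$ yields the $d_3$-differential on $h_{l,1}$, after tracking how squaring interacts with the Adams filtration and absorbing the appropriate $v_2^{2^{l-1}-1}$ factor dictated by the $\sitd(2)$-relations. The ``$+ \cdots$'' terms should fall out of a Cartan-type formula for these operations acting on products in the cobar complex of $\sitd(2)$.

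The principal obstacle is that $Z$ is not a ring spectrum, so there is no honest Leibniz rule for power operations on $\E{\tmf}{}^{*,*}_r(Z)$. A rigorous argument will instead need to control how the $\sitd(2)$-comodule structure on $k(2)_*Z \cong k(2)_*$ interacts with Steenrod-type operations on the cobar complex, likely by adapting Rezk- or Mathew-style machinery for $\tmf$-power operations to the non-multiplicative module setting. A further complication is that the statement is already implicitly conditional on Conjecture~\ref{conj:btc} (see Remark~\ref{rmk:MRE1}), so an unconditional proof will require first settling the Torsion Conjecture, or at least being robust against potential hidden $v_2$-extensions in the May-Ravenel spectral sequence. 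I expect the power operation bootstrap to be the main technical heart of the problem --- even in the multiplicative $y(n)$ setting the corresponding induction is quite delicate.
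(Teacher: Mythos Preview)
This statement is a \emph{conjecture} in the paper, not a theorem; the paper offers no proof and none is expected. The only motivation the authors give is one sentence: ``The $d_4$-differentials of the Differentials Conjecture~\ref{conj:diffs} suggest the following analogous conjecture for the $\tmf$-ASS.'' That is, the paper simply transports the conjectural $d_4(h_{l,1}) = v_2 h_{l-1,0}^4$ from the localized classical ASS to the $\tmf$-ASS by matching filtration degrees, and then proceeds to explore its consequences conditionally. There is nothing for your proposal to be compared against.

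That said, your proposal is a reasonable outline of a research program, and you have correctly identified the two genuine obstacles the paper itself flags elsewhere: the failure of $Z$ to be a ring spectrum (so no Leibniz rule, no honest power operations on $\E{\tmf}{}_r(Z)$), and the dependence on Conjecture~\ref{conj:btc}. Your idea of verifying small $l$ by extending the stemwise computations is sound in principle, but note that the first instance $l=6$ already lives in stem $|h_{6,1}| - 1 = 125$, far beyond any range one can realistically compute with current tools. The power-operation bootstrap you sketch is exactly the mechanism Ravenel and Mahowald--Ravenel--Shick invoke, but as the paper emphasizes (and as you acknowledge), even in their multiplicative $y(n)$ setting that argument remains incomplete. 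Your appeal to the relation $\td{t}_k^4 = (\text{positive Adams filtration})$ in $\sitd(2)$ is suggestive but does not by itself produce a $d_3$; it only constrains the algebra at the $E_1$-level, whereas the conjectured differential is a genuinely higher phenomenon requiring topological input. In short: your proposal is a fair summary of what an attack on this open problem would look like, but it is not a proof, and the paper does not claim one either.
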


After running these $d_3$-differentials the only remaining classes in the $\tmf$-ASS for $Z$ are either $v_2^2$-torsion, or of the form
 \begin{align*}
 \mr{(I')} & \quad v_2^m h^{\bar{\epsilon}_3}_{3,0}\td{h}_{2,1}^{\epsilon_2} h^{\epsilon_3}_{3,1} \td{h}^{\epsilon_4}_{4,1}, 
 \shortintertext{\begin{flushright}
 $m \ge 0; \: \epsilon_j, \bar{\epsilon}_j \in \{0,1\},$
 \end{flushright}}
 \mr{(I''')} & \quad v_2^{<2^{i+1}} \td{h}_{2,1}^{\epsilon_1} h^{\epsilon_2}_{3,0} h^{\epsilon_3}_{3,1} \td{h}^{\epsilon_4}_{4,1} x_i^{k_i+1} x_{i+1}^{k_{i+1}} x_{i+2}^{\br{\epsilon}_{i+2}} x_{i+3}^{\br{\epsilon}_{i+3}} \cdots,  
 \shortintertext{\begin{flushright}
 $i \ge 3; \: k_j \ge 0; \: \epsilon_j, \bar{\epsilon}_j \in \{0,1\}.$
 \end{flushright}}
\end{align*}

\subsection{Parabolas}

In the $\tmf$-ASS for $Z$, we have differentials (Theorem~\ref{thm:diffsquotient})
$$ d_1(h_{i+2,1}) = v^{2^{i+1}}_2x_{i} $$
whereas in the Adams spectral sequence there are conjecturally differentials (Conjecture~\ref{conj:diffs})
$$ d_4(h_{i+2,1}) = v_2x^2_{i+1}. $$
This suggests the following.

\begin{conjecture}[Extension Conjecture]
In the $\tmf$-ASS there are hidden extensions
$$ v_2^{2^{i+1}}x_i = v_2x^2_{i+1}. $$
\end{conjecture}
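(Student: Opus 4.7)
The plan is to derive the hidden extension by linking the two differentials $d_1(h_{i+2,1}) = v_2^{2^{i+1}} x_i$ in the $\tmf$-ASS (Theorem~\ref{thm:diffsquotient}) and the conjectured $d_4(h_{i+2,1}) = v_2 x_{i+1}^2$ in the Adams spectral sequence (Conjecture~\ref{conj:diffs}): these are not independent, since both record that a single cobar lift of $h_{i+2,1}$ bounds in two compatible ways. I would treat this lift as the witness of a Toda bracket and apply a Moss-type convergence theorem in each of the two spectral sequences to extract the common bracket in $\pi_* Z$, from which the hidden $v_2$-extension can be read off.

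Concretely, the first step is to choose an explicit cobar $1$-cochain $\eta$ in $\sitd(2)$ lifting $h_{i+2,1}$; by Theorem~\ref{thm:goodcomplex} this is a polynomial in the $\td{t}_k$'s whose coboundary is $v_2^{2^{i+1}} x_i$ modulo higher May--Ravenel filtration. Pushing $\eta$ forward along the Hopf-algebra map $\sitd(2) \to A_*$ yields a cobar representative whose $A$-coboundary, by Conjecture~\ref{conj:diffs}, is $v_2 x_{i+1}^2$ modulo Adams filtration $>5$. Moss's theorem then identifies $v_2 \cdot [x_{i+1}^2]$ as (an element of) the Massey product $\langle v_2, x_{i+1}^2, -\rangle$ in $\pi_*Z$, while the agathokakological refinement of Moss, which I expect to set up using the tower \eqref{eq:akfilt}, identifies the same bracket with the class detected by $v_2^{2^{i+1}} x_i$ at the $\tmf$-filtration level. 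Equating the two descriptions yields the stated hidden $v_2$-extension.

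The main obstacles are threefold. First, one must check that the Massey-product indeterminacies in each spectral sequence are contained inside the other's filtration, so that the bracket is truly well-defined and matches under the comparison; the vanishing line of Theorem~\ref{thm:vanishingline} together with the explicit shape of $\E{MR}{}_1(\sitd(2))$ from Theorem~\ref{thm:MRE1} should suffice in a large range. Second, one must exclude alternative receivers of $v_2 \cdot [x_{i+1}^2]$ in intermediate $\tmf$-filtration; assuming Conjecture~\ref{conj:btc} and the $d_3$-structure of Conjecture~\ref{conj:diffsv2}, this reduces to a finite bookkeeping exercise on the $\E{\tmf}{}_4$-page. The real difficulty, however, is that the whole argument is predicated on Conjecture~\ref{conj:diffs}: establishing the Adams $d_4$-differential unconditionally at $p=2$ seems to require an extended-power-operations argument on $Z$ adapted from Ravenel's treatment of $V(1)$ at $p\ge 5$, and this is precisely the point at which the $2$-primary setting introduces genuinely new complications, notably the failure of $Z$ to be a ring spectrum and the absence of a straightforward $H_\infty$-structure with which to manipulate $h_{i+2,1}$.
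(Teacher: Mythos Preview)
The statement you are attempting to prove is a \emph{conjecture} in the paper, not a theorem: the paper offers no proof of the Extension Conjecture. Its entire ``argument'' is the two-line heuristic immediately preceding the statement, namely that $h_{i+2,1}$ supports $d_1(h_{i+2,1}) = v_2^{2^{i+1}} x_i$ in the $\tmf$-ASS (Theorem~\ref{thm:diffsquotient}) while it conjecturally supports $d_4(h_{i+2,1}) = v_2 x_{i+1}^2$ in the localized ASS (Conjecture~\ref{conj:diffs}), and the phrase ``This suggests the following.'' You have correctly identified this heuristic as the starting point.

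Your proposal is therefore not competing against a proof in the paper but rather sketching a possible route toward one. As you yourself flag, the argument is conditional on Conjecture~\ref{conj:diffs}, so even if every step went through it would not be an unconditional proof. Beyond that, two technical points deserve caution. First, there is no Hopf-algebra map $\sitd(2)\to A_*$ in the paper; the comparison between the good complex and $\Ext_{A_*}$ runs through the algebraic AKSS and the $\tmf$-MSS, not through a direct map of Hopf algebras, so ``pushing $\eta$ forward'' would need to be reformulated. Second, the Moss-type convergence theorem you invoke for the AKSS tower \eqref{eq:akfilt} does not exist in the paper and would itself have to be established; since $Z$ is not a ring spectrum (as the paper emphasizes), the usual multiplicative machinery underlying Moss's theorem is not directly available. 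These are not fatal to the strategy, but they mean the outline is considerably more speculative than a proof sketch for a proven statement would be.
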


This conjecture predicts that the $v_2$-torsion families of type $\mr{(I''')}$ of Conjecture~\ref{conj:diffsv2} in the $\tmf$-ASS for $Z$ actually assemble via an infinite sequence of hidden extensions to form $v_2$-periodic families in $\pi_*(Z)$.  Figure~\ref{fig:parabola} shows one such $v_2$-periodic family.

\begin{figure}
\centering
\includegraphics[width=1\linewidth]{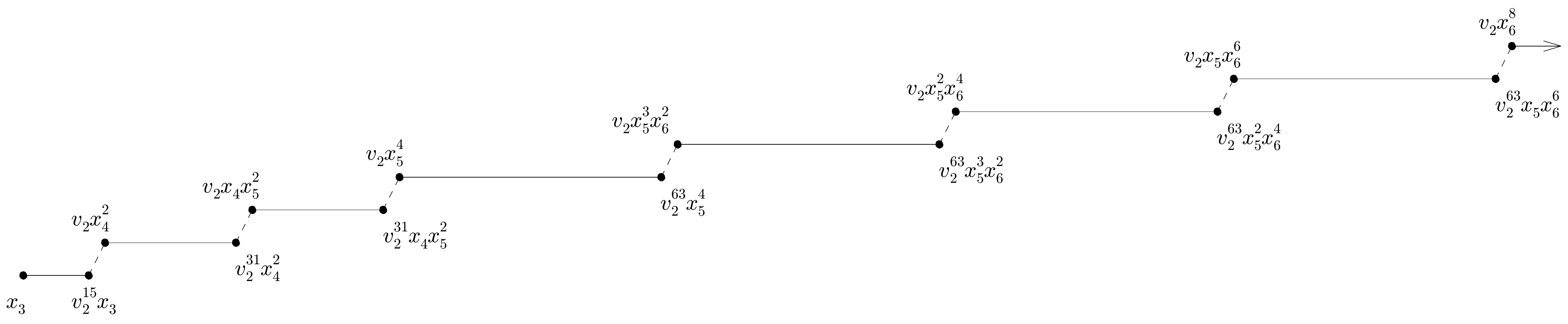}
\caption{The $v_2$-periodic family supported by $x_3$}
\label{fig:parabola}
\end{figure}
 
If we assign a mass to $x_i$ via
$$ M(x_i) := \frac{1}{2^{i-2}} $$
and, more generally for monomials
$$ M(v_2^m x_3^{k_3}x_4^{k_4} \cdots) := k_3M(x_3) + k_4M(x_4) + \cdots $$
then one finds that all of the terms of the form
$$ v_2^{m} x_i^{k_i+1}x_{i+1}^{k_{i+1}}x_{i+2}^{\bar{\epsilon}_{i+2}} x_{i+3}^{\bar{\epsilon}_{i+3}} \cdots $$
(for $i \ge 3$, $0 < m < 2^{i+1}$, $k_j \ge 0$, and $\bar{\epsilon}_j \in \{0,1\}$) lie in the same $v_2$-periodic family if and only if they have the same mass.  

Each of these $v_2$-periodic families begins with a term of the form
$$ x_3^{k_3} x_4^{\bar{\epsilon}_4} x_5^{\bar{\epsilon}_5} \cdots $$
(with $k_3 \ge 0$ and $\bar{\epsilon}_j \in \{0,1\}$) with corresponding mass
$$ M = \frac{k_3}{2}+\frac{\bar{\epsilon}_4}{4}+\frac{\bar{\epsilon}_5}{8} + \cdots. $$
Thus for each monomial
$$ \td{h}^{\epsilon_1}_{2,1}h^{\epsilon_2}_{3,0} h^{\epsilon_3}_{3,1}\td{h}_{4,1}^{\epsilon_4} \in E[\td{h}_{2,1}, h_{3,0}, h_{3,1}, \td{h}_{4,1}] $$
and each mass
$M \in \ZZ[1/2]^{>0}$ there is 
a corresponding non-trivial monomial
$$ x_3^{k_3} x_4^{\bar{\epsilon}_4} x_5^{\bar{\epsilon}_5} \cdots \in \FF_2[x_3]\otimes E[x_4, x_5, x_6, \cdots] $$
such that 
$$
\td{h}^{\epsilon_1}_{2,1}h^{\epsilon_2}_{3,0} h^{\epsilon_3}_{3,1}\td{h}_{4,1}^{\epsilon_4} x_3^{k_3} x_4^{\bar{\epsilon}_4} x_5^{\bar{\epsilon}_5} \cdots
$$
supports a $v_2$-family with mass $M$. 
For each of these $v_2$-families, the elements
$$ \td{h}^{\epsilon_1}_{2,1}h^{\epsilon_2}_{3,0} h^{\epsilon_3}_{3,1}\td{h}_{4,1}^{\epsilon_4} v_2 x_i^{2^{i-2}M} $$
represent a cofinal collection of elements which lie in the family. 
The elements $v_2 x_i^{2^{i-2}M}$
lie on the (sideways) parabola
\begin{equation}\label{eq:parabola}
 t-n = \frac{4}{M}n^2 - 3n+6
 \end{equation}
in the $(t-n,n)$-plane.  As such, we will refer to these $v_2$-families as $v_2$-\emph{parabolas}.

\subsection{The vanishing line}

Theorem~\ref{thm:MRE1} and Proposition~\ref{prop:evilvanishing} imply the following.

\begin{thm}\label{thm:vanishingline}
In the $\tmf$-ASS for $Z$, we have $\E{\tmf}{}_2^{n,t}(Z) = 0$ for
$$ n > \frac{t-n+12}{11}. $$
\end{thm}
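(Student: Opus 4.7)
The plan is to apply the long exact sequence \eqref{eq:LES} coming from the good/evil decomposition,
\[
 \cdots \to H^{n,t}(V) \to \E{\tmf}{}_2^{n,t}(Z) \to H^{n,t}(\mc{C}) \to H^{n+1,t}(V) \to \cdots,
\]
and reduce the theorem to showing that both $H^{n,t}(V)$ and $H^{n,t}(\mc{C})$ vanish whenever $n > \frac{t-n+12}{11}$. For the evil side this is exactly Proposition~\ref{prop:evilvanishing}, so the only remaining task is the analogous vanishing for the good side.

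For $H^{n,t}(\mc{C})$ I would use the May-Ravenel spectral sequence
\[
\E{MR}{}_1(\sitd(2)) \Rightarrow H^{*,*}(\mc{C}),
\]
whose $E_1$-term is described completely in Theorem~\ref{thm:MRE1}. Since a vanishing statement at $E_1$ passes automatically to $E_\infty$, it suffices to check that every monomial in $\E{MR}{}_1(\sitd(2))$ lies in a bidegree $(t-n, n)$ satisfying $n \le \frac{t-n+12}{11}$, equivalently $12n \le t + 12$. In fact I would establish the stronger inequality $12n \le t$ for every monomial in the ambient polynomial algebra generated by $v_2$, $\td{h}_{2,1}$, and $h_{i,j}$ ($i \ge 3$, $j \in \{0,1\}$), bypassing the basis constraints of Theorem~\ref{thm:MRE1} entirely.

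This last inequality reduces to a short calculation on generators, since $12n - t$ is additive over products:
\[
 v_2 \mapsto -6,\quad \td{h}_{2,1} \mapsto 0,\quad h_{i,0} \mapsto 14-2^{i+1} \le -2,\quad h_{i,1} \mapsto 16-2^{i+2} \le -16 \quad (i \ge 3),
\]
with $\td{h}_{4,1}$ sharing the bidegree of $h_{4,1}$. All values are $\le 0$, so $12n - t \le 0$ on every monomial, a fortiori $12n \le t + 12$. Combined with Proposition~\ref{prop:evilvanishing} and the long exact sequence, this finishes the proof. The only genuine difficulty is upstream of this argument: Theorem~\ref{thm:MRE1} and Proposition~\ref{prop:evilvanishing} are both substantial inputs developed in Sections~\ref{sec:good} and~\ref{sec:agatho}; once they are in hand, the vanishing line is a short formal consequence, with the slope $1/11$ dictated precisely by $\td{h}_{2,1}$, the unique generator whose ratio $n/(t-n)$ saturates the bound.
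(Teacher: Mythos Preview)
Your proposal is correct and matches the paper's argument exactly: the paper's proof is the single sentence ``Theorem~\ref{thm:MRE1} and Proposition~\ref{prop:evilvanishing} imply the following,'' and you have simply unpacked how those two inputs combine via the long exact sequence~\eqref{eq:LES}, together with the elementary slope check on the generators of the Chevallay--Eilenberg complex. Your observation that one does not need the full basis description of Theorem~\ref{thm:MRE1} but only the ambient polynomial algebra (since $\E{MR}{}_1$ is a subquotient of it) is a harmless and slightly cleaner simplification.
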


Unfortunately, Conjecture~\ref{conj:btc} only predicts the bounded $v_2$-torsion in this $E_2$-term is $v_2^2$-torsion.  This means that the $v_2^2$-torsion could in principle assemble (via infinite sequences of hidden extensions) to detect non-trivial $v_2$-periodic families in $\pi_*Z$ which lie along curves with derivatives $\ge 1/12$ in the $(t-n,n)$-plane. Thus Theorem~\ref{thm:vanishingline} is not strong enough to preclude the bounded $v_2^2$-torsion contributing to the homotopy of $\widehat{Z}$.

This $1/11$ vanishing line essentially arises from the element $\td{h}_{2,1} \in H^{*,*}(\mc{C})$.\footnote{If one replaces $Z$ with the Thom spectrum $y(2)$ of \cite{MahowaldRavenelShick}, a similar analysis to Theorem~\ref{thm:vanishingline} easily yields a vanishing line of slope $1/13$.}  However, the results of \cite{AndersonDavis} imply that $\E{ass}{}^{*,*}_2(A_2)$ has a vanishing line of slope $1/13$.
Moreover, the element $h_{2,2}^4$ in the May spectral sequence (corresponding to $\td{h}^4_{2,1} \in H^{*,*}(\mc{C}(Z))$) detects the element $g_2 \in \E{ass}{}_2(S)$.  
The element $g_2$ is not nilpotent \cite{Isaksenstems}, but it detects the element $\bar{\kappa}_2 \in \pi_{44}(S)$ which necessarily is nilpotent by the Nishida nilpotence theorem.  It seems likely this can be used to prove the following, which would imply that the bounded $v_2^2$-torsion cannot contribute to the homotopy of $\widehat{Z}$.

\begin{conjecture}[Vanishing Line Conjecture]\label{conj:vanishingline}
There is an $r$ so that $\E{\tmf}{}^{n,t}_r(Z)$ has a $1/13$ vanishing line.
\end{conjecture}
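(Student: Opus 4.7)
The plan is to attack the $1/13$ bound in two pieces, handling the evil and good parts of $\E{\tmf}{}_r^{*,*}(Z)$ separately via the long exact sequence (\ref{eq:LES}). On the evil side, I would first observe that the $1/11$ bound of Proposition~\ref{prop:evilvanishing} was inherited via the algebraic AKSS for $A_2$ from the $1/11$ bound for $\E{ass}{}_2^{*,*}(A_2)$ in Lemma~\ref{lem:A2vanishing}, which in turn was caused by the single May generator $h_{2,2}$ of Adams slope $1/11$. The results of Anderson-Davis provide a refined $1/13$ vanishing line for $\E{ass}{}_2^{*,*}(A_2)$, effectively because the contribution of $h_{2,2}$ to the vanishing line disappears at a finite page. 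Feeding this sharper bound back into the proof of Proposition~\ref{prop:evilvanishing}, together with the direct observation from Theorem~\ref{thm:HCalg} that every generator of $H^{*,0,*}(\mc{C}_{alg})$ other than $\td{h}_{2,1}$ already has slope $\leq 1/13$, will yield a $1/13$ vanishing line for $H^{*,*}(V)$ at some finite page.

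For the good side, Theorem~\ref{thm:MRE1} identifies $\td{h}_{2,1}$ as the unique generator of $\E{MR}{}_1(\sitd(2))$ lying on the $1/11$ line. Most of the powers $\td{h}_{2,1}^k$ that would otherwise populate this line are already annihilated by the $d_{1+\epsilon}$ differentials recorded in Section~\ref{sec:algAKSS} (for instance $d_{1+\epsilon}(\td{h}_{2,1}^2) = \ev{orange}{21}{3}$), but $\td{h}_{2,1}^4$ does survive to $\E{\tmf}{}_2^{4,48}(Z)$ and corresponds (under the comparison with the classical ASS) to $g_2$, and similarly $\td{h}_{2,1}^{4k}$ corresponds to $g_2^k$. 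The class $g_2$ detects $\bar{\kappa}_2 \in \pi_{44}S$, which is nilpotent by Nishida, so $\bar{\kappa}_2^N = 0$ for some $N$. Pushing this relation through the unit $S \to Z$ and tracking Adams filtrations via the good/evil decomposition should force a nontrivial differential $d_r(\td{h}_{2,1}^{4N+\delta}) \neq 0$ in $\E{\tmf}{}_r(Z)$ for some finite $r$. Combined with $v_2$-linearity and the $d_{1+\epsilon}$-differentials on the low powers, this would truncate the $\td{h}_{2,1}$-tower in $\E{\tmf}{}_r^{*,*}(Z)$ to a finite range, eliminating the $1/11$ obstruction and combining with the evil-side bound to give a genuine $1/13$ vanishing line.

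The main obstacle is effectiveness. Nishida nilpotence asserts $\bar{\kappa}_2^N = 0$ without supplying an explicit bound for $N$ or identifying the Adams page on which $g_2^N$ dies, and even granted such data the differential must be lifted from the classical ASS to the $\tmf$-ASS for $Z$, navigating the Mahowald SS and the agathokakological decomposition. To circumvent this, I would exploit the fact that in $\pi_*\tmf$ the image of the $g_2$-tower is known explicitly and is killed effectively in low dimensions, and then localize along the $\tmf$-module structure on $\tmf \wedge Z \simeq k(2)$; the $v_2$-local bound from Theorem~\ref{thm:k2locss} (a horizontal vanishing line at $E_2$) further constrains how far the $\td{h}_{2,1}$-tower can persist past its first few differentials. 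Once a specific finite $N$ and page $r$ are pinned down from this analysis, the two halves combine to establish the $1/13$ vanishing line for $\E{\tmf}{}_r^{*,*}(Z)$, which, as noted in the paper, precludes the bounded $v_2^2$-torsion from assembling into spurious $v_2$-periodic families in $\pi_*\widehat{Z}$.
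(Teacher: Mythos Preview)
This statement is a \emph{conjecture} in the paper; there is no proof to compare against. The paragraph immediately preceding it sketches exactly the heuristic you reproduce --- Anderson--Davis for a $1/13$ line on $\E{ass}{}_2^{*,*}(A_2)$, and Nishida nilpotence of $\bar{\kappa}_2$ (detected by $g_2 \leftrightarrow \td{h}_{2,1}^4$) to truncate the $\td{h}_{2,1}$-tower --- and then says only ``it seems likely this can be used to prove the following.'' Your outline is thus a faithful expansion of the authors' own intuition, not a new route.

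The gap you flag under ``effectiveness'' is the genuine obstruction, and your proposal does not close it. Nishida gives $\bar{\kappa}_2^N = 0$ in $\pi_*S$ for some unspecified $N$, but $g_2$ is \emph{not} nilpotent in $\E{ass}{}_2(S)$; the vanishing in homotopy only says $g_2^N$ is hit by an Adams differential, and converting that into a $\tmf$-ASS differential on $\td{h}_{2,1}^{4N}$ for $Z$ requires threading the Mahowald spectral sequence and the AKSS with no uniform control on the page $r$. Your proposed workarounds (the $\tmf$-module structure on $k(2)$, the horizontal vanishing line of Theorem~\ref{thm:k2locss}) constrain only the $v_2$-\emph{localized} spectral sequence, not the unlocalized one where the $1/13$ line must live. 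There is also a subtlety on the evil side you pass over: the inductive argument of Proposition~\ref{prop:evilvanishing} bounds $H^{*,*}(V)$ by the \emph{worse} of the bounds for $\E{ass}{}_2(A_2)$ and for $H^{*,0,*}(\mc{C}_{alg})$, and the latter still contains the full tower $\td{h}_{2,1}^k$ on the $1/11$ line --- so sharpening only the Anderson--Davis input does not give $H^{*,*}(V)$ a $1/13$ line at $E_2$, and any improvement again depends on the unresolved good-side argument.
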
 

\subsection{The parabola conjecture}

Assuming all of the conjectures so far are true, the homotopy of $\widehat{Z}$ can only be detected by the $v_2$-periodic elements or the $v_2$-parabolas in $\E{\tmf}{}_4(Z)$.  We therefore are left to consider the possibility of differentials between these families.  The only possibilities are:
\begin{enumerate}
\item differentials between $v_2$-periodic elements,
\item differentials from $v_2$-periodic elements to $v_2$-parabolas,
\item differentials from a $v_2$-parabola of mass $M$ to a $v_2$-parabola of mass $M'$ with $M' > M$.
\end{enumerate}
Differentials of type (1) are ruled out by Theorem~\ref{thm:k2locss}.  Proposition~\ref{prop:higherdiffs} establishes that $\td{h}_{2,1}$, $h_{3,0}$, and $v_2^2h_{3,1}$ are permanent cycles in the $\tmf$-ASS.  While $Z$ is not a ring spectrum, one might nevertheless suspect that the $v_2$-families
$$  v_2^m \td{h}^{\epsilon_1}_{2,1}h^{\epsilon_2}_{3,0} h^{\epsilon_3}_{3,1} $$
cannot support differentials of type (2), and presumably this could be easily established be extending our low dimensional calculations a little further.  

We therefore turn to considering differentials of type (2) involving the element $\td{h}_{4,1}$.  Note that since $v_2^{-10}\td{h}_{4,1}$ detects $\zeta_2 \in \pi_{-1}Z_{E(2)}$, this is equivalent to the question of whether the element $\zeta_2 \in \pi_{-1}Z_{E(2)}$ lifts to $\pi_{-1}\widehat{Z}$ (compare with Remark~\ref{rmk:zeta}).    

We first note that Conjecture~\ref{conj:diffsv2} does not include the differential
$$ d_4 (h_{4,1}) = v_2x_3^2 $$
of Conjecture~\ref{conj:diffs}.  We therefore offer this second installment to Conjecture~\ref{conj:diffs} which does include this differential, and its consequences.

\begin{conjecture}[Differentials conjecture, v2, part 2]\label{conj:diffsv2p2}
In the $\tmf$-ASS, for $m \gg 0$, the $v_2$-families 
$$ v_2^m \td{h}^{\epsilon_1}_{2,1}h^{\epsilon_2}_{3,0} h^{\epsilon_3}_{3,1}\td{h}_{4,1} $$
support differentials which hit the $v_2$-parabolas supported by
$$ \td{h}^{\epsilon_1}_{2,1}h^{\epsilon_2}_{3,0} h^{\epsilon_3}_{3,1} x_3^2 $$
and the $v_2$-parabolas supported by
$$ 
\td{h}^{\epsilon_1}_{2,1}h^{\epsilon_2}_{3,0} h^{\epsilon_3}_{3,1}\td{h}_{4,1} x_3^{k_3} x_4^{\bar{\epsilon}_4} x_5^{\bar{\epsilon}_5} \cdots
$$
support differentials which hit the $v_2$-parabolas
$$ 
\td{h}^{\epsilon_1}_{2,1}h^{\epsilon_2}_{3,0} h^{\epsilon_3}_{3,1} x_3^{k_3+2} x_4^{\bar{\epsilon}_4} x_5^{\bar{\epsilon}_5} \cdots.
$$
\end{conjecture}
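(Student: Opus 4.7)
The plan is to derive these differentials by a two-step comparison: first establish the analogous differentials in the $v_2$-localized classical Adams spectral sequence (Conjecture~\ref{conj:diffs}), then transfer them to the $\tmf$-ASS via the map of Adams towers and the topological AKSS. The motivating observation is that the asserted targets in the $\tmf$-ASS correspond, via the conjectural infinite tower of hidden extensions $v_2^{2^{i+1}} x_i = v_2 x_{i+1}^2$, precisely to the targets of the $d_4$ differentials $d_4(h_{i+2,1}) = v_2 x_{i+1}^2$ in the localized ASS. Thus the content of Conjecture~\ref{conj:diffsv2p2} is really the assertion that the classical $d_4$ on $h_{4,1}$, and its translates after multiplying by monomials in the permanent cycles $\td{h}_{2,1}, h_{3,0}, h_{3,1}$, persists through the coarser $\tmf$-filtration.

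The first and most serious step is to establish the $d_4$ differentials of Conjecture~\ref{conj:diffs}. I would follow Ravenel's extended-powers strategy from \cite{Raveneltelescope}: a class $h_{i+2,1} \in \Ext^{1,*}$ is represented by a secondary cohomology operation on the Milnor generator $\zeta_{i+2}^2$, and its mod-$2$ Dyer-Lashof-type extended power decomposes modulo higher filtration as a product forcing the desired differential on $v_2 x_{i+1}^2$. The obstruction here is that $Z$ is not a ring spectrum, so one must work with the pair $(Z, Z\smsh Z)$ (or exploit that $Z\smsh A_2$ has $H^*$ which is free of rank one over $A(2)$) to make sense of the power operation, and control the resulting Toda bracket indeterminacy.

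The second step is to transfer the localized classical differentials to the $\tmf$-ASS. Here I would use that, after inverting $v_2$, the $\tmf$-ASS and the classical ASS both converge to $\pi_*\widehat{Z}$, and compare them via the unit map $S \to \tmf$ together with the good/evil decomposition (Corollary~\ref{cor:htpysplit}). The key observation is that a $d_4(h_{4,1}) = v_2 x_3^2$ in the classical localized ASS records a relation in $\pi_*\widehat{Z}$ which, read back through the tmf-filtration and the hidden extension $v_2^{16} x_3 = v_2 x_4^2$ (and its iterates), must be realized by a cascade of $\tmf$-ASS differentials of the type asserted. Running these differentials inductively on the mass $M$ of the target parabola, starting from $M = 1$ (the $v_2$-family through $x_3^2$), gives the result for the $\td{h}_{4,1}$-free part, and multiplying by the permanent cycles $\td{h}_{2,1}^{\epsilon_1} h_{3,0}^{\epsilon_2} h_{3,1}^{\epsilon_3}$ yields the general case (using that these classes pull back from $\pi_* S^0$, or can be verified to be permanent by extending the computations of Section~\ref{sec:stemwise}).

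The main obstacles are threefold. First, the extended power argument for a $d_4$ on $h_{4,1}$ on a non-ring spectrum requires essentially new input; Ravenel's original argument at $V(1)$ used the $H_\infty$ structure of the sphere, and adapting it to $Z$ is the crux. Second, the translation step presumes both the Extension Conjecture (the infinite tower of hidden $v_2$-extensions) and the Torsion Conjecture (collapse of the May-Ravenel spectral sequence at $E_1$), without which the ``parabolas'' are not even present at $\E{\tmf}{}_4$ as single families; proving these is at least as hard as the vanishing line improvement of Conjecture~\ref{conj:vanishingline}. Third, even granting all of the above, one must exclude competing differentials landing on the same target---this is where the assumption $m \gg 0$ and a careful inspection of the Adams filtrations of all candidate sources (using Theorem~\ref{thm:vanishingline} and its conjectural slope-$1/13$ refinement) enters. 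Given these obstacles, any proof is really a research program conditional on disproving the telescope conjecture at height $2$, and not a routine verification.
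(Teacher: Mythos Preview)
The statement you are attempting to prove is a \emph{conjecture} in the paper, not a theorem; the paper offers no proof. It is presented as part of a hierarchy of conjectures (Conjectures~\ref{conj:diffs}, \ref{conj:btc}, \ref{conj:diffsv2}, the Extension Conjecture, and \ref{conj:diffsv2p2}) which together would imply the failure of the telescope conjecture at height~2. The paper's only commentary following the statement is to display an illustrative figure and to note that the predicted differentials have unbounded length, leaving open the possibility that they eventually vanish.

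Your final paragraph correctly recognizes this: you concede that any argument is ``a research program conditional on disproving the telescope conjecture at height~2, and not a routine verification.'' That assessment is exactly right, and it matches the paper's stance. Your outline of the obstacles---the absence of a ring structure on $Z$ obstructing Ravenel's extended-power argument, the dependence on the unproven Torsion and Extension Conjectures, and the need to exclude competing differentials---is accurate and aligns with the paper's motivating discussion (the paragraph preceding the conjecture explains that it is meant to incorporate the missing $d_4(h_{4,1}) = v_2 x_3^2$ from Conjecture~\ref{conj:diffs}). But the framing of the earlier paragraphs as a two-step ``plan'' that would ``derive these differentials'' overstates what is actually achievable: each step you describe rests on an open conjecture at least as hard as the statement itself, so the proposal is not a proof sketch but a dependency graph among unproven statements.
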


\begin{figure}
\centering
\includegraphics[width=1\linewidth]{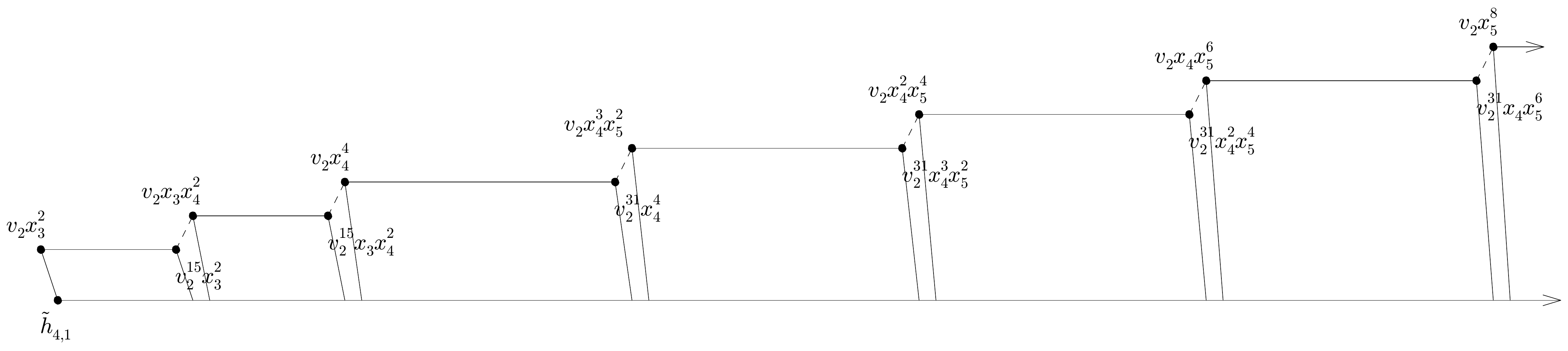}
\caption{The conjectural differentials on $v_2^m \td{h}_{4,1}$.}
\label{fig:zetadiffs}
\end{figure}

Figure~\ref{fig:zetadiffs} shows an example of such a family of differentials.  Note that the lengths of each of the families of differentials predicted by Conjecture~\ref{conj:diffsv2p2} are unbounded.  However, it could be that far enough out in the family, the differentials are all zero.  This could occur, for instance, if another parabola supporting shorter differentials kills the $v_2$-family which is the putative target.  Such a phenomenon would be a means for $\zeta_2$ to exist in $\pi_* \widehat{Z}$ without violating Conjecture~\ref{conj:diffsv2p2}.  

The following version of the parabola conjecture offers a maximally anti-telescope point of view, and is consistent with Conjecture~\ref{conj:parabola}.

\begin{conjecture}[Parabola Conjecture, v2]
The differentials of Conjecture~\ref{conj:diffsv2p2} are non-trivial, and all of the remaining $v_2$-parabolas have elements which are permanent cycles.  Thus the $v_2$-periodic homotopy of $Z$ is generated by the $v_2$-families
$$ v_2^m h^{\bar{\epsilon}_3}_{3,0}\td{h}_{2,1}^{\epsilon_2} h^{\epsilon_3}_{3,1}, \quad m \ge 0, \: {\epsilon}_j \in \{0,1\}, $$
and the $v_2$-parabolas are supported by
$$ h^{\bar{\epsilon}_3}_{3,0}\td{h}_{2,1}^{\epsilon_2} h^{\epsilon_3}_{3,1}
x_3^{\bar{\epsilon}_3}x_4^{\bar{\epsilon}_4} \cdots, \quad \epsilon_j, \bar{\epsilon}_j \in \{0,1\}. $$
\end{conjecture}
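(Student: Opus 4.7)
The plan is to break the conjecture into two parts: (i) prove the non-trivial differentials of Conjecture~\ref{conj:diffsv2p2}, and (ii) show the remaining $v_2$-parabolas contain permanent cycles. The stated description of $v_2$-periodic homotopy then follows by combining these with the parabola enumeration of the previous subsection. Throughout, one must freely invoke the Torsion Conjecture~\ref{conj:btc}, the Differentials Conjecture~\ref{conj:diffsv2}, the Extension Conjecture, and the Vanishing Line Conjecture~\ref{conj:vanishingline}; in this sense the plan is genuinely conditional.

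For the differentials of Conjecture~\ref{conj:diffsv2p2}, the approach is to lift the $d_4(h_{4,1})=v_2 x_3^2$ differential from Conjecture~\ref{conj:diffs} into the $\tmf$-ASS. Following Ravenel's argument in \cite{Raveneltelescope}, that Adams differential arises from an extended power operation applied to the Toda bracket $\langle v_2, v_2, \td{h}_{2,1}^2\rangle$-type relation lurking behind the May-Ravenel differential $d(h_{4,1})=v_2^{2^{i-1}}h_{i-2,0}^2$ of Theorem~\ref{thm:diffsquotient}. The power operation is best set up on an $E_\infty$-recipient such as $\tmf_{K(2)}$ or $E_2$, both of which receive structured maps out of $\tmf$. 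Once the differential is established on the sphere's $\tmf$-ASS, the $\tmf$-module structure on $Z$ (coming from the unit $S^0\to Z$) propagates it to $Z$. Finally, the Extension Conjecture converts the infinite tower of $v_2^2$-torsion monomials predicted by Theorem~\ref{thm:MRE1} into the hidden $v_2$-extensions that make the differentials non-trivial on every monomial $v_2^m\td{h}_{4,1}\cdot \td{h}_{2,1}^{\epsilon_1}h_{3,0}^{\epsilon_2}h_{3,1}^{\epsilon_3}$ and on every parabola in which $\td{h}_{4,1}$ appears as a factor.

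For permanent cycles on the remaining parabolas, supported by monomials
$$ h^{\bar{\epsilon}_3}_{3,0}\td{h}_{2,1}^{\epsilon_2}h^{\epsilon_3}_{3,1}x_3^{\bar{\epsilon}_3}x_4^{\bar{\epsilon}_4}\cdots, $$
the plan is twofold. First, one verifies each such monomial is a $d_1$-cycle in the $\tmf$-ASS using the differentials of Theorem~\ref{thm:diffsquotient} and the identification $x_i=h_{i,0}^2$ from the May-Ravenel spectral sequence, taking care to track the correction terms visible in the proof of Theorem~\ref{thm:MRE1bar}. Second, one rules out higher differentials: by the slope $1/13$ Vanishing Line Conjecture, a fixed parabola of mass $M$ meets the allowed region in only finitely many classes, so almost all classes on the parabola are forced to be permanent cycles, and for the finitely many exceptional classes one can perform an explicit analysis in the spirit of Section~\ref{sec:stemwise}, pushing the ASS for $Z$ out into the stems where those classes live and comparing with the Adams--Novikov computation in Theorem~\ref{thm:k2locss}.

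The main obstacle, and it is an essential one, is producing the initial extended-power-operation differential at the $\tmf$-level with enough control to propagate cleanly and to preclude subsequent rescuing differentials. This is exactly the unresolved point that doomed Ravenel's original disproof attempt in \cite{Raveneltelescope} and that Mahowald--Ravenel--Shick encountered for $y(n)$: currently available techniques can predict such differentials and verify their necessity in the May-Ravenel $E_1$-page, but cannot rule out a bizarre cascade of compensating differentials between $v_2$-parabolas of different masses. In our setting this ambiguity is localized to whether $\zeta_2\in\pi_{-1}Z_{E(2)}$ lifts to $\pi_{-1}\widehat{Z}$, and carrying out the program above in full would therefore be of essentially the same difficulty as settling the height $2$ telescope conjecture itself.
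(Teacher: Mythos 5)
There is a genuine gap here, and in fact there has to be: the statement you are addressing is stated in the paper as a \emph{conjecture} (the ``maximally anti-telescope'' scenario), the paper offers no proof of it, and its truth is explicitly presented as being entangled with the height~$2$ telescope conjecture itself. Your proposal does not close this gap --- it is a conditional program resting on Conjecture~\ref{conj:btc}, Conjecture~\ref{conj:diffsv2}, the Extension Conjecture, Conjecture~\ref{conj:vanishingline}, and ultimately on Conjecture~\ref{conj:diffsv2p2}, which is precisely the content one would need to establish --- and you candidly concede in your final paragraph that the essential step (producing and controlling the extended-power differential and excluding compensating differentials between parabolas) is beyond current technique. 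That concession is correct, but it means what you have written is a restatement of the conjectural landscape of Section~\ref{sec:telescope}, not a proof.

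Two specific steps in your sketch would also fail as stated. First, the propagation of a power-operation differential ``via the $\tmf$-module structure on $Z$ coming from the unit $S^0\to Z$'' is not available: $Z$ is a finite complex, not a ring spectrum or a $\tmf$-module, and the paper stresses this (e.g.\ $\td{h}_{2,1}$ is a permanent cycle while $\td{h}_{2,1}^2$ supports a differential in the topological AKSS), so Ravenel's extended-power argument for $V(1)$ at large primes does not transfer by naturality in any obvious way; the inclusion of the bottom cell is not a unit for a multiplication. Second, your use of the slope~$1/13$ vanishing line to force almost all classes on a fixed parabola to be permanent cycles is inverted: classes on a $v_2$-parabola of mass $M$ satisfy $t-n = \tfrac{4}{M}n^2-3n+6$, so their filtration grows only like $\sqrt{t-n}$ and they eventually lie \emph{below} any straight vanishing line; the line therefore imposes no constraint on them in large stems. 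In the paper the conjectural $1/13$ line is used only to prevent the bounded $v_2^2$-torsion from assembling into $v_2$-periodic families, not to show parabola classes survive. The remaining, and decisive, obstruction is exactly the one the paper identifies: a pattern of differentials from parabolas of smaller mass to parabolas of larger mass (or differentials killing the putative targets of the $\td{h}_{4,1}$-family differentials) cannot currently be ruled out, and resolving that ambiguity --- equivalently, deciding whether $\zeta_2$ lifts to $\pi_{-1}\widehat{Z}$ --- is of essentially the same difficulty as the telescope conjecture at height~$2$.
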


\begin{rmk}
Recent work of Carmeli-Schlank-Yanovski \cite{CSY} gives some circumstantial evidence that it could be the case that $\zeta_2 \in \pi_*Z_{E(2)}$ lifts to an element of $\pi_*\widehat{Z}$.  If this turns out to be true, then it flies in the face of the conventional wisdom on the subject, but it does not seem to necessarily force the telescope conjecture to be true.  Rather, it is totally possible that a weak form of the parabola conjecture is true, where the map
$$ \pi_*\widehat{Z} \rightarrow \pi_*Z_{E(2)} $$
is surjective with non-trivial kernel generated by a portion of the $v_2$-parabolas.
\end{rmk}


\appendix
\section{$A(2)$ as a module over the Steenrod algebra}\label{apx:data}

Here, we describe the $A$-module structure on $A(2)$ resulting from \cite[p. 30, Chapter III]{Roth} and present it as a definition file for Bruner's program
\cite{Bruner}. The definition file is a text file, where the first line is an integer $n$ which records the dimension of the $A$-module as an $\FF_2$-vector space. We should then interpret that we are given an ordered basis  $g_0, \ldots, g_{n-1}$. The second line of the text file is an ordered list of integers $d_0, \ldots, d_{n-1}$, where $d_i$ is the internal degree of $g_i$. For $A(2)$, the first two lines of Bruner's definition file reads as:
\begin{verbatim}
64 

0 1 2 3 3 4 4 5 5 6 6 6 7 7 7 7 8 8 8 9 9 9 9 10 10 10 10 10 11 11 11 
11 12 12 12 12 13 13 13 13 13 14 14 14 14 15 15 15 16 16 16 16 17 17 
17 18 18 19 19 20 20 21 22 23
\end{verbatim}

Every subsequent line in the text file describes a nontrivial action of some $Sq^k$ on some generator $g_i$. For example, if
\[Sq^k (g_i) = g_{j_1} + \ldots + g_{j_l} \]
we would encode this fact by writing the line
\[ i \ k \ l \ j_1 \  \ldots \ j_l \]
followed by a line break. Actions which are not indicated by such data are assumed to be trivial.

\newpage
\begin{multicols}{3}
\begin{small}
\begin{verbatim}


0 1 2 3 
3 4 4 5 
5 6 6 6 
7 7 7 7 
8 8 8 9
9 9 9 10 
10 10 10 10 
11 11 11 11 
12 12 12 12 
13 13 13 13 
13 14 14 14 
14 15 15 15 
16 16 16 16 
17 17 17 18 
18 19 19 20 
20 21 22 23

0 1 1 1
0 2 1 2
0 3 1 3
0 4 1 5
0 5 1 7
0 6 1 9
0 7 1 12
0 10 1 23
0 12 1 32
0 13 1 36
0 14 1 41
0 20 2 59 60
0 21 1 61

1 2 2 3 4
1 3 1 6 
1 4 2 7 8
1 5 1 10
1 6 2 12 13
1 7 1 16
1 8 1 19
1 9 1 23
1 12 1 36
1 14 1 45
1 15 1 48
1 20 1 61
1 22 1 63

2 1 1 3
2 2 1 6
2 4 3 9 10 11
2 5 2 12 14
2 6 2 16 17
2 7 1 20
2 8 1 24
2 9 1 28
2 10 1 32
2 11 1 36
2 12 2 41 42
2 14 1 49
2 15 1 52
2 16 1 55
2 18 2 59 60
2 19 1 61

3 2 1 8
3 3 1 10
3 4 2 12 14
3 6 3 19 20 21
3 7 2 23 25
3 8 2 28 29
3 9 1 33
3 10 2 36 37
3 11 1 41
3 12 1 45
3 13 1 48
3 20 1 63

4 1 1 6
4 2 1 8
4 3 1 10
4 4 2 13 15
4 5 2 16 18
4 6 2 19 22
4 7 2 23 26
4 8 1 30
4 9 1 34
4 10 1 38
4 11 1 42
4 12 1 45
4 13 1 48
4 14 1 52
4 16 1 57
4 17 1 59
4 18 1 61
4 20 1 63

5 1 1 7
5 2 2 9 10
5 3 1 12
5 4 1 17
5 5 1 20
5 6 2 23 25
5 8 2 34 35
5 9 1 39
5 10 2 42 43
5 11 1 46
5 12 2 48 49
5 13 1 52
5 16 1 59
5 18 1 62
5 19 1 63

6 2 1 10
6 4 2 16 18
6 6 3 23 26 27
6 7 1 31
6 8 2 34 35
6 9 1 39
6 10 2 42 43
6 11 1 46
6 12 2 48 49
6 13 1 52
6 16 1 59
6 18 1 62
6 19 1 63

7 2 2 12 13
7 3 1 16
7 4 2 19 20
7 5 1 23
7 6 1 29
7 7 1 33
7 8 1 39

8 1 1 10
8 4 3 19 21 22
8 5 3 23 25 26
8 6 2 29 31
8 7 1 33
8 8 4 37 38 39 40
8 9 3 41 42 44
8 10 3 45 46 47
8 11 2 48 50
8 12 2 52 53
8 13 1 55
8 14 1 57
8 15 1 59
8 18 1 63

9 1 1 12
9 2 1 16
9 4 2 23 24
9 5 1 28
9 6 2 32 33
9 7 1 36
9 8 3 41 42 43
9 9 1 46

10 4 3 23 25 26
10 6 2 33 35
10 7 1 39
10 8 3 41 42 44
10 10 4 48 49 50 51
10 11 2 52 54
10 12 2 55 56
10 13 1 58
10 14 2 59 60
10 15 1 61
10 16 1 62
10 17 1 63

11 1 1 14
11 2 1 17
11 3 1 20
11 4 2 24 27
11 5 2 28 31
11 6 2 32 35
11 7 2 36 39
11 8 2 42 43
11 9 1 46
11 10 1 48
11 12 1 55
11 14 2 59 60
11 15 1 61
11 16 1 62
11 17 1 63

12 2 1 19
12 3 1 23
12 4 1 28
12 6 2 36 37
12 7 1 41
12 8 2 45 46
12 9 1 48
12 10 1 52

13 1 1 16
13 2 1 19
13 3 1 23
13 4 2 29 30
13 5 2 33 34
13 6 2 38 39
13 7 1 42
13 8 2 46 47
13 9 1 50
13 10 1 54

14 2 2 20 21
14 3 1 25
14 4 3 28 29 31
14 5 1 33
14 6 3 36 37 39
14 7 1 41
14 8 2 45 46
14 9 1 48
14 10 1 52

15 1 1 18
15 2 1 22
15 3 1 26
15 4 1 30
15 5 1 34
15 6 1 38
15 7 1 42
15 10 1 52
15 12 1 57
15 13 1 59
15 14 1 61

16 2 1 23
16 4 2 33 34
16 6 2 42 43
16 7 1 46
16 8 2 49 50
16 9 1 52
16 10 1 56
16 11 1 58

17 1 1 20
17 2 1 25
17 4 3 32 33 35
17 5 2 36 39
17 6 1 41
17 8 1 51
17 9 1 54
17 10 1 56
17 11 1 58
17 12 1 59
17 14 1 62
17 15 1 63

18 2 2 26 27
18 3 1 31
18 4 2 34 35
18 5 1 39
18 6 2 42 43
18 7 1 46
18 8 1 49
18 9 1 52
18 12 1 59
18 14 1 62
18 15 1 63

19 1 1 23
19 4 2 37 38
19 5 2 41 42
19 6 2 45 46
19 7 1 48
19 8 1 53
19 9 1 55
19 10 2 57 58
19 11 1 59
19 12 1 61

20 2 1 29
20 3 1 33
20 4 2 36 39
20 6 1 45
20 7 1 48
20 8 1 54
20 10 1 58

21 1 1 25
21 2 1 29
21 3 1 33
21 4 2 37 40
21 5 2 41 44
21 6 2 45 47
21 7 2 48 50
21 8 2 52 53
21 9 1 55
21 10 1 57
21 11 1 59
21 12 1 61

22 1 1 26
22 2 1 31
22 4 3 38 39 40
22 5 2 42 44
22 6 2 46 47
22 7 1 50
22 8 1 53
22 9 1 55
22 10 1 57
22 11 1 59
22 12 1 61

23 4 2 41 42
23 6 2 48 49
23 7 1 52
23 8 1 55
23 10 2 59 60
23 11 1 61
23 12 1 62
23 13 1 63

24 1 1 28
24 2 2 32 33
24 3 1 36
24 4 1 43
24 5 1 46
24 6 2 48 49
24 7 1 52
24 8 1 56
24 9 1 58
24 12 1 62
24 13 1 63

25 2 1 33
25 4 2 41 44
25 6 3 48 50 51
25 7 1 54
25 8 2 55 56
25 9 1 58
25 10 2 59 60
25 11 1 61
25 12 1 62
25 13 1 63

26 2 1 35
26 3 1 39
26 4 2 42 44
26 6 3 49 50 51
26 7 2 52 54
26 8 2 55 56
26 9 1 58
26 10 2 59 60
26 11 1 61
26 12 1 62
26 13 1 63

27 1 1 31
27 2 1 35
27 3 1 39
27 4 1 43
27 5 1 46
27 6 1 49
27 7 1 52
27 12 1 62
27 13 1 63

28 2 2 36 37
28 3 1 41
28 4 2 45 46
28 5 1 48
28 6 1 52
28 8 1 58
28 12 1 63

29 1 1 33
29 4 2 45 47
29 5 2 48 50
29 6 1 54
29 8 2 57 58
29 9 1 59
29 10 1 61
29 12 1 63

30 1 1 34
30 2 2 38 39
30 3 1 42
30 4 1 47
30 5 1 50
30 6 2 52 54
30 12 1 63

31 2 1 39
31 4 1 46
31 6 1 52
31 12 1 63

32 1 1 36
32 2 1 41
32 4 2 48 49
32 5 1 52
32 8 1 60
32 9 1 61
32 10 1 62
32 11 1 63

33 4 2 48 50
33 6 1 56
33 7 1 58
33 8 1 59
33 10 1 62
33 11 1 63

34 2 2 42 43
34 3 1 46
34 4 2 49 50
34 5 1 52
34 6 1 56
34 7 1 58

35 1 1 39
35 4 2 49 51
35 5 2 52 54
35 6 1 56
35 7 1 58
35 8 1 60
35 9 1 61
35 10 1 62
35 11 1 63

36 2 1 45
36 3 1 48
36 4 1 52
36 8 1 61
36 10 1 63

37 1 1 41
37 2 1 45
37 3 1 48
37 4 1 53
37 5 1 55
37 6 2 57 58
37 7 1 59
37 10 1 63

38 1 1 42
38 2 1 46
38 4 2 52 53
38 5 1 55
38 6 2 57 58
38 7 1 59
38 8 1 61

39 4 2 52 54
39 6 1 58
39 8 1 61
39 10 1 63

40 1 1 44
40 2 1 47
40 3 1 50
40 4 1 53
40 5 1 55
40 6 1 57
40 7 1 59
40 10 1 63

41 2 1 48
41 4 1 55
41 6 2 59 60
41 7 1 61
41 8 1 62
41 9 1 63

42 2 1 49
42 3 1 52
42 4 1 55
42 6 2 59 60
42 7 1 61
42 8 1 62
42 9 1 63

43 1 1 46
43 2 1 49
43 3 1 52
43 4 1 56
43 5 1 58

44 2 2 50 51
44 3 1 54
44 4 2 55 56
44 5 1 58
44 6 2 59 60
44 7 1 61
44 8 1 62
44 9 1 63

45 1 1 48
45 4 1 57
45 5 1 59
45 6 1 61
45 8 1 63

46 2 1 52
46 4 1 58

47 1 1 50
47 2 1 54
47 4 2 57 58
47 5 1 59
47 6 1 61

48 4 1 59
48 6 1 62
48 7 1 63

49 1 1 52
49 4 1 60
49 5 1 61
49 6 1 62
49 7 1 63

50 2 1 56
50 3 1 58
50 4 1 59
50 6 1 62
50 7 1 63

51 1 1 54
51 2 1 56
51 3 1 58
51 4 1 60
51 5 1 61
51 6 1 62
51 7 1 63

52 4 1 61
52 6 1 63

53 1 1 55
53 2 2 57 58
53 3 1 59
53 6 1 63

54 2 1 58
54 4 1 61
54 6 1 63

55 2 2 59 60
55 3 1 61
55 4 1 62
55 5 1 63

56 1 1 58
56 4 1 62
56 5 1 63

57 1 1 59
57 2 1 61
57 4 1 63

58 4 1 63

59 2 1 62
59 3 1 63

60 1 1 61
60 2 1 62
60 3 1 63

61 2 1 63

62 1 1 63

\end{verbatim}
\end{small}

%
\end{multicols}

\bibliographystyle{amsalpha}
\bibliography{tmfASSZ2}
\end{document}